\newcommand{\tth}{\mathtt{h}}
\newcommand{\ttf}{\mathtt{f}_\e}
\newcommand{\te}{\mathtt{e}}
\newcommand{\teWB}{\mathtt{e}_{\scriptscriptstyle{\textsc{WB}}}}
\newcommand{\DeltaBF}{\Delta_{\scriptscriptstyle{\textsc{BF}}}}
\newcommand{\tthWB}{\mathtt{h}_{\scriptscriptstyle{\textsc{WB}}}}
\DeclareMathOperator*{\sgn}{sgn}
\newcommand{\bbh}{\text{\textbf{b}}_{\mathtt{h}}}
\newcommand{\II}{{\mathrm{I\! I}}}
\newcommand{\e}{\epsilon}
\newcommand{\im}{\mathrm{i}\,}
\newcommand{\molt}[2]{\left(#1\,,\,#2\right)}
\newcommand{\lie}[2]{\left[#1\,,\, #2\right]}
\newcommand{\norm}[1]{{\| #1 \|}}
\newcommand\restr[2]{{% we make the whole thing an ordinary symbol
  \left.\kern-\nulldelimiterspace % automatically resize the bar with \right
  #1 % the function
  \vphantom{\big|} % pretend it's a little taller at normal size
  \right|_{#2} % this is the delimiter
  }}  
  \newcommand{\BVe}[4]{\left({\cal B}_{\mu,\e}\, \tf^{#1}_{{#2}} \ , \tf^{#3}_{#4} \right) }
\theoremstyle{plain}
\newtheorem{lem}{Lemma}
\newtheorem{teo}[lem]{Theorem}
\newtheorem{prop}[lem]{Proposition}
\theoremstyle{definition}
\newtheorem{sia}[lem]{Definition}
\newtheorem{rmk}[lem]{Remark}
\newtheorem*{rem}{Remark}
\renewcommand{\bar}{\overline}
\newcommand{\vet}[2]{\begin{bmatrix}#1 \\ #2 \end{bmatrix}}
\newcommand{\uno}{\mathrm{Id}}
\newcommand{\bR}{\mathbb{R}}
\newcommand{\bT}{\mathbb{T}}
\newcommand{\bZ}{\mathbb{Z}}
\newcommand{\bN}{\mathbb{N}}
\newcommand{\bC}{\mathbb{C}}
\newcommand{\tf}{\mathtt{f}}
\newcommand{\sL}{\mathscr{L}}
\newcommand{\cO}{\mathcal{O}}
\newcommand{\cR}{\mathcal{R}}
\newcommand{\cJ}{\mathcal{J}}
\newcommand{\cB}{\mathcal{B}}
\newcommand{\cL}{\mathcal{L}}
\newcommand{\cV}{\mathcal{V}}
\newcommand{\tB}{\mathtt{B}}
\newcommand{\tJ}{\mathtt{J}}
\newcommand{\tL}{\mathtt{L}}
\newcommand{\de}{\mathrm{d}}
\newcommand{\pa}{\partial}
\newcommand{\cH}{\mathcal{H}}
\newcommand{\cF}{\mathcal{F}}
\newcommand{\cU}{\mathcal{U}}
\newcommand{\cW}{\mathcal{W}}
\newcommand{\off}{\varnothing}
\newcommand{\bro}{\bar\rho}
\newcommand{\Gc}{\mathfrak{c}}
\newcommand{\tc}{{\mathtt{c}}}
\newcommand{\bem}{\text{\large \bf $\flat$}}
\newcommand{\ch}{{\mathtt c}_{\mathtt h}}
\numberwithin{equation}{section}
\title{\bf Benjamin-Feir instability of \\
Stokes waves in finite depth
}
\begin{document}

 \author{Massimiliano Berti, Alberto Maspero, Paolo Ventura\footnote{
International School for Advanced Studies (SISSA), Via Bonomea 265, 34136, Trieste, Italy. 
 \textit{Emails: } \texttt{berti@sissa.it},  \texttt{alberto.maspero@sissa.it}, \texttt{paolo.ventura@sissa.it}
 }}

\date{}

\maketitle
\begin{abstract}
Whitham and Benjamin predicted in 1967  that 
small-amplitude periodic  traveling Stokes 
waves
of  the 2d-gravity water waves equations 
are linearly unstable with respect to long-wave perturbations, 
if the depth 
$ {\mathtt h} $ is larger than a critical threshold $\tthWB \approx 1.363 $. 
In   
this paper we completely  describe, for any value of 
$ \mathtt h >0 $, 
 the four eigenvalues close to zero  of the linearized equations 
 at the Stokes wave, 
 as  the Floquet exponent $\mu$ is turned on.
 We prove in particular the existence of a unique  depth   
 $ \tthWB $, 
 which coincides with the one predicted by 
 Whitham and Benjamin,  % in \cite{BrM,Whitham,Benjamin},
such that, for any $ 0 < \mathtt h < \tthWB $, the eigenvalues close to zero
remain purely imaginary 
and,  for any $ \mathtt h > \tthWB $, 
a pair of non-purely imaginary eigenvalues
depicts a closed figure ``8'', parameterized by the Floquet exponent.
As $ {\mathtt h}  \to \tthWB^{\, +} $ this figure ``8'' 
collapses to the origin of the complex plane. 
The  
proof combines a symplectic version of Kato's perturbative theory 
to compute the eigenvalues of a $ 4 \times 4 $  Hamiltonian and reversible  
matrix,   
and KAM inspired 
transformations to block-diagonalize it. 
The four eigenvalues  have all the  same size $\cO(\mu)$
--unlike the  infinitely deep water case in \cite{BMV1}-- and 
 the correct Benjamin-Feir phenomenon 
 appears only after one non-perturbative 
 block-diagonalization step. In addition one has to  
  track, along the whole proof, the 
explicit dependence of the entries of the $ 4 \times 4 $ 
reduced 
 matrix with respect to the depth $\tth$.
\end{abstract}

\tableofcontents

\section{Introduction to  main results}

A classical problem in fluid dynamics, pioneered by the famous work of Stokes \cite{stokes} in 1847,
concerns the spectral stability/instability of periodic traveling waves --called Stokes waves-- 
of the gravity water waves 
equations in any depth. 
 
Benjamin and Feir \cite{BF}, Lighthill   \cite{Li} and Zakharov \cite{Z0,ZK}
discovered in the sixties, through experiments and formal arguments, that Stokes waves in deep water are  unstable, proposing an heuristic mechanism which  leads 
to the disintegration of wave trains.
More precisely, these works predicted unstable eigenvalues of the linearized equations 
 at the Stokes wave, 
 near the origin of the complex plane, corresponding to small Floquet exponents $ \mu $ or, equivalently, to long-wave perturbations. The same phenomenon was later predicted 
 by Whitham \cite{Whitham} and Benjamin 
 \cite{Benjamin}   
 for Stokes waves of wavelength $ 2\pi\kappa $, in finite depth $ \mathtt h $,  provided that $ \kappa \mathtt h > 1.363 $ approximately. 
 This phenomenon is nowadays called 
``Benjamin-Feir" --or modulational-- instability, and it is supported by an enormous  amount of  physical observations and numerical simulations, see e.g. \cite{DO,KDZ}. We refer to
 \cite{ZO} for an historical survey. 

A serious  difficulty for a rigorous 
mathematical proof of the Benjamin-Feir instability 
is  that the  perturbed  eigenvalues bifurcate from  the  eigenvalue zero, which is 
{\it defective}, with multiplicity four. 
The first  rigorous proof of a local branch of unstable eigenvalues close to zero 
for  $ \kappa \mathtt h  $ larger than the Whitham-Benjamin threshold $1.363\ldots$
was obtained   by 
Bridges-Mielke \cite{BrM}  in finite depth (see also the preprint by  Hur-Yang \cite{HY}).
Their method, based on a spatial dynamics and a center manifold reduction,  breaks down in deep water.  
For dealing with this case 
Nguyen-Strauss \cite{NS} have recently developed a new approach, based
on a Lyapunov-Schmidt decomposition. 
The novel spectral approach developed  in Berti-Maspero-Ventura \cite{BMV1} % BMV1.5} 
allowed to fully describe, in deep water,  the 
splitting of the four eigenvalues close to zero, 
as the Floquet exponent is turned on,  
proving in particular the conjecture that a pair of non-purely imaginary eigenvalues depicts a closed figure ``8'', parameterized by the Floquet exponent.
% in full agreement with the numerical simulations in \cite{DO}.

\smallskip

The goal of this paper is to 
describe the full Benjamin-Feir instability phenomenon at any finite value of the depth $ \tth > 0 $. 
This  analysis has fundamental physical importance since  real-life experiments are performed 
in  water tanks
(for example the original Benjamin and Feir experiments,  in 
Feltham's National Physical Laboratory, had Stokes waves of wavelength 2.2 m and bottom's depth of  7.62 m, see \cite{Benjamin}).  We also remark that the Benjamin-Feir instability mechanism is a 
%has been  suggested 
possible  responsible of  the % possibly  connected with
emergence of rogue waves in the ocean, 
 we refer to \cite{JO, OS} and references therein for  a vast physical literature. % Je,}. 
 A first mathematically rigorous treatment of large waves 
 is given in \cite{GS}, via a probabilistic analysis, in the case of NLS. 

Along this paper,
with no loss of generality, we consider $2\pi$-periodic Stokes waves, i.e. with wave number $\kappa=1$. 
In Theorems  \ref{TeoremoneFinale} and \ref{thm:simpler} 
we prove 
the existence of a unique depth  
 $ \tthWB $, in perfect agreement with the Benjamin-Feir critical value 1.363..., 
such that: 
\begin{itemize}
\item{{\it Shallow water case:}} for any $ 0 < \mathtt h < \tthWB $  
the eigenvalues close to zero
remain purely imaginary for Stokes waves
of sufficiently small amplitude, see Figure \ref{figure-eigth}-left; 
\item{\it Sufficiently deep water case:} 
for any $ \tthWB  < \mathtt h < \infty $, 
there exists a pair of non-purely imaginary eigenvalues which 
traces a complete closed figure ``8'' (as shown in  Figure \ref{figure-eigth}-right) parameterized by the Floquet exponent $ \mu $.
By further increasing $ \mu $, 
 the eigenvalues recollide far from the origin on the imaginary axis where then they keep moving.
As $ {\mathtt h}  \to \tthWB^{\, +} $ the set of unstable Floquet exponents shrinks to zero and the Benjamin-Feir unstable eigenvalues 
collapse to the origin,  see Figure \ref{Remax}.   
This figure `8" 
was first numerically discovered  
by Deconink-Oliveras in \cite{DO}.
\end{itemize}
We remark that our approach fully 
describes {\it all}
the eigenvalues close to $ 0 $, 
providing a necessary and sufficient condition for the existence of  unstable 
eigenvalues, i.e.  the positivity of the 
Benjamin-Feir discriminant function $\DeltaBF(\tth;\mu,\e)  $ defined in
\eqref{primosegno}.

\smallskip

 The results of 
Theorems \ref{TeoremoneFinale} and \ref{thm:simpler} are complementary to those of \cite{BMV1}. In following the natural spectral approach developed 
in \cite{BMV1}, we
encounter a major difference in the proof, that we now anticipate. 
In the infinitely deep water  ideal case
it turns out that the  ``reduced'' $4\times 4$ matrix obtained by the 
Kato reduction procedure is a small perturbation of a block-diagonal matrix which
possesses yet the  correct Benjamin-Feir unstable eigenvalues. 
This is not the case  in  finite depth. 
The correct eigenvalues of the  ``reduced'' $4\times 4$ matrix emerge only after one 
non-perturbative step of block diagonalization.
We shall explain in detail  this point after the statement of Theorem  
\ref{TeoremoneFinale}. 
This is related with the fact that, in infinite deep water,  
among the  four eigenvalues close to zero of  the linearized operator at the Stokes wave, 
two are $ \cO(\mu)$, whereas the other two  have much larger size $\cO(\sqrt{\mu}) $,
whereas in finite depth all four eigenvalues  have size $\cO(\mu) $. 
In addition, along  the whole proof, one needs to carefully track the explicit dependence with respect to $\tth$ of the   entries of the reduced $4\times 4$ matrix.

Let us now present rigorously our results.

\subsection*{Benjamin-Feir instability  in finite depth}

We consider the pure gravity water waves equations  for a bidimensional fluid 
occupying a region with finite depth $ \mathtt h $. With no loss of generality
we set 
the gravity $ g = 1 $, see Remark \ref{rem:gravity}. We consider 
a $2\pi$-periodic Stokes wave % solution 
with amplitude $0< \e \ll 1$ and  
speed 
$$ 
c_\e = \ch + \cO(\e^2) \, , \quad \ch := \sqrt{\tanh(\tth)} \, .
$$
The linearized  water waves equations at the Stokes wave
are, in the inertial reference frame moving with speed $c_\e$, a linear time independent system of the form
$ h_t = \mathcal{L}_{\e}  h  $ where $ \mathcal{L}_{\e} := \mathcal{L}_{\e}({\mathtt h}) $
is a linear operator  with $ 2 \pi $-periodic coefficients,  
see \eqref{cLepsilon} (the operator $ \mathcal{L}_{\e}  $ 
in \eqref{cLepsilon} is actually  obtained 
conjugating the linearized water waves equations in the Zakharov  formulation 
at the Stokes wave 
via the  ``good unknown of Alinhac" \eqref{Alin} and the 
Levi-Civita \eqref{LC} invertible transformations).
The operator 
$ \mathcal{L}_{\e} $ possesses the  eigenvalue $ 0 $, 
which is defective, with multiplicity four,  
 due to  symmetries of the water waves equations. 
The problem is to prove that  the linear system 
 $ h_t = \mathcal{L}_{\e}  h  $  
has  solutions  of the form $h(t,x) = \text{Re}\left(e^{\lambda t} e^{\im \mu x} v(x)\right)$
where $v(x)$ is a  $2\pi$-periodic function, $\mu$ in $ \bR$ is the  Floquet exponent
and $\lambda$ has positive real part, thus $h(t,x)$ grows exponentially in time.
By Bloch-Floquet theory, such $\lambda$ is an  eigenvalue 
  of the operator  $ \mathcal{L}_{\mu,\e} 
:= e^{-\im \mu x } \,\mathcal{L}_{\e} \, e^{\im \mu x } $
acting on $2\pi$-periodic functions. 
\smallskip

 The main result of this paper proves, for any finite value of the depth $ \mathtt h $,
  the full splitting of the four 
 eigenvalues  close to 
zero of the operator $ \mathcal{L}_{\mu,\e}  := \mathcal{L}_{\mu,\e} (\mathtt h ) $ 
when  $ \e $ and $ \mu $ are small enough, see Theorem \ref{TeoremoneFinale}. 
We first present Theorem \ref{thm:simpler} which focuses on the figure $``8" $ formed by 
the Benjamin-Feir {\it unstable} eigenvalues.

We first need to introduce the ``Whitham-Benjamin'' function
\begin{equation}\label{funzioneWB}
\teWB   := \teWB  (\tth)  := \frac{1}{\ch}
\Big[ \frac{9\ch^8-10\ch^4+9}{8\ch^6} - 
\frac{1}{\tth- \frac14\te_{12}^2} \Big(1 + \frac{1-\ch^4}{2} + \frac34 \frac{(1-\ch^4)^2}{\ch^2}\tth \Big) \Big]  \, , 
\end{equation}
where $\ch = \sqrt{\tanh(\tth)} $ is the speed of the linear Stokes wave, and 
\begin{equation}
\te_{12}  :=  \te_{12}(\tth)  := \ch+\ch^{-1}(1-\ch^4)\tth  \label{te12} > 0 \, , \quad 
\forall \tth > 0 \, .
\end{equation}
The function $ \teWB (\tth )$ is well defined for any $ \tth > 0 $ because 
the denominator $ \tth- \tfrac14 \te_{12}^2 > 0  $ in 
\eqref{funzioneWB} is positive for any $ \tth > 0 $, see Lemma \ref{defDh}.  
The function  \eqref{funzioneWB}  coincides, up to a non zero factor, 
with the celebrated function obtained by
Whitham \cite{Whitham}, Benjamin \cite{Benjamin} and Bridges-Mielke \cite{BrM}
which determines the ``shallow/sufficiently deep''  threshold regime. 
In particular the Whitham-Benjamin function $\teWB(\tth)$ vanishes at $ \tthWB = 1.363...$, it is negative for $ 
0 <  \tth < \tthWB $, positive for $ 
\tth > \tthWB $ and tends to $1$ as $\tth \to +\infty$, see Figure \ref{graficoe112}.  
We also introduce the positive coefficient 
\begin{equation}
\te_{22}  :=  \te_{22}(\tth)  := \dfrac{(1-\ch^4)(1+3\ch^4) \tth^2+2 \ch^2(\ch^4-1) \tth+\ch^4}{\ch^3} > 0 \, , \quad \forall \tth > 0 \, .
\label{te22}
\end{equation}
We remark that the functions $\te_{12}(\tth) > \tc_\tth $ and $\te_{22}(\tth) > 0 $ are positive for any $ \tth > 0 $, tend to $0$ as $\tth \to 0^+$ and to $1$ as $\tth \to +\infty$, see
Lemma  \ref{nondegcond}.\begin{figure}[h]
\centering
\includegraphics[width=5.5cm]{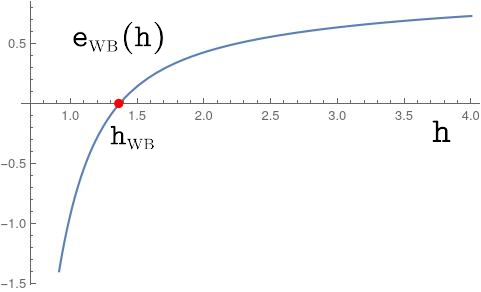}
\caption{Plot of the Whitham-Benjamin function  $ \teWB  (\tth) $.
The red dot shows its unique  root $\tthWB=1.363\dots $.
which is 
the celebrated ``shallow/sufficiently deep'' water threshold predicted  
independently by Whitham (cfr.\cite{Whitham} p.49) and Benjamin (cfr.\cite{Benjamin} p.68), and recovered  in the rigorous proof  of Bridges-Mielke \cite[p. 183]{BrM}.
 }
\label{graficoe112}
\end{figure}

Along  the paper we denote by $r(\e^{m_1} \mu^{n_1}, \ldots, \e^{m_p} \mu^{n_p})$
a real analytic function fulfilling  for some $C >0$ and $\e, \mu$ sufficiently small, the estimate  $| r(\e^{m_1} \mu^{n_1}, \ldots, \e^{m_p} \mu^{n_p}) | \leq 
C \sum_{j=1}^p
 |\e|^{m_j} |\mu|^{n_j}
$, where  the constant $C:=C(\tth)$ is uniform for $\tth$ in any compact set of $(0, + \infty)$.
 \begin{teo}\label{thm:simpler}
 {\bf (Benjamin-Feir unstable eigenvalues)}
For any $ \mathtt h > \tthWB $, there exist  $ \e_1, \mu_0 > 0 $  and 
an analytic  function $\underline \mu: [0,\e_1)\to [0,\mu_0)$, 
 of the form 
 \begin{equation}\label{barmuep}
 \underline  \mu(\e)  =    \te_{\tth}  \e(1+r(\e)) \, , \quad 
 \te_{\tth} := \sqrt{\frac{8\teWB(\tth)}{\te_{22}(\tth)}} \, ,  
 \end{equation}
 such that, 
 for any  $  \e \in [0, \e_1)  $, the 
 operator  $\cL_{\mu,\e}$ has two eigenvalues
  $\lambda^\pm_1 (\mu,\e)$ of the form 
 \begin{equation} \label{eigelemu}
 \begin{cases} 
     \im \frac12 \breve{\mathtt c}_\tth \mu+\im r_2(\mu\e^2,\mu^2\e,\mu^3)  \pm \tfrac18 \mu \sqrt{\te_{22}(\tth)}  (1+r(\e,\mu))  \sqrt{\DeltaBF(\tth;\mu,\e) },
& \forall \mu \in [0, \underline \mu (\e)) \!\!\! \\[1.5mm]
  \im \frac12  \breve{\mathtt c}_\tth \underline \mu (\e)+\im r(\e^3),  
   &
 \mu= \underline \mu  (\e)  \!\!\! \\[1.5mm]
    \im \frac12 \breve{\mathtt c}_\tth \mu+\im r_2(\mu\e^2,\mu^2\e,\mu^3)  \pm \im \tfrac18 \mu \sqrt{\te_{22}(\tth)}  (1+r(\e,\mu))  \sqrt{|\DeltaBF(\tth;\mu,\e)|}, &  \forall  \mu \in ( \underline \mu  (\e), \mu_0) \!\!\!
\end{cases}\!\!\!
\end{equation}
where  
$\breve{\mathtt c}_\tth:=2 \ch- \te_{12}(\tth) >0$ 
and 
$\DeltaBF(\tth;\mu,\e) $ is the ``Benjamin-Feir discriminant" function 
\begin{equation}\label{primosegno}
\DeltaBF(\tth;\mu,\e) := 8\teWB (\tth) \e^2 +r_1(\e^3,\mu\e^2)-\te_{22} (\tth) \mu^2\big(1+r_1''(\e,\mu)\big)  \,  . 
\end{equation}
Note that, for any $0<\e<\e_1$ (depending on $\tth$)  the function 
$ \DeltaBF(\tth;\mu,\e)  > 0 $ is positive,  
respectively $ < 0 $, 
provided $0<\mu < \underline{\mu}(\e)$,  respectively $\mu > \underline{\mu}(\e)$.
\end{teo}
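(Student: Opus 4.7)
The plan is to reduce the spectral problem for $\cL_{\mu,\e}$ near zero to that of a $4\times 4$ Hamiltonian and reversible matrix, block-diagonalize it so as to isolate the two Benjamin-Feir eigenvalues in a single $2\times 2$ block, and then read $\lambda_1^{\pm}(\mu,\e)$ off the characteristic polynomial of that block. Concretely, I would first apply a symplectic version of Kato's similarity transformation to the four eigenvalues of $\cL_{\mu,\e}$ close to zero, obtaining a $4\times 4$ matrix $\tB_{\mu,\e}(\tth)$ whose entries are real-analytic in $(\mu,\e)$, carry the Hamiltonian and reversible structure inherited from the water waves equations, and whose expansion in $\mu$ and $\e$ can be computed explicitly (with coefficients analytic in $\tth$) starting from the four generalized null vectors of $\cL_{\e}$ produced by the water waves symmetries.

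The second step is the block-diagonalization of $\tB_{\mu,\e}$, which in finite depth is more delicate than in \cite{BMV1}. Because all four small eigenvalues have size $\cO(\mu)$, a purely perturbative diagonal reduction will not separate the two pairs: one genuinely non-perturbative change of basis is required, solving a Sylvester-type equation to cancel the leading off-diagonal $2\times 2$ block, with solvability guaranteed by the nonvanishing of $\tth - \tfrac14 \te_{12}^2(\tth)$ (Lemma \ref{defDh}) and by the positivity of $\te_{12}(\tth)$ and $\te_{22}(\tth)$ for $\tth>0$. Preserving the Hamiltonian and reversible symmetries at each stage is essential and constrains the transformation to lie in a symplectic-reversible group. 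A subsequent convergent KAM-style iteration then makes the remaining off-diagonal blocks arbitrarily small and produces a block-diagonal matrix with a $2\times 2$ Hamiltonian-reversible block $\tM_{\mu,\e}$ containing the two eigenvalues $\lambda_1^{\pm}(\mu,\e)$.

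With $\tM_{\mu,\e}$ in hand, the eigenvalues are $\tfrac12 \mathrm{tr}(\tM_{\mu,\e}) \pm \tfrac12 \sqrt{\mathrm{tr}(\tM_{\mu,\e})^2 - 4\det(\tM_{\mu,\e})}$. Reversibility forces the trace to be purely imaginary and, by direct expansion, equal to $\im \breve{\mathtt c}_\tth \mu$ plus a remainder $\im r_2(\mu\e^2, \mu^2\e, \mu^3)$; the Hamiltonian structure factors the discriminant as $\mathrm{tr}^2 - 4\det = \tfrac{1}{16}\mu^2 \te_{22}(\tth)(1+r(\e,\mu))^2 \DeltaBF(\tth;\mu,\e)$, with $\DeltaBF$ as in \eqref{primosegno}. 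The critical curve $\underline{\mu}(\e)$ in \eqref{barmuep} is then the analytic branch of $\DeltaBF(\tth;\mu,\e)=0$ produced by the analytic implicit function theorem, using that for $\tth>\tthWB$ the leading quadratic form $8\teWB(\tth)\e^2 - \te_{22}(\tth)\mu^2$ is nondegenerate with $\teWB(\tth)>0$, $\te_{22}(\tth)>0$. The three cases of \eqref{eigelemu} then follow by evaluating $\sqrt{\DeltaBF}$ on the regions $0\le\mu<\underline{\mu}(\e)$, $\mu = \underline{\mu}(\e)$, and $\underline{\mu}(\e)<\mu<\mu_0$, the sign of $\DeltaBF$ determining whether the root is real (instability, nonzero real part) or imaginary (stability).

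The main obstacle will be the non-perturbative block-diagonalization step together with the tracking of the exact $\tth$-dependence of the entries of $\tM_{\mu,\e}$. One must verify that the coefficient of $\e^2$ in $\DeltaBF$ is \emph{exactly} $8\teWB(\tth)$ as given by \eqref{funzioneWB}: the term $(9\ch^8-10\ch^4+9)/(8\ch^6)$ comes from the direct perturbative computation of $\tB_{\mu,\e}$, whereas the correction $-(\tth - \tfrac14\te_{12}^2)^{-1}\bigl(1 + \tfrac{1-\ch^4}{2} + \tfrac34 \tfrac{(1-\ch^4)^2}{\ch^2}\tth\bigr)$ arises precisely from the inverse appearing in the Sylvester equation solved during the non-perturbative step. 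This correction is absent in infinite depth and is exactly what produces the finite-depth Whitham-Benjamin threshold $\tthWB$. Uniformity in $\tth$ of the constants in all remainder functions $r(\cdots)$ on compact subsets of $(0,+\infty)$ must be maintained throughout.
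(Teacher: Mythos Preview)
Your proposal is correct and follows essentially the same strategy as the paper: Kato reduction to a $4\times 4$ Hamiltonian-reversible matrix, a non-perturbative block-decoupling step via a Sylvester equation (whose inverse produces exactly the $(\tth-\tfrac14\te_{12}^2)^{-1}$ correction turning $\te_{11}$ into $\teWB$), a final perturbative block-diagonalization, and the analytic implicit function theorem for $\underline{\mu}(\e)$. The paper adds one technical ingredient you do not mention: a preliminary singular symplectic rescaling by $\mathrm{diag}(\mu^{1/2},\mu^{-1/2},\mu^{1/2},\mu^{-1/2})$ before the Sylvester step, which brings all diagonal-block entries to size $\cO(\mu)$ and shrinks the off-diagonal block from $\cO(\e)$ to $\cO(\mu\e)$; the final block-diagonalization is then done in one implicit-function-theorem shot rather than by a KAM iteration, but this is an equivalent choice.
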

Let us make some comments. %  on  the result.
\\[1mm]
\indent 1.
{\sc Benjamin-Feir unstable eigenvalues.}
For $ \mathtt h >  \tthWB $,
according to \eqref{eigelemu}, 
 for values of the Floquet parameter $ 0<\mu <  \underline \mu (\e) $, the eigenvalues 
$\lambda^\pm_1 (\mu, \epsilon) $ have opposite non-zero real part.
  As $ \mu $ tends to $  \underline  \mu (\e)$, the two eigenvalues $\lambda^\pm_1 (\mu,\epsilon) $ 
 collide on the imaginary axis {\it far} from $ 0 $ (in the upper semiplane $ \text{Im} (\lambda) > 0 $), 
along which they  keep moving  for $ \mu > \underline  \mu (\e) $,
see Figure \ref{figure-eigth}. For $ \mu < 0 $ the operator $ {\mathcal L}_{\mu,\e} $
possesses the symmetric eigenvalues 
$ \overline{\lambda_1^{\pm} (-\mu,\e)} $ in the semiplane $ \text{Im} (\lambda) < 0 $. 
For $ \mu \in [0, \underline \mu(\e)]$  
we obtain the upper part of  
the figure  ``8'', which is 
well approximated by the curves 
\begin{equation}\label{appdr}
\mu \mapsto \Big( \pm\frac{\mu}{8} \sqrt{\te_{22}} \sqrt{8\teWB\e^2 - \te_{22}\mu^2},  \ 
\tfrac12 \breve{\mathtt c}_\tth \mu \Big) \, ,
\end{equation}
in accordance with the  numerical simulations by Deconinck-Oliveras  \cite{DO}.
Note that for $ \mu > 0 $ the imaginary part in \eqref{appdr} is positive because 
$  \breve{\mathtt c}_\tth =  \ch^{-1} 
( \tanh (\tth) - (1- \tanh^2 (\tth)) \tth )> 0 $ 
for any $ \tth > 0 $.    
The higher order ``side-band" corrections of the eigenvalues $ \lambda_1^\pm (\mu,\e ) $ in \eqref{eigelemu}, 
provided by the analytic functions $r, r_1, r_1'', r_2 $,  
are explicitly computable. 
We finally remark that the eigenvalues \eqref{eigelemu} are {\it not analytic}  in $(\mu, \epsilon)$
close to the
value $(\underline{\mu}(\epsilon),\epsilon)$ where  $  \lambda^\pm_1 (\mu, \epsilon) $
 collide  at the top of the 
figure $ ``8" $ far from $0$ (clearly they are continuous).  
 \begin{figure}[h]
 \centering
 \subcaptionbox*{}[.4\textwidth]{\includegraphics[width=4.5cm]{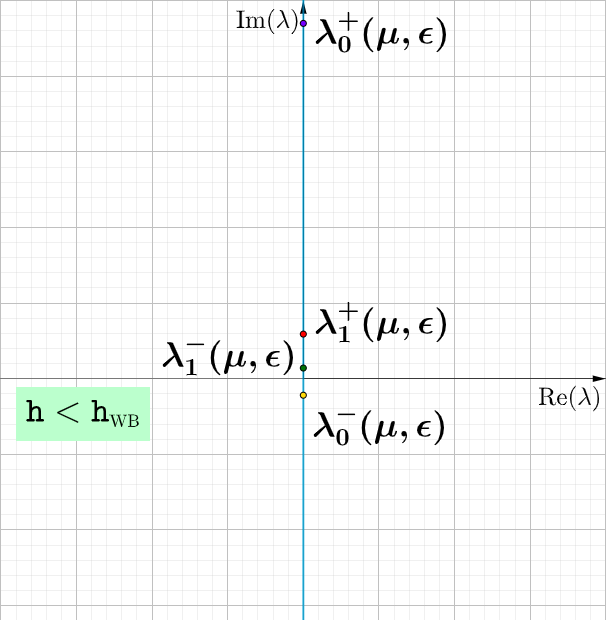}}\hspace{1cm}
\subcaptionbox*{}[.4\textwidth]{
\includegraphics[width=4.5cm]{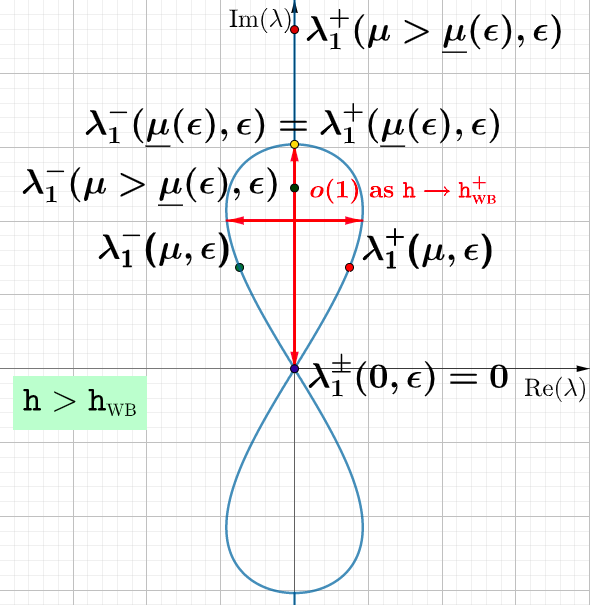} } \vspace{-0.8cm}
\caption{The picture on the left shows, in the ``shallow'' water regime $\tth< \tthWB $, the  eigenvalues $\lambda^\pm_{1} (\mu,\epsilon )$ and  $\lambda^\pm_{0} (\mu,\epsilon )$  which are purely imaginary.
 The picture on the right shows, in the ``sufficiently deep'' water regime $\tth> \tthWB $, the  eigenvalues $\lambda^\pm_1 (\mu,\epsilon )$ in the complex $ \lambda $-plane at fixed $|\e| \ll 1 $ as $\mu$ varies. This figure ``8 '' depends on $\tth$ and shrinks to $0$ as $\tth\to \tthWB^+$, see Figure \ref{Remax}. As $\tth\to+\infty$   the  spectrum resembles the
one in deep water found in \cite{BMV1}. \label{figure-eigth}}
\end{figure}
\\[1mm]
\indent 2.  {\sc Behaviour near the Whitham-Benjamin depth $ \tthWB $. }
As $ {\mathtt h}  \to \tthWB^+ $ the
constant $ \e_1 := \e_1(\tth) > 0 $ in Theorem \ref{thm:simpler}  tends to zero, 
the set of unstable Floquet exponents 
$  (0, \underline  \mu(\e) ) $  with $ \underline  \mu(\e) = \te_{\mathtt h} \e(1+r(\e))  $
given in \eqref{barmuep}
shrinks to zero  and the figure ``8'' of Benjamin-Feir  unstable eigenvalues 
collapse to zero, see Figure \ref{Remax}. 
In particular   
\begin{equation}\label{eqRemax}
\max_{\mu \in [0,\underline{\mu}(\e)]} \text{Re}\,\lambda_1^+(\mu,\e) =  \text{Re}\,\lambda_1^+(\mu_{\max},\e) =\frac12 {\teWB}(\tth) \e^2 + r(\e^3)\ \text{ and } 
\end{equation}
tends to zero as $\tth\to \tthWB^+$, since $0<\e<\e_1(\tth)$ and $\e_1(\tth)\to 0^+$.
 \begin{figure}[h]
\centering
\includegraphics[width=5cm]{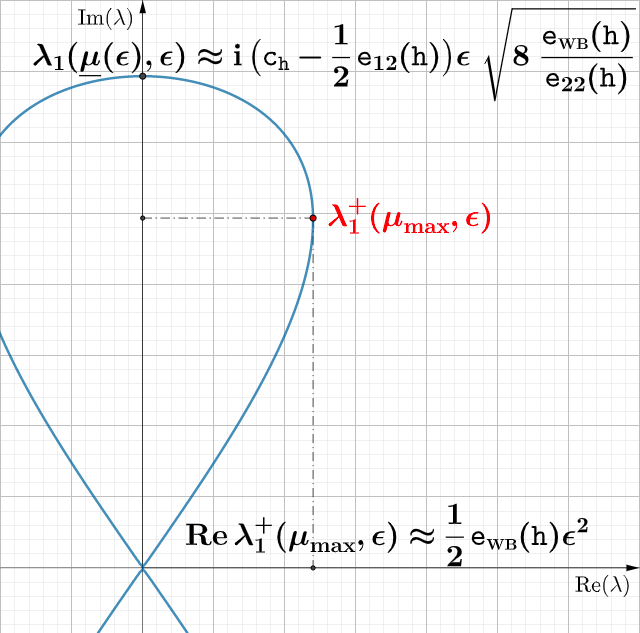}
\caption{ The Benjamin-Feir eigenvalue $\lambda^+_{1}(\mu_{\max},\e) $ in \eqref{eqRemax} with maximal real part, as well as the whole figure ``$8$''
 shrinks to zero %  with $\teWB\to  0$ 
as $\tth\to \tthWB^+$. }
\label{Remax}
\end{figure}
\\[1mm]
\indent 3. 
{\sc Relation with Bridges-Mielke \cite{BrM}.} Bridges and Mielke  describe 
 the unstable eigenvalues very close to the origin, namely the cross amid the `8". In order to make a precise comparison with our result % between the 
let us spell out the relation of the functions  $\teWB $, $\te_{12}$ and $\te_{22}$ with   the coefficients obtained in % the approach of Bridges-Mielke 
\cite{BrM}. The Whitham-Benjamin function $\teWB  $ in \eqref{te11} is $\teWB= (\ch\tth)^{-1} \nu(F)$, where $\nu(F)$ is defined   in \cite[formula (6.17)]{BrM} and $F = \ch \tth^{- \frac12} $ is the Froude number, cfr. 
\cite[formula (3.4)]{BrM}. Moreover the term $\te_{12}$ in \eqref{te12} is $\te_{12} = 2 c_g $, where $ c_g = \frac12 \ch \big(1+ F^{-2} \text{sech}^2(\tth)\big) $ 
is the group velocity  defined in 
Bridges-Mielke \cite[formula (3.8)]{BrM}. Finally % the term 
$\te_{22}(\tth) \propto \dot c_g$  where $ \dot c_g $ is the derivative of the group velocity defined in  \cite[formula (6.15)]{BrM},
which for gravity waves is negative in any depth.
\\[1mm]
\indent 4.  {\sc Complete spectrum near $ 0 $.}
In Theorem \ref{thm:simpler} we have described just the two unstable eigenvalues of $\cL_{\mu,\e}$ close to zero for $ \mathtt h >  \tthWB $. 
 There are also two larger 
purely imaginary eigenvalues of order $ \cO(\mu) $, see Theorem \ref{TeoremoneFinale}.  
We remark that our approach describes 
{\it all} the eigenvalues of $ {\mathcal L}_{\mu,\epsilon} $ close to $0 $ (which are $ 4 $).
\\[1mm]
\indent 5. {\sc Shallow water regime.}
In the shallow water regime $ 0 < \mathtt h <  \tthWB $, we prove in Theorem 
\ref{TeoremoneFinale}  that  all 
the four eigenvalues of  $ {\mathcal L}_{\mu,\e} $ close to zero remain purely imaginary for $\e$ sufficiently small. %depending on $\tth$. 
The eigenvalue expansions of 
Theorem 
\ref{TeoremoneFinale}  become singular as $ \tth \to 0^+ $. 
\\[1mm]
\indent 6. {\sc Behavior at the Whitham-Benjamin threshold $\tthWB$. } 
The analysis of 
Theorem \ref{thm:simpler} is not conclusive at the critical depth $\tth = \tthWB$. 
The reason is that $ \teWB (\tthWB) = 0 $ 
and the Benjamin-Feir discriminant  function 
 \eqref{primosegno} reduces to
\begin{equation}\label{commentointrigante}
\DeltaBF(\tthWB; \mu, \e) = r(\e^3) + r(\mu\e^2)  -\te_{22}(\tthWB) \mu^2 (1+r_1''(\e,\mu)) \, .
\end{equation}
Thus its quadratic expansion is not sufficient anymore 
to determine the sign of $\DeltaBF(\tthWB; \mu, \e)$. 
Note that \eqref{commentointrigante}  could be positive due to
the cubic term $r(\e^3)=\alpha\e^3+\dots$ for $\e$ and $\mu$ small enough.
% In principle 
The coefficient $\alpha$ could be explicitly computed  taking into account the third order expansion of the Stokes waves. 
%This analysis, that we plan to address in the future, could justify rigorously the modulational predictions  of Johnson \cite{John}.
\\[1mm]
 \indent 7. {\sc Unstable   Floquet exponents and amplitudes $ (\mu,\e) $. } 
In Theorem \ref{TeoremoneFinale} we actually prove that the expansion 
\eqref{eigelemu} of  the 
eigenvalues of $ \cL_{\mu,\e} $  holds for any value 
of 
$(\mu, \e) $ in a larger rectangle $ [0,\mu_0) \times [0,\e_0 )$, and
there exist Benjamin-Feir unstable eigenvalues 
 if and only if the  analytic  function $\DeltaBF(\tth; \mu, \e)$ in 
 \eqref{primosegno} is positive.
 The zero set of  $\DeltaBF(\tth; \mu, \e)$ is an analytic variety 
which, for $ \tth > \tthWB $,  
 is,  restricted to the rectangle 
$ [0,  \mu_0) \times [0, \e_1)$,   the graph of the analytic  function 
$ \underline  \mu(\e)  =   \te_{\mathtt h}\e(1+r(\e)) $ in \eqref{barmuep}. 
This function  is tangent at $ \e = 0 $ to the straight line 
$ \mu =  \te_{\mathtt h} \e $, and divides 
$ [0,\mu_0) \times [0,\e_1 )$ in the 
region where $\DeltaBF(\tth; \mu,\e) > 0 $ 
 --and thus the  eigenvalues of 
$ {\cL}_{\mu,\e}$ have non-trivial real part--, from the ``stable" one 
where all the eigenvalues of $ {\cL}_{\mu,\e}$ are purely imaginary, see Figure \ref{fig2}.
 In the region  $ [0,\mu_0)\times  [\e_1,\e_0)$ 
 the higher order polynomial approximations of $\DeltaBF(\tth;  \mu,\e )$ 
 (which are computable) will determine the sign of $\DeltaBF(\tth; \mu,\e)  $. 
 \begin{figure}[h]
\centering
\includegraphics[width=6.5cm]{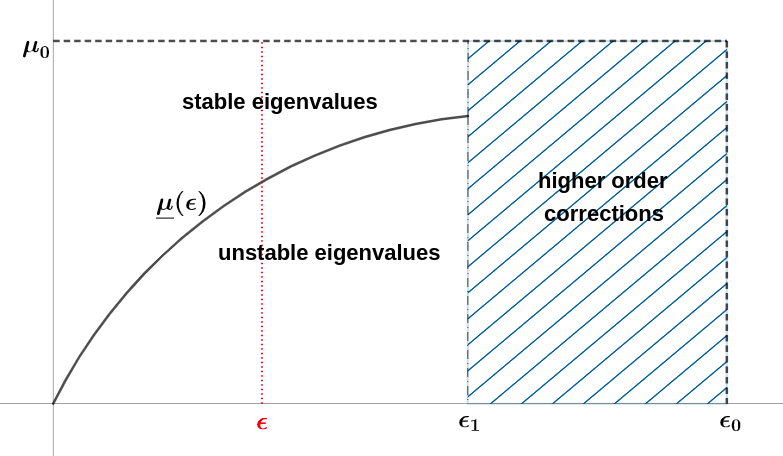}
\caption{The solid curve portrays the graph of the real analytic function $\underline{\mu}(\e)$ in \eqref{barmuep} as $\tth>\tthWB$. For values of $\mu$ below this curve, the two eigenvalues $\lambda^\pm_1(\mu,\e)$ have non zero real part. For $\mu$ above the curve, $\lambda^\pm_1(\mu,\e)$ are purely imaginary. In the region $[\e_1,\e_0)\times [0,\mu_0)$  
the eigenvalues are real/purely imaginary depending on the 
higher order corrections given by Theorem \ref{TeoremoneFinale}, which determine the 
sign of  $\DeltaBF(\tth; \mu,\e)  $.}
\label{fig2}
\end{figure}
\\[1mm] 
\indent 8.  {\sc Deep water limit}. 
Theorems \ref{thm:simpler} and \ref{TeoremoneFinale} do not pass to the limit as $ \tth \to + \infty $ since the remainders in the expansions of the eigenvalues are uniform only on any compact set of $\tth \in (0,+\infty)$. 
From a mathematical point of view, the difference is evident in the asymptotic behavior of $\tanh(\tth\mu) $ (and similar quantities) which, in the idealized  deep water case $\tth=+\infty$, is identically equal to $1$ 
for {\it any 
arbitrarily small} Floquet exponent $ \mu $, whereas $ \tanh(\tth\mu) = O(\mu\tth) $ for any $ \tth $ finite. However additional intermediate scaling regimes $  \tth \mu  \sim 1 $, $  \tth \mu  \ll 1 $, $  \tth \mu  \gg 1 $ are possible. It is well-known (e.g. see \cite{CGK}) that intermediate long-wave 
regimes   of the water-waves equations formally lead to  different physically-relevant limit equations as Boussinesq, KdV,  NLS, Benjamin-Ono, etc...
%At fixed finite depth $ \tth $
%the $ 4 $ eigenvalues of $\cL_{\mu,0}$  close to zero have the same size  $ \cO ( \mu )$, see \eqref{omeghino}, whereas in the deep water case two eigenvalues are 
%$ \cO (\mu)$ and the other two are $\cO ( \sqrt{\mu} )  $. 
%For this reason  the Benjamin-Feir instability proofs for the finite and infinite depth cases, given in the present paper and in \cite{BMV1}, are
%quite different.   
%In particular the complete structure of the 
%Benjamin-Feir unstable eigenvalues 
%appears only after the block-decoupling procedure of Section \ref{sec:block}.
\smallskip

We shall describe in detail 
the ideas of proof and the differences with the deep water case below
the statement of Theorem \ref{TeoremoneFinale}. 
\\[1mm]
{\it Further literature.} 
Modulational instability has been studied also  for a variety of approximate water waves models, such as KdV, gKdV, NLS and the Whitham equation by, for instance, Whitham \cite{Wh}, Segur, Henderson, Carter and Hammack \cite{SHCH}, Gallay and Haragus \cite{GH}, Haragus and Kapitula \cite{HK}, Bronski and Johnson \cite{BJ}, Johnson \cite{J}, Hur and Johnson \cite{HJ}, Bronski, Hur and Johnson \cite{BHJ},  Hur and Pandey \cite{HP},   Leisman,  Bronski,   Johnson   and
 Marangell \cite{LBJM}.
Also for these approximate models, numerical simulations predict 
a figure ``8'' similar to that in  
Figure \ref{figure-eigth} for the bifurcation of the unstable eigenvalues close to zero. 
We expect the present approach can be adapted to describe the full bifurcation of the  eigenvalues also for these models. 

Finally we mention the  nonlinear modulational instability 
result of Jin, Liao, and Lin  \cite{JLL}  for  
several fluid model equations and the preprint by Chen-Su \cite{ChenSu} for Stokes waves in deep water.
Nonlinear transversal instability results of traveling 
solitary water waves  in finite depth 
decaying at infinity on $ \bR $ 
 %(not periodic) 
have been proved in \cite{RT} % and reference therein
 (in deep water no solitary wave exists \cite{Hur12,IT}).
\\[1mm]
\noindent{\bf Acknowledgments.}
Research supported by PRIN 2020 (2020XB3EFL001) 
``Hamiltonian and dispersive PDEs".

\section{The  complete  Benjamin-Feir spectrum in finite depth}

In this section we present in detail the complete spectral Theorem \ref{TeoremoneFinale}. 
We first introduce  the pure  gravity water waves equations and the Stokes waves solutions.   
\\[1mm]{\bf The water waves equations.}
We consider the Euler equations for a 2-dimensional incompressible, irrotational fluid under the action of  gravity. The fluid fills the
region 
$$
{ \mathcal D}_\eta := \left\{ (x,y)\in \bT\times \bR\;:\; -\tth\leq  y< \eta(t,x)\right\} \, , 
\quad \bT :=\bR/2\pi\bZ \,, 
$$  
with finite depth
and  space periodic boundary conditions. 
The irrotational velocity field is the gradient  
of a harmonic scalar potential $\Phi=\Phi(t,x,y) $  
determined by its trace $ \psi(t,x)=\Phi(t,x,\eta(t,x)) $ at the free surface
$ y = \eta (t, x ) $.
Actually $\Phi$ is the unique solution of the elliptic equation
$    \Delta \Phi = 0 $ in $ {\mathcal D}_\eta $ with Dirichlet datum $
     \Phi(t,x,\eta(t,x)) = \psi(t,x)$ and $    \Phi_y(t,x,y)  =  0 $ at 
     $y =  - \tth $.

The time evolution of the fluid is determined by two boundary conditions at the free surface. 
The first is that the fluid particles  remain, along the evolution, on the free surface   (kinematic
boundary condition), and the second one is that the pressure of the fluid  
is equal, at the free surface, to the constant atmospheric pressure  (dynamic boundary condition). Then, as shown by Zakharov \cite{Zak1} and Craig-Sulem \cite{CS}, 
the time evolution of the fluid is determined by the 
following equations for the unknowns $ (\eta (t,x), \psi (t,x)) $,  
\begin{equation}\label{WWeq}
 \eta_t  = G(\eta)\psi \, , \quad 
  \psi_t  =  
- g \eta - \dfrac{\psi_x^2}{2} + \dfrac{1}{2(1+\eta_x^2)} \big( G(\eta) \psi + \eta_x \psi_x \big)^2 \, , 
\end{equation}
where $g > 0 $ is the gravity constant and $G(\eta):= G(\eta, \tth)$ denotes 
the Dirichlet-Neumann operator $
 [G(\eta)\psi](x) := \Phi_y(x,\eta(x)) -  \Phi_x(x,\eta(x)) \eta _x(x)$. 
 In the sequel, with no loss of generality, we set the gravity constant $ g = 1 $, see Remark
 \ref{rem:gravity}.

The equations \eqref{WWeq} are the Hamiltonian system
\begin{equation}\label{PoissonTensor}
 \pa_t \vet{\eta}{\psi} = \cJ \vet{\nabla_\eta \mathcal{H}}{\nabla_\psi \mathcal{H}}, \quad \quad \cJ:= \begin{bmatrix} 0 & \uno \\ -\uno & 0 \end{bmatrix} ,
 \end{equation}
 where $ \nabla $ denote the $ L^2$-gradient, and the Hamiltonian
$  \mathcal{H}(\eta,\psi) :=  \frac12 \int_{\mathbb{T}} \left( \psi \,G(\eta)\psi +\eta^2 \right) \de x
$
is the sum of the kinetic and potential energy of the fluid. 
In addition of being Hamiltonian, the water waves 
system \eqref{WWeq} possesses other important symmetries.
First of all it is time reversible with respect to the involution 
\begin{equation}\label{revrho}
\rho\vet{\eta(x)}{\psi(x)} := \vet{\eta(-x)}{-\psi(-x)}, \quad \text{i.e. }
\mathcal{H} \circ \rho = \mathcal{H} \, .
\end{equation}
Moreover, the equation \eqref{WWeq} 
 is space invariant, since, being the bottom flat, 
$$ 
\tau_\theta G(\eta)\psi = G( \tau_\theta \eta)[ \tau_\theta \psi] \, ,  \quad
 \forall \theta \in \bR \, , \quad \text{where} \quad 
 \tau_\theta u (x) := u (x + \theta ) \, .  
 $$ 
In addition, the Dirichlet-Neumann operator satisfies
$ G(\eta+m,\tth) = G(\eta,\tth+m) $, for any $ m \in \bR $.
\\[1mm]{\bf Stokes waves.}
The Stokes waves are traveling
solutions of \eqref{WWeq}  of 
the form $\eta(t,x)=\breve \eta(x-ct)$ and $\psi(t,x)=\breve \psi(x-ct)$ for some real  $c$   and  $2\pi$-periodic functions  $(\breve \eta (x), \breve \psi (x)) $.
In a reference frame in translational motion with constant speed $c$,  the water waves equations \eqref{WWeq} become
\begin{equation}\label{travelingWW}
\eta_t  = c\eta_x+G(\eta)\psi \, , \quad 
 \psi_t  = c\psi_x - \eta - \dfrac{\psi_x^2}{2} + \dfrac{1}{2(1+\eta_x^2)} \big( G(\eta) \psi + \eta_x \psi_x \big)^2  
\end{equation}
and the Stokes waves $(\breve \eta, \breve \psi)$ are  equilibrium 
steady solutions 
of \eqref{travelingWW}. 

The bifurcation result of small amplitude of Stokes waves 
   is due to  Struik \cite{Struik} in finite depth, and  Levi-Civita \cite{LC}, 
and Nekrasov \cite{Nek} in infinite depth.
 We denote by $B(r):= \{ x \in \bR \colon \  |x| < r\}$ the real ball with center 0 and radius $r$. 
 \begin{teo}\label{LeviCivita}
{\bf (Stokes waves)} For any $\tth >0$ there exist $\e_*:=\e_*(\tth) >0$ and a unique family  of real analytic 
 solutions $(\eta_\e(x), \psi_\e(x), c_\e)$, parameterized by the amplitude $|\e| \leq \e_*$, of 
\begin{equation}\label{travelingWWstokes}
c \, \eta_x+G(\eta)\psi = 0 \, , \quad 
c \, \psi_x -  \eta - \dfrac{\psi_x^2}{2} + 
\dfrac{1}{2(1+\eta_x^2)} \big( G(\eta) \psi + \eta_x \psi_x \big)^2  = 0 \, , 
\end{equation}
  such that
 $ \eta_\e (x), \psi_\e (x) $ are $2\pi$-periodic;  $\eta_\e (x) $ is even
and $\psi_\e (x) $ is odd, of the form 
 \begin{equation}\label{exp:Sto}
 \begin{aligned}
  & \eta_\e (x) =  \e \cos (x) + \e^2 (\eta_{2}^{[0]} + \eta_{2}^{[2]} \cos (2x)) + 
  \cO(\e^3) , \\ 
  & \psi_\e (x)  = \e \ch^{-1} \sin (x) + \e^2 \psi_{2}^{[2]} \sin (2x)   
  +\cO(\e^3) \, ,  \\
  & c_\e = \ch + \e^2 c_2 +\cO(\e^3) \quad \text{where} \quad \ch = \sqrt{\tanh(\tth)} \, , 
   \end{aligned}
  \end{equation}
and 
  \begin{align}\label{expcoef}
 \eta_{2}^{[0]} :=  \frac{\ch^4-1}{4\ch^2} \, , \qquad 
\eta_{2}^{[2]} := \frac{3-\ch^4}{4\ch^6} \, , \qquad 
\psi_{2}^{[2]} := \frac{3+\ch^8}{8\ch^7} \, , \qquad 
\\ \label{expc2}
c_2:=
\frac{9-10\ch^4+9\ch^8  }{16\ch^7}
+ 
{\frac{(1-\ch^4) }{2\ch}}
\eta_2^{[0]} =  
  \frac{-2 \ch^{12}+13 \ch^8-12 \ch^4+9}{16 \ch^7}\,  .
\end{align}
More precisely for any  $ \sigma \geq  0 $ and $ s > \frac52 $, there exists $ \e_*>0 $ such that
the map $\e \mapsto (\eta_\e, \psi_\e, c_\e)$ is analytic from $B(\e_*) \to H^{\sigma,s}_{\mathtt{ev}} (\bT)\times H^{\sigma,s}_{\mathtt{odd}}(\bT)\times \bR$, where 
$ H^{\sigma,s}_{\mathtt{ev}}(\bT) $, respectively $ H^{\sigma,s}_{\mathtt{odd}}(\bT) $, denote the  space of even, respectively odd, 
 real valued $ 2 \pi $-periodic analytic functions
$ u(x) = \sum_{k \in \mathbb{Z}} u_k e^{\im k x} $
such that $ \| u \|_{\sigma,s}^2 := \sum_{k \in \mathbb{Z}} |u_k|^2 \langle k \rangle^{2s} 
e^{2 \sigma |k|} < + \infty$. 
% The constant $\e_*(\tth)$  tends to $0$ as $\tth\to 0^+$. 
\end{teo}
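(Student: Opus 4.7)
The plan is to apply the analytic implicit function theorem to the nonlinear map
\[
F(\eta,\psi,c):=\begin{pmatrix} c\,\eta_x + G(\eta)\psi \\[1mm] c\,\psi_x - \eta - \tfrac12\psi_x^{\,2} + \tfrac{(G(\eta)\psi+\eta_x\psi_x)^2}{2(1+\eta_x^{\,2})}\end{pmatrix},
\]
viewed in a neighbourhood of the trivial solution $(0,0,\ch)$ as a real-analytic map $H^{\sigma,s}_{\mathtt{ev}}(\bT)\times H^{\sigma,s}_{\mathtt{odd}}(\bT)\times\bR \to H^{\sigma,s-1}_{\mathtt{odd}}(\bT)\times H^{\sigma,s-1}_{\mathtt{ev}}(\bT)$. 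The even/odd parity class is preserved by $F$ because $G(\eta)$ intertwines the reflection $x\mapsto -x$; imposing it simultaneously encodes the reversibility \eqref{revrho} and quotients out the space-translation symmetry. Analyticity of $F$ rests on the well-known real-analytic dependence of the Dirichlet--Neumann operator on the free surface $\eta$, which is compatible with the exponentially weighted norm $\|\cdot\|_{\sigma,s}$ since $G(\eta)$ preserves analytic regularity of the boundary.

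The linearisation at $(0,0,\ch)$ is the constant-coefficient Fourier multiplier
\[
dF(0,0,\ch)\begin{pmatrix}\hat\eta\\\hat\psi\end{pmatrix}=\begin{pmatrix}\ch\,\hat\eta_x+|D|\tanh(\tth|D|)\hat\psi\\ \ch\,\hat\psi_x-\hat\eta\end{pmatrix},
\]
whose $k$-th Fourier block is singular iff $k^{\,2}\ch^{\,2}=|k|\tanh(\tth|k|)$, i.e.\ $|k|\tanh(\tth)=\tanh(\tth|k|)$. Strict concavity of $\tanh$ on $(0,\infty)$ forces $|k|=1$, matching the dispersion relation $\ch=\sqrt{\tanh(\tth)}$. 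On the even/odd subspace the kernel is therefore the one-dimensional line $V_0:=\mathrm{span}\{(\cos x,\ch^{-1}\sin x)\}$, and $dF(0,0,\ch)$ is boundedly invertible from $V_0^{\perp}$ onto its range with a uniform spectral gap, since $|k|\tanh(\tth)-\tanh(\tth|k|)$ stays bounded away from $0$ for $|k|\ge 2$. A Lyapunov--Schmidt reduction then replaces $F=0$ with a scalar bifurcation equation $\Phi(\e,c)=0$, where $\e$ parametrises the $V_0$-component. The Crandall--Rabinowitz transversality $\partial_c dF(0,0,\ch)\,v_0\notin\operatorname{Range} dF(0,0,\ch)$ is verified by a direct Fourier computation at $k=\pm 1$, and the analytic implicit function theorem then yields a unique analytic branch $\e\mapsto(\eta_\e,\psi_\e,c_\e)$ for $|\e|<\e_*$. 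The $\bZ_2$-symmetry of $F$ under the simultaneous change $\e\mapsto-\e$, $x\mapsto x+\pi$ forces $c_\e$ to be even in $\e$, so $c_\e=\ch+\cO(\e^2)$ with no linear correction.

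The explicit coefficients in \eqref{expcoef}--\eqref{expc2} are extracted by matching powers of $\e$ in \eqref{travelingWWstokes}, using the standard Taylor expansion $G(\eta)=G(0)+G_1(\eta)+G_2(\eta)+\cdots$ with $G_1(\eta)\psi=-\partial_x(\eta\,\psi_x)-G(0)(\eta\,G(0)\psi)$ and $G(0)=|D|\tanh(\tth|D|)$. At order $\e^2$, projecting onto Fourier modes $k=0$ and $k=\pm 2$ gives a small linear system whose unique solution is \eqref{expcoef}. The value of $c_2$ in \eqref{expc2} comes from the Fredholm solvability condition at order $\e^3$ on the resonant mode $k=\pm 1$: projecting the order-$\e^3$ equation onto $V_0$ produces one scalar identity that pins down $c_2$. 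The main obstacle is not the functional-analytic setup, which is classical (going back to Struik), but the sustained algebraic bookkeeping needed to simplify the occurrences of $\tanh(2\tth)$ (and $\tanh(3\tth)$ at the next order) using $\ch^{\,2}=\tanh(\tth)$ and the double/triple-angle identities, then compress the result into the compact $\ch$-polynomial form of $c_2$ displayed on the right of \eqref{expc2}.
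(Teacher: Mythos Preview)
Your proposal is correct and follows essentially the same approach as the paper: the existence part via Lyapunov--Schmidt/Crandall--Rabinowitz is precisely the classical argument the paper cites (Struik, \cite{BMV2}) without reproducing, and your power-matching derivation of the coefficients mirrors Appendix~\ref{sec:App2}, including the kernel computation at $|k|=1$ and the extraction of $c_2$ from the third-order solvability condition. The only cosmetic differences are that you obtain $c_1=0$ by the $\bZ_2$-symmetry $(\e,x)\mapsto(-\e,x+\pi)$ whereas the paper verifies it directly from the second-order orthogonality condition \eqref{orth1}, and you invoke concavity of $\tanh$ where the paper uses the equivalent monotonicity of $x\mapsto\tanh(\tth x)/x$.
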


The expansions \eqref{exp:Sto}-\eqref{expc2}
 are  derived in the Appendix \ref{sec:App2} for completeness,
 although present in the literature (they coincide with  \cite[section 13, chapter 13]{Wh}  and  \cite[section 2]{Benjamin}). Note that in the shallow water regime $\tth\to 0^+$ the expansions  \eqref{exp:Sto}-\eqref{expc2} become singular. 
 For the analiticity properties of the maps
stated in Theorem \ref{LeviCivita} we refer to \cite{BMV2}. 
 
We also mention that  more general time quasi-periodic traveling Stokes waves 
-- which are nonlinear superpositions of multiple Stokes waves traveling with % different, 
rationally independent speeds  -- have been recently proved 
for \eqref{WWeq} in \cite{BFM2} in finite depth, in \cite{FG} in infinite depth,
and in \cite{BFM} for capillary-gravity water waves in any depth. 
\\[1mm]
{\bf Linearization at the Stokes waves.}\label{goodunknown}
In order to determine the stability/instability of the Stokes  waves given by Theorem \ref{LeviCivita}, 
we linearize  the water waves equations \eqref{travelingWW} with $ c = c_\e $ at  $(\eta_\epsilon(x), \psi_\epsilon(x))$. 
In the sequel we   closely follow \cite{BMV1} pointing out the 
differences 
of the finite depth case. 

By using the shape derivative formula
for the differential $ \de_\eta G(\eta)[\hat \eta ]$ of the Dirichlet-Neumann operator 
one obtains the autonomous real linear system
\begin{equation}\label{linearWW}
 \vet{\hat \eta_t}{\hat \psi_t}
 = \begin{bmatrix} -G(\eta_\e)B-\pa_x \circ (V-c_\e) & G(\eta_\e) \\ -1+B(V-c_\e)\pa_x - B \pa_x \circ (V-c_\e) - BG(\eta_\e)\circ B  & - (V-c_\e)\pa_x + BG(\eta_\e) \end{bmatrix}\vet{\hat \eta}{\hat \psi}
 \end{equation}
 where
\begin{equation}\label{espVB}
V := V(x) := -B (\eta_\e)_x + (\psi_\e)_x \, , \ \  
 B := B(x) := \frac{G(\eta_\e)\psi_\e + (\psi_\e)_x (\eta_\e)_x}{1+(\eta_\e)_x^2}
 = \frac{ (\psi_\e)_x- c_\e}{1+(\eta_\e)_x^2}(\eta_\e)_x
  \, . 
\end{equation}
The functions $(V,B)$ are the horizontal and vertical components of the velocity field
$ (\Phi_x, \Phi_y) $ at the free surface. 
Moreover $\e \mapsto (V,B)$ is  analytic as a map 
$B(\e_0) \to H^{\sigma, s-1}(\bT)\times H^{\sigma,s-1}(\bT)$.

The real system \eqref{linearWW} is Hamiltonian, i.e. of the form $ \cJ \mathcal A $
for a symmetric operator  $ \mathcal A = \mathcal A^\top $,
where $\mathcal A^\top$ is the transposed operator with respect the standard real  scalar product of $L^2(\bT, \bR)\times L^2(\bT, \bR)$.

Moreover, since $ \eta_\e $ is
even in $x$ and $ \psi_\e $ is 
odd in $x$, then  the functions $ (V, B) $ are respectively 
even and odd in $ x $, and the 
linear operator  in \eqref{linearWW} is   
 reversible, i.e. it anti-commutes with the involution $ \rho $ in \eqref{revrho}. 

Under the time-independent  
``good unknown of Alinhac" linear transformation
\begin{equation}\label{Alin}
 \vet{\hat \eta}{\hat \psi} := Z \vet{u}{v} \, , \qquad  Z = \begin{bmatrix}  1 & 0 \\ B & 1\end{bmatrix}, \quad Z^{-1} = \begin{bmatrix}  1 & 0 \\ -B & 1\end{bmatrix},
\end{equation}
the system \eqref{linearWW} assumes the 
simpler form 
\begin{equation}\label{linearWW2}
 \vet{u_t}{v_t} = \widetilde{\mathcal{L}}_\e \vet{u}{v} , 
 \qquad 
  \widetilde{\mathcal{L}}_\e:= \begin{bmatrix} -\pa_x\circ (V-c_\e) & G(\eta_\e) \\ -1 - (V-c_\e) B_x  & - (V-c_\e)\pa_x \end{bmatrix}  \, . 
\end{equation}
Note that, since the transformation $ Z $ is symplectic, i.e.
$ Z^\top \cJ Z = \cJ $, 
and reversibility
preserving, i.e. $ Z \circ \rho = \rho \circ Z $, the linear system \eqref{linearWW2}
is Hamiltonian and reversible as \eqref{linearWW}. 

Next 
we perform a conformal change of variables to flatten 
the water surface. Here the finite depth case induces 
a  modification with respect to the deep water case.  
By \cite[Appendix A]{BBHM}, 
 there exists a diffeomorphism of $\mathbb{T}$,
 $ x\mapsto x+\mathfrak{p}(x)$, with a small $2\pi$-periodic  function $\mathfrak{p}(x)$, 
 and a small constant $\ttf $, such that, by defining the associated composition operator $ (\mathfrak{P}u)(x) := u(x+\mathfrak{p}(x))$, the Dirichlet-Neumann operator writes as \cite[Lemma A.5]{BBHM}
\begin{equation}\label{Gneta}
 G(\eta_\e) = \pa_x \circ \mathfrak{P}^{-1} \circ {\mathcal H} \circ
 \tanh\big((\tth+\ttf)|D| \big)
 \circ \mathfrak{P} \, , 
\end{equation}
where $ {\mathcal H} $ is the Hilbert transform, i.e. the  Fourier multiplier operator
$$
 \mathcal H(e^{\im j x}):= - \im \textup{sign}(j) e^{\im j x} \, , 
 \quad  \forall j \in \bZ \setminus \{0\} \, , 
 \quad \mathcal H(1) := 0 \, . 
$$
The function $\mathfrak p(x)$ and the constant $\ttf $ are  determined as a fixed point  of  
(see  \cite[formula (A.15)]{BBHM})
\begin{equation} \label{def:ttf}
\mathfrak{p}  =  \frac{\cH}{\tanh \big((\tth + \ttf)|D| \big)}[\eta_\e ( x + \mathfrak{p}(x))] \, , 
 \qquad 
 \ttf:= \frac{1}{2\pi} \int_\bT \eta_\e (x +  \mathfrak{p}(x)) \de x \, . 
  \end{equation}
 By the analyticity of the map $\e \to \eta_\e \in H^{\sigma,s}$, $\sigma >0$, $s > 1/2$,  
the 
analytic implicit function theorem
implies the existence of a solution   
$\e \mapsto \mathfrak{p}(x):=\mathfrak{p}_\e(x) $, $ \e \mapsto \ttf $,   analytic
as a map 
$B(\e_0) \to H^{s}(\bT) \times \bR $.
Moreover, since $\eta_\e$ is even,  the function $\mathfrak p(x)$ is odd.
In Appendix \ref{sec:App2} we prove the expansion
\begin{equation}
 \label{expfe}
  \mathfrak p(x)  =\e \ch^{-2} 
  \sin(x)+\e^2\frac{(1+\ch^4)(3+\ch^4)}{8\ch^8}\sin(2x)+\cO(\e^3) \, ,  \quad
   \ttf =
   \e^2\frac{\ch^4-3}{4\ch^2} +
   \cO(\e^3) \, . 
 \end{equation}
Under the symplectic  and reversibility-preserving map 
\begin{equation}\label{LC}
 \mathcal{P} := \begin{bmatrix}(1+\mathfrak{p}_x)\mathfrak{P} & 0 \\ 0 & \mathfrak{P} \end{bmatrix} \, , 
\end{equation}
the system \eqref{linearWW2} transforms, by \eqref{Gneta},  into
the linear system $ h_t = \cL_\e h $
 where  $ \cL_\e $ is the Hamiltonian and reversible real operator
\begin{equation}
\begin{aligned}\label{cLepsilon}
\cL_\e  := \mathcal{P} \, \widetilde{\mathcal L}_\e \, \mathcal{P}^{-1} 
& =  
\begin{bmatrix} \pa_x \circ (\ch+p_\e(x)) &  |D|\tanh((\tth+\mathtt{f}_\e) |D|) \\ - (1+a_\e(x)) &   (\ch+p_\e(x))\pa_x \end{bmatrix} \\
& = \cJ \begin{bmatrix}   1+a_\e(x) &   -(\ch+p_\e(x)) \pa_x \\ 
\pa_x \circ (\ch+p_\e(x)) &  |D|\tanh((\tth+\mathtt{f}_\e) |D|)  \end{bmatrix} 
\end{aligned}
\end{equation}
where 
\begin{equation}\label{def:pa}
\ch+p_\e(x) :=  \displaystyle{\frac{ c_\e-V(x+\mathfrak{p}(x))}{ 1+\mathfrak{p}_x(x)}} \, , \quad 1+a_\e(x):=   \displaystyle{\frac{1+ (V(x + \mathfrak{p}(x)) - c_\e)
 B_x(x + \mathfrak{p}(x))  }{1+\mathfrak{p}_x(x)}} \, .
\end{equation}
By the analiticity results of the functions $ V, B, \mathfrak{p}(x) $ given above,  
the functions   $p_\e$ and $a_\e$   are analytic in $\e$ as maps $B(\e_0)\to H^{s} (\mathbb T)$.
In the Appendix \ref{sec:App2} we prove the following expansions. 
\begin{lem}\label{lem:pa.exp}
The analytic functions $p_\e (x) $ and $a_\e (x) $  in \eqref{def:pa} 
are even in $ x $, and
\begin{equation}\label{SN1}
p_\e (x)  
= \e p_1 (x) + \e^2 p_2 (x)  + \cO(\e^3) \, , \qquad   
a_\e (x)  
= \e a_1(x) +\e^2 a_2 (x) + \cO(\e^3) \, , 
\end{equation}
where 
\begin{align}\label{pino1fd}
 p_1(x) &= p_1^{[1]}\cos (x)\, , \qquad \quad \quad p_1^{[1]} :=  - 2 \ch^{-1}\, ,\\
p_2(x) & \label{pino2fd} = 
 p_2^{[0]}+p_2^{[2]}\cos(2x)\, , \quad  
 p_2^{[0]} := 
 %\frac{\ch^8+14\ch^7+9}{16\ch^7} + \frac{1-\ch^4}{2\ch}\eta_2^{[0]} = 
 \frac{9+12 \ch^4+ 5\ch^8-2 \ch^{12}}{16 \ch^7}\, , \quad
  p_2^{[2]}:= - \frac{3+\ch^4}{2\ch^7}\, , 
 \end{align}
 and 
 \begin{align}
a_1(x) \label{aino1fd} &= a_1^{[1]} \cos (x)\, , \qquad \qquad 
a_1^{[1]}:= - ( \ch^2 + \ch^{-2})\, , \\
a_2(x)& \label{aino2fd} = a_2^{[0]}+a_2^{[2]}\cos(2x)\, ,\quad   a_2^{[0]}:=\frac32 + \frac1{2\ch^4}\, , 
\quad a_2^{[2]} := 
 \frac{-14\ch^4+9\ch^8-3}{4\ch^8}\, .
\end{align}  
\end{lem}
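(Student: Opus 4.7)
The plan is to directly compute the second-order Taylor expansions in $\e$ of the right-hand sides of \eqref{def:pa}, using the explicit Fourier data of $\eta_\e, \psi_\e, c_\e$ from Theorem \ref{LeviCivita} and of $\mathfrak p, \ttf$ from \eqref{expfe}. The analyticity of $\e \mapsto (p_\e, a_\e) \in H^s$, which follows from the analyticity already established for $V, B, \mathfrak p$, guarantees that once the first two orders in $\e$ are determined the remainder is $O(\e^3)$ in $H^s$.

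Before any computation I dispose of the parity claim, which is then exploited to reduce bookkeeping. Since $\eta_\e$ is even and $\psi_\e$ is odd, $(\eta_\e)_x$ is odd and $(\psi_\e)_x$ is even, and \eqref{espVB} then gives $V$ even and $B$ odd (so $B_x$ even). Since $\mathfrak p$ is odd and $\mathfrak p_x$ is even, the map $x \mapsto x + \mathfrak p(x)$ is odd, so $V(x+\mathfrak p(x))$ and $B_x(x+\mathfrak p(x))$ are even. Both the numerators and the denominator in \eqref{def:pa} are therefore even, and so are $p_\e, a_\e$. Consequently only the Fourier mode $[1]$ at order $\e$ and the modes $[0], [2]$ at order $\e^2$ can appear in the expansions.

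For the quantitative expansion I proceed in three substeps. First, from \eqref{espVB} and the expansions of $\eta_\e, \psi_\e, c_\e$, I write $B = \ch\e \sin x + \e^2 B_2(x) + O(\e^3)$ and then $V = \e \ch^{-1}\cos x + \e^2 V_2(x) + O(\e^3)$, with $B_2, V_2$ explicit trigonometric polynomials involving $\eta_2^{[0]}, \eta_2^{[2]}, \psi_2^{[2]}, c_2$ and the quadratic self-interactions of the order-$\e$ terms (via the product-to-sum identities $\sin x \cos x = \tfrac12 \sin 2x$, $\cos^2 x = \tfrac12(1+\cos 2x)$, $\sin^2 x = \tfrac12(1-\cos 2x)$). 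Second, I compose with $x + \mathfrak p(x)$ via the Taylor formula $f(x+\mathfrak p(x)) = f(x) + f'(x)\mathfrak p(x) + O(\e^3)$ applied to $f = V$ and $f = B_x$; this contributes additional $\e^2$ cross terms through the leading $\e \ch^{-2}\sin x$ of $\mathfrak p$. Third, I invert $(1 + \mathfrak p_x)^{-1} = 1 - \mathfrak p_x + \mathfrak p_x^2 + O(\e^3)$ and multiply with the numerators of \eqref{def:pa} to read off $\ch + p_\e$ and $1 + a_\e$. A sanity check at order $\e$ gives $p_1^{[1]} = -\ch^{-1} - \ch\cdot \ch^{-2} = -2\ch^{-1}$ and $a_1^{[1]} = -\ch^2 - \ch^{-2}$, confirming \eqref{pino1fd} and \eqref{aino1fd}.

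The main obstacle is purely algebraic bookkeeping: at order $\e^2$ several superposed contributions coexist, namely the intrinsic $\e^2$ Fourier coefficients of $\eta_\e, \psi_\e, c_\e, \mathfrak p, \ttf$, the quadratic self-interactions of the first-order terms in $V$ and $B$, the composition cross-terms generated by $\mathfrak p$, and the Neumann series for $(1 + \mathfrak p_x)^{-1}$. The parity argument cuts the number of admissible Fourier modes, and the explicit identities \eqref{expcoef}--\eqref{expc2} ensure that the sums collapse to the compact forms \eqref{pino2fd} and \eqref{aino2fd}; I expect no conceptual difficulty beyond careful grouping of the $[0]$ and $[2]$ modes.
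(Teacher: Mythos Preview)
Your proposal is correct and follows essentially the same route as the paper: expand $B$ and $V$ to second order from \eqref{espVB}, Taylor-compose with $x+\mathfrak p(x)$, multiply by the Neumann series of $(1+\mathfrak p_x)^{-1}$, and read off the Fourier modes. The paper carries this out in Appendix~\ref{sec:App2} with the explicit intermediate formulas \eqref{espB1}--\eqref{aino12imp}, and your parity argument and order-$\e$ sanity check match those computations exactly.
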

\noindent
{\bf Bloch-Floquet expansion.}\label{Bloch-Floquet}
Since the operator $\cL_\e$ in \eqref{cLepsilon} has $2\pi$-periodic coefficients, Bloch-Floquet theory guarantees  that 
$$
\sigma_{L^2(\bR)} (\cL_\e ) = \bigcup_{\mu\in [- \frac12, \frac12)} \sigma_{L^2(\bT)}  (\cL_{\mu, \e}) \qquad \text{where} \quad 
\qquad \cL_{\mu,\e}:= e^{- \im \mu x} \, \cL_\e \, e^{\im \mu x}  \, .
$$
 The  domain $ [- \frac12, \frac12) $ is called, in solid state physics, 
 the ``first zone of Brillouin". 
In particular, if $\lambda$ is an eigenvalue of $\cL_{\mu,\e}$ on $L^2(\bT, \bC^2)$
with eigenvector $v(x)$, then  $h (t,x) = e^{\lambda t} e^{\im \mu x} v(x)$ solves $h_t = \cL_{\e} h$. We remark that:
\\[1mm]
\indent 1.
If $A = \mathrm{Op}(a) $  
is a pseudo-differential  operator with   
 symbol $ a(x, \xi ) $, which is $2\pi$ periodic in the $x$-variable, 
then 
$  A_\mu := e^{- \im \mu x}A  e^{ \im \mu x}  =  \mathrm{Op} (a(x, \xi + \mu )) $.
\\[1mm]
\indent 2.
If $ A$ is a real operator then 
$ \overline{ A_\mu} = A_{- \mu } $. 
As a consequence the spectrum  
$ \sigma (A_{-\mu}) = \overline{  \sigma (A_{\mu}) } $ and 
we can study $  \sigma (A_{\mu}) $ just for $ \mu > 0 $.
Furthermore $\sigma(A_{\mu})$ is a 1-periodic set with respect to $\mu$, so   one can restrict  to  $\mu \in [0, \frac12)$.  

By the previous remarks the Floquet operator  associated with the real operator $\cL_\e$ in \eqref{cLepsilon} is  the complex  \emph{Hamiltonian} and \emph{reversible} operator
\begin{align}\label{WW}
 \cL_{\mu,\e} :&= \begin{bmatrix} (\pa_x+\im\mu)\circ (\ch+p_\e(x)) & |D+\mu| \tanh\big((\tth + \ttf) |D+\mu| \big) \\ -(1+a_\e(x)) & (\ch+p_\e(x))(\pa_x+\im \mu) \end{bmatrix} \\ 
 &= \underbrace{\begin{bmatrix} 0 & \uno\\ -\uno & 0 \end{bmatrix}}_{\displaystyle{=\cJ}} \underbrace{\begin{bmatrix} 1+a_\e(x) & -(\ch+p_\e(x))(\pa_x+\im \mu) \\ (\pa_x+\im\mu)\circ (\ch+p_\e(x)) & |D+\mu| \tanh\big((\tth + \ttf) |D+\mu| \big) \end{bmatrix}}_{\displaystyle{=:\cB_{\mu,\e}}} \, . \notag 
\end{align}
We regard $  \cL_{\mu,\e} $ as an operator with 
domain $H^1(\bT):= H^1(\mathbb{T},\bC^2)$ 
and range $L^2(\bT):=L^2(\mathbb{T},\bC^2)$, equipped with  
the complex scalar product 
\begin{equation}\label{scalar}
(f,g) := \frac{1}{2\pi} \int_{0}^{2\pi} \left( f_1 \bar{g_1} + f_2 \bar{g_2} \right) \, \text{d} x  \, , 
\quad
\forall f= \vet{f_1}{f_2}, \ \  g= \vet{g_1}{g_2} \in  L^2(\bT, \bC^2) \, .
\end{equation} We also  denote $ \| f \|^2 = (f,f) $.

The complex operator $\cL_{\mu,\e}$ in \eqref{WW}  is \emph{Hamiltonian} 
and \emph{Reversible},  according to the  following definition.

\begin{sia} {\bf (Complex Hamiltonian/Reversible operator)} \label{def:compl.ham}
 A complex operator $\cL : H^1(\bT,\bC^2) \to L^2(\bT,\bC^2) $ 
is 
\\[1mm] 
($i$) \emph{Hamiltonian}, if 
$\cL = \cJ \cB $ where $ \cB  $ is a self-adjoint operator, namely 
$ \cB = \cB^*   $, 
where  $\cB^*$ (with domain $H^1(\bT)$)  is the adjoint with respect to the complex scalar product \eqref{scalar} of $L^2(\mathbb{T})$.
\\[1mm] 
($ii$) \emph{Reversible},  if
\begin{equation}\label{Reversible}
 \cL \circ \bro =- \bro \circ \cL \, ,
\end{equation}
where $\bro$ is the complex involution (cfr. \eqref{revrho})
\begin{equation}\label{reversibilityappears}
 \bro \vet{\eta(x)}{\psi(x)} := \vet{\bar\eta(-x)}{-\bar\psi(-x)} \, .
\end{equation}
\end{sia}
The property \eqref{Reversible} for $ \cL_{\mu,\epsilon} $ 
follows  because 
$ \cL_\e $ is a real  operator 
which is reversible with respect to the involution $ \rho $ in \eqref{revrho}.
Equivalently, since $\cJ \circ \bro = -\bro \circ \cJ$, the self-adjoint operator $\cB_{\mu,\e}$ is \emph{reversibility-preserving},  i.e. 
\begin{equation}
\label{B.rev.pres}
\cB_{\mu,\e} \circ \bro = \bro \circ \cB_{\mu,\e} \, .
\end{equation}
In addition $(\mu, \e) \to \cL_{\mu,\e} \in \cL(H^1(\bT), L^2(\bT))$ is analytic, 
since  the functions $\e \mapsto a_\e$, $p_\e$  defined in \eqref{SN1} are analytic as maps $B(\e_0) \to H^1(\bT)$ 
and  ${\mathcal L}_{\mu,\e}$ is analytic with respect to $\mu$, since,   
for  any $ \mu \in [-\frac12, \frac12) $,  
\begin{equation}\label{D+mu an}
|D+\mu| \tanh\big((\tth + \ttf) |D+\mu| \big) = (D + \mu)  \tanh\big((\tth + \ttf) (D+\mu) \big) \, . 
\end{equation} 
We also note that  
(see \cite[Section 5.1]{NS})
\begin{equation}\label{|D+mu|}
|D+\mu| =  |D| + \mu (\sgn(D)+\Pi_0) \, , \quad \forall \mu > 0 \, , 
\end{equation}
where $\sgn(D)$ is the Fourier multiplier operator, 
acting on $2\pi$-periodic functions,  
 with symbol 
\begin{equation}\label{def:segno}
 \sgn(k) := 1\  \forall k > 0 \, , \quad \sgn(0):=0 \, ,\quad \sgn(k) := -1 \ \forall k < 0 \, , 
\end{equation}
and $\Pi_0$ is the projector operator on the zero mode, 
$\Pi_0f(x) := \frac{1}{2\pi} \int_\bT f(x)\de x. $

\begin{rmk}\label{rem:gravity}
If $ (\eta(x), \psi(x) , c ) $ solve the traveling wave 
equations \eqref{travelingWWstokes} then the rescaled functions 
$ (\widetilde \eta (x), \widetilde \psi (x), \widetilde c ) :=
(\eta (x), \sqrt{g}  \psi (x), \sqrt{g} c)   $
solve the same equations with gravity constant $ g $ instead of $ 1$.
The eigenvalues of the corresponding linearized operators
\eqref{linearWW} and 
 \eqref{WW} for a general gravity  $g$ are those of the  $g = 1$ case multiplied by  $\sqrt{g}$.
\end{rmk}

Our aim is to prove the existence of eigenvalues of $  \cL_{\mu,\e}  $ in  \eqref{WW}
with non zero real part. 
We remark that the Hamiltonian structure of $\cL_{\mu,\e}$ implies that eigenvalues with non zero real part may arise only from multiple
  eigenvalues of $\cL_{\mu,0}$ (``Krein criterion"), 
  because   if $\lambda$ is an eigenvalue of $\cL_{\mu,\e}$ then  also $-\bar \lambda$ is, and the total algebraic multiplicity of the eigenvalues is conserved under small perturbation. 
We now  describe the spectrum of $\cL_{\mu,0}$.
\\[1mm]{\bf The spectrum of $\cL_{\mu,0}$.}\label{initialspectrum}
The spectrum of the Fourier multiplier matrix operator 
\begin{equation}\label{cLmu}
 \cL_{\mu,0} = \begin{bmatrix} \ch( \pa_x+\im\mu ) & |D+\mu| \,  \tanh\big(\tth  |D+\mu| \big)
  \\ -1 & \ch(\pa_x+\im \mu) \end{bmatrix}
\end{equation}
consists of the purely imaginary eigenvalues $\{\lambda_k^\pm(\mu)\;,\; k\in \bZ \} $, where
\begin{equation} \label{omeghino}
 \lambda_k^\pm(\mu):=
\im \big( \ch( \pm k+\mu) \mp  \sqrt{|k \pm \mu|\tanh(\tth|k \pm \mu|)} \big) \, . 
\end{equation}
For $\mu=0$ the {real} operator $\cL_{0,0}$ possesses the  eigenvalue $0$ with algebraic multiplicity $4$, 
$$ 
\lambda_0^+(0) = \lambda_0^-(0) = \lambda_1^+(0) = \lambda_{1}^-(0)=0 \, ,
$$
 and geometric multiplicity $3$. A real  basis of the 
 Kernel of $\cL_{0,0}$ is
\begin{align}\label{basestart}
 f_1^+ :=  \vet{ \ch^{1/2} \cos(x)}{\ch^{-1/2} \sin(x)}, 
 \quad 
 f_1^{-} :=  \vet{- \ch^{1/2} \sin(x)}{\ch^{-1/2}\cos(x)},\qquad f_0^-:=\vet{0}{1} \, ,
\end{align}
 together with the generalized eigenvector 
\begin{align}\label{basestartadd} 
f_0^+:=\vet{1}{0} , \qquad \cL_{0,0}f_0^+ =-f_0^- \, . 
\end{align} 
Furthermore $0$ is an isolated eigenvalue for $\cL_{0,0}$, namely the 
 spectrum   $\sigma\left(\cL_{0,0}\right)  $ decomposes in two separated parts
\begin{equation}
\label{spettrodiviso0}
\sigma\left(\cL_{0,0}\right) = \sigma'\left(\cL_{0,0}\right) \cup \sigma''\left(\cL_{0,0}\right)
\quad \text{where} \quad  \sigma'(\cL_{0,0}):=\{0\} 
\end{equation}
and 
$$
 \sigma''(\cL_{0,0}):= \big\{ \lambda_k^\sigma(0),\ 
 k = 0,1 \, , \sigma = \pm   \big\} 
 \, .
$$
We shall also use that, as proved in Theorem 4.1 in \cite{NS}, 
the operator $ {\mathcal L}_{0,\e} $ 
possesses, for any sufficiently small $\e \neq 0$, 
 the eigenvalue $ 0 $
with a four 
dimensional generalized  Kernel,  spanned by  $ \e $-dependent 
vectors $ U_1, \tilde U_2, U_3, U_4 $ satisfying, for some real constant $ \alpha_\e, \beta_\e $, % depending on $ \e $, 
 \begin{equation}\label{genespace} 
 {\mathcal L}_{0,\e} U_1 = 0 \, , \  \ 
 {\mathcal L}_{0,\e} \tilde U_2 = 0 \, ,  \  \  {\mathcal L}_{0,\e}  U_3 =  
 \alpha_\e \,  \tilde U_2 \, , \ 
 \  {\mathcal L}_{0,\e}  U_4 =  - U_1- \beta_\e \tilde U_2 \, , \quad U_1 :=  \vet{0}{1}  \, . 
 \end{equation}
 By Kato's perturbation theory (see  Lemma \ref{lem:Kato1} below)
for any $\mu, \e \neq 0$  sufficiently small, the perturbed spectrum
$\sigma\left(\cL_{\mu,\e}\right) $ admits a disjoint decomposition as 
\begin{equation}\label{SSE}
\sigma\left(\cL_{\mu,\e}\right) = \sigma'\left(\cL_{\mu,\e}\right) \cup \sigma''\left(\cL_{\mu,\e}\right) \, ,
\end{equation}
where $ \sigma'\left(\cL_{\mu,\e}\right)$  consists of 4 eigenvalues close to 0. 
We denote by $\cV_{\mu, \e}$   the spectral subspace associated with  $\sigma'\left(\cL_{\mu,\e}\right) $, which   has  dimension 4 and it is  invariant by $\cL_{\mu, \e}$.
Our goal is to prove that,  for $ \e $ small, for values of the Floquet
exponent $ \mu $ in an interval of order $ \e $,  the $4\times 4$ matrix 
which represents
 the operator  $ \cL_{\mu,\e} : \mathcal{V}_{\mu,\e} \to  \mathcal{V}_{\mu,\e} $
 possesses a pair of eigenvalues close to zero 
with opposite non zero real parts.

Before stating our main result, let us introduce a notation we shall use through all the paper:

\begin{itemize}\label{notation}
\item[$\bullet$] {\bf  Notation:}
we denote by  $\cO(\mu^{m_1}\e^{n_1},\dots,\mu^{m_p}\e^{n_p})$, $ m_j, n_j \in \bN  $ (for us $\bN:=\{1,2,\dots\} $),  
analytic functions of $(\mu,\e)$ with values in a Banach space $X$ which satisfy, for some $ C > 0 $ uniform for  $\tth$ in any compact set of $(0, + \infty)$, the bound
 $\|\cO(\mu^{m_j}\e^{n_j})\|_X \leq C \sum_{j = 1}^p |\mu|^{m_j}|\e|^{n_j}$ 
 for small values of $(\mu, \e)$. 
Similarly we denote $r_k (\mu^{m_1}\e^{n_1},\dots,\mu^{m_p}\e^{n_p}) $
scalar  functions  $\cO(\mu^{m_1}\e^{n_1},\dots,\mu^{m_p}\e^{n_p})$ which are  also {\it real} analytic.
\end{itemize}

Our complete  spectral result is the following: 

\begin{teo}\label{TeoremoneFinale}
{\bf (Complete Benjamin-Feir spectrum)} 
There exist $ \e_0, \mu_0>0 $, uniformly for the 
depth $ \tth  $ in any compact set of $ (0,+\infty )$, 
 such that, 
for any  $ 0\, <\, \mu < \mu_0 $ and $ 0\leq \e < \e_0 $, 
 the operator $ \cL_{\mu,\e} : \mathcal{V}_{\mu,\e} \to  \mathcal{V}_{\mu,\e} $ 
 can be represented by a $4\times 4$ matrix of the form 
 \begin{equation} \label{matricefinae}
  \begin{pmatrix} \mathtt{U} & \vline & 0 \\ \hline 0 & \vline & \mathtt{S} \end{pmatrix},
 \end{equation}
 where $ \mathtt{U} $ and $ \mathtt{S} $ are  $ 2 \times 2 $
matrices, with identical diagonal entries each, of the form 
 \begin{align}
\notag
& \mathtt{U} = {\begin{pmatrix} 
\im  \big((\ch- \tfrac12\te_{12})\mu+ r_2(\mu\e^2,\mu^2\e,\mu^3) \big) & -\te_{22}\frac{\mu}{8}(1+r_5(\e,\mu)) \\
-\mu \e^2 \teWB +  r_1'( \mu\e^3, \mu^2\e^2 ) +\te_{22}\frac{\mu^3}{8} (1+r_1''(\e,\mu)) & \im  \big( (\ch-\tfrac12\te_{12})\mu+ r_2(\mu\e^2,\mu^2\e,\mu^3) \big)  
 \end{pmatrix}}\, ,   \\
 & \label{S} \mathtt{S} = 
 \begin{pmatrix} \im\ch\mu+ \im { r_9(\mu\e^2, \mu^2\e)}  & \tanh(\tth \mu)+ {r_{10}(\mu\e)} \\ 
-\mu + {r_8(\mu\e^2, \mu^3 \e)} &    \im\ch\mu+\im {r_9(\mu\e^2,\mu^2\e) }
 \end{pmatrix}\, , 
\end{align}
where 
$\teWB $, $\te_{12}, \te_{22}$ are defined in \eqref{funzioneWB}, \eqref{te12}, \eqref{te22}.
The eigenvalues  of  $ \mathtt{U} $ have the form  
\begin{equation}\label{l1piumeno} 
\begin{aligned}
\lambda_1^\pm(\mu,\e) 
& =   \im \frac12\breve{\mathtt c}_\tth \mu+\im r_2(\mu\e^2,\mu^2\e,\mu^3)  \pm \tfrac18 \mu \sqrt{\te_{22}(\tth)  (1+r_5(\e,\mu)) } \sqrt{\DeltaBF(\tth; \mu, \e)}  \, , 
\end{aligned}
\end{equation}
where  $\breve{\mathtt c}_\tth:=2 \ch- \te_{12}(\tth)$ and $\DeltaBF(\tth; \mu, \e)$ is the Benjamin-Feir discriminant function \eqref{primosegno} (with $r_1(\e^3, \mu\e^2):=-8 r_1'(\e^3, \mu\e^2)$).
As $\te_{22}(\tth)>0$,   they 
have non-zero real part if and only if $\DeltaBF(\tth; \mu, \e)>0$.

The eigenvalues of the matrix $ \mathtt{S} $ are a pair of purely imaginary eigenvalues of the form
\begin{equation}\label{eigenstable}
 \lambda_0^\pm (\mu, \e) = 
  \im\ch\mu \big(1+{r_9(\e^2,\mu\e)\big)} \mp \im \sqrt{\mu\tanh(\tth\mu)}\big(1+ {r(\e)}\big)\, . 
\end{equation}
{For $ \e = 0$ the eigenvalues $ \lambda_1^\pm(\mu,0),  \lambda_0^\pm (\mu,0) $ 
coincide with those in \eqref{omeghino}.}
\end{teo}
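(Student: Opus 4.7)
My plan would be to follow the spectral approach of \cite{BMV1}, suitably adapted to the finite-depth setting. It consists of four main stages.

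First, applying a symplectic and reversibility-preserving version of Kato's similarity transformation, I would construct vectors $f_j^\sigma(\mu,\e)$ with $j\in\{0,1\}$, $\sigma\in\{\pm\}$, depending analytically on $(\mu,\e)$ and reducing at $(\mu,\e)=(0,0)$ to the basis in \eqref{basestart}--\eqref{basestartadd}. This produces an analytic family spanning the 4-dimensional $\cL_{\mu,\e}$-invariant spectral subspace $\cV_{\mu,\e}$ from \eqref{SSE} and reduces the infinite-dimensional spectral problem to computing the $4\times 4$ matrix representing $\cL_{\mu,\e}|_{\cV_{\mu,\e}}$ in that basis.

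Second, I would exploit the Hamiltonian and reversible structure. Since $\cL_{\mu,\e}=\cJ\cB_{\mu,\e}$ with $\cB_{\mu,\e}$ self-adjoint and satisfying \eqref{B.rev.pres}, and since the involution $\bro$ anticommutes with $\cL_{\mu,\e}$, the basis $f_j^\sigma$ can be chosen so that the Gram matrix of the symplectic form induced by $\cJ$ equals a canonical symplectic matrix and each $f_j^\sigma$ is either invariant or anti-invariant under $\bro$. The resulting $4\times 4$ representation is then both Hamiltonian and reversible, which pins down the structure of the entries: diagonal entries are purely imaginary, while off-diagonal entries fall into prescribed real/imaginary patterns.

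Third, I would expand the matrix entries in powers of $(\mu,\e)$ using the explicit expansions of the Stokes wave in \eqref{exp:Sto}--\eqref{expc2}, of $\mathfrak{p}$ and $\ttf$ in \eqref{expfe}, and of the coefficients $p_\e,a_\e$ in Lemma \ref{lem:pa.exp}. Computing the scalar products $(\cL_{\mu,\e}f_j^\sigma, f_k^{\sigma'})$ to the required order assembles, after elementary but delicate algebra, the Whitham--Benjamin coefficient $\teWB(\tth)$ of \eqref{funzioneWB} together with $\te_{12}(\tth),\te_{22}(\tth)$ of \eqref{te12}, \eqref{te22}. A central point at this stage is to track the explicit $\tth$-dependence throughout, which is critical for identifying the correct threshold $\tthWB$.

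Fourth, and this is the main technical obstacle peculiar to the finite-depth case, I would block-diagonalize the $4\times 4$ matrix to decouple the unstable block $\mathtt{U}$ from the stable block $\mathtt{S}$. In the deep-water treatment of \cite{BMV1} the naive diagonal blocks already have the correct leading-order structure, because two eigenvalues are $\cO(\sqrt{\mu})$ while the other two are $\cO(\mu)$, so a single perturbative KAM-type step suffices. In finite depth all four eigenvalues are $\cO(\mu)$ and the off-diagonal coupling is comparable to the diagonal blocks themselves, so a naive expansion would mix stable and unstable modes and destroy the Benjamin-Feir mechanism. I would therefore perform one non-perturbative symplectic and reversibility-preserving block-diagonalization step, solving an appropriate Sylvester-type equation for the conjugating matrix (whose solvability is ensured by the $\cO(\mu)$ gap between the groups of eigenvalues), and then iterate with perturbative transformations to push the off-diagonal blocks to arbitrarily high order in $(\mu,\e)$. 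After regrouping, the remaining matrix takes the form \eqref{matricefinae}--\eqref{S}, and the eigenvalue formulas \eqref{l1piumeno} and \eqref{eigenstable} follow from the elementary $2\times 2$ quadratic formula applied to $\mathtt{U}$ and $\mathtt{S}$, with the Benjamin-Feir discriminant $\DeltaBF(\tth;\mu,\e)$ from \eqref{primosegno} emerging as the radicand of the unstable block.
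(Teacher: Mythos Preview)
Your outline matches the paper's approach, but one point in your third stage is inaccurate and worth correcting, because it is precisely the conceptual crux of the finite-depth case. The direct computation of the matrix entries (Proposition~\ref{BexpG}) does \emph{not} assemble the Whitham--Benjamin coefficient $\teWB$; it produces instead the coefficient $\te_{11}$ of \eqref{te11}, which is strictly positive for every $\tth>0$ and would, if taken at face value, predict instability at all depths. The function $\teWB$ appears only \emph{after} the non-perturbative block-decoupling step of your fourth stage (Lemma~\ref{decoupling2}): the commutator term $\frac12[S^{(1)},R^{(1)}]$ contributes the correction $\tilde\te_{11}$ in \eqref{tilde.E.G}, and $\teWB=\te_{11}+\tilde\te_{11}$ as in \eqref{te2}. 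So the non-perturbative step is not merely needed to decouple the blocks but actually \emph{generates} the correct coefficient in the $(1,1)$ entry of the unstable block. The paper also inserts, before the Sylvester step, a singular symplectic rescaling by $\mathrm{diag}(\mu^{1/2},\mu^{-1/2},\mu^{1/2},\mu^{-1/2})$ (Lemma~\ref{decoupling1prep}) to equalize the $\mu$-scaling of the blocks and make the homological equation uniformly solvable; you should include this or an equivalent device.
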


\begin{rmk}
At $\e = 0$, the eigenvalues in \eqref{l1piumeno} have the Taylor expansion
$$
\lambda^\pm_1(\mu,0) = \im (\ch- \frac12 \te_{12}(\tth)) \mu 
\pm \im \frac{\te_{22}(\tth)}{8} \mu^2 + \cO(\mu^3) \, , 
$$
which coincides with the  one of %Taylor expansion of 
$\lambda^\pm_1(\mu)$ in 
\eqref{omeghino},  in view of the coefficients
 $\te_{12}(\tth)$ and $\te_{22}(\tth)$
defined in \eqref{te12}, \eqref{te22}.
\end{rmk}

We conclude this section describing in detail our  approach.
\\[1mm]
{\bf Ideas  and scheme of proof.} 
The proof follows the general ideas of the infinitely  deep water case \cite{BMV1}, although important differences arise in finite depth and require a different approach.
The first step % of the proof 
is to exploit   Kato's theory 
to prolong 
the unperturbed symplectic basis
$\{f_1^\pm, f_0^\pm\}$ of $\cV_{0,0}$ in \eqref{basestart}-\eqref{basestartadd} into a 
symplectic basis of 
the spectral subspace $\cV_{\mu,\e}$ 
associated with  $\sigma'\left(\cL_{\mu,\e}\right) $  in \eqref{SSE}, depending 
analytically on $\mu, \e $. % Lemma \ref{lem:Kato1} provides 
The transformation  operator $U_{\mu,\e}$ in Lemma \ref{lem:Kato1} is  
 % invertible,
  symplectic, analytic in  $\mu,\e$, and 
 maps isomorphically $\cV_{0,0}$ into $\cV_{\mu,\e}$.
The vectors
$ f^\sigma_k(\mu,\e) := U_{\mu,\e}f_k^\sigma$, $ k = 0,1$, $\sigma = \pm$, 
are the required symplectic basis of  the symplectic subspace $\cV_{\mu,\e} $.
Its expansion in $\mu,\e$ is provided in Lemma \ref{expansion1}.
This procedure reduces
our spectral problem 
to determine 
the eigenvalues of the $4\times 4$ 
Hamiltonian and reversible   matrix $\tL_{\mu,\e}$ (cfr. Lemma \ref{lem:B.mat}), 
representing the action of the operator $ \cL_{\mu,\e }- \im \ch \mu$ on the basis $ \{f_k^\sigma(\mu,\e)\} $.  
In  Proposition \ref{BexpG} we prove that 
\begin{equation}
\label{L.form}
 \tL_{\mu,\e}=
\tJ_4 \begin{pmatrix} 
 E &  F \\ 
F^* &  G 
\end{pmatrix} = 
\begin{pmatrix} 
\tJ_2 E & \tJ_2 F \\ 
\tJ_2 F^* &\tJ_2 G 
\end{pmatrix} \qquad \text{where} \qquad 
\tJ_4 =  \begin{pmatrix} 
 \tJ_2 &  0 \\ 
0 &   \tJ_2 
\end{pmatrix} \, , \ \  
 \tJ_2 = \begin{psmallmatrix} 0 & 1 \\ -1 & 0 \end{psmallmatrix}  \, , 
\end{equation}
and the $ 2 \times 2 $ matrices
 $ E, G, F $ have the expansions \eqref{BinG1}-\eqref{BinG3}.  
  In finite depth this computation  is much more involved than in deep water,  %\cite{BMV1}, 
  as  we need to track  the exact dependence of the
 matrix entries with respect to $\tth$.
 In particular the matrix $E$ is 
\begin{equation}
\label{Eintro}
E = \begin{pmatrix} 
  \te_{11} \e^2(1+r_1'(\e,\mu \e)) - \te_{22}\frac{\mu^2}{8}(1+r_1''(\e,\mu))  
  & 
  \im \big( \frac12\te_{12}\mu+ r_2(\mu\e^2,\mu^2\e,\mu^3) \big)  \\
- \im \big( \frac12\te_{12} \mu+ r_2(\mu\e^2,\mu^2\e,\mu^3) \big) & -\te_{22}\frac{\mu^2}{8}(1+r_5(\e,\mu))
 \end{pmatrix}
 \end{equation}
 where the coefficients $\te_{11} $ and $\te_{22}$,  
  defined  in \eqref{te11} and \eqref{te22}, are strictly positive {\em for any} value of $\tth>0$. 
Thus the  submatrix $\tJ_2 E$ has a pair of eigenvalues with nonzero real part, for any value of $\tth>0$,  provided $0<\mu < \overline{\mu}(\e) \sim \e$.   
On the other hand, it has to come out that the complete $4\times4$ matrix $\tL_{\mu,\e}$ possesses unstable  eigenvalues {\em if and only if}  the depth exceeds the celebrated Whitham-Benjamin threshold $\tthWB \sim 1.363\ldots$.
Indeed the correct eigenvalues of $\tL_{\mu,\e}$  are not a small perturbation of those of
$ \footnotesize{\begin{pmatrix} 
\tJ_2 E & 0 \\ 
0 &\tJ_2 G 
\end{pmatrix}} $ and 
% Actually the correct eigenvalues of $\tL_{\mu,\e}$ 
will emerge only after one non-perturbative step of block diagonalization.
 This was not the case in the infinitely deep water case \cite{BMV1}, where at this stage the corresponding submatrix $\tJ_2 E$ had already the correct  Benjamin-Feir eigenvalues, and we only had to check their stability  under perturbation.

\begin{rmk}
We also stress that \eqref{Eintro} 
is not a simple Taylor expansion in $\mu, \e $: note that 
the $(2,2)$-entry in \eqref{Eintro} does not have any term 
 $\cO ( \e^m )$ nor $ \cO ( \mu \e^m ) $ {\em for any} $ m \in \bN$. 
These terms would be dangerous because they could change the sign of the 
entry $(2,2)$ which instead, in \eqref{Eintro}, is always negative (recall that $\te_{22}(\tth) >0$).
We prove the absence of  terms  $\e^m$, $ m \in \bN$,  
 fully exploiting (as in \cite{BMV1}) the  structural information \eqref{genespace}
concerning the four dimensional generalized Kernel of 
 the operator $\cL_{0,\e}$ for any $\e >0$, see Lemma \ref{lem:B1}.
 Moreover, in finite depth 
 it turns out that there are no terms of order $ \mu \e^m $, $ m \in \bN$, 
 which instead are present in deep water,  and  were eliminated 
 in \cite{BMV1} via a further 
 change of basis.
We also note that the $2 \times 2$ matrices  $\tJ_2 E $ and $\tJ_2 G$ in \eqref{L.form} have both  eigenvalues of size $\cO(\mu)$. 
As already mentioned in the introduction, this is a crucial difference with the deep water case, where the eigenvalues of  $\tJ_2 G$ have the much larger size  $\cO(\sqrt \mu)$.
\end{rmk}

In order to determine the correct spectrum of  the matrix $\tL_{\mu,\e}$ in \eqref{L.form}, 
we 
perform a 
block diagonalization of $\tL_{\mu,\e}$  to  eliminate   the coupling term
 $  \tJ_2 F $ (which has size $ \e$, see \eqref{BinG3}). 
%Differently  from the deep water case  \cite{BMV1}, 
 %this
 %procedure modifies the (1,1)-entries of $E$
 %by a term of order $\e^2$ determining the correct structure of the Benjamin-Feir 
 %unstable eigenvalues. 
We proceed,  in  Section \ref{sec:block},  in three steps:
\\[1mm]
\indent 1. \emph{Symplectic rescaling}.
We first perform a  symplectic rescaling which is singular at $\mu =0$, see Lemma \ref{decoupling1prep}, obtaining the matrix $\tL_{\mu,\e}^{(1)}$.
The effects 
are twofold:  (i)  the diagonal elements of  
\begin{equation}\label{E11i}
E^{(1)} = 
\begin{pmatrix} 
  \te_{11} \mu\e^2(1+r_1'(\e,\mu\e))- \te_{22}\frac{\mu^3}{8}(1+r_1''(\e,\mu))  & \im \big( \frac12\te_{12}\mu+ r_2(\mu\e^2,\mu^2\e,\mu^3) \big)  \\
- \im\big( \frac12\te_{12}\mu+ r_2(\mu\e^2,\mu^2\e,\mu^3) \big) & -\te_{22}\frac{\mu}{8}(1+r_5(\e,\mu))
 \end{pmatrix}
\end{equation}
have  size  $\cO(\mu)$, as well as those of  $G^{(1)}$,  and (ii) the  matrix $F^{(1)}$ has the smaller size  $ { \cO(\mu \e ) } $. % \mu^3)$. 
%Since the eigenvalues of $\tJ_2 E^{(1)} $ and $\tJ_2 G^{(1)} $ have size  $\cO(\mu)$, the problem is now in a perturbative regime.
%This transformation is  very natural, 
%since it corresponds to the proper scaling which put the linear 
%frequencies 
%of an Hamiltonian system 
%at the same size. 
\\[1mm]
\indent 2. \emph{Non-perturbative step of block-diagonalization
 (Section \ref{sec:5.2})}. 
Inspired by KAM theory, we  perform one step of block decoupling to decrease further the size of the off-diagonal blocks.
This step 
modifies the matrix $\tJ_2 E^{(1)}$ in a substantial way, by a term $ \cO (\mu \e^2 )$. 
Let us explain better  this step.
In order to reduce the size of $\tJ_2 F^{(1)} $, 
we   conjugate  $\tL_{\mu,\e}^{(1)}$ by 
the symplectic  matrix $\exp(S^{(1)})$, where $S^{(1)}$  is a Hamiltonian matrix
with  the same form  of 
$ \tJ_2 F^{(1)} $, see \eqref{formaS}.  
The transformed  matrix $\tL_{\mu,\e}^{(2)} = 
\exp(S^{(1)}) \tL_{\mu,\e}^{(1)}\exp(-S^{(1)})
$  has the 
Lie expansion\footnote{recall that   $\exp(S) L \exp(-S) = \sum_{n \geq 0} \frac{1}{n!} \textup{ad}_S^n(L)$, where
$\textup{ad}_S^0(L) := L$,   $\textup{ad}_S^n(L) = [S,  \textup{ad}_S^{n-1}(L)]$ for $n \geq 1$.}
\begin{equation}\label{L2.lie.expl}
\begin{aligned}
\tL_{\mu,\e}^{(2)} 
& =
\begin{psmallmatrix} 
\tJ_2 E^{(1)} & 0 \\ 
0 &\tJ_2  G^{(1)}
\end{psmallmatrix} \\
& \quad + 
\begin{psmallmatrix} 
0 & \tJ_2 F^{(1)} \\ 
\tJ_2 [F^{(1)}]^* & 0 
\end{psmallmatrix}
 +\lie{S^{(1)}}{\begin{psmallmatrix} 
\tJ_2 E^{(1)} & 0 \\ 
0 &\tJ_2  G^{(1)}
\end{psmallmatrix} }\\
& \quad 
+ \frac12 \Big[ S^{(1)}, \Big[ S^{(1)}, \begin{psmallmatrix} 
\tJ_2 E^{(1)} & 0 \\ 
0 &\tJ_2  G^{(1)}
\end{psmallmatrix} \Big] \Big] 
   + \Big[ S^{(1)}, \begin{psmallmatrix} 
0 & \tJ_2 F^{(1)} \\ 
\tJ_2 [F^{(1)}]^* & 0 
\end{psmallmatrix} \Big]  + \mbox{h.o.t.}
\end{aligned}
\end{equation}
The first line in the right hand side of \eqref{L2.lie.expl} is the previous 
block-diagonal matrix, the second line of \eqref{L2.lie.expl} is a purely off-diagonal matrix and the third line is the sum of two  block-diagonal matrices  and ``h.o.t."  collects terms of much smaller size.
 $S^{(1)}$ is determined in such a way that 
the second line of \eqref{L2.lie.expl} vanishes, and therefore 
 the  remaining off-diagonal matrices (appearing in the h.o.t. remainder) are  smaller in size. 
Unlike the infinitely deep water case \cite{BMV1}, the 
 block-diagonal corrections in the third line of \eqref{L2.lie.expl} are {\em not} perturbative and they modify substantially the block-diagonal part. 
More precisely  we obtain that $ \tL_{\mu,\e}^{(2)}  $ has the form  \eqref{sylvydec} with 
$$
 E^{(2)}
:=
{\begin{pmatrix} 
\mu\e^2 \teWB+ r_1'(\mu \e^3, \mu^2 \e^2 )-\te_{22}\frac{\mu^3}{8}(1+r_1''(\e,\mu))  & \im  \big( \frac12\te_{12}\mu+ r_2(\mu\e^2,\mu^2\e,\mu^3) \big)  \\
- \im  \big( \frac12\te_{12}\mu+ r_2(\mu\e^2,\mu^2\e,\mu^3) \big) & -\te_{22}\frac{\mu}{8}(1+r_5(\e,\mu))
 \end{pmatrix}}\, .
$$
Note the appearance of 
the Whitham-Benjamin function $\teWB (\tth) $ 
in the   (1,1)-entry of $  E^{(2)} $,   
which changes sign at the critical  depth  $\tthWB$, see Figure \ref{graficoe112}, unlike the coefficient 
$ \te_{11} (\tth)> 0 $ in \eqref{E11i}.
If 
 $\teWB(\tth) >0$ and 
 $\e$ and $\mu$ are sufficiently small, the matrix  $\tJ_2 E^{(2)}$ has eigenvalues with non-zero real part (recall that
$\te_{22}(\tth)>0$ for any $ \tth $).
On the contrary, if $\teWB(\tth) <0$, then 
the eigenvalues of $\tJ_2 E^{(2)}$ lay on the imaginary axis. 
\\[1mm]
\indent 3. \emph{Complete block-diagonalization 
 (Section \ref{section34})}.
% As a last step,  
In Lemma \ref{ultimate}
we completely block-diagonalize $\tL^{(2)}_{\mu,\e}$
by means of a standard implicit function theorem.
%a symplectic and reversibility preserving transformation
% which the c. 
%In conclusion, 
By this procedure
the original matrix $\tL_{\mu,\e}$
is conjugated 
into  the Hamiltonian and reversible matrix \eqref{matricefinae}, proving
Theorem \ref{TeoremoneFinale}. % and Theorem \ref{thm:simpler}.

\section{Perturbative approach to the separated eigenvalues}\label{Katoapp}

We apply Kato's similarity transformation 
theory \cite[I-\textsection 4-6, II-\textsection 4]{Kato}  to study the splitting of the eigenvalues of 
$ \cL_{\mu,\e} $ close to $ 0 $ for small values of $ \mu $ and $ \e $,
following \cite{BMV1}. 
First of all, it is convenient to decompose the operator $ \cL_{\mu,\e}$ in \eqref{WW} as 
  \begin{equation}\label{calL}
 \cL_{\mu,\e}  = \im  \ch \mu + \sL_{\mu,\e} \, , \qquad \mu > 0 \, ,  
\end{equation}
where, using also \eqref{|D+mu|}, 
\begin{equation}\label{calL2}
\sL_{\mu,\e}:= 
\begin{bmatrix} \pa_x\circ (\ch+p_\e(x)) + \im \mu \,  p_\e(x) &
 |D+ \mu| \, \tanh\big((\tth+\ttf)|D+\mu| \big)\\ -(1+a_\e(x)) & (\ch+p_\e(x))\pa_x+\im \mu\,  p_\e(x) \end{bmatrix}  \, . 
\end{equation}
 The operator $\sL_{\mu,\e}$ is still  Hamiltonian, having the form 
\begin{equation}\label{calL.ham}
\sL_{\mu,\e} = 
 \cJ \, {\cal B}_{\mu, \e} \, , \quad
{\cal B}_{\mu, \e}
:= \begin{bmatrix} 1+a_\e(x) & -(\ch+p_\e(x))\pa_x-\im \mu\,  p_\e(x)  \\
 \pa_x\circ (\ch+p_\e(x)) + \im \mu \,  p_\e(x) & |D+ \mu| \, \tanh\big((\tth+\ttf)|D+\mu|\big) \end{bmatrix}
\end{equation}
with ${\cal B}_{\mu, \e}$ selfadjoint, and 
 it is also  reversible, namely it satisfies, by \eqref{Reversible}, 
 \begin{equation}\label{calL.rev}
\sL_{\mu,\e}\circ \bro =- \bro \circ \sL_{\mu,\e} \, , 
\qquad
\bro \mbox{ defined in }  \eqref{reversibilityappears} \, , 
 \end{equation}
 whereas ${\cal B}_{\mu,\e}$ is reversibility-preserving, i.e. fulfills \eqref{B.rev.pres}. Note also that ${\cal B}_{0,\e}$ is a real operator.

 The scalar operator $ \im \ch \mu \equiv \im \ch \mu \, \text{Id}$ 
just translates the spectrum of $ \sL_{\mu,\e}$ 
along the imaginary  axis of  the quantity $ \im \ch \mu $, that is, in view of \eqref{calL}, 
$ 
\sigma ({\mathcal L}_{\mu,\e}) = \im \ch \mu + \sigma (\sL_{\mu,\e}) \, . 
$ 
Thus in the sequel we focus on studying the spectrum of 
$ \sL_{\mu,\e}$.

Note also that $\sL_{0,\e} = \cL_{0,\e}$ for any $\e \geq 0$.
In particular $\sL_{0,0}$ has zero as isolated eigenvalue with algebraic multiplicity 4, geometric multiplicity 3 and generalized kernel spanned by the vectors  $\{f^+_1, f^-_1, f^+_0, f^-_0\}$ in \eqref{basestart}, \eqref{basestartadd}.
Furthermore its spectrum is separated as in \eqref{spettrodiviso0}.
For  any $\e \neq 0$ small, $\sL_{0,\e}$ has  zero as  isolated eigenvalue 
with geometric multiplicity $2$, and two generalized eigenvectors satisfying 
\eqref{genespace}.

We  remark that, in view of \eqref{|D+mu|}, the operator
 $\sL_{\mu,\e}$ is analytic with respect to $\mu$.
The operator $ \sL_{\mu,\e}  : Y \subset X \to X $   
has domain $Y:=H^1(\mathbb{T}):=H^1(\mathbb{T},\bC^2)$ and range $X:=L^2(\mathbb{T}):=L^2(\mathbb{T},\bC^2)$. 

%We construct the  transformation operators which map isomorphically the  unperturbed spectral subspace into the perturbed ones.

\begin{lem}\label{lem:Kato1}
{\bf (Kato theory for separated eigenvalues)}
 Let $\Gamma$ be a closed, counterclockwise-oriented curve around $0$ in the complex plane separating $\sigma'\left(\sL_{0,0}\right)=\{0\}$
  and the other part of the spectrum $\sigma''\left(\sL_{0,0}\right)$ in \eqref{spettrodiviso0}.
There exist $\e_0, \mu_0>0$  such that for any $(\mu, \e) \in B(\mu_0)\times B(\e_0)$  the following statements hold:
\\[1mm] 
1. The curve $\Gamma$ belongs to the resolvent set of 
the operator $\sL_{\mu,\e} : Y \subset X \to X $ defined in \eqref{calL2}.
\\[1mm] 
2.
The operators
\begin{equation}\label{Pproj}
 P_{\mu,\e} := -\frac{1}{2\pi\im}\oint_\Gamma (\sL_{\mu,\e}-\lambda)^{-1} \de\lambda : X \to Y 
\end{equation}  
are well defined projectors commuting  with $\sL_{\mu,\e}$,  i.e. 
$ P_{\mu,\e}^2 = P_{\mu,\e} $ and 
$ P_{\mu,\e}\sL_{\mu,\e} = \sL_{\mu,\e} P_{\mu,\e} $. 
The map $(\mu, \epsilon)\mapsto P_{\mu,\epsilon}$ is  analytic from 
$B({\mu_0})\times B({\epsilon_0})$
 to $ \cL(X, Y)$.
\\[1mm] 
3.
The domain $Y$  of the operator $\sL_{\mu,\e}$ decomposes as  the direct sum
$$
Y= \mathcal{V}_{\mu,\e} \oplus \text{Ker}(P_{\mu,\e}) \, , \quad \mathcal{V}_{\mu,\e}:=\text{Rg}(P_{\mu,\e})=\text{Ker}(\uno-P_{\mu,\e}) \, ,
$$
of   closed invariant  subspaces, namely 
$ \sL_{\mu,\e} : \mathcal{V}_{\mu,\e} \to \mathcal{V}_{\mu,\e} $, $
\sL_{\mu,\e} : \text{Ker}(P_{\mu,\e}) \to \text{Ker}(P_{\mu,\e}) $.  
Moreover 
$$
\begin{aligned}
&\sigma(\sL_{\mu,\e})\cap \{ z \in \bC \mbox{ inside } \Gamma \} = \sigma(\sL_{\mu,\e}\vert_{{\mathcal V}_{\mu,\e}} )  = \sigma'(\sL_{\mu, \e}) , \\
&\sigma(\sL_{\mu,\e})\cap \{ z \in \bC \mbox{ outside } \Gamma \} = \sigma(\sL_{\mu,\e}\vert_{Ker(P_{\mu,\e})} )  = \sigma''( \sL_{\mu, \e}) \ ,
\end{aligned}
$$
proving the  ``semicontinuity property" \eqref{SSE} of separated parts of the spectrum.
\\[1mm] 
4.  The projectors $P_{\mu,\e}$ 
are similar one to each other: the  transformation operators\footnote{
 The operator $(\uno-R)^{-\frac12} $ is defined, for any 
operator $ R $ satisfying $\|R\|_{{\cL}(Y)}<1 $,  by the power series
\begin{align}\label{rootexp}
 (\uno - R)^{-\frac12} :=  \sum_{k=0}^\infty {-1/2 \choose k}(-R)^k = \uno + \frac{1}{2}R + \frac{3}{8}R^2+\cO(R^3) \, .
\end{align}
}
\begin{equation} \label{OperatorU} 
U_{\mu,\e}   := 
\big( \uno-(P_{\mu,\e}-P_{0,0})^2 \big)^{-1/2} \big[ 
P_{\mu,\e}P_{0,0} + (\uno - P_{\mu,\e})(\uno-P_{0,0}) \big] 
\end{equation}
are bounded and  invertible in $ Y $ and in $ X $, with inverse
$$
U_{\mu,\e}^{-1}  = 
 \big[ 
P_{0,0} P_{\mu,\e}+(\uno-P_{0,0}) (\uno - P_{\mu,\e}) \big] \big( \uno-(P_{\mu,\e}-P_{0,0})^2 \big)^{-1/2} \, , 
$$
 and 
$ U_{\mu,\e} P_{0,0}U_{\mu,\e}^{-1} =  P_{\mu,\e}  
$ as well as $ U_{\mu,\e}^{-1} P_{\mu,\e}  U_{\mu,\e} = P_{0,0} $. 

The map $(\mu, \epsilon)\mapsto  U_{\mu,\e}$ is analytic from  $B(\mu_0)\times B(\epsilon_0)$ to $\cL(Y)$.
\\[1mm] 
5. The subspaces $\mathcal{V}_{\mu,\e}=\text{Rg}(P_{\mu,\e})$ are isomorphic one to each other: 
$
\mathcal{V}_{\mu,\e}=  U_{\mu,\e}\mathcal{V}_{0,0}.
$
 In particular $\dim \mathcal{V}_{\mu,\e} = \dim \mathcal{V}_{0,0}=4 $, for any 
 $(\mu, \e) \in B(\mu_0)\times B(\e_0)$.
\end{lem}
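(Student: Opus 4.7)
\medskip

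\noindent\textbf{Proof proposal.} The statement is essentially an instance of Kato's analytic perturbation theory for separated parts of the spectrum, so the plan is to verify the hypotheses of that theory for the specific family $\sL_{\mu,\e}$ and then specialize the standard conclusions. The first task is to check that $(\mu,\e)\mapsto \sL_{\mu,\e}\in \cL(Y,X)$ is an analytic family. The dependence on $\e$ is analytic because, by Theorem \ref{LeviCivita} and Lemma \ref{lem:pa.exp}, the coefficients $a_\e,p_\e\in H^s(\bT)$ and the scalar $\ttf_\e\in\bR$ depend analytically on $\e$; multiplication by these Sobolev functions is bounded $H^1\to L^2$. The dependence on $\mu$ is analytic in view of \eqref{D+mu an}, which rewrites the Fourier multiplier $|D+\mu|\tanh((\tth+\ttf)|D+\mu|)$ as the entire function $(D+\mu)\tanh((\tth+\ttf)(D+\mu))$ of $\mu\in(-\tfrac12,\tfrac12)$, with uniformly bounded symbol norms on small complex neighborhoods.

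Next I would set up the Riesz projector integral. At $(\mu,\e)=(0,0)$ the operator $\sL_{0,0}=\cL_{0,0}$ is the Fourier multiplier \eqref{cLmu} whose spectrum is the explicit set $\{\lambda_k^\pm(0)\}$ from \eqref{omeghino}. Choosing $\Gamma$ to be a small circle about the origin, disjoint from all non-zero $\lambda_k^\pm(0)$, gives $\Gamma\subset\rho(\sL_{0,0})$ with an explicit lower bound on $\|(\sL_{0,0}-\lambda)^{-1}\|_{\cL(X,Y)}$ uniform for $\lambda\in\Gamma$ (the Fourier symbol can be inverted mode by mode and estimated). By the analytic dependence established in Step 1, the second resolvent identity implies that for $|\mu|<\mu_0,|\e|<\e_0$ sufficiently small and any $\lambda\in\Gamma$, the resolvent $(\sL_{\mu,\e}-\lambda)^{-1}$ exists, is uniformly bounded, and is jointly analytic in $(\mu,\e,\lambda)$. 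This proves item 1, and then defining $P_{\mu,\e}$ by \eqref{Pproj} one obtains item 2: analyticity of $P_{\mu,\e}$ as a map into $\cL(X,Y)$ by differentiating under the contour integral; the projection property $P_{\mu,\e}^2=P_{\mu,\e}$ from the resolvent identity via the standard two-contour computation; and the commutation $P_{\mu,\e}\sL_{\mu,\e}=\sL_{\mu,\e}P_{\mu,\e}$ by moving $\sL_{\mu,\e}$ inside the integral.

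With the projectors in hand, item 3 is formal: write $Y=P_{\mu,\e}Y\oplus(\mathbb{1}-P_{\mu,\e})Y$ and use commutation to get invariance of both summands under $\sL_{\mu,\e}$; the spectral identification on each invariant subspace follows from the fact that $\Gamma$ bounds the first piece and from a direct computation using the resolvent $(\sL_{\mu,\e}-\lambda)^{-1}P_{\mu,\e}$, which is holomorphic outside $\{0\}\cap\text{inside}(\Gamma)$. For item 4 I would observe that, by analyticity of $P_{\mu,\e}$, the norm $\|P_{\mu,\e}-P_{0,0}\|_{\cL(Y)}<1$ after shrinking $\mu_0,\e_0$, so the series \eqref{rootexp} for $(\mathbb{1}-(P_{\mu,\e}-P_{0,0})^2)^{-1/2}$ converges in $\cL(Y)$ and defines an analytic bounded operator. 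The intertwining $U_{\mu,\e}P_{0,0}=P_{\mu,\e}U_{\mu,\e}$ and the formula for $U_{\mu,\e}^{-1}$ are then verified by the short algebraic manipulation of Kato (using $P_{\mu,\e}^2=P_{\mu,\e}$ and $P_{0,0}^2=P_{0,0}$). Finally item 5 is an immediate consequence of the similarity $U_{\mu,\e}P_{0,0}U_{\mu,\e}^{-1}=P_{\mu,\e}$, and the dimension count uses $\dim\cV_{0,0}=4$ established after \eqref{basestart}-\eqref{basestartadd}.

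The main obstacle, and the only nontrivial ingredient beyond general principles, is verifying uniform invertibility of $(\sL_{0,0}-\lambda)^{-1}$ on $\Gamma$ with quantitative bounds in $\cL(X,Y)$: since $\sL_{0,0}$ is an unbounded $2\times2$ Fourier multiplier, I would invert the symbol matrix block-by-block and use the gap between $0$ and the nearest nonzero $\lambda_k^\pm(0)$ (which is uniform on compact subsets of $\tth\in(0,+\infty)$) together with the growth of $|k|\tanh(\tth|k|)$ for large $k$ to obtain both the $L^2\to L^2$ and the $L^2\to H^1$ bounds. Once this is secured, all remaining steps are the textbook Kato argument.
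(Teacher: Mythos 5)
Your proposal is correct and follows essentially the same route as the paper: item~1 is established by decomposing $\sL_{\mu,\e}-\lambda=(\sL_{0,0}-\lambda)+{\mathcal R}_{\mu,\e}$, inverting the constant-coefficient Fourier multiplier $\sL_{0,0}-\lambda$ explicitly on $\Gamma$, and using a Neumann-series (equivalently, second-resolvent-identity) perturbation argument, exactly as the paper does. Items 2--5 are the standard Kato machinery, which the paper delegates to Lemma 3.1 of \cite{BMV1} and which you reproduce correctly in outline.
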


\begin{proof}
For any $ \lambda \in \bC $ we decompose 
$\sL_{\mu,\e}-\lambda= \sL_{0,0}-\lambda + {\cal R}_{\mu,\e} $
where $ \footnotesize \sL_{0,0} = \begin{bmatrix} \ch\pa_x & |D| \tanh(\tth |D|) \\ -1 & \ch \pa_x \end{bmatrix}$ and 
$$
{\cal R}_{\mu,\e}:=\sL_{\mu,\e}-\sL_{0,0} = \begin{bmatrix}  (\pa_x +\im \mu) p_\e(x) & f_{\mu,\e}(D) \\ -a_\e(x) &  p_\e(x)(\pa_x  + \im \mu) \end{bmatrix}: Y \to X \, ,
$$
having used also  \eqref{|D+mu|} and setting 
$$
f_{\mu,\e}(D) :=  |D+ \mu| \, \tanh\big((\tth+\ttf)|D+\mu|\big) - 
 |D| \tanh(\tth |D|) \in \cL(Y) \, , \ \  \  
 \norm{f_{\mu,\e}(D)}_{\cL(Y)} = \cO(\mu,\e) \, . 
$$
 For any $\lambda \in \Gamma$, 
the operator  $\sL_{0,0}-\lambda$ is invertible and its inverse is the Fourier multiplier 
matrix operator  
$$
(\sL_{0,0}-\lambda)^{-1} = \text{Op}\left( \frac{1}{(\im \ch k-\lambda )^2 + |k| \tanh(\tth |k|)}
\begin{bmatrix} \im \ch k - \lambda & -|k|\tanh(\tth |k|) \\ 1 & \im \ch k - \lambda  \end{bmatrix} \right): X \to Y \, .
$$
Hence, for $|\e|<\e_0$ and  $|\mu|<\mu_0$ small enough, uniformly on the compact set $\Gamma$, the operator $(\sL_{0,0}-\lambda)^{-1}{\cal R}_{\mu,\e}:Y\to Y$ is bounded,  with small operatorial norm. Then $\sL_{\mu,\e}-\lambda$ is invertible by Neumann series and
$\Gamma$ belongs to the resolvent set of $\sL_{\mu,\e}$. 
 The remaining part of the proof follows exactly  as  in Lemma 3.1 in \cite{BMV1}. 
\end{proof}

The Hamiltonian and reversible nature of
the operator $ \sL_{\mu,\e} $, see \eqref{calL.ham} and \eqref{calL.rev}, imply
additional   algebraic properties 
for   spectral projectors $P_{\mu,\e}$ and the transformation operators $U_{\mu,\e} $.
By Lemma  3.2 in \cite{BMV1} we have that: 

\begin{lem}\label{propPU}
 For any $(\mu, \epsilon)  \in B(\mu_0)\times  B(\epsilon_0)$, the following holds true:
\\[1mm]
($i$)
The projectors $P_{\mu,\e}$ defined in \eqref{Pproj} are skew-Hamiltonian, namely $\cJ P_{\mu,\e}=P_{\mu,\e}^*\cJ $,  and reversibility preserving, i.e. 
$ \bro P_{\mu,\e} = P_{\mu,\e}  \bro $.
\\[1mm]
(ii) The transformation operators $U_{\mu,\e}$ in \eqref{OperatorU} are symplectic, namely 
 $ U_{\mu,\e}^* \cJ U_{\mu,\e}= \cJ $, and reversibility preserving.
 \\[1mm]
 (iii)  $P_{0,\e}$ and $U_{0,\e}$ are real operators, i.e. $\bar{P_{0,\e}}=P_{0,\e}$ and $\bar{U_{0,\e}}=U_{0,\e}$.
\end{lem}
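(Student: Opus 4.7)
\medskip

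The three statements all follow by transporting the algebraic structure of $\sL_{\mu,\e}$ through the contour integrals \eqref{Pproj} and \eqref{OperatorU}. The plan is to first collect the three basic structural identities, and then insert them into the resolvent integral. From $\sL_{\mu,\e}=\cJ\cB_{\mu,\e}$ with $\cB_{\mu,\e}=\cB_{\mu,\e}^{*}$ and $\cJ^{*}=-\cJ$ one reads
\[
\cJ\sL_{\mu,\e}\cJ^{-1}=-\sL_{\mu,\e}^{*};
\]
reversibility \eqref{calL.rev} gives $\bro\sL_{\mu,\e}\bro^{-1}=-\sL_{\mu,\e}$; and at $\mu=0$ the coefficients in \eqref{calL2} are real, so $\overline{\sL_{0,\e}}=\sL_{0,\e}$. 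I would fix the curve $\Gamma$ in \eqref{Pproj} to be a small circle centered at $0$, so that $\Gamma=-\Gamma=\bar\Gamma$ as oriented sets; this makes the substitutions $\lambda\mapsto-\lambda$, $\lambda\mapsto\bar\lambda$, $\lambda\mapsto-\bar\lambda$ preserve $\Gamma$ up to a predictable orientation flip.

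For item $(i)$, conjugation by $\cJ$ in the resolvent yields $\cJ(\sL_{\mu,\e}-\lambda)^{-1}\cJ^{-1}=-(\sL_{\mu,\e}^{*}+\lambda)^{-1}$; the substitution $\lambda\mapsto-\lambda$ (orientation preserved) and the standard identification of $-\tfrac{1}{2\pi\im}\oint_\Gamma(\sL_{\mu,\e}^{*}-\lambda)^{-1}\de\lambda$ with $P_{\mu,\e}^{*}$ give the skew-Hamiltonian relation $\cJ P_{\mu,\e}=P_{\mu,\e}^{*}\cJ$. For reversibility, $\bro$ being anti-linear gives $\bro(\sL_{\mu,\e}-\lambda)^{-1}\bro^{-1}=-(\sL_{\mu,\e}+\bar\lambda)^{-1}$; inserting this into \eqref{Pproj} and making the substitution $\mu=-\bar\lambda$ (which reverses orientation on $\Gamma$ and is compensated by $\overline{-1/(2\pi\im)}=1/(2\pi\im)$) produces $\bro P_{\mu,\e}\bro^{-1}=P_{\mu,\e}$.

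For item $(ii)$, I would set $R:=P_{\mu,\e}-P_{0,0}$. By $(i)$ both projectors are skew-Hamiltonian and reversibility preserving, hence so is $R$; direct manipulation (each power preserves the structure) shows the same for $\uno-R^{2}$ and, via the power series \eqref{rootexp}, for $(\uno-R^{2})^{-1/2}$, which in addition commutes with $P_{\mu,\e}$ and $P_{0,0}$. Substituting in \eqref{OperatorU} and using $P_{\mu,\e}^{*}\cJ=\cJ P_{\mu,\e}$ (together with the analogous identity for $\uno-P_{\mu,\e}$, and both for $P_{0,0}$) yields $U_{\mu,\e}^{*}\cJ U_{\mu,\e}=\cJ$. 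Running the same computation with $\bro$ in place of $\cJ$---now with genuine commutation in place of conjugation---gives $\bro U_{\mu,\e}=U_{\mu,\e}\bro$.

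For item $(iii)$, at $\mu=0$ and with $\Gamma=\bar\Gamma$, the reality of $\sL_{0,\e}$ gives $\overline{(\sL_{0,\e}-\lambda)^{-1}}=(\sL_{0,\e}-\bar\lambda)^{-1}$; taking the complex conjugate under the integral and substituting $\lambda\mapsto\bar\lambda$ flips the orientation of $\Gamma$, which is cancelled by the conjugate of the prefactor, so $\overline{P_{0,\e}}=P_{0,\e}$. Since $P_{0,0}$ is manifestly real, \eqref{OperatorU} is then a purely real algebraic expression and $\overline{U_{0,\e}}=U_{0,\e}$ follows. The only delicate point throughout is bookkeeping of orientations under conjugate substitutions in the contour integrals; this becomes transparent once $\Gamma$ is taken to be a circle centered at the origin, which is automatically stable under the three relevant involutions $\lambda\mapsto-\lambda,\bar\lambda,-\bar\lambda$.
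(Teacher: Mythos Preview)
Your argument is correct. The paper does not give a self-contained proof of this lemma but simply cites Lemma~3.2 of \cite{BMV1}; the proof there follows exactly the strategy you outline—push the three structural identities $\cJ\sL_{\mu,\e}\cJ^{-1}=-\sL_{\mu,\e}^{*}$, $\bro\sL_{\mu,\e}\bro=-\sL_{\mu,\e}$, and $\overline{\sL_{0,\e}}=\sL_{0,\e}$ through the resolvent integral \eqref{Pproj}, and then through the algebraic formula \eqref{OperatorU} (using that $(\uno-R^2)^{-1/2}$ inherits the skew-Hamiltonian and reversibility-preserving properties via its power series and commutes with both projectors). One cosmetic remark: in your reversibility computation the dummy integration variable after the substitution $-\bar\lambda$ is named $\mu$, which clashes with the Floquet parameter already in use; a different letter would be cleaner.
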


By the previous lemma, the linear involution $\bro $ 
commutes with the spectral projectors $P_{\mu,\e}$ and then 
$\bro $ leaves invariant the subspace 
$ \mathcal{V}_{\mu,\e} = \text{Rg}(P_{\mu,\e}) $.  
\\[1mm]{\bf Symplectic and reversible basis of $\mathcal{V}_{\mu,\e}$.}
It is convenient to represent the Hamiltonian and reversible operator
$ \sL_{\mu,\e} : \mathcal{V}_{\mu,\e} \to \mathcal{V}_{\mu,\e} $ in a 
basis which is symplectic and reversible, according to the following definition. 
\begin{sia}\label{def:SR} 
{\bf (Symplectic and reversible basis)}
 A basis $\mathtt{F}:=\{\mathtt{f}^+_1,\,\mathtt{f}^-_1,\,\mathtt{f}^+_0,\,\mathtt{f}^-_0 \}$ of $\mathcal{V}_{\mu,\e}$ is 
 \begin{itemize}
 \item 
 \emph{symplectic} if, for any $ k, k' = 0,1 $,  
 \begin{equation}\label{symplecticbasis}
 \molt{\cJ \tf_k^-}{\tf_k^+} = 1 \, ,  \ \  
 \big( \cJ \tf_k^\sigma, \tf_k^\sigma \big) = 0 \, , \  \forall \sigma = \pm \, ;  
  \ \  \text{if} \ k \neq k' \  \text{then} \  
\big( \cJ \tf_k^\sigma, \tf_{k'}^{\sigma'} \big) = 
   0 \, , \ \forall \sigma, \sigma' = \pm  \, . 
\end{equation}
\item 
\emph{reversible} if
\begin{equation}
\label{reversiblebasis}
 \bar \rho \tf^+_1 =  \tf^+_1 , \quad   \bar \rho \tf^-_1 = - \tf^-_1 , \quad 
 \bar \rho \tf^+_0 =  \tf^+_0 , \quad   \bar \rho \tf^-_0 = - \tf^-_0, \quad \text{i.e. } \bro \tf_k^\sigma = \sigma \tf_k^\sigma \, , \ \forall \sigma = \pm, k = 0,1 \, .
\end{equation}
\end{itemize}
\end{sia}
We use the following 
notation along the paper:  we denote by $even(x)$ a real $2\pi$-periodic function which is even in $x$, and by $odd(x)$ a real $2\pi$-periodic function which is odd in $x$.

By the definition of the involution $\bro$ in \eqref{reversibilityappears}, 
the real and imaginary parts of 
a reversible basis $\mathtt{F}=\{\mathtt{f}^\pm_k \}$, $k=0,1$,  enjoy the following parity properties (cfr. Lemma 3.4 in \cite{BMV1})
\begin{equation}\label{reversiblebasisprop}
  \tf_k^+(x) = \vet{even(x)+\im odd(x)}{odd(x)+\im even(x)}, \quad
  \tf_k^-(x) = \vet{odd(x)+\im even(x)}{even(x)+\im odd(x)}.
\end{equation}
By Lemmata 3.5 and 3.6  in \cite{BMV1} we have the following result. 
%Thus the restriction of the canonical complex symplectic form to this subspace, 
%is  represented, in this symplectic basis,  
%by the {\it constant}  symplectic matrix $ {\mathtt J}_4 $ defined in 
%\eqref{Lform}, which is {\it independent} of $ (\mu,\e)$. This feature simplifies considerably 
%perturbation theory. 

\begin{lem}\label{lem:B.mat}
The $ 4 \times 4 $ matrix that represents the Hamiltonian and reversible operator 
$\sL_{\mu,\e}= \cJ {\cal B}_{\mu,\e}:\mathcal{V}_{\mu,\e}\to\mathcal{V}_{\mu,\e} $ with respect to a symplectic and reversible basis $\mathtt{F}=\{\tf_1^+,\tf_1^-,\tf_0^+,\tf_0^-\} $ of $\mathcal{V}_{\mu,\e}$ is 
\begin{align}\label{Lform}
 \tJ_4 \tB_{\mu,\e} \, ,\quad  \tJ_4 := 
 \begin{pmatrix} 
 \tJ_2& \vline & 0 \\
 \hline
0  & \vline & \tJ_2
\end{pmatrix}, \quad 
{\small \tJ_2 := \begin{pmatrix} 
 0 & 1 \\
-1  & 0
\end{pmatrix}}, \quad \text{where } \quad \tB_{\mu,\e}= \tB_{\mu,\e}^* \end{align}
is the self-adjoint matrix
\begin{equation}\label{matrix22} 
\tB_{\mu,\e} = 
\begin{pmatrix}
\BVe{+}{1}{+}{1} & \BVe{-}{1}{+}{1} & \BVe{+}{0}{+}{1} & \BVe{-}{0}{+}{1} \\
\BVe{+}{1}{-}{1} & \BVe{-}{1}{-}{1} & \BVe{+}{0}{-}{1} & \BVe{-}{0}{-}{1} \\
\BVe{+}{1}{+}{0} & \BVe{-}{1}{+}{0} & \BVe{+}{0}{+}{0} & \BVe{-}{0}{+}{0} \\
\BVe{+}{1}{-}{0} & \BVe{-}{1}{-}{0} & \BVe{+}{0}{-}{0} & \BVe{-}{0}{-}{0} \\
	\end{pmatrix}.
\end{equation}
 The entries of the matrix $\tB_{\mu,\e}$ are alternatively  real or purely imaginary: for any $ \sigma = \pm $, $ k = 0, 1 $, 
\begin{equation}\label{revprop}
 \molt{{\cal B}_{\mu,\e}  \, \tf^{\sigma}_{k}}{\tf^{\sigma}_{k'}} \text{ is real},\qquad 
   \molt{{\cal B}_{\mu,\e}  \, \tf^{\sigma}_{k}}{\tf^{-\sigma}_{k'}} \text{ is purely imaginary} \, .
\end{equation}
\end{lem}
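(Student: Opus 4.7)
The plan is to derive the matrix representation of $\sL_{\mu,\e} = \cJ \mathcal{B}_{\mu,\e}$ by exploiting three structural ingredients: the symplectic pairing encoded in \eqref{symplecticbasis}, the self-adjointness of $\mathcal{B}_{\mu,\e}$, and the compatibility of the reversibility $\bro$ with the complex inner product \eqref{scalar}. I would organize the argument in two essentially independent blocks, one for the form $\tJ_4 \tB_{\mu,\e}$ and one for the parity statement \eqref{revprop}.

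\textbf{Matrix form $\tJ_4 \tB_{\mu,\e}$.} Write $\sL_{\mu,\e}\tf_j = \sum_i M_{ij}\tf_i$, where $j$ ranges over the ordered index set $\{(+,1),(-,1),(+,0),(-,0)\}$. Pair with $\tf_i$ through the sesquilinear form $(\cJ\,\cdot\,,\,\cdot\,)$ to get $\sum_k M_{kj}(\cJ \tf_k, \tf_i) = (\cJ \sL_{\mu,\e}\tf_j, \tf_i)$. Because $\cJ^2 = -\mathrm{Id}$, the right-hand side equals $-(\mathcal{B}_{\mu,\e}\tf_j,\tf_i)$, i.e.\ minus the $(i,j)$-entry of the matrix $\tB_{\mu,\e}$ in \eqref{matrix22}. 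Meanwhile, the symplectic relations \eqref{symplecticbasis} together with the anti-Hermitian identity $(\cJ f, g) = -\overline{(\cJ g, f)}$ (which follows from $\cJ^* = -\cJ$) show that the Gram matrix $\Omega_{ik} := (\cJ \tf_k, \tf_i)$ is exactly the block matrix $\tJ_4$ of \eqref{Lform}. Hence $\Omega M = -\tB_{\mu,\e}$, and using $\tJ_4^{-1} = -\tJ_4$ gives $M = \tJ_4 \tB_{\mu,\e}$, as claimed. The self-adjointness $\tB_{\mu,\e} = \tB_{\mu,\e}^*$ is immediate from $\mathcal{B}_{\mu,\e} = \mathcal{B}_{\mu,\e}^*$: the $(i,j)$-entry $(\mathcal{B}_{\mu,\e}\tf_j,\tf_i)$ equals $\overline{(\mathcal{B}_{\mu,\e}\tf_i,\tf_j)}$, which is the conjugate of the $(j,i)$-entry.

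\textbf{Real/imaginary pattern \eqref{revprop}.} The anti-linear involution $\bro$ in \eqref{reversibilityappears} satisfies the key intertwining identity $(\bro f, g) = \overline{(f, \bro g)}$, obtained by a direct change of variables $x\mapsto -x$ in \eqref{scalar}. Combined with the reversibility-preserving property $\mathcal{B}_{\mu,\e}\bro = \bro\,\mathcal{B}_{\mu,\e}$ from \eqref{B.rev.pres} and the sign relation $\bro \tf_k^\sigma = \sigma \tf_k^\sigma$ from \eqref{reversiblebasis}, one computes
\begin{equation*}
\sigma_j\,(\mathcal{B}_{\mu,\e}\tf_k^{\sigma_j}, \tf_{k'}^{\sigma_i})
= (\bro \,\mathcal{B}_{\mu,\e}\tf_k^{\sigma_j}, \tf_{k'}^{\sigma_i})
= \overline{(\mathcal{B}_{\mu,\e}\tf_k^{\sigma_j}, \bro\tf_{k'}^{\sigma_i})}
= \sigma_i\,\overline{(\mathcal{B}_{\mu,\e}\tf_k^{\sigma_j}, \tf_{k'}^{\sigma_i})}.
\end{equation*}
When $\sigma_i = \sigma_j$ this forces the entry to equal its own conjugate (so real), and when $\sigma_i = -\sigma_j$ it forces the entry to equal minus its conjugate (so purely imaginary), which is precisely \eqref{revprop}.

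The step that requires a little care is the anti-Hermitian identity $(\bro f, g) = \overline{(f, \bro g)}$, since $\bro$ involves both complex conjugation and the reflection $x\mapsto -x$ acting with opposite signs on the two components; otherwise both parts are direct algebraic manipulations, so I do not expect any serious obstacle. Since this exact statement is already Lemmata~3.5--3.6 of \cite{BMV1}, the final proof can simply recall the computation without repeating all details.
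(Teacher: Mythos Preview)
Your proposal is correct and follows the natural approach: the paper itself does not give a proof here but simply cites Lemmata~3.5--3.6 of \cite{BMV1}, and your argument (computing the symplectic Gram matrix to identify $M=\tJ_4\tB_{\mu,\e}$, then using the identity $(\bro f,g)=\overline{(f,\bro g)}$ together with \eqref{B.rev.pres} and \eqref{reversiblebasis} to get the real/imaginary pattern) is exactly the computation underlying those lemmata. Your closing remark that one can simply cite \cite{BMV1} is precisely what the paper does.
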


 It is convenient to give a name to the matrices of the form obtained 
in Lemma \ref{lem:B.mat}.

\begin{sia} 
% {\bf Hamiltonian and reversible matrices.} 
A $ 2n \times 2n $, $ n = 1,2, $ matrix of the form 
$\tL=\tJ_{2n} \tB$ is 
\\[1mm]
1. \emph{Hamiltonian} if  $ \tB $ is a self-adjoint matrix, i.e.   $\tB=\tB^*$;
\\[1mm]
2. \emph{Reversible} if $\tB$ is reversibility-preserving,   i.e. $\rho_{2n}\circ \tB = \tB \circ \rho_{2n} $, where 
\begin{equation}\label{involutionrep}
 \rho_4 := \begin{pmatrix}\rho_2 & 0 \\ 0 & \rho_2\end{pmatrix}, \qquad \rho_2 := \begin{pmatrix} \mathfrak{c}  & 0 \\ 0 & - \mathfrak{c} \end{pmatrix},
\end{equation}
and $\Gc : z \mapsto \bar z $ is the conjugation of the complex plane.
Equivalently, $\rho_{2n} \circ \tL  = -  \tL \circ \rho_{2n}$.
\end{sia}
In the sequel we shall mainly deal with $ 4 \times 4 $ Hamiltonian and reversible matrices.
The transformations 
preserving  the Hamiltonian structure  are called
  \emph{symplectic}, and  satisfy
\begin{align}\label{sympmatrix}
 Y^* \tJ_4 Y = \tJ_4 \, .
 \end{align}
 If $Y$ is symplectic then $Y^*$ and $Y^{-1}$  are symplectic as well. A Hamiltonian matrix $\tL=\tJ_4 \tB$, with $\tB=\tB^*$, is conjugated through $Y$ in the new Hamiltonian matrix
 \begin{equation}
 \tL_1 = Y^{-1}  \tL Y = Y^{-1}  \tJ_4 Y^{-*} Y^*  \tB Y = \tJ_4 \tB_1 \quad \text{where } \quad 
  \tB_1 := Y^* \tB Y = \tB_1^* \, . \label{sympchange}
 \end{equation}
Note that the matrix $ \rho_4 $ in \eqref{involutionrep} represents the action of the 
involution $\bar \rho : {\mathcal V}_{\mu,\e}
\to {\mathcal V}_{\mu,\e}  $ defined in \eqref{reversibilityappears} in a 
reversible basis (cfr. \eqref{reversiblebasis}).
A $ 4\times 4$ matrix $\tB=(\tB_{ij})_{i,j=1,\dots,4}$ is reversibility-preserving if and only if its entries are alternatively real and purely imaginary, namely $\tB_{ij}$ is real when $i+j$ is even and purely imaginary otherwise, as in \eqref{revprop}.
A $4\times 4$ complex matrix $\tL =(\tL_{ij})_{i,j=1, \ldots, 4}$ is reversible if and only if  $\tL_{ij}$ is purely imaginary when $i+j$ is even and real otherwise.
 
We finally mention 
that the flow of a Hamiltonian 
reversibility-preserving matrix  is symplectic
and reversibility-preserving (see Lemma 3.8 in \cite{BMV1}).

\section{Matrix representation of $ \sL_{\mu,\e}$ on $ \mathcal{V}_{\mu,\e}$}\label{sec:mr}

%In this section we use 
Using the transformation operators $U_{\mu,\e}$  in \eqref{OperatorU},  
% obtained  in the previous section,
we  construct  the 
% symplectic and reversible  
basis of $\cV_{\mu,\e}$ 
%in Proposition \ref{BexpG}, we compute the $4\times 4$ Hamiltonian and reversible matrix representing  $ \sL_{\mu,\e}\colon \cV_{\mu,\e} \to \cV_{\mu,\e}$ on such basis. 
\begin{equation}\label{basisF}
\cF := \big\{ 
f_{1}^+(\mu,\e), \  f_{1}^- (\mu,\e), \   f_{0}^+(\mu,\e),\   f_{0}^-(\mu,\e) \big\} \, , \quad 
f_{k}^\sigma(\mu,\e) := U_{\mu,\e} f_{k}^\sigma \, , \ \sigma=\pm \, , \,k=0,1 \, , 
\end{equation}
where 
\begin{equation}
\label{funperturbed}
f_1^+ = \vet{\ch^{1/2} \cos (x)}{\ch^{-1/2} \sin (x)} , \quad
f_1^- = \vet{-  \ch^{1/2} \sin (x)}{\ch^{-1/2} \cos (x)} , \quad 
f_0^+ = \vet{1}{0}, \quad
f_0^- = \vet{0}{1} \, , 
\end{equation}
form a basis of $ \mathcal{V}_{0,0} =\mathrm{Rg} (P_{0,0}) $, cfr. \eqref{basestart}-\eqref{basestartadd}. 
Note that the  real valued vectors $ \{ f_1^\pm, f_0^\pm \} $  
form  a  symplectic and reversible basis for 
$ \mathcal{V}_{0,0} $, according to Definition \ref{def:SR}. 
Then, by Lemma \ref{propPU} and  \ref{lem:Kato1} we deduce that (cfr. Lemma 4.1 in \cite{BMV1}):

\begin{lem}\label{base1symp}
The  basis $  \cF  $ of $\mathcal{V}_{\mu,\e}$ defined in \eqref{basisF}, 
is symplectic and reversible, i.e. satisfies \eqref{symplecticbasis} 
and \eqref{reversiblebasis}. 
Each map $(\mu, \e) \mapsto f^\sigma_k(\mu, \e)$ is analytic as a map $B(\mu_0)\times B(\epsilon_0) \to H^1(\bT)$. 
\end{lem}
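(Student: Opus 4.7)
The plan is to reduce both properties of $\cF$ to the corresponding properties of the unperturbed basis $\{f_1^\pm, f_0^\pm\}$ in \eqref{funperturbed}, transported through the operators $U_{\mu,\e}$, and then invoke the structural properties of $U_{\mu,\e}$ established in Lemma \ref{propPU} and Lemma \ref{lem:Kato1}.

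First I would verify directly that the unperturbed basis $\{f_1^+,f_1^-,f_0^+,f_0^-\}$ is symplectic and reversible in $\cV_{0,0}$. Symplecticity is a short Fourier computation using the scalar product \eqref{scalar}: one checks $(\cJ f_k^-, f_k^+)=1$ for $k=0,1$ by integrating the relevant product of $\cos(kx)$ and $\sin(kx)$ (or constants for $k=0$), the normalization factors $\ch^{\pm 1/2}$ in \eqref{funperturbed} being chosen precisely to make this equal to one, while all other combinations vanish by parity or orthogonality of different Fourier modes. Reversibility $\bro f_k^\sigma = \sigma f_k^\sigma$ follows likewise by inspection of \eqref{reversibilityappears}: the first (resp. second) component of $f_k^+$ is even (resp. odd) in $x$ and real, and analogously for $f_k^-$ with opposite parities.

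Next, I would propagate these properties to $\cF$ using the operator identities in Lemma \ref{propPU}. Since $U_{\mu,\e}$ is symplectic, $U_{\mu,\e}^*\cJ U_{\mu,\e}=\cJ$, so for any $k,k'\in\{0,1\}$ and $\sigma,\sigma'\in\{\pm\}$
\begin{equation*}
\bigl(\cJ f_k^\sigma(\mu,\e), f_{k'}^{\sigma'}(\mu,\e)\bigr)
=\bigl(\cJ U_{\mu,\e} f_k^\sigma,U_{\mu,\e} f_{k'}^{\sigma'}\bigr)
=\bigl(U_{\mu,\e}^*\cJ U_{\mu,\e} f_k^\sigma, f_{k'}^{\sigma'}\bigr)
=\bigl(\cJ f_k^\sigma, f_{k'}^{\sigma'}\bigr),
\end{equation*}
so the symplectic relations \eqref{symplecticbasis} transfer verbatim from the unperturbed basis. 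Similarly, since $U_{\mu,\e}$ is reversibility-preserving, $\bro U_{\mu,\e}= U_{\mu,\e}\bro$, which gives
\begin{equation*}
\bro f_k^\sigma(\mu,\e)= \bro U_{\mu,\e} f_k^\sigma= U_{\mu,\e}\bro f_k^\sigma= \sigma\, U_{\mu,\e} f_k^\sigma = \sigma f_k^\sigma(\mu,\e),
\end{equation*}
yielding the reversibility condition \eqref{reversiblebasis}. Finally, each $f_k^\sigma(\mu,\e)$ is the image under $U_{\mu,\e}$ of the fixed vector $f_k^\sigma\in H^1(\bT)$; since by Lemma \ref{lem:Kato1} the map $(\mu,\e)\mapsto U_{\mu,\e}\in\cL(Y)$ with $Y=H^1(\bT)$ is analytic on $B(\mu_0)\times B(\e_0)$, the composition is analytic as well.

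I do not foresee any real obstacle: the content of the lemma is precisely that the algebraic properties already encoded in $U_{\mu,\e}$ (symplecticity and commutation with $\bro$) combine with the hand-checked properties of the unperturbed basis. The only place requiring care is the explicit check of the normalization constants $\ch^{\pm 1/2}$ in \eqref{funperturbed}, which must be consistent with the convention $(\cJ f_k^-, f_k^+)=1$ in \eqref{symplecticbasis}; everything else is a direct transport argument through $U_{\mu,\e}$.
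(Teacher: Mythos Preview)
Your proposal is correct and follows exactly the approach the paper takes: the paper's own proof simply invokes Lemma \ref{propPU} (symplecticity and reversibility-preservation of $U_{\mu,\e}$) and Lemma \ref{lem:Kato1} (analyticity of $U_{\mu,\e}$), referring to \cite{BMV1} for the details you have written out. Your transport argument via $U_{\mu,\e}^*\cJ U_{\mu,\e}=\cJ$ and $\bro U_{\mu,\e}=U_{\mu,\e}\bro$, together with the direct verification that the unperturbed real basis \eqref{funperturbed} satisfies \eqref{symplecticbasis}--\eqref{reversiblebasis}, is precisely what underlies the paper's one-line proof.
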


In the next lemma we   expand the vectors  $ f_k^\sigma(\mu,\e) $ in $ (\mu, \e ) $. 
 We denote by $even_0(x)$ a real, even, $2\pi$-periodic function with zero space average. 
In the sequel 
 $\cO(\mu^{m} \e^{n}) \footnotesize\vet{even(x)}{odd(x)}$ denotes an analytic map in $(\mu, \e)$ with values in  $ H^1(\bT, \bC^2) $,
 whose first component is $even(x)$ and the second one  $odd(x)$;  similar meaning 
for $\cO(\mu^{m} \e^{n}) \footnotesize\vet{odd(x)}{even(x)}$, 
etc... 
\begin{lem}\label{expansion1} 
{\bf (Expansion of the basis $ \cF$)} For small values of $(\mu, \e)$ 
the basis $ \cF$ in \eqref{basisF}  has the expansion
\begin{align}
 f^+_1(\mu, \e) & = \vet{\ch^\frac12 \cos(x)}{\ch^{-\frac12}\sin(x)} + \im \frac{\mu}{4}\gamma_\tth \vet{\ch^\frac12\sin(x)}{\ch^{-\frac12}\cos(x)} + 
 \epsilon \vet{\alpha_\tth \cos(2x)}{\beta_\tth \sin(2x)} \label{exf41}
 \\ &  +
\cO(\mu^2) \vet{even_0(x) + \im odd(x)}{odd(x) + \im even_0(x)}  + \cO(\e^2) \vet{even_0(x)}{odd(x)} + \im\mu \epsilon\vet{odd(x)}{even(x)} + \cO(\mu^2\e,\mu\e^2) \, , \notag \\
 f^-_1(\mu, \e) \label{exf42} &= \vet{-\ch^\frac12 \sin(x)}{\ch^{-\frac12}\cos(x)} + \im \frac{\mu}{4} \gamma_\tth \vet{\ch^{\frac12}\cos(x)}{-\ch^{-\frac12}\sin(x)} 
 +
  \epsilon
   \vet{-\alpha_{\tth}\sin(2x)}{\beta_\tth \cos(2x)}\\ & + \cO(\mu^2) \vet{odd(x) + \im even_0(x)}{even_0(x) + \im odd(x)} + \cO(\e^2) \vet{odd(x)}{even(x)} 
+ \im\mu \epsilon\vet{even(x)}{odd(x)} + \cO(\mu^2\e,\mu\e^2) \, , \notag \\
 f^+_0(\mu, \e) \label{exf43} &= \vet{1}{0}+ \epsilon \delta_\tth \vet{\ch^\frac12 \cos(x)}{- \ch^{-\frac12}\sin(x)}
 + \cO(\e^2) \vet{even_0(x)}{odd(x)} + \im\mu \epsilon\vet{odd(x)}{even_0(x)}+ \cO(\mu^2\e,\mu\e^2) \, , \\
 f^-_0(\mu, \e) \label{exf44} & = \vet{0}{1}  + \im  \mu\e  \vet{even_0(x)}{odd(x)}+\cO(\mu^2\e,\mu\e^2) \, ,
\end{align}
where 
the remainders $\cO()$ are vectors in $H^1(\bT)$
and 
\begin{equation}\label{def:basiscoeff}
\alpha_\tth := \frac12 \ch^{-\frac{11}{2}}(3+\ch^4) \, , \quad
\beta_\tth := \frac14\ch^{-\frac{13}{2}}(1+\ch^4)(3-\ch^4) \, ,  
\quad \gamma_\tth :=  1+\frac{\tth (1-\ch^4)}{\ch^2} \, , \quad 
\delta_\tth := \frac{3+\ch^4}{4 \ch^{\frac52}} \, . 
\end{equation}
 For $\mu=0$ the basis $\{f_k^\pm(0,\e), k=0,1 \}  $ is real  and 
\begin{equation}\label{nonzeroaverage}
f^{+}_1 (0, \e) =  \vet{even_0(x)}{odd(x)},
\ 
f^{-}_1 (0, \e) =  \vet{odd(x)}{even(x)}, 
\  
f^{+}_0 (0, \e) = \vet{1}{0}+ \vet{even_0(x)}{odd(x)} \, , 
\   
f^{-}_0 (0, \e) =  \vet{0}{1} \, .
\end{equation}
\end{lem}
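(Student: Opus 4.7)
My plan is to carry out a standard Kato perturbation expansion of the transformation operator $U_{\mu,\e}$ and apply it to the unperturbed basis $\{f_k^\sigma\}$. First I would decompose $\sL_{\mu,\e} = \sL_{0,0} + \cR_{\mu,\e}$, with $\cR_{\mu,\e}$ jointly analytic in $(\mu,\e)$ thanks to Lemma \ref{lem:pa.exp} for $p_\e,a_\e$, expansion \eqref{expfe} for $\ttf$, and identities \eqref{|D+mu|} and \eqref{D+mu an} which make $|D+\mu|\tanh((\tth+\ttf)|D+\mu|)$ analytic in $\mu$. Plugging into the Neumann series for $(\sL_{\mu,\e}-\lambda)^{-1}$ and integrating along $\Gamma$ yields an analytic bivariate expansion
\[
P_{\mu,\e} = P_{0,0} + \mu P_{1,0} + \e P_{0,1} + \mu^2 P_{2,0} + \mu\e P_{1,1} + \e^2 P_{0,2} + \cdots
\]
in $\cL(X,Y)$. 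Substituting into the Kato formula \eqref{OperatorU} and expanding the square root via \eqref{rootexp} gives, for any $v\in\cV_{0,0}$,
\[
U_{\mu,\e}v = v + (P_{\mu,\e}-P_{0,0})v + \tfrac12 (P_{\mu,\e}-P_{0,0})^2\bigl(\tfrac12\uno-P_{0,0}\bigr)v + \cdots ,
\]
and applying to $v = f_k^\sigma$ yields an analytic expansion of $f_k^\sigma(\mu,\e)$ in $H^1(\bT,\bC^2)$ as required.

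Next I would extract the explicit leading coefficients by Fourier-mode analysis. Since $\sL_{0,0}$ is a Fourier multiplier, its resolvent is diagonal on each Fourier block, so the contour integrals defining $P_{1,0}$ and $P_{0,1}$ reduce to finite residue computations on the modes that $\cR_{\mu,\e}$ couples to the starting vectors $f_k^\sigma$. The $\e$-linear part of $\cR_{\mu,\e}$ involves $p_1(x), a_1(x)\propto \cos x$ from Lemma \ref{lem:pa.exp}, which couples Fourier mode $k$ with modes $k\pm 1$: applied to $f_1^\pm$ it generates zero-mode and second-harmonic components, and inverting the nondegenerate $k=\pm 2$ block of $\sL_{0,0}$ (whose symbol involves $\tanh(2\tth)$ via \eqref{omeghino}) produces the $\e$-linear term $\e\vet{\alpha_\tth\cos(2x)}{\beta_\tth\sin(2x)}$; applied to $f_0^\pm=\mbox{const}$ it generates first-harmonic modes, producing the coefficient $\delta_\tth$ in the $\e$-correction of $f_0^+$. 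The $\mu$-perturbation at $\e=0$, made analytic by \eqref{|D+mu|}, is a rank-two modification of the symbol; its residue on the $k=\pm 1$ block produces the purely imaginary $\mu$-correction of $f_1^\pm$ with coefficient $\gamma_\tth$ in \eqref{def:basiscoeff}. The mixed orders $\mu\e,\mu^2,\e^2$ sit in the remainders, with parities pinned down by the symmetry argument below.

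Finally, the parity structure of every term in \eqref{exf41}--\eqref{exf44} is forced by symmetry. By Lemma \ref{propPU}(ii) the operator $U_{\mu,\e}$ is reversibility-preserving, so $\bro f_k^\sigma(\mu,\e) = \sigma f_k^\sigma(\mu,\e)$, which via \eqref{reversiblebasisprop} fixes the real/imaginary parity pattern of each component at every order in $(\mu,\e)$. For $\mu=0$, Lemma \ref{propPU}(iii) gives that $U_{0,\e}$ is real, so $f_k^\sigma(0,\e)$ are real, establishing \eqref{nonzeroaverage}; the zero-mean constraint on the first components of $f_1^+(0,\e)$ and of $f_0^+(0,\e) - \vet{1}{0}$ then follows from the fact that $\sL_{0,\e}$ preserves the mean-zero subspace of first components (clear from the explicit form \eqref{calL2}) together with the generalized-kernel structure \eqref{genespace}. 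The one genuinely computational step is the closed-form evaluation of $\alpha_\tth,\beta_\tth,\gamma_\tth,\delta_\tth$ in terms of $\ch=\sqrt{\tanh\tth}$: the finite-depth $\tanh$ factors make the algebra messier than in the deep-water case \cite{BMV1}, but conceptually this is just linear algebra on two Fourier blocks.
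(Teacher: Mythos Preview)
Your plan is essentially the paper's own proof: Taylor-expand $P_{\mu,\e}$ and $U_{\mu,\e}$ in $(\mu,\e)$, reduce the contour integrals to residue computations on a few Fourier blocks using the explicit resolvent of $\sL_{0,0}$, and read off the parity structure from reversibility. The paper organizes this via a decomposition $H^1=\cV_{0,0}\oplus\cU\oplus\cW$ (Lemma \ref{lem:VUW}) and explicit formulas for $(\sL_{0,0}-\lambda)^{-1}$ on each piece, but the content is the same.

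Two places where your sketch is thinner than it needs to be. First, the expansions \eqref{exf43}--\eqref{exf44} contain no pure $\mu$ or $\mu^2$ terms for $f_0^\pm$: this is the exact identity $f_0^\pm(\mu,0)\equiv f_0^\pm$, not an order-by-order vanishing, and it requires the separate observation (the paper's Lemma \ref{lem:A5}) that $\mathrm{span}\{f_0^+,f_0^-\}$ is $\sL_{\mu,0}$-invariant with both eigenvalues $\pm\im\sqrt{\mu\tanh(\tth\mu)}$ inside $\Gamma$, so $P_{\mu,0}$ fixes it. Similarly $f_0^-(0,\e)\equiv f_0^-$ exactly, because $f_0^-=U_1$ lies in $\mathrm{Ker}\,\sL_{0,\e}$ for every $\e$ by \eqref{genespace}. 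Second, your argument for the zero-average structure in \eqref{nonzeroaverage} (``$\sL_{0,\e}$ preserves the mean-zero subspace of first components'') does not by itself constrain the range of $P_{0,\e}$ or $U_{0,\e}$; the paper instead deduces the average structure of $f_1^\pm(0,\e)$ and $f_0^+(0,\e)$ from the explicit form of the generalized kernel and the symplectic/reversibility properties of the basis (cf.\ Lemma A.4 of \cite{BMV1}). These are both easy to supply, but they are not consequences of the generic expansion machinery you describe.
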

\begin{proof}
The long calculations are given in Appendix \ref{ProofExpansion}. 
\end{proof}

We now state the main result of this section.

\begin{prop}\label{BexpG}
The matrix that represents the Hamiltonian and reversible operator  
$ \sL_{\mu,\e} : \mathcal{V}_{\mu,\e} \to \mathcal{V}_{\mu,\e} $ in the symplectic and reversible basis $\mathcal{F}$ of $\mathcal{V}_{\mu,\e}$ defined in \eqref{basisF}, 
is a Hamiltonian matrix 
 $\tL_{\mu,\e}=\tJ_4 \tB_{\mu,\e}$, where 
 $\tB_{\mu,\e} $ is a self-adjoint and reversibility preserving
(i.e. satisfying \eqref{revprop})
 $ 4 \times 4$  matrix of the form 
  \begin{equation}\label{splitB}
\tB_{\mu,\e}=
\begin{pmatrix} 
E & F \\ 
F^* & G 
\end{pmatrix}, \qquad 
E  = E^* \, , \   \ G = G^* \, , 
\end{equation} 
  where $E, F, G$  are the $ 2 \times 2 $  matrices
\begin{align}\label{BinG1}
& E := 
\begin{pmatrix} 
  \te_{11} \e^2(1+r_1'(\e,\mu \e)) - \te_{22}\frac{\mu^2}{8}(1+r_1''(\e,\mu))  
  & 
  \im \big( \frac12\te_{12}\mu+ r_2(\mu\e^2,\mu^2\e,\mu^3) \big)  \\
- \im \big( \frac12\te_{12} \mu+ r_2(\mu\e^2,\mu^2\e,\mu^3) \big) & -\te_{22}\frac{\mu^2}{8}(1+r_5(\e,\mu))
 \end{pmatrix} \\
 & \label{BinG2} G := 
\begin{pmatrix} 
1+r_8(\e^2,\mu^2\e % \mu^3 
) &   - \im r_9(\mu\e^2,\mu^2\e) % \mu^3)
 \\
  \im  r_9(\mu\e^2, \mu^2\e % ,\mu^3
  )  &\mu\tanh(\tth\mu)+ r_{10}(\mu^2\e % \mu^3
  ) 
 \end{pmatrix} \\
 & \label{BinG3}
 F := 
\begin{pmatrix} 
\tf_{11}\e+ r_3(\e^3,\mu\e^2,\mu^2\e %\mu^3
) & \im 
 \mu\e \ch^{-\frac12}   +\im  r_4({\mu\e^2}, \mu^2 \e %\mu^3
 )  \\
  \im   r_6(\mu\e % \mu^3
  )    & r_7(\mu^2\e % \mu^3
  ) 
 \end{pmatrix} \, , 
 \end{align}
 with $\te_{12}$ and $\te_{22}$ given in \eqref{te12} and \eqref{te22} respectively, and 
 \begin{align}
\label{te11}
\te_{11} 
 & := 
 \dfrac{9\ch^8-10\ch^4+9}{8\ch^7} = \dfrac{9 (1-\ch^4)^2 +8 \ch^4 }{8\ch^7} > 
 0 \, , \qquad
 \tf_{11} :=    \tfrac12 \ch^{-\frac32}(1-\ch^4) \, .
\end{align} 
\end{prop}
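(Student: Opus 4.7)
\smallskip

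\noindent\textbf{Proof proposal.} The plan is to compute the matrix coefficients
$\tB_{ij} = \bigl({\cal B}_{\mu,\e}\, f_{k}^{\sigma}(\mu,\e)\,,\,f_{k'}^{\sigma'}(\mu,\e)\bigr)$
directly from the explicit form of $\cB_{\mu,\e}$ in \eqref{calL.ham}, plugging in the basis expansions of Lemma \ref{expansion1} and the coefficient expansions of Lemma \ref{lem:pa.exp}. The Hamiltonian/reversible structure of the abstract operator forces $E=E^{*}$, $G=G^{*}$, and the alternating real/imaginary pattern \eqref{revprop}, so roughly half the work is automatic; I only need to compute, for each pair $(k,\sigma)\neq(k',\sigma')$ with $k\le k'$, one representative inner product and read off its leading expansion in $(\mu,\e)$ from the formulas in Lemmata \ref{expansion1}--\ref{lem:pa.exp} together with the Stokes expansions \eqref{exp:Sto}--\eqref{expc2}.

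First I would assemble the ``free'' information. At $(\mu,\e)=(0,0)$ we have $\cL_{0,0} f_1^{\pm}=\cL_{0,0}f_0^{-}=0$ and $\cL_{0,0}f_0^{+}=-f_0^{-}$, so the only inner product that is $\cO(1)$ is $\tB_{33}=\molt{\cJ^{-1}\cL_{0,0}f_0^{+}}{f_0^{+}}=1$, producing the $1$ in $G_{11}$. All other entries vanish at the origin, so it suffices to expand to first, second or third order in $(\mu,\e)$ as prescribed. For the $\mu$-expansion of the Fourier--multiplier block of $\cB_{\mu,\e}$ I use that $|D+\mu|\tanh((\tth+\ttf)|D+\mu|)=(D+\mu)\tanh((\tth+\ttf)(D+\mu))$ is analytic in $\mu$ on $\bR$; together with the expansion \eqref{expfe} of $\ttf$ and the explicit coefficients of $p_{\e},a_{\e}$ in Lemma \ref{lem:pa.exp}, computing $\cB_{\mu,\e}f_k^{\sigma}(\mu,\e)$ modulo the required order becomes a (long but elementary) exercise, and the reality/imaginarity of the result is guaranteed by \eqref{revprop} combined with the parities \eqref{reversiblebasisprop}. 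The scalar coefficients $\te_{11},\te_{12},\te_{22},\tf_{11}$ in \eqref{BinG1}--\eqref{BinG3}, \eqref{te11} then emerge by matching Fourier modes in these computations.

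The non-trivial part is explaining the \emph{absence} of several expected terms: the $(2,2)$-entry of $E$ contains no $\e^{m}$ nor $\mu\e^{m}$ term, several entries in $F$ carry an extra factor $\mu$ or $\e$ beyond what naive Taylor expansion predicts, and $G_{12}=-\im r_9(\mu\e^{2},\mu^{2}\e)$ has no pure-$\e$ part. These cancellations are imposed algebraically rather than by computation, exploiting the structural information that for every sufficiently small $\e\neq0$ the operator $\sL_{0,\e}=\cL_{0,\e}$ has a four-dimensional generalized kernel spanned by $U_{1},\tilde U_{2},U_{3},U_{4}$ satisfying \eqref{genespace}. In practice I would show, following the argument of \cite{BMV1}, that the matrix $\tB_{0,\e}$ (obtained by evaluating at $\mu=0$) must be conjugate to the matrix representing $\cL_{0,\e}$ on its generalized kernel in the basis $\{U_{1},\tilde U_{2},U_{3},U_{4}\}$; since the latter has by \eqref{genespace} a very sparse form (at most two non-zero entries, equal to $\alpha_{\e}$ and to $-1$), the compatibility with self-adjointness and reversibility of $\tB_{0,\e}$ forces the claimed vanishing of all $\e^{m}$ contributions in the $(2,2)$-slot of $E$ and in the other entries listed above. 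The real analyticity of $(\mu,\e)\mapsto\tB_{\mu,\e}$ (which comes from Lemma \ref{base1symp} and the analyticity of $\cB_{\mu,\e}$) then extends these vanishings to the statement that the missing terms come with an extra factor of $\mu$, giving the remainders $r_{*}(\mu\e^{m},\dots)$.

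The main obstacle will be the pure bookkeeping of second--order terms to identify $\te_{11},\te_{12},\te_{22}$ with the correct rational functions of $\ch=\sqrt{\tanh\tth}$ given in \eqref{funzioneWB}--\eqref{te22} and \eqref{te11}: one has to expand $\cB_{\mu,\e}f_{1}^{\pm}(\mu,\e)$ up to $\cO(\e^{2}+\mu^{2}+\mu\e)$, collect the contributions coming from (a) the $\e^{2}$ part of $a_{\e},p_{\e}$ (which depends on the second Stokes coefficients $\eta_{2}^{[k]},\psi_{2}^{[k]},c_{2}$), (b) the $\mu^{2}$ expansion of $|D+\mu|\tanh((\tth+\ttf)|D+\mu|)$ (which involves $\ttf=\cO(\e^{2})$ and hence couples $\mu^{2}\e^{2}$ to the Whitham--Benjamin discriminant at higher order), and (c) the cross-terms produced by the $\mu$- and $\e$-corrections of the basis \eqref{exf41}--\eqref{exf44}. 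Keeping the $\tth$-dependence explicit through these computations is what distinguishes the finite-depth proof from \cite{BMV1}; everything else is a direct transcription of the scheme developed there.
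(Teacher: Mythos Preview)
Your plan is essentially the paper's strategy: compute the entries of $\tB_{\mu,\e}$ by pairing $\cB_{\mu,\e}f_k^\sigma(\mu,\e)$ against $f_{k'}^{\sigma'}(\mu,\e)$, use the Hamiltonian/reversible symmetries to halve the work, and invoke the structural information \eqref{genespace} to force the vanishing of $E_{22}(0,\e)$. The paper organizes the calculation by splitting $\cB_{\mu,\e}=\cB_\e+\cB^\flat+\cB^\sharp$ (the $\mu=0$ part, the Fourier-multiplier correction, and the $\mu p_\e$ cross term) and treating each piece in a separate lemma; this is convenient but not essential. The paper's version of your ``conjugacy'' argument is slightly more direct: from \eqref{genespace} one has $\sL_{0,\e}^2=0$ on $\cV_{0,\e}$, hence $\tL_\e(0)^2=0$, and squaring the explicit form of $\tL_\e(0)$ yields the dichotomy $E_{22}(0,\e)\equiv 0$ or $E_{11}(0,\e)\equiv 0\equiv F_{11}(0,\e)$; computing $E_{11}(0,\e)=\te_{11}\e^2+r(\e^3)$ with $\te_{11}>0$ then selects the first branch. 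Once $E_{22}(0,\e)=0$ is known, it also feeds back into the $\mu$-derivative: $\cB_\e f_1^-(0,\e)=0$ and $\cB_\e f_0^-(0,\e)=0$, which kills the second and fourth columns of the matrix $X=\bigl(\cB_\e f_k^\sigma(0,\e),(\partial_\mu f_{k'}^{\sigma'})(0,\e)\bigr)$ and explains the absence of $\mu\e^m$ terms in $E_{22}$.

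There is, however, a genuine gap in your outline. You only use the structural information at $\mu=0$; this gives you extra factors of $\mu$ in certain entries, but it does \emph{not} account for the absence of pure $\mu^k$ terms in $F$ and in the off-diagonal of $G$. Concretely, Lemmata \ref{lem:B1}--\ref{lem:B3} combined yield only
\[
F=\begin{pmatrix}\tf_{11}\e+r_3(\e^3,\mu\e^2,\mu^2\e,\mu^3)&\im\mu\e\ch^{-1/2}+\im r_4(\mu\e^2,\mu^2\e,\mu^3)\\ \im r_6(\mu\e,\mu^3)&r_7(\mu^2\e,\mu^3)\end{pmatrix},
\]
and similarly for $G$, with stray $\mu^3$ remainders. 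To reduce these to \eqref{BinG2}--\eqref{BinG3} you need the complementary analysis at $\e=0$: for the unperturbed Floquet operator $\sL_{\mu,0}$ one checks that $f_0^\pm(\mu,0)\equiv f_0^\pm$ and that $f_1^\pm(\mu,0)$ have zero average (Lemma \ref{lem:A5}), whence $\cB_{\mu,0}f_0^+(\mu,0)=f_0^+$ and $\cB_{\mu,0}f_0^-(\mu,0)=\mu\tanh(\tth\mu)f_0^-$. This gives $F(\mu,0)=0$ and $G(\mu,0)=\mathrm{diag}(1,\mu\tanh(\tth\mu))$ exactly, for all small $\mu$, and analyticity then forces every $\mu^k$ contribution in those entries to carry at least one factor of $\e$. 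Without this step the stated remainder structure in \eqref{BinG2}--\eqref{BinG3} does not follow.
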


The rest of this section is devoted to the proof of  Proposition \ref{BexpG}.  

We decompose $ {\cal B}_{\mu,\e}$ in \eqref{calL.ham} as 
  \begin{equation*}
   {\cal B}_{\mu,\e}  = {\cal B}_\e  + {\cal B}^\flat +  {\cal B}^\sharp  \, , 
\end{equation*}
where $ {\cal B}_\e $, $  {\cal B}^\flat $, $  {\cal B}^\sharp  $ 
are the self-adjoint and reversibility preserving operators
\begin{align}
 \label{cBe}
&  {\cal B}_\e  := {\cal B}_{0,\e} := \left[
\begin{array}{cc}
1+a_\e(x) &   - (\ch +p_\e(x)) \partial_x   \\
\partial_x \circ (\ch +p_\e(x)) &    |D|\tanh((\tth+\ttf)|D|)
\end{array}
\right],  \\ \label{cBflat}
& {\cal B}^\flat :=   
 \begin{bmatrix}
0 & 0 \\
0 &  |D+\mu|\tanh((\tth+\ttf)|D+\mu|)-|D|\tanh((\tth+\ttf)|D|)
\end{bmatrix} , \, \\
& \label{cBsharp} {\cal B}^\sharp :=  
\mu \begin{bmatrix}
0 &  -\im  p_{\e} \\
 \im p_{\e} & 0 
\end{bmatrix} \, .  
\end{align}
In view of \eqref{D+mu an} the operator $ {\cal B}^\flat $ is analytic in $ \mu $. 

%In order to prove \eqref{splitB}-\eqref{BinG3} we  exploit the representation Lemma  \ref{lem:B.mat} and compute perturbatively the 
%$ 4 \times 4 $ matrices,   associated, as in \eqref{matrix22}, to the 
%self-adjoint and reversibility preserving operators 
%  $ {\cal B}_\e$, $ {\cal B}^\flat$ and $ {\cal B}^\sharp$, 
%in the basis $\mathcal{F} $.

\begin{lem}\label{lem:B1} {\bf (Expansion of $\mathtt{B}_\e$)} 
The self-adjoint  and reversibility preserving matrix  
$\tB_\e:= \tB_\e(\mu)$ associated, as in \eqref{matrix22}, with the self-adjoint and reversibility preserving operator $ {\cal B}_\e$ defined in \eqref{cBe}, with respect to the basis 
$\mathcal{F} $ of $ {\mathcal V}_{\mu,\e} $ 
in \eqref{basisF},  expands as 
\begin{align}\label{expB1}
 \mathtt{B}_\e = \begin{pmatrix} 
 \te_{11} \e^2+\zeta_\tth \mu^2+r_1(\e^3,\mu\e^3) & \im r_2(\mu\e^2) & \vline & \tf_{11} \e + r_3(\e^3, \mu\e^2) & \im r_4(\mu\e^3) \\
 -\im r_2(\mu\e^2) & \zeta_\tth \mu^2 & \vline & \im r_6(\mu\e) & 0 \\
 \hline
 \tf_{11} \e +r_3(\e^3, \mu\e^2) & -\im r_6(\mu\e) & \vline & 1+r_8(\e^2,\mu\e^2) & \im r_9(\mu\e^2)\\
 -\im r_4(\mu\e^3) & 0 & \vline & -\im r_9(\mu\e^2) & 0 \\
 \end{pmatrix}+\cO(\mu^2\e, \mu^3) \, ,
\end{align}
where % $\tth$-dependent coefficients
$\te_{11}$, $\tf_{11}$ are defined  respectively in  \eqref{te11},  and 
\begin{equation} \label{zetah} \zeta_\tth := \tfrac{1}{8}\ch\gamma_\tth^2 \, .  \end{equation} 
\end{lem}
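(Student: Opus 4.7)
The plan is to compute the ten independent entries of the self-adjoint, reversibility-preserving matrix $\tB_\e$ by expanding each bilinear pairing $(\cB_\e f_j^{\sigma'}, f_i^\sigma)$ in $(\mu,\e)$. I would combine three ingredients: the basis expansion \eqref{exf41}--\eqref{exf44} from Lemma~\ref{expansion1}; the expansions of $a_\e, p_\e, \ttf$ in Lemma~\ref{lem:pa.exp} and \eqref{expfe}; and the kernel identity $\cB_{0,\e} f_0^-(0,\e)=0$. It is convenient to decompose $\cB_\e = \cB_{0,\e} + \cB^\flat + \cB^\sharp$ as in \eqref{cBe}--\eqref{cBsharp}: the operator $\cB^\sharp$ is purely off-diagonal and of size $\cO(\mu\e)$; $\cB^\flat$ is a Fourier multiplier whose symbol vanishes at $\mu = 0$ and has an explicit Taylor expansion in $\mu$ on the low Fourier modes entering $\cF$; and $\cB_{0,\e}$ admits a power series $\cB_{0,0} + \e\cB^{(1)} + \e^2\cB^{(2)} + \cO(\e^3)$ whose coefficients are read off from Lemma~\ref{lem:pa.exp}.

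A key structural input handles the fourth column and row. Since $f_0^-(0,\e) = \vet{0}{1}$ coincides with the vector $U_1$ in \eqref{genespace} and $\cJ$ is invertible, $\cL_{0,\e} U_1 = 0$ forces $\cB_{0,\e} f_0^-(0,\e) \equiv 0$ for every $\e$. Hence every entry $(i,4)$ of $\tB_\e$ vanishes at $\mu = 0$, which already rules out any pure $\e^m$ piece there. Combined with the leading $\mu$-correction $\im\mu\e\vet{even_0(x)}{odd(x)}$ of $f_0^-(\mu,\e)$ in \eqref{exf44} and the parity pattern \eqref{reversiblebasisprop} of the other basis vectors, this produces the claimed orders $\im r_4(\mu\e^3)$, $\im r_9(\mu\e^2)$, $0$, and $0$ in positions $(1,4)$, $(3,4)$, $(2,4)$, $(4,4)$ up to the $\cO(\mu^2\e, \mu^3)$ remainder; the reality/imaginarity alternation then follows automatically from the reversibility constraint \eqref{revprop}.

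The remaining entries are computed by direct bilinear expansion. For the $(1,1)$ entry the $\e^2$-coefficient $\te_{11}$ emerges as
\[
\te_{11} = \bigl(\cB^{(2)} f_1^{+,[0]}, f_1^{+,[0]}\bigr) + 2\,\text{Re}\bigl(\cB^{(1)} f_1^{+,[1]}, f_1^{+,[0]}\bigr) + \bigl(\cB_{0,0} f_1^{+,[1]}, f_1^{+,[1]}\bigr),
\]
where $f_1^{+,[j]}$ denotes the coefficient of $\e^j$ in \eqref{exf41}; substituting \eqref{pino1fd}--\eqref{aino2fd} and \eqref{def:basiscoeff}, a finite set of trigonometric integrals on low Fourier modes collapses to $\te_{11} = (9\ch^8-10\ch^4+9)/(8\ch^7)$. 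The $\mu^2$-coefficient $\zeta_\tth = \tfrac18\ch\gamma_\tth^2$ appearing in both the $(1,1)$ and $(2,2)$ entries is generated by pairing the $\mu$-tilt $\im\tfrac{\mu}{4}\gamma_\tth$ in \eqref{exf41}--\eqref{exf42} with $\cB_{0,0}$; its equality in the two diagonal entries reflects the group velocity of the linearized Stokes wave. The coefficient $\tf_{11} = \tfrac12\ch^{-3/2}(1-\ch^4)$ in $(1,3)$ and the value $1$ in $(3,3)$ follow from analogous, shorter expansions.

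The hardest part is not any single integral but the uniform bookkeeping of the remainders: one must verify that the $\e^m$ and $\mu\e^m$ monomials which do \emph{not} appear in \eqref{expB1} are genuinely absent at every order $m$, since a spurious $\mu$-linear piece in the $(2,2)$ entry or an $\e$-linear piece in the $(1,2)$ entry would compromise the subsequent block-decoupling in Section~\ref{sec:block}. The structural tools to invoke repeatedly are the kernel identity $\cB_{0,\e} f_0^-(0,\e) = 0$, which forbids any $\e^m$ piece in the fourth column, and the reversibility alternation \eqref{revprop}, which rules out several monomials by parity; all other contributions are either of the claimed polynomial form or absorbed in the $\cO(\mu^2\e, \mu^3)$ remainder.
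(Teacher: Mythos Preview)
Your proposal has a notational confusion and a genuine gap.

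First, the confusion: in this lemma the operator $\cB_\e$ is by definition $\cB_{0,\e}$ (see \eqref{cBe}), which is \emph{$\mu$-independent}. The pieces $\cB^\flat$ and $\cB^\sharp$ are treated in the separate Lemmata \ref{lem:B2} and \ref{lem:B3}; they play no role here. The $\mu$-dependence of the matrix $\tB_\e(\mu)$ enters \emph{only} through the basis vectors $f_k^\sigma(\mu,\e)$. Your opening decomposition ``$\cB_\e=\cB_{0,\e}+\cB^\flat+\cB^\sharp$'' conflates $\cB_\e$ with the full $\cB_{\mu,\e}$ and should be dropped.

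Second, and more seriously, your bookkeeping strategy cannot establish the claimed form of the $(2,2)$ entry. The assertion is that $E_{22}(0,\e):=(\cB_\e f_1^-(0,\e),f_1^-(0,\e))$ vanishes \emph{identically} in $\e$, so that the only surviving contribution to $(2,2)$ is the $\zeta_\tth\mu^2$ coming from the basis tilt. Neither the kernel identity $\cB_\e f_0^-(0,\e)=0$ (which concerns $f_0^-$, not $f_1^-$) nor the reversibility alternation \eqref{revprop} (which only fixes reality, and $(2,2)$ is real) can force this. Direct expansion would require checking infinitely many orders in $\e$. The paper's argument is structural: by \eqref{genespace} the operator $\sL_{0,\e}$ is nilpotent of order $2$ on $\cV_{0,\e}$, so the representing matrix $\tL_\e(0)=\tJ_4\tB_\e(0)$ satisfies $\tL_\e(0)^2=0$. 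Writing this out yields the alternative $E_{22}(0,\e)\equiv0$ or $E_{11}(0,\e)\equiv F_{11}(0,\e)\equiv0$; your own computation of $\te_{11}\neq0$ then selects the first. This nilpotency step is also what allows the paper to conclude $\sL_{0,\e}f_1^-(0,\e)=0$, which in turn makes the second column of the $\mu$-linear correction $X$ in \eqref{matX} vanish; without it your control of the $\mu\e^m$ terms in column two is incomplete as well.
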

\begin{proof}
We expand the matrix $ \tB_\e(\mu) $ as 
\begin{equation}\label{TaylorexpBemu}
\tB_\e(\mu) = \tB_\e(0) + \mu (\pa_\mu \tB_\e)(0) + \frac{\mu^2 }{2} (\pa_\mu^2 \tB_0)(0) + \cO(\mu^2\e,\mu^3) \, . 
 \end{equation}
% To simplify notation, during this proof we often identify a matrix  with its matrix elements.
% \\[1mm]
{\bf The matrix $\tB_\e(0)$.} The main result of this long paragraph 
is to prove that  the matrix $\tB_\e(0)$ has the expansion \eqref{Bsoloeps}. 
The matrix $\tB_\e(0)$ is real, because 
the operator $ {\cal B}_{\e}$ is real and the basis $ \{ f_k^\pm(0,\e) \}_{k=0,1}$ is  real.
Consequently, by \eqref{revprop},  its  matrix elements $(\tB_\e(0))_{i,j}$ are real  whenever  $i+j$ is even and vanish for $i+j$ odd.
In addition $f^-_0(0,\e)  = \footnotesize  \vet{0}{1}$ by \eqref{nonzeroaverage}, 
and, by \eqref{cBe}, we get 
$ {\cal B}_{\epsilon} f^-_0(0, \e)  = 0 $, for any $ \epsilon $.  
We deduce that the self-adjoint matrix $ \tB_\e(0) $ has the form 
 \begin{equation}\label{tBe}\tB_\e(0) =
\left( 
{\cal B}_\epsilon  \, f^\sigma_k(0, \e) , \, f^{\sigma '}_{k'}(0, \e)
\right)_{k, k'=0,1, \sigma, \sigma'  = \pm} = 
\begin{pmatrix}
E_{11}(0,\e)  &   0 & \vline &  F_{11}(0,\e)  &  0 \\
0 &  E_{22}(0,\e) & \vline & 0  &  0  \\
\hline
 F_{11}(0,\e)  &  0 & \vline & G_{11}(0,\e) &   0 \\
 0 & 0 & \vline & 0 & 0 
\end{pmatrix}\, ,
\end{equation}
where $E_{11}(0,\e)$, $E_{22}(0,\e)$, $G_{11}(0,\e)$, $F_{11}(0,\e)$ are real.
  We claim that
$ E_{22}(0,\e) = 0 $ for any $ \e $. As a first step, following \cite{BMV1}, we prove that
\begin{equation}\label{Jordancondition}
\text{ either } \ E_{22}(0,\e)\equiv 0 \, ,  \qquad\text{ or }  
\ E_{11}(0,\e)\equiv 0  \equiv F_{11}(0,\e) \, .
\end{equation}
Indeed, by \eqref{genespace}, 
the operator $ \sL_{0,\e} \equiv  {\mathcal L}_{0,\e}$ 
possesses, for any sufficiently small $\e \neq 0$, 
 the eigenvalue $ 0 $
with a four  dimensional 
generalized  Kernel  $
 \cW_\e := \text{span} \{ U_1, \tilde U_2, U_3, U_4 \} $, spanned 
by  $ \e$-dependent 
vectors $ U_1, \tilde U_2, U_3, U_4 $. 
 By Lemma \ref{lem:Kato1} it 
 results that $ \cW_\e = {\mathcal V}_{0,\e} = \text{Rg}(P_{0,\e} )$
 and by \eqref{genespace} we have $ \sL_{0,\e}^2 = 0 $ on $ \mathcal{V}_{0,\e} $. 
 Thus the matrix 
 \begin{equation}\label{formaLep}
\tL_\e(0):=\tJ_4 \tB_\e(0) = 
\begin{pmatrix}
0 &  E_{22}(0,\e) & \vline & 0  &  0  \\
-E_{11}(0,\e)  &   0 & \vline &  -F_{11}(0,\e)  &  0 \\
\hline
 0 & 0 & \vline & 0 & 0 \\
  -F_{11}(0,\e)  &  0 & \vline & -G_{11}(0,\e) &   0 
\end{pmatrix}\, ,
\end{equation}
which represents $ \sL_{0,\e}:\mathcal{V}_{0,\e}\to\mathcal{V}_{0,\e}$, 
satisfies $ \tL^2_\epsilon(0) = 0 $, namely 
$$ 
\tL^2_\epsilon(0) = -\begin{pmatrix}
  (E_{11}E_{22})(0,\e) & 0 & \vline & (F_{11}E_{22})(0,\e)  &  0  \\
0  &   (E_{11}E_{22})(0,\e) & \vline &  0  &  0 \\
\hline
 0 & 0 & \vline & 0 & 0 \\
 0  &   (F_{11}E_{22})(0,\e) & \vline & 0 &   0 
\end{pmatrix} = 0 
 $$
which  implies  \eqref{Jordancondition}. We now prove that the matrix $\tB_\e(0)$ defined in \eqref{tBe} expands as
\begin{equation}\label{Bsoloeps}
\tB_\e(0)  = \begin{pmatrix}\te_{11}\e^2+ {r(\e^3)} & 0 & \vline & \tf_{11}\e + r(\e^3) & 0 \\ 
 0& 0 & \vline & 0 & 0 \\
 \hline 
 \tf_{11}\e +r(\e^3)  & 0 & \vline & 1+ r(\e^2) & 0 \\
 0 & 0 & \vline & 0 & 0\end{pmatrix}
\end{equation}
where  $\te_{11}$ and $\tf_{11}$ are in \eqref{ta2} and \eqref{td1}.
 We expand  the operator $ {\cal B}_\e$ in \eqref{cBe} as
\begin{equation}\label{pezziB}
\begin{aligned}
&{\cal B}_\e= {\cal B}_0+\e {\cal B}_1+ \e^2 {\cal B}_2 + \cO(\e^3) , 
\quad 
 {\cal B}_0 := \begin{bmatrix} 1 & -\ch \pa_x \\ \ch \pa_x & |D|\tanh(\tth |D|) \end{bmatrix} \, ,\\  &{\cal B}_1 := \begin{bmatrix} a_1(x) & -p_1(x)\pa_x \\ \pa_x\circ p_1(x) & 0 \end{bmatrix} \,,  \ \; {\cal B}_2 := \begin{bmatrix} a_2(x) & -p_2(x)\pa_x \\ \pa_x\circ p_2(x) & -\tf_2 \pa_x^2 \big(1-\tanh^2(\tth |D|)\big)  \end{bmatrix} \,, 
 \end{aligned}
\end{equation}
where the remainder term  $\cO(\e^3) \in \cL(Y, X)$,
 the functions $a_1$, $p_1$, $a_2$, $p_2$ are given in \eqref{pino1fd}-\eqref{aino2fd}
and, in view of \eqref{expfe}, $\tf_2 := \tfrac14\ch^{-2}(\ch^4-3)$.
%\begin{equation}\label{ttf2}
%\tf_2 = \frac{\ch^4-3}{4\ch^2}.
%\end{equation}

\noindent {$ \bullet$  \it Expansion of $E_{11}(0,\e)=\te_{11}\e^2+r(\e^3)$. }
 By \eqref{exf41}  we split the real function $f_1^+(0,\e)$ as 
 \begin{equation}\label{pezzig1p}
 \begin{aligned}
& \qquad \qquad f_1^+(0,\e) = f_1^+ + \e f_{1_1}^+ + \e^2 f_{1_2}^+ + \cO(\e^3) \, , \\
&    f_1^+ = \vet{\ch^\frac12 \cos(x)}{\ch^{-\frac12}\sin(x)},\   f_{1_1}^+ := \vet{\alpha_\tth \cos(2x)}{\beta_\tth \sin(2x)} \, , 
\ 
f_{1_2}^+ := \vet{even_0(x)}{odd(x)}, 
\end{aligned}
 \end{equation}
 where both $f_{1_2}^+$ and $\cO(\e^3)$ are vectors in  $H^1(\bT)$.
 Since $ {\cal B}_0f_1^+=\cJ^{-1} \sL_{0,0}f_1^+= 0$, and both $ {\cal B}_0$, $ {\cal B}_1$ are self-adjoint real operators, it results 
\begin{align}
 E_{11}(0,\e)&=\molt{{\cal B}_\e f^+_1(0,\e)}{ f^+_1(0,\e)} \notag \\
 &= \e \molt{{\cal B}_1 f_1^+}{f_1^+} + \e^2  \left[ \molt{{\cal B}_2 f_1^+}{f_1^+}+2\molt{{\cal B}_1 f_1^+}{f_{1_1}^+} + \molt{{\cal B}_0f_{1_1}^+}{f_{1_1}^+} \right]+\cO(\e^3) \, .
 \label{expa0}
\end{align}
By  \eqref{pezziB} one has
\begin{align} \label{Bon1p}
{\cal B}_1f_1^+  = \vet{A_1(1+\cos(2x))}{B_1\sin(2x)},\ \  
 {\cal B}_2f_1^+=\vet{A_2 \cos(x)+ A_3 \cos(3x)}{B_2\sin(x)+B_3\sin(3x)}, \  
 \  {\cal B}_0f_{1_1}^+ =\vet{A_4 \cos(2x)}{B_4 \sin(2x)} \, , 
\end{align}
with
\begin{equation}\label{cuntaz}
\begin{aligned}
& A_1:=  \tfrac12(a_1^{[1]}\ch^\frac12 - p_1^{[1]}\ch^{-\frac12}), \qquad 
B_1:= -p_1^{[1]} \ch^{\frac12} \, , \\
&  A_2 := \ch^{\frac12} a_2^{[0]} - \ch^{-\frac12}  p_2^{[0]}+\tfrac12 \ch^{\frac12} a_2^{[2]} - \tfrac12 \ch^{-\frac12}p_2^{[2]} \, , \qquad A_4 :=\alpha_\tth  -2\beta_\tth \ch \, , \\  
&  B_2 :=-\ch^{\frac12}p_2^{[0]}-\tfrac12 \ch^{\frac12}p_2^{[2]} + {\ch^{-\frac12}} \tf_2(1-\ch^4) \, , \qquad 
  B_4 := -2\alpha_\tth \ch  + \displaystyle{\frac{4\ch^2}{1+\ch^4}}{\beta_\tth}\, . 
 \end{aligned}
 \end{equation}
By \eqref{Bon1p} and \eqref{pezzig1p}, we deduce  
\begin{equation} \label{ta2} 
E_{11}(0,\e) = \te_{11}\e^2 + r(\e^3) \, ,
 \quad \te_{11} := \frac12 \big(A_2\ch^\frac12+ B_2\ch^{-\frac12}+ 2\alpha_\tth A_1 +2 B_1 \beta_\tth +	\alpha_\tth A_4+\beta_\tth B_4 \big)\,. 
 \end{equation}
By \eqref{ta2}, \eqref{cuntaz}, \eqref{def:basiscoeff}, \eqref{pino1fd}-\eqref{aino2fd} we obtain \eqref{te11}.
Since $\te_{11}>0$ the  second alternative in \eqref{Jordancondition} is  ruled out, implying   $E_{22}(0,\e) \equiv 0$.\\
{ $ \bullet$  \it  Expansion of $G_{11} (0,\e)=1+r(\e^2)$. }
 By \eqref{exf43}  we split the real-valued function $f_0^+(0,\e)$ as
\begin{equation}\label{pezzig0p}
f_0^+(0,\e) = f_0^+ + \e f_{0_1}^+ + \e^2 f_{0_2}^+ + \cO(\e^3) \, , \  \  
f_0^+ = \vet10 \, ,
\   f_{0_1}^+:= \delta_\tth \vet{\ch^{\frac12}\cos(x)}{-\ch^{-\frac12}\sin(x)} \, , 
\   f_{0_2}^+:= \vet{even_0(x)}{odd(x)} \, .
\end{equation} 
Since, by \eqref{basestart} and  
\eqref{pezziB}, 
 ${\cal B}_0f_0^+= f_0^+$, 
 using that ${\cal B}_0$, ${\cal B}_1$ are self-adjoint real operators,  
 and $\|f_0^+\| = 1$, $ (f_0^+ , f_{0_1}^+ ) $, we have 
$ G_{11}(0,\e) =\molt{{\cal B}_\e f^+_0(0,\e)}{ f^+_0(0,\e)} 
 =1+ \e \molt{{\cal B}_1 f_0^+}{f_0^+}  
 +r(\e^2) $. 
By  \eqref{pezziB} and \eqref{pino1fd}-\eqref{aino2fd}  one has
\begin{equation}\label{Bon0p}
{\cal B}_1f_0^+ = \vet{a_1^{[1]}\cos(x)}{-p_1^{[1]} \sin(x)} 
\end{equation}
and, by \eqref{pezzig0p}, we deduce 
$ G_{11} (0,\e) = 1+ r(\e^2 ) $.  \\
{$ \bullet$  \it  Expansion of $F_{11}(0,\e)=\tf_{11}\e+r(\e^3)$.}
By  \eqref{pezziB},  \eqref{pezzig1p}, \eqref{pezzig0p}, using that 
${\cal B}_0, {\cal B}_1$ are self-adjoint and real, 
and  ${\cal B}_0 f_1^+ = 0$, ${\cal B}_0 f_0^+ = f_0^+$, we obtain 
  \begin{align}
 F_{11}(0,\e)  & = \e \left[ \molt{{\cal B}_1 f_1^+}{f_0^+} + \molt{f_{1_1}^+}{ f_0^+ }\right] 
 \notag \\ 
 \notag
 & \quad +  \e^2  \big[ \molt{{\cal B}_2 f_1^+}{f_0^+}+\molt{{\cal B}_1 f_1^+}{f_{0_1}^+} +\molt{{\cal B}_1 f_0^+}{f_{1_1}^+} +\molt{f_{1_2}^+}{f_0^+} + \molt{{\cal B}_0f_{1_1}^+}{f_{0_1}^+}  \big]+r(\e^3) \,  .
 \end{align}
By \eqref{pezzig1p}, \eqref{Bon1p}, \eqref{cuntaz}, \eqref{pezzig0p}, \eqref{Bon0p}, all these scalar products vanish but the first one, and then 
\begin{equation} \label{td1}
 F_{11}(0,\e)=\tf_{11}\e+r(\e^3) \, ,\quad 
 \tf_{11} := A_1 = \tfrac12(a_1^{[1]}\ch^{\frac12}-p_1^{[1]}\ch^{-\frac12}) \, , 
 \end{equation}
 which,  by substituting the expressions of 
 $ a_1^{[1]} $, $ p_1^{[1]} $ in Lemma \ref{lem:pa.exp}, gives the expression in \eqref{te11}.
 
 The expansion \eqref{Bsoloeps} in proved. 
\\[1mm]
{\bf Linear terms in $ \mu $.}
We now compute the terms  of $\tB_\e(\mu)$  that are linear in $\mu$. It results 
\begin{equation}\label{MatrixX}
\pa_\mu \tB_\e(0) = X + X^* 
\qquad \text{where} \qquad X :=
 \big( {\cal B}_\e f_k^\sigma(0,\e), (\pa_\mu f^{\sigma'}_{k'})(0,\e) \big)_{k,k'=0,1,  \sigma,\sigma'=\pm} \, .  
 \end{equation}
We now prove that 
\begin{equation}\label{matX}
   X = \begin{pmatrix} 
 \cO(\e^3) & 0 & \vline & \cO(\e^2) & 0 \\ 
 \cO(\e^2)  & 0 & \vline & \cO( \e) & 0 \\
 \hline
 \cO(\e^3) & 0 & \vline & \cO(\e^2) & 0 \\
 \cO(\e^3) & 0 & \vline & \cO(\e^2) & 0
 \end{pmatrix}.
 \end{equation}
The matrix $ \tL_\e (0) $ in \eqref{formaLep} where $E_{22}(0,\e)=0$, represents the action 
of the operator $ \sL_{0,\e}:\mathcal{V}_{0,\e}\to\mathcal{V}_{0,\e}$ in the basis
$ \{ f^{\sigma}_k (0,\e) \} $  and then we deduce that  
 $ \sL_{0,\e} f_1^-(0,\e) = 0 $,  $ \sL_{0,\e} f_0^-(0,\e) = 0 $. 
Thus %  lie in the kernel of $\cL_{0,\e}$ and consequently in the kernel of 
also $ {\cal B}_\e  f_1^-(0,\e) = 0 $, $ {\cal B}_\e  f_0^-(0,\e)  = 0 $, 
and the second and the fourth column of the matrix $X$ in \eqref{matX} are zero. 
To compute the other two columns we use the expansion of the derivatives. In view of \eqref{exf41}-\eqref{exf44} and by denoting with a dot the derivative w.r.t. $\mu$, one has
\begin{equation}\label{reuse}
\begin{aligned}
 &\dot f^{+}_{1}(0,\e) = \frac\im4 \gamma_\tth	\vet{\ch^{\frac12}\sin(x)}{\ch^{-\frac12}\cos(x)}+\im\e \vet{odd(x)}{even(x)}+\cO(\e^2) 	\, , \quad  \dot f^{+}_{0}(0,\e) = \im\e \vet{odd(x)}{even_0(x)}+\cO(\e^2) \, ,\\  
 &\dot f^{-}_{1}(0,\e) = \frac\im4\gamma_\tth \vet{\ch^{\frac12}\cos(x)}{-\ch^{-\frac12}\sin(x)}+\im\e \vet{even(x)}{odd(x)}+\cO(\e^2) \, ,\ \ \dot f^{-}_{0}(0,\e) =\im \e \vet{even_0(x)}{odd(x)} +\cO(\e^2) \, .
\end{aligned}
\end{equation} 
In view of \eqref{PoissonTensor}, \eqref{exf41}-\eqref{exf44},  \eqref{formaLep},
\eqref{nonzeroaverage},  \eqref{ta2},\eqref{td1},  
and since $ {\cal B}_\e f_k^\sigma(0,\e)=-\cJ \sL_\e f_k^\sigma(0,\e) $, 
we have
\begin{equation}\label{reuse2}
\begin{aligned}
{\cal B}_\e f_1^+(0,\e) &= E_{11}(0,\e) \,  \cJ f_1^-(0,\e) + F_{11}(0,\e) \, \cJ f_0^-
=\e \vet{\tf_{11}}{0}+ \e^2 \te_{11} \vet{\ch^{-\frac12}\cos (x)}{\ch^{\frac12}\sin (x)} + \cO (\e^3)\, , \\
{\cal B}_\e f_0^+(0,\e) &=F_{11}(0,\e)\, \cJ f_1^-(0,\e) + G_{11}(0,\e)\,\cJ f_0^-
= \vet{1}{0}+\e \tf_{11} \vet{\ch^{-\frac12}\cos(2x)}{\ch^{\frac12}\sin(2x)}+
% \e^2\Big(\vet{\tg_{11}}{0}+\tf_{11}\vet{\beta_\tth \cos(2x)}{ \alpha_\tth \sin(2x)} \Big) +
\cO(\e^2)\, . 
\end{aligned}
\end{equation}
% By \eqref{reuse2} and  \eqref{reuse} 
We deduce
%that
% the other two columns of the matrix $X$  in \eqref{MatrixX} have the expansion  
\eqref{matX}  by  \eqref{reuse} and \eqref{reuse2}.
\\[1mm]
{\bf Quadratic terms in $ \mu $.}
By denoting with a double dot the double derivative w.r.t. $\mu$, we have 
\begin{equation}\label{dersec}
\pa_\mu^2 \tB_0(0) = \molt{{\cal B}_0 f_k^\sigma}{\ddot f_{k'}^{\sigma'}(0,0)}+\molt{\ddot f_{k}^{\sigma}(0,0)}{{\cal B}_0 f_{k}^{\sigma'}}+2\molt{{\cal B}_0\dot f_k^\sigma(0,0)}{\dot f_{k'}^{\sigma'}(0,0)}=:Y+Y^*+2Z \, .
\end{equation}
We claim that $Y = 0 $. Indeed, its first, second and fourth column are zero, since ${\cal B}_0f_k^\sigma=0$ for $f_k^\sigma \in \{ f_1^+,f_1^-,f_0^- \} $.
 The third column is also zero 
by noting  that $ {\cal B}_0 f_0^+ = f_0^+ $ and
$$
\ddot f_{1}^{+}(0,0) = \vet{even_0(x)+\im odd(x)}{odd(x)  +\im  even_0(x)}, \ \ \ddot f_{1}^{-}(0,0) = \vet{odd(x)  +\im  even_0(x)}{even_0(x)+\im odd(x)}, \ \ \ddot f_{0}^{+}(0,0)=\ddot 
f_{0}^{-}(0,0)=0 \, .
$$
We claim that
\begin{align}\label{matZ}
 Z = \molt{{\cal B}_0\dot f_k^\sigma(0,0)}{\dot f_{k'}^{\sigma'}(0,0)}_{\substack{k,k'=0,1,\\\sigma,\sigma'=\pm}} = \begin{pmatrix} 
 \zeta_\tth & 0 & \vline & 0 & 0 \\
 0 & \zeta_\tth & \vline & 0 & 0 \\
 \hline
 0 & 0 & \vline & 0 & 0 \\
 0 & 0 & \vline & 0 & 0 \\
 \end{pmatrix} \, ,
\end{align}
with $\zeta_\tth$ as in \eqref{zetah}.
Indeed,  by \eqref{reuse}, we have  $\dot f^+_0(0,0)=\dot f^-_0(0,0)= 0$. 
Therefore the last two columns of $Z$, and by self-adjointness  the last two rows, are zero. 
By \eqref{pezziB},  \eqref{reuse}
we obtain the matrix \eqref{matZ} with 
$$ 
 \zeta_\tth := \molt{{\cal B}_0  \dot f^+_1(0,0)}{ \dot f^+_1(0,0)} = \molt{{\cal B}_0 \dot f^-_1(0,0)}{\dot f^-_1(0,0)} = \tfrac{1}{8}\ch\gamma_\tth^2 \, . 
$$
In conclusion \eqref{TaylorexpBemu}, \eqref{MatrixX}, \eqref{matX}, \eqref{dersec}, the fact that $Y=0$ and \eqref{matZ} imply \eqref{expB1}, using 
also the selfadjointness of 
$\tB_\e$
 and 
\eqref{revprop}. 
\end{proof}

We now consider $ {\cal B}^\flat $.

\begin{lem}\label{lem:B2}
{\bf (Expansion of $\mathtt{B}^\flat$)} 
The self-adjoint and reversibility-preserving matrix 
$\tB^\flat$ associated, as in \eqref{matrix22}, to the self-adjoint 
and reversibility-preserving operator $  {\cal B}^\flat$, defined in \eqref{cBflat}, with respect to the basis $\mathcal{F}$  of $ {\mathcal V}_{\mu,\e} $
in \eqref{basisF}, admits  the expansion
\begin{equation}\label{EBflat}
\mathtt{B}^\flat=   \begingroup 
\setlength\arraycolsep{2pt} \begin{pmatrix} 
-\frac{\mu^2}{4}\bbh   & \im (\frac{\mu}{2} \te_{12} +r_2(\mu \e^2)) & \vline & 0 & 0 \\         
-\im (\frac{\mu}{2} \te_{12} +r_2(\mu \e^2)) & -\frac{\mu^2}{4} \bbh & \vline & 
\im r_6(\mu \e) & 0 \\ \hline
0 & - \im r_6(\mu \e)   & \vline & 0 & 0\\
0 & 0 & \vline &  0 & \mu \tanh(\tth \mu)
\end{pmatrix} \endgroup +\cO(\mu^2\e,\mu^3)
\end{equation}
where $\te_{12}$ is defined in \eqref{te12}\,  and
\begin{equation}
\bbh 
 :=   \gamma_\tth \ch + \ch^{-1}\tth (1-\ch^4) (\gamma_\tth - 2(1-\ch^2\tth)) \, 
 \label{Bflatdiag} \, . 
\end{equation}
\end{lem}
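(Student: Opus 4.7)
The plan is to exploit the block structure of $\mathcal{B}^\flat$: since it vanishes except in the $(2,2)$-component, each matrix entry takes the form $(\tB^\flat)_{ij} = (T_\mu \phi_j, \phi_i)$, where $\phi_i$ denotes the second scalar component of the $i$-th basis vector (read off from Lemma \ref{expansion1}) and
\[
T_\mu := |D+\mu|\tanh\bigl((\tth+\ttf)|D+\mu|\bigr) - |D|\tanh\bigl((\tth+\ttf)|D|\bigr)
\]
is the Fourier multiplier with real symbol $m(k) = \psi_0(k+\mu) - \psi_0(k)$, where $\psi_0(\xi) := \xi\tanh((\tth+\ttf)\xi)$ is even in $\xi$ (recall $\ttf = \cO(\e^2)$ from \eqref{expfe}). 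Parseval's identity then reduces every entry to a finite sum over the active Fourier modes $k\in\{0,\pm 1,\pm 2\}$ appearing in the basis expansions.

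Taylor-expanding the symbol in $\mu$ yields $m(k) = \mu\psi_0'(k) + \tfrac12 \mu^2 \psi_0''(k) + \cO(\mu^3)$; the evenness of $\psi_0$ then gives the crucial parity identities
\[
m(1)+m(-1) = \mu^2 \psi_0''(1) + \cO(\mu^4), \qquad m(1)-m(-1) = 2\mu\,\psi_0'(1) + \cO(\mu^3),
\]
together with $m(0) = \mu\tanh\bigl((\tth+\ttf)\mu\bigr) = \mu\tanh(\tth\mu) + \cO(\mu^3\e^2)$ for the zero mode. With these in hand, the $(4,4)$ entry follows immediately from $(f_0^-)_2 = 1 + \cO(\mu\e)$ by reading off the constant mode, giving $\mu\tanh(\tth\mu)$ modulo the required error. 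For $(1,1)$ and $(2,2)$, combine the leading parts $\ch^{-1/2}\sin x$ and $\ch^{-1/2}\cos x$ of $(f_1^\pm)_2$ with the Kato corrections $\pm\tfrac{\im\mu\gamma_\tth}{4}\ch^{-1/2}\cos x$ and $\mp\tfrac{\im\mu\gamma_\tth}{4}\ch^{-1/2}\sin x$: a direct Parseval computation in both cases yields $\tfrac{\mu^2}{4\ch}\bigl[\psi_0''(1) - \gamma_\tth\psi_0'(1)\bigr]$. The off-diagonal $(1,2)$ entry comes from the $k = \pm 1$ modes alone and equals $\tfrac{\im\mu}{2\ch}\psi_0'(1)$ plus an acceptable remainder.

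The two key algebraic identifications are
\[
\psi_0'(1) = \ch^2 + \tth(1-\ch^4) = \ch\,\te_{12}, \qquad \psi_0''(1) - \gamma_\tth\psi_0'(1) = -\ch\,\bbh,
\]
with the first following from differentiating $\xi\tanh(\tth\xi)$ and comparing with \eqref{te12}, \eqref{def:basiscoeff}, and the second from $\psi_0''(1) = 2\tth(1-\ch^4)(1-\tth\ch^2)$, after rewriting $\gamma_\tth\psi_0'(1) = \gamma_\tth\bigl(\ch^2 + \tth(1-\ch^4)\bigr)$ and using the explicit form of $\bbh$ in \eqref{Bflatdiag}. These conversions produce the $-\tfrac{\mu^2}{4}\bbh$ diagonal entries and the $\tfrac{\im\mu}{2}\te_{12}$ off-diagonal entries claimed in \eqref{EBflat}.

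Finally, the remaining entries are routinely checked to lie in the error $\cO(\mu^2\e, \mu^3)$: for $(1,4), (2,4), (3,3), (3,4)$, the nonzero Fourier modes of $(f_0^\pm)_2$ are of order $\cO(\e)$ or $\cO(\mu\e)$, so combining with $m(\pm 1) = \cO(\mu)$ and $m(0) = \cO(\mu^2)$ gives at most $\cO(\mu^2\e^2)$; meanwhile the $(2,3)$-type entries yield a genuine $\cO(\mu\e)$ imaginary contribution, arising from the cross-product of the $\sin x$ part of $(f_0^+)_2$ at order $\e$ with the $\cos x$ part of $(f_1^-)_2$, matching $\im r_6(\mu\e)$. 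The principal obstacle is the algebraic identification $\psi_0''(1) - \gamma_\tth\psi_0'(1) = -\ch\bbh$, which is the single point where the specific form of $\bbh$ in \eqref{Bflatdiag} is imposed; everything else reduces to a careful bookkeeping of Fourier coefficients respecting the parity structure \eqref{reversiblebasisprop}.
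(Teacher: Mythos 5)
Your proposal is correct and your numerics match the paper's: $\psi_0'(1) = \ch^2 + \tth(1-\ch^4) = \ch\,\te_{12} = \ch^2\gamma_\tth$, and $\psi_0''(1) = 2\tth(1-\ch^4)(1-\tth\ch^2)$, so that $\gamma_\tth\psi_0'(1) - \psi_0''(1) = \ch^2\gamma_\tth^2 - 2\tth(1-\ch^4)(1-\tth\ch^2)$ agrees with the expression $\ch\bbh = \gamma_\tth\ch^2 + \tth(1-\ch^4)(\gamma_\tth - 2(1-\ch^2\tth))$ after expansion. The paper's symbols $\bem_1, \bem_3$ are exactly $\ch^{-1/2}\psi_0'(1)$ and $\ch^{-1/2}\tfrac12\psi_0''(1)$, so your identity $\psi_0''(1) - \gamma_\tth\psi_0'(1) = -\ch\bbh$ is precisely the paper's $\bbh = \ch^{-1/2}(\gamma_\tth\bem_1 - 2\bem_3)$.

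The route differs from the paper in packaging, not in substance. The paper writes an operator-level expansion $\mathcal{B}^\flat = \mu\,\mathcal{B}^\flat_1(0) + \mu\,\mathcal{R}^\flat(\e) + \mu^2\mathcal{B}^\flat_2 + \cO(\mu^2\e,\mu^3)$, applies each term to the basis vectors, and then computes scalar products mode by mode. You instead reduce immediately to Parseval sums $\sum_k m(k)\widehat{\phi_j}(k)\overline{\widehat{\phi_i}(k)}$ with the scalar symbol $m(k)=\psi_0(k+\mu)-\psi_0(k)$, and let the evenness of $\psi_0$ (hence $m(k)+m(-k)=\mu^2\psi_0''(k)+\cO(\mu^4)$, $m(k)-m(-k)=2\mu\psi_0'(k)+\cO(\mu^3)$) automate the bookkeeping. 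This is a cleaner way to organize the same finite-dimensional computation; in particular the paper's $\mathcal{B}^\flat_1(0)$ and $\mathcal{B}^\flat_2$ are exactly the Fourier multipliers $\text{sgn}(k)\psi_0'(|k|)$ and $\tfrac12\psi_0''(|k|)$ in your notation.

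One point you gloss over, and which is the most delicate step in the paper's proof, is why the diagonal entries of $\tB^\flat$ have error $\cO(\mu^2\e^2)$ rather than the naively expected $\cO(\mu\e^2)$ from the $\e^2$-corrections $\mathcal{R}^\flat(\e)$ and the $\cO(\e)$ parts of the basis. The paper handles this by observing that $\mathcal{R}^\flat$ is purely imaginary while the $\mu=0$ basis is real, so pairings with $\sigma=\sigma'$ are forced to vanish at first order. In your framework the same cancellation appears through parity: for the $(1,1)$ entry, the cross terms between the leading $\sin x$ mode and the real $\cO(\e^2)$-odd correction of $(f_1^+)_2$ produce $(m(1)+m(-1))\cdot(\text{real}\times\text{real}) = \cO(\mu^2\e^2)$, because $m(1)+m(-1)$ is $\cO(\mu^2)$ by evenness of $\psi_0$. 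You should make this explicit rather than appeal to ``routinely checked,'' since it is where the structure is genuinely used. With that one sentence added, the argument is complete and is a valid alternative to the paper's proof.
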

\begin{proof}
We have to compute the 
expansion of the matrix entries $ ( {\cal B}^\flat f^\sigma_k(\mu,\e), f^{\sigma'}_{k'}(\mu,\e)) $. 
First, by \eqref{exf44},  \eqref{cBflat} and since $\ttf=O(\e^2)$ (cfr. \eqref{expfe}) we have 
  \begin{align*}
 {\cal B}^\flat f^-_0(\mu,\e) = \vet{0}{\mu \tanh\big(\tth \mu\big)} + \vet{0}{\cO(\mu^2\e)} \, .
\end{align*}
Hence, by \eqref{exf41}-\eqref{exf44},  
the entries of the last column (and row) of $\mathtt{B}^\flat$ are
\begin{align*}
& \big({\cal B}^\flat f^-_0(\mu,\e), f^+_1(\mu,\e) \big)  =\cO(\mu^2 \e) \ , \quad  \big({\cal B}^\flat f^-_0(\mu,\e), f^-_1(\mu,\e) \big)  = \mu \tanh(\tth \mu) \cO(\e^2) + \cO(\mu^2 \e^2 ) = 
\cO(\mu^2 \e^2) \notag \\
& \big({\cal B}^\flat f^-_0(\mu,\e), f^+_0(\mu,\e) \big)  =  \cO(\mu^2\e, \mu^3)  \ , 
\quad  \big({\cal B}^\flat f^-_0(\mu,\e), f^-_0(\mu,\e) \big)  = \mu \tanh(\tth\mu) + \cO(\mu^2\e) \, ,  
\end{align*}
in agreement with \eqref{EBflat}.

In order to compute the other matrix entries we 
expand  $  {\cal B}^\flat $ in 
 \eqref{cBflat}
 at $\mu = 0$, obtaining
 \begin{equation}
 \label{B1bemolle}
 \begin{aligned}
&   {\cal B}^\flat  = \mu {\cal B}^\flat_1{(0)} +
\mu {\cal R}^\flat(\e) +
 \mu^2 {\cal B}^\flat_2 + {\cO(\mu^2	\e,\mu^3)\, ,\quad \text{where} } \\
  & {\cal B}^\flat_1(0) := \Big[\tth  D \big(1-\tanh^2(\tth|D|)\big) + \sgn(D)\tanh(\tth|D|)\Big] \Pi_{\II} \,  , \quad \Pi_{\II} := \begin{bmatrix} 0 & 0 \\ 0 & \uno \end{bmatrix} \, , \\
& {\cal R}^\flat(\e) := \cO(\e^2) \Pi_{\II}  \, , \qquad   {\cal B}^\flat_2:={\Big[\tth  \big(1-\tanh^2(\tth|D| )\big) \big( 1-\tth \tanh(\tth|D|)   |D| \big)\Big]\Pi_{\II}\,  . } 
 \end{aligned}
 \end{equation}
We note that 
\begin{equation}\label{restiniR}
\mu\big( {\cal R}^\flat(\e) f^{\sigma}_k (\mu, \e) , f^{\sigma'}_{k'} (\mu, \e) \big)  =  \mu\big( {\cal R}^\flat f^{\sigma}_k (0, \e) , f^{\sigma'}_{k'} (0, \e) \big) + \cO(\mu^2\e^2) = \begin{cases} 
\cO(\mu^2\e^2) & \mbox{if }\sigma=\sigma'\, , \\ \cO(\mu\e^2) & \mbox{if }\sigma\neq\sigma'\, . 
\end{cases}
\end{equation}
Indeed, if $\sigma=\sigma'$,  $ \big( {\cal R}^\flat f^{\sigma}_k (0, \e) , f^{\sigma'}_{k'} (0, \e) \big)$ is real by \eqref{revprop}, but 
purely imaginary\footnote{ \label{apuim}
An operator $\mathcal{A}$ is \emph{purely imaginary} if $\bar{\mathcal{A}}=-\mathcal{A}$. A purely imaginary operator sends real functions into purely imaginary ones.} too, since  the operator ${\cal R}^\flat $
 is purely imaginary (as ${\cal B}^\flat  $ is) and the basis $ \{ f_k^\pm(0,\e) \}_{k=0,1}$ is  real. The terms \eqref{restiniR} contribute to $ r_2 (\mu \e^2) $ and $ r_6 (\e \mu )$ in  \eqref{EBflat}. 
 
 Next we compute the other scalar products.
By \eqref{exf41}, \eqref{B1bemolle}, and the identities 
$ \sgn(D) \sin(kx)  = - \im\cos(kx) $ and  
$ \sgn(D)\cos(kx) = \im \sin(kx) $ for any $ k \in \bN $,  
 we have 
 \begin{equation*}
\mu {\cal B}^\flat_1 (0) f^+_1(\mu,\e)  = \footnotesize
  -\im\mu \bem_{1}\vet{0}{\cos(x)} - \frac{\mu^2}{4} \gamma_\tth \bem_{1}\vet{0}{\sin(x)} - \im\mu\e \bem_{2} \vet{0}{\cos(2x)} +\im \cO(\mu\e^2) \vet{0}{even_0(x)} +\cO(\mu^2\e,\mu^3)
 \end{equation*}
 where 
 \begin{equation} \label{bem12}
 \begin{aligned}
& \bem_{1}:= \ch^{-\frac12} (\ch^2 +(1-\ch^4)\tth)   \\ 
& \bem_{2}:= \beta_\tth \Big(  \tanh (2\tth ) +  2 \tth  (1-\tanh^2 (2\tth )) \Big)=\beta_\tth \Big(\frac{2\ch^2}{1+\ch^4}+2\tth\big(1-\frac{4\ch^4}{(1+\ch^4)^2}\big)\Big) \, .
 \end{aligned}
 \end{equation}
 Similarly $\mu^2 {\cal B}^\flat_2 f^+_1(\mu,\e)  = \mu^2  \bem_{3} \footnotesize{\vet{0}{\sin(x)}} + \cO(\mu^2\e,\mu^3) $, where 
 \begin{equation}\label{bem3}
\bem_{3}:= \tth \big(1-\tanh^2(\tth)  \big) \big(1-\tanh(\tth ) \tth \big) \ch^{-\frac12}= \tth (1-\ch^4)(1-\ch^2 \tth ) \ch^{- \frac12}  \, . 
 \end{equation}
Analogously, using \eqref{exf42}, 
$$
 \footnotesize
\mu {\cal B}^\flat_1 (0) f^-_1(\mu,\e) = \im\mu \bem_{1} \vet{0}{\sin(x)} -\frac{\mu^2}{4}\gamma_\tth \bem_{1} \vet{0}{\cos(x)}+\im\mu\e \bem_{3}\vet{0}{\sin(2x)}+\im  \cO(\mu\e^2) \vet{0}{ odd(x)}+\cO(\mu^2\e,\mu^3) \, ,
$$
and $\mu^2 {\cal B}^\flat_2 f^-_1(\mu,\e)  = \mu^2  \bem_{3}\footnotesize{\vet{0}{\cos(x)}} + \cO(\mu^2\e,\mu^3) $, with $\bem_j$, $j=1,2,3$, defined in \eqref{bem12} and \eqref{bem3}. 
 In addition, by  \eqref{exf43}-\eqref{exf44},  we get that 
$$
 \mu{\cal B}^\flat_1 (0) f^+_0(\mu,\e) = \im\mu\e \delta_\tth \bem_{1} \vet{0}{\cos(x)}+ \im \cO(\mu\e^2) \vet{0}{even_0(x)}+ \cO(\mu^2\e) \, ,
 \ \ 
  \mu^2{\cal B}^\flat_2 f^+_0(\mu,\e) = \vet{0}{\cO(\mu^2\e)}\,
$$
with $\bem_1 $ in \eqref{bem12}.
By taking the scalar products of the above expansions of 
 $ {\cal B}^\flat f^\sigma_k (\mu,\e) $
 with the functions $f^{\sigma'}_{k'}(\mu,\e) $  expanded as in 
 \eqref{exf41}-\eqref{exf44} we obtain that 
 (recall that the scalar product is conjugate-linear in the second component)
$$
\begin{aligned}
& \big( \mu {\cal B}^\flat_1(0) f^+_1(\mu,\e), f^+_1(\mu,\e) \big) \, , \ 
% {-\frac{\mu^2}{4} \gamma_\tth \bem_1 \ch^{-\frac12}}+ \cO(\mu^2\e,\mu^3)   $
\big( \mu {\cal B}^\flat_1(0) f^-_1(\mu,\e), f^-_1(\mu,\e) \big)  = 
{-\frac{\mu^2}{4} \gamma_\tth \bem_1 \ch^{-\frac12}}+ \cO(\mu^2\e,\mu^3) \\
& 
\big( \mu^2 {\cal B}^\flat_2 f^+_1(\mu,\e), f^+_1(\mu,\e) \big) \, , \  
%  = {\frac{\mu^2}{2} \bem_3\ch^{-\frac12}} + \cO(\mu^2 \e, \mu^3) $
 \big( \mu^2 {\cal B}^\flat_2 f^-_1(\mu,\e), f^-_1(\mu,\e) \big)
 ={\frac{\mu^2}{2} \bem_3\ch^{-\frac12}} + \cO(\mu^2 \e, \mu^3)
\end{aligned} 
$$
and, recalling \eqref{B1bemolle}, \eqref{bem12}, \eqref{bem3}, we deduce 
the expansion of the entries $(1,1)$ and $(2,2)$ of the matrix $\tB^\flat$ 
in \eqref{EBflat}
% coincide up to $\cO(\mu^2\e,\mu^3)$ and expand as $-\frac{\mu^2}{4} \bbh $ with $ \bbh $ 
 with $ \bbh = \ch^{-\frac12} 
 (\gamma_\tth \bem_1 - 2 \bem_3) $ in  \eqref{Bflatdiag}. Moreover
$$
  \big( \mu {\cal B}^\flat_1(0) f^-_1(\mu,\e), f^+_1(\mu,\e) \big)
   = 
{\im \frac{\mu}{2} \te_{12}} + \cO(\mu \e^2,\mu^2 \e, \mu^3) \, , 
\ \ 
\big( \mu^2 {\cal B}^\flat_2 f^-_1(\mu,\e), f^+_1(\mu,\e) \big)
   = 
\cO(\mu^3, \mu^2 \e) \, , 
$$
where $ \te_{12}:= \bem_1 \ch^{-\frac12}$ is equal to  \eqref{te12}. Finally we obtain
 \begin{equation*}
 \begin{aligned}
&  
%\big( \mu^2 {\cal B}^\flat_2 f^-_1(\mu,\e), f^+_1(\mu,\e) \big)
%   = 
%\cO(\mu^3, \mu^2 \e) \, , \qquad 
 \big( \mu({\cal B}^\flat_1(0)+\mu {\cal B}^\flat_2) f^-_1(\mu,\e), f^+_0(\mu,\e) \big)
   =\cO(\mu \e, \mu^3)   \\
&   ( \mu ({\cal B}^\flat_1(0) +\mu {\cal B}^\flat_2) f^+_1(\mu,\e), f^+_0(\mu,\e))
   =\cO(\mu^3, \mu^2 \e) \, , \\
   & \big( \mu ({\cal B}^\flat_1(0) +\mu {\cal B}^\flat_2) f^+_0(\mu,\e), f^+_0(\mu,\e) \big)  = \cO(\mu^2 \e^2) \, .
   \end{aligned}
\end{equation*}
The expansion \eqref{EBflat} is proved.
\end{proof}

%\begin{rmk}
%The coefficient
%% in position (2,4) is given by
%$\big({\cal B}^\flat f^-_0(\mu,\e), f^-_1(\mu,\e) \big)  = \mu \tanh(\tth \mu) \cO(\e^2) + \cO(\mu \e^2 )$ is $\cO(\mu^2 \e^2)$ uniformly for  $\tth$ in a compact set of $(0, + \infty)$.
%In case of infinite depth \cite{BMV1} it is always of size $\cO(\mu^2\e^2)$, thanks to the use of a symplectic basis $\{g^\sigma_k \}$ in which $g^-_1(\mu,\e)$ has zero average. 
%\end{rmk}

Finally we consider ${\cal B}^\sharp$. 

\begin{lem}\label{lem:B3}
{\bf (Expansion of  $\mathtt{B}^\sharp$)}
The self-adjoint and reversibility-preserving matrix 
$\tB^\sharp$ associated, as in \eqref{matrix22}, to the self-adjoint 
and reversibility-preserving operators $ {\cal B}^\sharp$, defined in  \eqref{cBsharp}, with respect to the basis $\mathcal{F}$  of $ {\mathcal V}_{\mu,\e} $
in \eqref{basisF}, admits the expansion
\begin{equation}\label{tBsharp}
 \mathtt{B}^\sharp = \begin{pmatrix} 
 0 & \im  r_2(\mu\e^2) & \vline & 0 & \im   \mu\e \ch^{-\frac12}+\im r_4(\mu\e^2) \\ 
 -\im r_2(\mu\e^2)& 0 & \vline & -\im r_6(\mu\e) & 0 \\
 \hline
 0 & \im r_6(\mu\e) & \vline & 0 & - \im r_9(\mu\e^2) \\
 -\im  \mu\e \ch^{-\frac12}-\im r_4(\mu\e^2)  & 0 & \vline & \im r_9(\mu\e^2) & 0
 \end{pmatrix}+\cO(\mu^2\e) \, .
\end{equation}
\end{lem}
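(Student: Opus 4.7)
The plan is a direct computation of the matrix elements $(\mathcal{B}^\sharp f_{k_j}^{\sigma_j}(\mu,\e), f_{k_i}^{\sigma_i}(\mu,\e))$ for $i,j = 1,\dots,4$, using the expansions of the basis in Lemma \ref{expansion1} and of $p_\e = \e p_1 + \e^2 p_2 + \cO(\e^3)$ in Lemma \ref{lem:pa.exp}. First I would factor $\mathcal{B}^\sharp = \mu\, \mathcal{C}_\e$ with
$$ \mathcal{C}_\e \vet{u}{v} = \vet{-\im p_\e v}{\im p_\e u}, $$
noting that $\mathcal{C}_\e$ is a purely imaginary operator (in the sense of the footnote in the proof of Lemma \ref{lem:B2}): this already makes the overall scale of every entry $\cO(\mu\e)$. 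Self-adjointness and reversibility-preservation of $\mathtt{B}^\sharp$ imply that entries at $(i,j)$ are real for $i+j$ even and purely imaginary for $i+j$ odd, so only half of the entries need a separate computation.

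The crucial structural point for the real (even $i+j$) entries is that if $u,v$ are real-valued then $(\mathcal{C}_\e u, v)$ is purely imaginary; since the basis $\{f_k^\sigma(0,\e)\}$ is real by \eqref{nonzeroaverage}, the contribution $\mu(\mathcal{C}_\e f_k^\sigma(0,\e), f_{k'}^{\sigma'}(0,\e))$ to any real entry vanishes identically for all $\e$. Hence the first nonzero real contribution arises only when $\mathcal{C}_\e$ is paired against the $\im\mu$ and $\im\mu\e$ imaginary corrections of the basis in \eqref{exf41}--\eqref{exf44}, producing a real term of order $\mu \cdot \e \cdot \mu = \mu^2\e$. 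This immediately accounts for the zeros on the six even-parity entries $(1,1), (2,2), (3,3), (4,4), (1,3), (2,4)$ and justifies the $\cO(\mu^2\e)$ remainder.

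For the purely imaginary (odd $i+j$) entries I would substitute the leading basis terms from \eqref{funperturbed} together with $p_1(x) = -2\ch^{-1}\cos x$ from \eqref{pino1fd} and compute a few trigonometric integrals. For instance, for the $(1,4)$-entry,
$$
(\mathcal{B}^\sharp f_0^-, f_1^+) = \mu \bigl( (-\im \e p_1, 0)^\top, f_1^+ \bigr) + \cO(\mu\e^2) = -\im\mu\e\, p_1^{[1]}\, \ch^{1/2}\cdot\tfrac{1}{2\pi}\!\int_0^{2\pi}\!\cos^2 x\,\de x + \cO(\mu\e^2) = \im\mu\e\ch^{-1/2} + \cO(\mu\e^2),
$$
matching the $(1,4)$-entry in \eqref{tBsharp}; the $(4,1)$-entry follows by self-adjointness. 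An analogous computation produces the $(2,3)$-entry $-\im r_6(\mu\e)$. The remaining odd-parity entries $(1,2)$ and $(3,4)$ have vanishing leading $\cO(\mu\e)$ coefficient, since the relevant Fourier integrals (for instance $\int_0^{2\pi}\!\cos x \sin^2 x\,\de x = 0$) vanish by orthogonality; they are therefore $\cO(\mu\e^2)$ and populate the functions $r_2(\mu\e^2)$ and $r_9(\mu\e^2)$.

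The main obstacle, as in Lemma \ref{lem:B2}, is not conceptual but book-keeping: one must carefully track which Fourier modes of $p_\e$ and of the imaginary basis corrections $\im\mu M_k^{(\mu)}$, $\im\mu\e M_k^{(\mu\e)}$ contribute to each entry, and absorb the sub-leading corrections into the $r_k$-functions and the final $\cO(\mu^2\e)$ remainder. Collecting all entries and using self-adjointness to fill in the lower triangle then yields \eqref{tBsharp}.
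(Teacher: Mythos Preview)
Your proposal is correct and follows essentially the same approach as the paper's proof: factor $\mathcal{B}^\sharp = -\im\mu p_\e \mathcal{J}$, use that this operator is purely imaginary together with reality of the basis $\{f_k^\sigma(0,\e)\}$ to kill the $\sigma=\sigma'$ entries modulo $\cO(\mu^2\e)$, and compute the remaining four imaginary entries by the explicit trigonometric integrals you indicate (the paper's $\im\beta,\im\gamma,\im\delta,\im\eta$ in \eqref{formatBsharp}). The only cosmetic difference is that the paper invokes \eqref{revprop} directly on the $(0,\e)$-basis to see that each $(\mathcal{B}^\sharp f_k^\sigma(0,\e), f_{k'}^\sigma(0,\e))$ is simultaneously real and purely imaginary, whereas you reach the same conclusion via the $\mu^1$-coefficient of a real analytic function; both arguments are equivalent.
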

\begin{proof}
Since ${\cal B}^\sharp  = -\im \mu p_\e \cJ$ and  
$p_\e=\cO(\e)$
  by \eqref{SN1},   we have the expansion
\begin{equation}\label{TaylorexpBsharpemu}
 \big( {\cal B}^\sharp f_k^\sigma(\mu,\e), f_{k'}^{\sigma'}(\mu,\e) \big) = 
\big({\cal B}^\sharp f_k^\sigma(0,\e), f_{k'}^{\sigma'}(0,\e) \big) + \cO(\mu^2\e) \, .
\end{equation}
The matrix entries
$ ( {\cal B}^\sharp f^{\sigma}_k (0, \e) , f^{\sigma}_{k'} (0, \e) )   $, $  k, k' =  0,1 $, 
$ \sigma = \{ \pm \} $ are zero,  
because they are simultaneously real by \eqref{revprop}, and  % simultaneously 
purely imaginary, being  the operator ${\cal B}^\sharp$
purely imaginary 
  and the basis $ \{ f_k^\pm(0,\e) \}_{k=0,1}$   real. 
 Hence $\tB^\sharp$ has the form 
 \begin{equation}\label{formatBsharp}
 \mathtt{B}^\sharp = \begin{pmatrix} 
 0 & \im\beta & \vline & 0 & \im\delta \\ 
 -\im\beta & 0 & \vline & -\im\gamma & 0 \\
 \hline
 0 & \im\gamma & \vline & 0 & \im\eta \\
 -\im\delta & 0 & \vline & -\im\eta & 0
 \end{pmatrix}+\cO(\mu^2\e) 
 \quad \text{where} \quad 
 \left\{\begin{matrix}\molt{{\cal B}^\sharp f_1^-(0,\e)}{f_1^+(0,\e)}=:\im\beta \, , \\   \molt{{\cal B}^\sharp f_1^-(0,\e)}{f_0^+(0,\e)}=:\im\gamma \, , \\
\molt{{\cal B}^\sharp f_0^-(0,\e)}{f_1^+(0,\e)}=:\im\delta \, , 
\\ \molt{{\cal B}^\sharp f_0^-(0,\e)}{f_0^+(0,\e)}=:\im\eta \, ,\end{matrix}\right.
 \end{equation}
and $\alpha $, $ \beta $, $ \gamma $, $ \delta$ are real numbers.
As ${\cal B}^\sharp  = \cO(\mu \e)$ in $\cL(Y)$, we deduce  
that $ \gamma =r( \mu\e ) $. 
Let us compute the expansion of $\beta$, $\delta $ and $\eta$.  By \eqref{pino1fd} 
and \eqref{PoissonTensor}
we write the operator $ {\cal B}^\sharp$ in \eqref{cBsharp}  as
\begin{equation}\label{pezziBsharp}
{\cal B}^\sharp = \im\mu \e{\cal B}_1^\sharp+  \cO(\mu\e^2) \, ,
\quad {\cal B}_1^\sharp :=   2 \ch^{-1} \cos (x)  \begin{bmatrix} 0 &  \uno \\ - \uno & 0 \end{bmatrix} \, , 
\end{equation}
with $\cO(\mu \e^2) \in \cL(Y)$. In view of \eqref{exf41}-\eqref{exf44},
$f_1^\pm(0,\e) = f_1^\pm + \cO(\e)$, $f_0^+(0,\e)=f_0^+  +\cO(\e)$, 
$f_0^-(0,\e) = \footnotesize \vet{0}{1}$,  where $ f_k^\sigma $ are in 
 \eqref{funperturbed}. By  \eqref{pezziBsharp} we have 
$ \footnotesize  {\cal B}_1^\sharp f_1^- =  \vet{\ch^{-\frac32}(1+\cos(2x))}{\ch^{-\frac12}\sin(2x)} $,
$ \footnotesize  {\cal B}_1^\sharp f_0^- =  \vet{2\ch ^{-1} \cos(x)}{0} $
 and then 
\begin{equation*} % \label{sharpsp}
\begin{aligned}
& \beta = \mu\e \molt{{\cal B}_1^\sharp f_1^-}{f_1^+} +r(\mu\e^2) = r(\mu\e^2) \, , \\ 
&  \delta = \mu\e \molt{{\cal B}_1^\sharp f_0^-}{f_1^+} +r(\mu\e^2) 
 =  \mu\e \ch^{-\frac12}+r(\mu\e^2) \, , \\
 &   \eta =  \mu\e \molt{{\cal B}_1^\sharp f_0^-}{f_0^+} +r(\mu\e^2) = r(\mu\e^2)  \, . 
 \end{aligned}
\end{equation*}
This proves  \eqref{tBsharp}.
 \end{proof}

 Lemmata \ref{lem:B1}, \ref{lem:B2}, \ref{lem:B3}  imply \eqref{splitB}
 where the matrix $ E $ has the form \eqref{BinG1} and
$$
\te_{22}:=2( \textbf{b}_{\mathtt{h}} - 4 \zeta_\tth) = 2\gamma_\tth \ch + 2\ch^{-1}\tth (1-\ch^4) (\gamma_\tth - 2(1-\ch^2\tth))  - \ch\gamma_\tth^2  \, ,
$$
 with $ \textbf{b}_{\mathtt{h}} $ in \eqref{Bflatdiag} and $ \zeta_\tth $ in \eqref{zetah}.
 The term $\te_{22}$ has the expansion in \eqref{te22}.   Moreover
\begin{align}
 & \label{BinG2in} G := G(\mu,\e) = 
 \begin{pmatrix} 
1+r_8(\e^2,\mu^2\e, \mu^3) &   - \im r_9(\mu\e^2,\mu^2\e,\mu^3) \\
  \im  r_9(\mu\e^2, \mu^2\e,\mu^3)  &\mu\tanh(\tth\mu)+ r_{10}(\mu^2\e,\mu^3) 
 \end{pmatrix} \\
 & \label{BinG3in}
 F := F(\mu,\e) = 
 \begin{pmatrix} 
\tf_{11}\e+ r_3(\e^3,\mu\e^2,\mu^2\e,\mu^3) & \im 
 \mu\e \ch^{-\frac12}   +\im  r_4({\mu\e^2}, \mu^2 \e, \mu^3)  \\
  \im   r_6(\mu\e, \mu^3)    & r_7(\mu^2\e,\mu^3) 
 \end{pmatrix} \, . 
 \end{align}
 In order to deduce the expansion 
 \eqref{BinG2}-\eqref{BinG3}  of the matrices $ F, G $ we exploit further information for 
\begin{equation}\label{Lmu0}
\sL_{\mu,0} := 
%\begin{bmatrix} \ch \pa_x &
% |D+ \mu| \, \tanh\big(\tth|D+\mu| \big)\\ - 1 & \ch\pa_x 
 %\end{bmatrix}  = 
 \cJ  {\cal B}_{\mu,0} \, , \quad 
{\cal B}_{\mu,0} := \begin{bmatrix} 1 & - \ch\pa_x   \\
\ch   \pa_x  & |D+ \mu| \, \tanh\big(\tth|D+\mu|\big) \end{bmatrix}
 \, . 
\end{equation}
 We have 
\begin{lem}\label{lem:fina}
At $ \e = 0 $ the matrices are 
$  F (\mu,0) = 0 $ and $  G (\mu,0) =   \begin{pmatrix}
1 & 0 \\
0 & \mu \tanh ( \tth \mu )
\end{pmatrix} $. 
\end{lem}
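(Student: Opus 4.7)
The whole lemma rests on a single structural observation: at $\e=0$ the self-adjoint operator $\cB_{\mu,0}$ in \eqref{Lmu0} is a matrix-valued \emph{Fourier multiplier}, hence it preserves each ``mode'' subspace $V_k := \{e^{\im kx}w : w \in \bC^2\}$, and distinct $V_k$'s are mutually $L^2$-orthogonal with respect to the scalar product \eqref{scalar}. I would exploit this to show that the Kato-transported basis $\cF$ at $\e=0$ respects the Fourier-mode decomposition, which decouples the blocks at once.

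\textbf{Step 1: $U_{\mu,0}$ is mode-preserving.} Since $\sL_{\mu,0}$ is a Fourier multiplier, so is the resolvent $(\sL_{\mu,0}-\lambda)^{-1}$ for every $\lambda$ on the Kato contour $\Gamma$. By the Riesz formula \eqref{Pproj} the spectral projector $P_{\mu,0}$ is then a Fourier multiplier and hence commutes with the orthogonal projector $\Pi_k$ onto $V_k$ for every $k\in\bZ$; the same holds for $P_{0,0}$. Since formula \eqref{OperatorU} builds $U_{\mu,0}$ through sums, products and the convergent power series \eqref{rootexp} of $P_{\mu,0}$ and $P_{0,0}$ alone, $U_{\mu,0}$ inherits the property of commuting with every $\Pi_k$.

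\textbf{Step 2: vanishing of $F(\mu,0)$.} From \eqref{funperturbed}, the unperturbed vectors satisfy $f_1^\pm\in V_1\oplus V_{-1}$ and $f_0^\pm\in V_0$; by Step 1, so do their Kato-transformed counterparts $f_k^\sigma(\mu,0) = U_{\mu,0}f_k^\sigma$. Applying $\cB_{\mu,0}$ (which again preserves $V_0$) leaves $\cB_{\mu,0}f_0^\sigma(\mu,0)$ inside $V_0$, whereas $f_1^{\sigma'}(\mu,0)$ lies in $V_1\oplus V_{-1}$, and these two spaces are $L^2$-orthogonal. All four matrix entries $\bigl(\cB_{\mu,0} f_0^\sigma(\mu,0),\,f_1^{\sigma'}(\mu,0)\bigr)$ therefore vanish, which is exactly $F(\mu,0)=0$.

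\textbf{Step 3: explicit form of $G(\mu,0)$.} The Fourier block $\sL_{\mu,0}|_{V_0}$ carries precisely the two small eigenvalues $\lambda_0^\pm(\mu)$ of \eqref{omeghino}, so the whole $V_0$ lies inside the spectral subspace $\cV_{\mu,0}$ for small $\mu$; in particular $P_{\mu,0}|_{V_0}=\uno|_{V_0}$ and trivially $P_{0,0}|_{V_0}=\uno|_{V_0}$. Substituting into \eqref{OperatorU} then yields $U_{\mu,0}|_{V_0}=\uno|_{V_0}$, so $f_0^+(\mu,0)=\vet{1}{0}$ and $f_0^-(\mu,0)=\vet{0}{1}$. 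A direct application of $\cB_{\mu,0}$ from \eqref{Lmu0} to these constant vectors, using $|D+\mu|\tanh(\tth|D+\mu|)\cdot 1 = \mu\tanh(\tth\mu)$ and $\ch\pa_x 1=0$, gives $\cB_{\mu,0}\vet{1}{0}=\vet{1}{0}$ and $\cB_{\mu,0}\vet{0}{1}=\vet{0}{\mu\tanh(\tth\mu)}$, from which the stated diagonal form of $G(\mu,0)$ is immediate. The argument is structural rather than computational; the only point requiring a moment of care is the identification $U_{\mu,0}|_{V_0}=\uno|_{V_0}$, which relies on the special fact that $V_0$ is simultaneously invariant for both Kato projectors at $\e=0$.
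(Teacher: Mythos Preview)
Your proof is correct and follows essentially the same approach as the paper. The paper's proof invokes Lemma~\ref{lem:A5} for the facts $f_0^\pm(\mu,0)\equiv f_0^\pm$ and that $f_1^\pm(\mu,0)$ have zero space average, then computes $\cB_{\mu,0}f_0^\pm$ directly; your mode-preservation argument for $U_{\mu,0}$ is precisely what underlies Lemma~\ref{lem:A5}, so you have in effect reproduced its content inline and packaged it a bit more structurally.
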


\begin{proof}
By Lemma \ref{lem:A5} and \eqref{Lmu0} we have 
$ {\cal B}_{\mu,0} f_0^+ (\mu,0) = f_0^+ $ and $ {\cal B}_{\mu,0} f_0^- (\mu,0) = 
\mu \tanh (\tth \mu) f_0^- $, for any $ \mu $.   Then the lemma follows recalling 
\eqref{matrix22} and the fact that  
$f_1^+(\mu,0)$ and $f_1^-(\mu,0)$ have zero space average
by Lemma \ref{lem:A5}.
\end{proof}

In view of Lemma % \ref{lem:B1}, \ref{lem:B2}, \ref{lem:B3}  and 
\ref{lem:fina} we deduce that the matrices
\eqref{BinG2in} and  \eqref{BinG3in} have the form \eqref{BinG2} and  \eqref{BinG3}.
This completes the proof of Proposition \ref{BexpG}.

\smallskip

We now show
  that  the constant $\te_{22}$ in \eqref{te22} is positive for any depth $\tth >0 $.  
\begin{lem}\label{nondegcond} For any $ \tth > 0 $ the term $\te_{22} $ in \eqref{te22} is positive, 
$\te_{22} \to 0 $  as $\tth\to 0^+$ and $\te_{22} \to 1 $  as $\tth\to +\infty$. As a consequence for any  $\tth_0 >0 $ the term $\te_{22}$ is bounded from below uniformly in $\tth>\tth_0$.
\end{lem}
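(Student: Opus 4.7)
\medskip

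\noindent\textbf{Proof plan.} The strategy is to set $t:=\tanh(\tth)=\ch^2\in(0,1)$ and treat the numerator of $\te_{22}$ as a quadratic polynomial in the \emph{independent} variable $\tth$ whose coefficients depend on the parameter $t$. Concretely, write
\[
\ch^3\,\te_{22}(\tth) \;=\; A(t)\,\tth^2 + B(t)\,\tth + C(t)\, , \qquad A(t):=(1-t^2)(1+3t^2)\, ,\ \ B(t):=-2t(1-t^2)\, ,\ \ C(t):=t^2\, .
\]
Since $t\in(0,1)$, both $A(t)>0$ and $C(t)>0$. The discriminant is
\[
B(t)^2-4A(t)C(t) \;=\; 4t^2(1-t^2)^2 - 4t^2(1-t^2)(1+3t^2) \;=\; 4t^2(1-t^2)\bigl[(1-t^2)-(1+3t^2)\bigr] \;=\; -16\,t^4(1-t^2)\, ,
\]
which is strictly negative for any $t\in(0,1)$. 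Hence, for every fixed $t\in(0,1)$, the quadratic $A(t)s^2+B(t)s+C(t)$ is strictly positive for \emph{all} real $s$; specializing to $s=\tth$ and $t=\tanh\tth$ gives $\ch^3\,\te_{22}(\tth)>0$ for every $\tth>0$. Dividing by $\ch^3>0$ yields $\te_{22}(\tth)>0$.

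\medskip

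\noindent For the two limits I would simply Taylor expand. As $\tth\to 0^+$, using $t=\tanh\tth=\tth-\tfrac{\tth^3}{3}+O(\tth^5)$, a direct expansion of $A(t)\tth^2+B(t)\tth+C(t)$ shows that the leading terms of orders $\tth^2$ and $\tth^3$ cancel and the numerator is $\tfrac{10}{3}\tth^4+O(\tth^6)$, while $\ch^3\sim \tth^{3/2}$; hence $\te_{22}(\tth)\sim \tfrac{10}{3}\tth^{5/2}\to 0$. As $\tth\to+\infty$, $1-\ch^4=1-\tanh^2\tth=\mathrm{sech}^2(\tth)$ decays exponentially, so both $A(t)\tth^2$ and $B(t)\tth$ tend to $0$, the numerator tends to $\ch^4\to 1$ and $\ch^3\to 1$, giving $\te_{22}(\tth)\to 1$.

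\medskip

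\noindent The final assertion (uniform lower bound on $[\tth_0,+\infty)$) follows from continuity of $\te_{22}$ on $(0,+\infty)$, its strict positivity just proved, and the limit $\te_{22}\to 1$ at infinity: on any half-line $[\tth_0,+\infty)$ the function $\te_{22}$ is continuous, bounded below by a continuous positive function on the compact interval $[\tth_0,M]$ (for $M$ large) and bounded below by $1/2$ on $[M,+\infty)$. No step here is a real obstacle; the only non-trivial observation is the discriminant computation, which is the natural way to produce a one-line proof of positivity without invoking any delicate inequality between $\tanh\tth$ and $\tth$.
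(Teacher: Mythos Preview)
Your proof is correct and follows essentially the same route as the paper: both treat $\ch^{3}\te_{22}$ as a quadratic in $\tth$ with coefficients depending on $t=\ch^{2}=\tanh\tth\in(0,1)$, and conclude positivity from the negativity of the discriminant (the paper records the reduced discriminant $-4t^{4}(1-t^{2})$, which is your value divided by $4$). The only slip is arithmetic: in the $\tth\to 0^+$ expansion the numerator is $4\tth^{4}+O(\tth^{6})$, not $\tfrac{10}{3}\tth^{4}$, but this does not affect the conclusion that $\te_{22}\to 0$.
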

\begin{proof}
The quantity  $ z := \ch^2 = \tanh(\tth) $  is in $ (0,1)$ for any $ \tth > 0 $. Then the quadratic polynomial 
$ (0, + \infty) \ni \tth \mapsto (1-z^2)(1+3z^2) \tth^2+2 z(z^2-1) \tth+z^2  $ 
is positive because
its discriminant $- 4z^4(1-z^2) $ is  negative as $ 0<z^2<1$. 
The limits for $ \tth\to 0^+$ and $\tth\to +\infty$ follow by inspection.
\end{proof}

\section{Block-decoupling and emergence of the Whitham-Benjamin function} 
\label{sec:block}

In this section we block-decouple 
the $ 4 \times 4 $ Hamiltonian matrix $\tL_{\mu,\e} = \tJ_4 \tB_{\mu,\e} $ obtained 
in Proposition \ref{BexpG}.  

We first perform a singular symplectic and reversibility-preserving
change of coordinates. 

\begin{lem}\label{decoupling1prep}
{{\bf (Singular symplectic rescaling)}}
 The conjugation of the Hamiltonian and reversible matrix $\tL_{\mu,\e} = \tJ_4 \tB_{\mu,\e} $ 
obtained in Proposition \ref{BexpG} through the symplectic and reversibility-preserving 
 $ 4 \times 4 $-matrix 
 \begin{equation}\label{Ychangeprep}
 Y := \begin{pmatrix} Q & 0 \\ 0 & Q \end{pmatrix}
 \quad  \text{with} \quad 
 Q:=\begin{pmatrix} \mu^{\frac12} & 0 \\ 0 & \mu^{-\frac12}\end{pmatrix} \, ,  \ \ \mu > 0 \, ,
\end{equation} 
yields the Hamiltonian and reversible matrix 
\begin{align}
&\tL_{\mu,\e}^{(1)} := Y^{-1} \tL_{\mu,\e} Y = \tJ_4\tB^{(1)}_{\mu,\e}  =
\begin{pmatrix}   \tJ_2 E^{(1)}  & \tJ_2  F^{(1)}  \\  
\tJ_2 [F^{(1)}]^*  &  \tJ_2 G^{(1)}  \end{pmatrix}  \label{LinHprep}
 \end{align}
where $ \tB_{\mu,\e}^{(1)} $ is a self-adjoint and reversibility-preserving 
 $ 4 \times 4$  matrix 
  \begin{equation}\label{splitB1prep}
\tB_{\mu,\e}^{(1)} =
\begin{pmatrix} 
E^{(1)} & F^{(1)} \\ 
[F^{(1)}]^* & G^{(1)} 
\end{pmatrix}, \quad 
E^{(1)}  = [E^{(1)}]^* \, , \ G^{(1)} = [G^{(1)}]^* \, , 
\end{equation} 
 where the $ 2 \times 2 $  reversibility-preserving matrices $E^{(1)} $, $ G^{(1)} $  and $ F^{(1)}$ extend analytically at $\mu  =0$ with the following expansion
 \begin{align}\label{BinH1prep}
& E^{(1)} = 
\begingroup
\setlength\arraycolsep{2.9pt}
\begin{pmatrix} 
  \te_{11} \mu\e^2(1+r_1'(\e,\mu\e))- \te_{22}\frac{\mu^3}{8}(1+r_1''(\e,\mu))  & \im \big( \frac12\te_{12}\mu+ r_2(\mu\e^2,\mu^2\e,\mu^3) \big)  \\
- \im\big( \frac12\te_{12}\mu+ r_2(\mu\e^2,\mu^2\e,\mu^3) \big) & -\te_{22}\frac{\mu}{8}(1+r_5(\e,\mu))
 \end{pmatrix}\, , \endgroup \\
 & \label{BinH2prep} G^{(1)} = 
 \begin{pmatrix} 
\mu+  r_8(\mu\e^2, \mu^3 \e ) &   - \im r_9(\mu\e^2,\mu^2\e)  
\\
  \im  r_9(\mu\e^2, \mu^2\e  )  & \tanh (\tth\mu) + r_{10}(\mu\e)
 \end{pmatrix}\, , \\
 & \label{BinH3prep}
 F^{(1)} = 
 \begin{pmatrix} 
\tf_{11}\mu\e+r_3(\mu\e^3,\mu^2\e^2,\mu^3\e ) & 
\im \mu\e \ch^{-\frac12}  + \im  r_4(\mu\e^2, \mu^2 \e) 
 \\
  \im   r_6(\mu\e)
   & r_7(\mu\e 
   ) 
 \end{pmatrix} 
 \end{align}
 where $\te_{11}, \te_{12}, \te_{22}, \tf_{11}$ are defined in 
 \eqref{te11}, \eqref{te12}, \eqref{te22}.
\end{lem}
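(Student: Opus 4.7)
The plan is to apply the general symplectic conjugation formula \eqref{sympchange} to the block-diagonal rescaling $Y$ in \eqref{Ychangeprep}, and then read off the rescaled blocks directly from \eqref{BinG1}--\eqref{BinG3}.

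First I would verify that $Y$ is symplectic and reversibility-preserving. Since $Y$ is block-diagonal with identical $2\times 2$ blocks $Q=\mathrm{diag}(\mu^{1/2},\mu^{-1/2})$, both properties reduce to the corresponding identities for $Q$: the relation $Q^\top \tJ_2 Q=\tJ_2$ is immediate because the $\mu^{\pm 1/2}$ factors cancel through the off-diagonal structure of $\tJ_2$, while $Q\rho_2=\rho_2 Q$ holds since $Q$ is real and diagonal. By \eqref{sympchange} the conjugate is $\tL^{(1)}_{\mu,\e}=\tJ_4\tB^{(1)}_{\mu,\e}$ with $\tB^{(1)}_{\mu,\e}=Y^*\tB_{\mu,\e}Y$; since $Y^*=Y$ is block-diagonal, the $2\times 2$ blocks of $\tB^{(1)}_{\mu,\e}$ are simply $E^{(1)}=QEQ$, $F^{(1)}=QFQ$, $G^{(1)}=QGQ$, and self-adjointness of $E^{(1)},G^{(1)}$ together with the reversibility-preserving property of the three blocks are inherited from those of $E,G,F$ under this real diagonal congruence. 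The block form \eqref{LinHprep} then follows from the block-diagonal structure of $\tJ_4$.

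The next step is the direct computation: for any $2\times 2$ matrix $M=\begin{psmallmatrix}a&b\\c&d\end{psmallmatrix}$ one has
\[
QMQ=\begin{pmatrix}\mu a & b \\ c & \mu^{-1}d\end{pmatrix}.
\]
Inserting the expansions \eqref{BinG1}--\eqref{BinG3} into this rule and using the convention of the paper for absorbing factors of $\mu$ into the remainder symbols (so that e.g.\ $\mu\cdot r_3(\e^3,\mu\e^2,\mu^2\e)$ becomes $r_3(\mu\e^3,\mu^2\e^2,\mu^3\e)$, and $\mu\cdot r_8(\e^2,\mu^2\e)$ becomes $r_8(\mu\e^2,\mu^3\e)$) reproduces termwise the expressions \eqref{BinH1prep}--\eqref{BinH3prep}.

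The main obstacle, and the only nontrivial point, is to show that the singular factor $\mu^{-1/2}$ in $Q$ does not introduce any actual singularity at $\mu=0$: equivalently, each $(2,2)$-entry of $E,F,G$ must be divisible by $\mu$ with an analytic quotient. Inspecting \eqref{BinG1}--\eqref{BinG3}, the $(2,2)$-entry of $E$ equals $-\te_{22}\mu^2/8\cdot(1+r_5(\e,\mu))$, which is manifestly of order $\mu^2$; the $(2,2)$-entry of $G$ equals $\mu\tanh(\tth\mu)+r_{10}(\mu^2\e)$, where the first summand is $\mu$ times an analytic function of $\mu$ and the second vanishes identically at $\mu=0$, hence is divisible by $\mu$ with an analytic quotient by the standard argument (an analytic function vanishing on $\{\mu=0\}$ is $\mu$ times an analytic function); and the $(2,2)$-entry of $F$ is $r_7(\mu^2\e)$, which vanishes at $\mu=0$ and is therefore divisible by $\mu$ for the same reason. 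Dividing each of these three entries by $\mu$ produces an analytic function of $(\mu,\e)$ in a neighborhood of the origin with exactly the form of the $(2,2)$-entry claimed in \eqref{BinH1prep}--\eqref{BinH3prep}, completing the proof.
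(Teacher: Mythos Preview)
Your proof is correct and follows essentially the same approach as the paper: verify that $Y$ is symplectic and reversibility-preserving, apply the conjugation formula \eqref{sympchange} to obtain $E^{(1)}=QEQ$, $F^{(1)}=QFQ$, $G^{(1)}=QGQ$, and then read off the rescaled entries from \eqref{BinG1}--\eqref{BinG3}. You are more explicit than the paper about the analytic extension at $\mu=0$ (which the paper relegates to a remark following the lemma), correctly identifying that the only potential obstruction is the $\mu^{-1}$ factor on the $(2,2)$-entries and checking that each of these is divisible by $\mu$ with analytic quotient.
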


\begin{rmk}
The matrix $\tL_{\mu,\e}^{(1)}$, a priori defined only for $\mu \neq 0$,  extends analytically to the zero matrix at  
 $\mu = 0$.
For $\mu \neq 0$ the spectrum of $\tL_{\mu,\e}^{(1)}$ coincides with the spectrum of $\tL_{\mu,\e}$.
\end{rmk}

\begin{proof}
The matrix $Y $ is symplectic, i.e. \eqref{sympmatrix} holds,
and since $\mu$ is real, it is reversibility preserving, i.e. 
satisfies  \eqref{revprop}.  
By \eqref{sympchange},  
\begin{equation*}\label{B1formaprep}
   \tB_{\mu,\e}^{(1)} = Y^* \tB_{\mu,\e} Y = \begin{pmatrix}  E^{(1)} & F^{(1)} \\ [F^{(1)}]^* &  
   G^{(1)}  \end{pmatrix}, 
\end{equation*}
with, $Q$ being self-adjoint, $E^{(1)}=QEQ = [E^{(1)}]^* $, $G^{(1)}=QGQ=[G^{(1)}]^*$ and $F^{(1)}=QFQ$. 
 In  view of \eqref{BinG1}-\eqref{BinG3}, we obtain
\eqref{BinH1prep}-\eqref{BinH3prep}.
\end{proof}

\subsection{Non-perturbative step of block-decoupling} 
\label{sec:5.2}

We first verify 
that the quantity $D_\tth:=\tth -\tfrac14 \te_{12}^2$  is nonzero for any  $\tth > 0 $. 
In view of the comment 3 after Theorem \ref{thm:simpler},
we have that $D_\tth = \tth-c_g^2$. 
The non-degeneracy property $ D_\tth \neq 0 $ corresponds to that in 
Bridges-Mielke \cite[p.183]{BrM} and \cite[p.409]{Whitham}. 
\begin{lem} For any $ \tth >0 $ it results 
\begin{equation}\label{defDh}
\mathtt{D}_\tth:=\tth -\tfrac14 \te_{12}^2> 0 \, ,\quad \text{and}\quad \lim_{\tth\to 0^+}\mathtt{D}_\tth=0\, . 
\end{equation}
\end{lem}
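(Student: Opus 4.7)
The plan is to rewrite the quantity $\mathtt{D}_\tth$ in a more tractable form using the identities $\ch^2=\tanh\tth$ and $1-\ch^4=\mathrm{sech}^2\tth$, and then reduce the positivity claim to a simple monotonicity argument on an auxiliary function of $\tth$.

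First I would clear the denominator $\ch$ appearing in \eqref{te12}: multiplying by $\ch$ gives
\[
\ch\,\te_{12}=\ch^2+(1-\ch^4)\tth=\tanh\tth+\tth\,\mathrm{sech}^2\tth,
\]
so squaring yields $\tanh\tth\cdot\te_{12}^2=\bigl(\tanh\tth+\tth\,\mathrm{sech}^2\tth\bigr)^2$. Consequently,
\[
4\tanh\tth\cdot\mathtt{D}_\tth \;=\;\phi(\tth)\quad\text{where}\quad\phi(\tth):=4\tth\tanh\tth-\bigl(\tanh\tth+\tth\,\mathrm{sech}^2\tth\bigr)^2 .
\]
Since $4\tanh\tth>0$ for $\tth>0$, it is enough to prove $\phi(\tth)>0$ on $(0,+\infty)$.

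Next I would verify that $\phi(0)=0$ and compute $\phi'$. Using $(\tanh\tth)'=\mathrm{sech}^2\tth$ and $(\tth\,\mathrm{sech}^2\tth)'=\mathrm{sech}^2\tth(1-2\tth\tanh\tth)$, the derivative of the inner expression is $2\,\mathrm{sech}^2\tth\,(1-\tth\tanh\tth)$, and a direct simplification yields
\[
\phi'(\tth)=4\bigl(\tanh\tth+\tth\,\mathrm{sech}^2\tth\bigr)\bigl[1-\mathrm{sech}^2\tth+\tth\,\mathrm{sech}^2\tth\,\tanh\tth\bigr] .
\]
The bracket equals $\tanh^2\tth+\tth\,\mathrm{sech}^2\tth\,\tanh\tth=\tanh\tth\cdot(\tanh\tth+\tth\,\mathrm{sech}^2\tth)$, using $1-\mathrm{sech}^2\tth=\tanh^2\tth$, so
\[
\phi'(\tth)=4\tanh\tth\cdot\bigl(\tanh\tth+\tth\,\mathrm{sech}^2\tth\bigr)^2>0 \quad\text{for }\tth>0 .
\]
Combined with $\phi(0)=0$, this gives $\phi(\tth)>0$ on $(0,+\infty)$, hence $\mathtt{D}_\tth>0$.

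For the limit, I observe that as $\tth\to 0^+$ one has $\ch\sim\sqrt{\tth}$ and $\ch^{-1}(1-\ch^4)\tth\sim\sqrt{\tth}$, so $\te_{12}\to 0$; therefore $\mathtt{D}_\tth=\tth-\tfrac14\te_{12}^2\to 0$. (Equivalently, $\phi(\tth)=4\tth^4+O(\tth^6)$ while $4\tanh\tth=4\tth+O(\tth^3)$, so $\mathtt{D}_\tth=\tth^3+O(\tth^5)$.) The main---and only---technical point is recognizing the factorization of $\phi'$; everything else is bookkeeping with the identity $\ch^2=\tanh\tth$.
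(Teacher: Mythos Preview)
Your argument is correct. Both your proof and the paper's reduce the positivity to a monotonicity statement for an auxiliary function vanishing at $\tth=0$, but the choices differ. The paper factors $\mathtt{D}_\tth=(\sqrt{\tth}+\tfrac12\te_{12})(\sqrt{\tth}-\tfrac12\te_{12})$, rewrites the second factor (after clearing $\ch$) as a product $q(\tth)p(\tth)$, observes $p>0$ from $\tth>\tanh\tth$, and then shows $q'>0$ by a somewhat ad hoc inspection of several terms. You instead multiply $\mathtt{D}_\tth$ by $4\tanh\tth$ to obtain $\phi(\tth)=4\tth\tanh\tth-g(\tth)^2$ with $g(\tth)=\tanh\tth+\tth\,\mathrm{sech}^2\tth$, and your derivative computation collapses to the closed form $\phi'(\tth)=4\tanh\tth\cdot g(\tth)^2$, which is manifestly positive. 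Your route avoids the intermediate $q\cdot p$ factorization and yields a cleaner derivative; the paper's route, by contrast, makes the comparison $\sqrt{\tth}>\tfrac12\te_{12}$ more explicit. Either way the limit $\mathtt{D}_\tth\to 0$ follows immediately from $\te_{12}\to 0$ as $\tth\to 0^+$.
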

\begin{proof}
We write
$ \mathtt{D}_\tth = (\sqrt{\tth}+\frac12 \te_{12})(\sqrt{\tth}-\frac12 \te_{12})$ 
whose first factor is positive for $\tth>0$. 
We claim that also the second factor is positive. 
In view of \eqref{te12} it is equal to 
$  \tfrac12 \ch^{-1} f(\tth) $ with
\begin{align*}
f( \tth )  := 
\big(\sqrt{\tth}\tanh(\tth) - \sqrt{\tth}+\sqrt{\tanh(\tth)}\big)\big(\sqrt{\tth}\tanh(\tth) + \sqrt{\tth}-\sqrt{\tanh(\tth)}\big)=:q(\tth)p(\tth)\, . 
\end{align*}
The function $p(\tth)$ is positive since $ \tth >\tanh(\tth)$ for any $\tth>0$. 
We claim that also the function $q(\tth )$ is positive. 
Indeed its derivative
$$
q' (\tth)
 = \frac{1 - \tanh (\tth)}{2 \sqrt{\tth} \sqrt{ \tanh (\tth)}} 
\Big( - \sqrt{ \tanh (\tth)} + \sqrt{\tth} + \sqrt{\tth} \,   {\tanh (\tth) } \Big)
+ \sqrt{\tth} \big( 1 - \tanh^2 (\tth) \big) > 0   
$$
for any $ \tth > 0 $. Since $ q(0) = 0 $ we deduce that $ q (\tth) > 0 $ for any  $ \tth > 0 $.  
This  proves the lemma.
\end{proof}

We now state the main result of this section. 

\begin{lem} {\bf (Step of block-decoupling)}\label{decoupling2}
There exists a $2\times 2$ reversibility-preserving matrix $ X $,
analytic in $(\mu, \e) $,  of the form
\begin{align} \label{Xsylvy}
X & := \begin{pmatrix} x_{11} & \im x_{12} \\ \im x_{21} & x_{22} \end{pmatrix} 
\qquad \qquad \qquad \qquad	\text{with} \quad  x_{ij}\in\bR \, , \ i,j=1,2 \, , \\
& 
= 
  \begin{pmatrix}
    r_{11}(\e) 
    & \im\,     r_{12}(\e) 
    \\
  -\im \frac12 \mathtt{D}_\tth^{-1}  (\te_{12} \tf_{11} + 2\ch^{-\frac12})\e +  \im r_{21}(\e^2, \mu\e) & 
   \frac12 \mathtt{D}_\tth^{-1} (\ch^{-\frac12}\te_{12} +2\tth \tf_{11})\e+ r_{22}(\e^2,\mu\e)
   \end{pmatrix}\,  \notag ,
\end{align}
where 
$\te_{12}$, $\tf_{11}$ are defined in \eqref{te12}, \eqref{te11} and $ \mathtt{D}_\tth $ is the positive constant  in \eqref{defDh}, 
such that the following holds true. By conjugating the Hamiltonian and reversible matrix 
$\tL_{\mu,\e}^{(1)}$, defined in \eqref{LinHprep}, with the symplectic and reversibility-preserving $4\times 4$ matrix 
\begin{equation}\label{formaS}
\exp\left(S^{(1)} \right) \, , 
\quad \text{ where } 
\qquad S^{(1)} := \tJ_4 \begin{pmatrix} 0 & \Sigma \\ \Sigma^* & 0 \end{pmatrix} \, , \qquad \Sigma:= \tJ_2 X \, , 
\end{equation}
we get the Hamiltonian and reversible matrix  
\begin{equation}\label{sylvydec}
 \tL_{\mu,\e}^{(2)} := \exp\left(S^{(1)} \right)\tL_{\mu,\e}^{(1)} \exp\left(-S^{(1)} \right)= \tJ_4 \tB_{\mu,\e}^{(2)} =
\begin{pmatrix}   \tJ_2 E^{(2)}  & \tJ_2  F^{(2)}  \\  
\tJ_2 [F^{(2)}]^*  &  \tJ_2 G^{(2)}  \end{pmatrix}\, ,
\end{equation}
where the reversibility-preserving $2\times 2$ self-adjoint 
matrix   $[E^{(2)}]^*=E^{(2)}$ has the form 
\begin{align}
\label{Bsylvy1}
& E^{(2)} = 
\begin{pmatrix} 
\mu\e^2 \teWB+ r_1'(\mu \e^3, \mu^2 \e^2 )-\te_{22}\frac{\mu^3}{8}(1+r_1''(\e,\mu))  & \im  \big( \frac12\te_{12}\mu+ r_2(\mu\e^2,\mu^2\e,\mu^3) \big)  \\
- \im  \big( \frac12\te_{12}\mu+ r_2(\mu\e^2,\mu^2\e,\mu^3) \big) & -\te_{22}\frac{\mu}{8}(1+r_5(\e,\mu))
 \end{pmatrix}\, ,
 \end{align}
  where
\begin{align}\label{te2}
\teWB =\te_{11} -  
\mathtt{D}_\tth^{-1} \big( \ch^{-1} + \tth \tf_{11}^2 +\te_{12}\tf_{11}\ch^{-\frac12} \big)   
\end{align}
(with constants  $ \te_{11}$, $\mathtt{D}_\tth $, $\tf_{11} $, $ \te_{12}$, 
defined in \eqref{te11}, \eqref{defDh}, \eqref{te12}), 
is the Whitham-Benjamin function defined in \eqref{funzioneWB}, 
the reversibility-preserving $2\times 2$ self-adjoint 
matrix  $[G^{(2)}]^*=G^{(2)}$ has the form
\begin{equation}
  \label{Bsylvy2} G^{(2)} = 
 \begin{pmatrix} 
\mu+ r_8(\mu\e^2, \mu^3 \e )
%\mu^2 \e) 
&   - \im r_9(\mu\e^2,\mu^2\e) \\
  \im  r_9(\mu\e^2, \mu^2\e)  & \tanh(\tth\mu) + r_{10}(\mu\e) 
 \end{pmatrix}\, , 
 \end{equation}
and 
 \begin{equation}
 \label{Bsylvy3}F^{(2)}= \begin{pmatrix}
  r_3(\mu\e^3 ) 
& \im r_4(\mu\e^3 ) \\
\im r_6(\mu\e^3 ) &
r_7(\mu\e^3)
\end{pmatrix} \, .
 \end{equation}
\end{lem}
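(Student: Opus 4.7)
My plan is to perform one KAM-style off-diagonal elimination step: choose $S^{(1)}$ of the block off-diagonal form \eqref{formaS} so that the leading part of the off-diagonal block $\tJ_2 F^{(1)}$ of $\tL^{(1)}_{\mu,\e}$ is exactly cancelled by the commutator $[S^{(1)},\mathcal D]$, where $\mathcal D$ is the block-diagonal part of $\tL^{(1)}_{\mu,\e}$. By the resulting homological identity, the second-order term of the Lie series $\tfrac12[S^{(1)},[S^{(1)},\mathcal D]]+[S^{(1)},\mathcal N]$ collapses to $\tfrac12[S^{(1)},\mathcal N]$, which is block-diagonal and supplies the \emph{non-perturbative} correction to $E^{(1)}$ that turns $\te_{11}$ into the Whitham--Benjamin coefficient $\teWB$ in \eqref{te2}.

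\textbf{Homological equation and non-degeneracy.} Writing $\tL^{(1)}_{\mu,\e}=\mathcal D+\mathcal N$ with $\mathcal D=\mathrm{diag}(\tJ_2 E^{(1)},\tJ_2 G^{(1)})$ and $\mathcal N$ the off-diagonal part, the equation $[S^{(1)},\mathcal D]+\mathcal N=0$ reads, on its upper-right block, as the Sylvester-type equation
\begin{equation*}
\tJ_2 E^{(1)}\,X - X\,\tJ_2 G^{(1)} = -\tJ_2 F^{(1)}.
\end{equation*}
Plugging in the reversibility-preserving ansatz \eqref{Xsylvy} (dictated by \eqref{revprop}) and using the leading expansions \eqref{BinH1prep}--\eqref{BinH3prep}, at leading order this decouples into a $2\times 2$ linear system for $(x_{21},x_{22})$ whose coefficient matrix has determinant proportional to $-\mathtt{D}_\tth$, nonzero by \eqref{defDh}. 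Cramer's rule yields the explicit leading terms for $x_{21},x_{22}$ displayed in \eqref{Xsylvy}; the entries $(x_{11},x_{12})$ enter only at subleading order and get absorbed into $r_{11},r_{12}$. Analyticity of $X$ in $(\mu,\e)$ then follows from the analytic implicit function theorem applied to the full Sylvester equation. The choice $\Sigma=\tJ_2 X$ with $X$ having the alternating real/imaginary pattern \eqref{Xsylvy} makes $\begin{psmallmatrix} 0 & \Sigma \\ \Sigma^* & 0 \end{psmallmatrix}$ self-adjoint and reversibility-preserving, hence $S^{(1)}$ is Hamiltonian and reversibility-preserving; by the fact recalled at the end of Section \ref{Katoapp}, $\exp(S^{(1)})$ is symplectic and reversibility-preserving, so $\tL^{(2)}_{\mu,\e}$ automatically has the Hamiltonian reversible form \eqref{sylvydec}.

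\textbf{Emergence of the Whitham--Benjamin coefficient.} By the homological equation, the Lie expansion simplifies to $\tL^{(2)}_{\mu,\e}=\mathcal D+\tfrac12[S^{(1)},\mathcal N]+\mathrm{h.o.t.}$ Computing the upper-left $2\times 2$ block of $[S^{(1)},\mathcal N]$ and substituting the leading forms of $X$ from the Sylvester step and of $F^{(1)}$ from \eqref{BinH3prep}, the $(1,1)$-entry acquires precisely a correction of size $\mu\e^2$ with coefficient
\begin{equation*}
-\,\mathtt{D}_\tth^{-1}\bigl(\ch^{-1}+\tth\,\tf_{11}^{\,2}+\te_{12}\,\tf_{11}\,\ch^{-1/2}\bigr),
\end{equation*}
which added to the original $\te_{11}\,\mu\e^2$ present in $E^{(1)}_{11}$ gives exactly the Whitham--Benjamin function \eqref{te2}. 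The $(2,2)$, $(1,2)$, $(2,1)$ entries of $E^{(1)}$ receive only corrections that are absorbed in the remainders of \eqref{Bsylvy1}, essentially because the factors of $F^{(1)}$ in the commutator vanish at $\e=0$ and the correction is quadratic in $S^{(1)}=O(\e)$. An analogous computation for the lower-right block yields \eqref{Bsylvy2}, while the new off-diagonal block $F^{(2)}$ arises only from $\tfrac12[S^{(1)},[S^{(1)},\mathcal N]]$ and higher commutators, hence has size $O(\mu\e^3)$ as in \eqref{Bsylvy3}.

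\textbf{Main obstacle.} The algebraic scheme is standard, but the genuine content is the exact arithmetic of the correction to $E^{(1)}_{11}$. One has to keep careful track of three occurrences of $\tJ_2$ (entering through $S^{(1)}=\tJ_4(\cdots)$ and through $\Sigma=\tJ_2 X$), of the signs in Cramer's rule with determinant $-\mathtt{D}_\tth$, and of the leading coefficients $\tf_{11}$, $\te_{12}$, $\ch^{-1/2}$ from \eqref{te11}--\eqref{te12}. A sign error or a misplaced factor of $\mathtt{D}_\tth^{-1}$ would change the sign of $\teWB$, and hence the very location of the critical depth $\tthWB$; reproducing \eqref{te2} \emph{exactly} is therefore the core of the proof. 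All remaining verifications --- that the Hamiltonian and reversible structure is preserved, that the imaginary/real alternation of \eqref{revprop} is respected, and that the residual off-diagonal block really has the claimed size --- are routine consequences of the block calculus.
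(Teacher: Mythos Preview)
Your strategy matches the paper's proof: split $\tL_{\mu,\e}^{(1)}$ into block-diagonal and block-off-diagonal parts, solve the homological (Sylvester) equation for $S^{(1)}$, then read off the block-diagonal correction $\tfrac12[S^{(1)},\mathcal N]$ that produces $\teWB$. The one point that needs tightening is the analyticity of $X$ at $\mu=0$. Every entry of $E^{(1)}$, $G^{(1)}$, $F^{(1)}$ in \eqref{BinH1prep}--\eqref{BinH3prep} carries a factor of $\mu$, so the Sylvester operator $X\mapsto \tJ_2 E^{(1)}X - X\,\tJ_2 G^{(1)}$ degenerates at $\mu=0$; in particular the determinant of your leading $2\times2$ block is $-\mathtt D_\tth\,\mu^2$, not a nonzero constant, and the analytic implicit function theorem does not apply directly. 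The fix is to observe that the right-hand side $-\tJ_2 F^{(1)}$ also vanishes to order $\mu$, so after dividing the entire linear system by $\mu$ one obtains an analytic system in $(\mu,\e)$ that is genuinely non-degenerate at $(0,0)$ (with solution $X=0$ there); its unique solution is then analytic in $(\mu,\e)$. The paper carries this out by writing the full $4\times4$ real linear system, computing its inverse via an explicit formula for a structured matrix, and checking that $\mathcal A^{-1}\vec f$ has a removable singularity at $\mu=0$. Your $2\times2$ leading-order shortcut (exploiting that $E^{(1)}_{11}=\cO(\mu\e^2,\mu^3)$ is subleading) is a legitimate simplification once this cancellation of the $\mu$-factor is made explicit.
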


The rest of the section is devoted to the proof of Lemma \ref{decoupling2}.
 For simplicity let $ S = S^{(1)} $. 

The matrix  $\text{exp}(S)$ is symplectic and reversibility-preserving
because the matrix $ S $ in \eqref{formaS} is Hamiltonian and 
reversibility-preserving, cfr. Lemma 3.8 in \cite{BMV1}. 
Note that $ S $  is reversibility preserving  since $X$ 
has  the form  \eqref{Xsylvy}. 

We now expand in Lie series 
the Hamiltonian and reversible matrix $ \tL_{\mu,\e}^{(2)} 
= \exp (S)\tL_{\mu,\e}^{(1)} \exp (-S) $. 

We split $\tL_{\mu,\e}^{(1)}$ 
into its $2\times 2$-diagonal and off-diagonal Hamiltonian and reversible matrices
\begin{align}
& \qquad  \qquad \qquad  \qquad  \qquad \qquad \tL_{\mu,\e}^{(1)} = D^{(1)} + R^{(1)}  \, , \label{LDR}\\
& 
D^{(1)} :=\begin{pmatrix} D_1 & 0 \\ 0 & D_0 \end{pmatrix} :=  \begin{pmatrix} \tJ_2 E^{(1)} & 0 \\ 0 & \tJ_2 G^{(1)} \end{pmatrix}, \quad 
R^{(1)} := \begin{pmatrix}  0 & \tJ_2 F^{(1)} \\ \tJ_2 [F^{(1)}]^* & 0 \end{pmatrix} , \notag
\end{align} 
and we perform the Lie expansion
\begin{align}
\label{lieexpansion}
& \tL_{\mu,\e}^{(2)} 
 = \exp(S)\tL_{\mu,\e}^{(1)} \exp(-S)  =  D^{(1)} +\lie{S}{D^{(1)}}+ \frac12 [S, [S, D^{(1)}]] + 
 R^{(1)}  + [S, R^{(1)}]  \\
 & + 
\frac12 \int_0^1 (1-\tau)^2 \exp(\tau S)  \text{ad}_S^3( D^{(1)} )  \exp(-\tau S) \, \de \tau 
+ \int_0^1 (1-\tau) \, \exp(\tau S) \, \text{ad}_S^2( R^{(1)} ) \, \exp(-\tau S) \, \de \tau \notag 
\end{align} 
where $\text{ad}_A(B) := [A,B] := AB - BA $ denotes the commutator 
between the linear operators $ A, B $.

We look for a $ 4 \times 4 $ matrix $S$ as in \eqref{formaS} that solves
the homological equation
$  R^{(1)}  +\lie{S}{ D^{(1)} } = 0  $,
which, recalling \eqref{LDR}, reads 
\begin{equation}\label{homoesp}
\begin{pmatrix} 0 & \tJ_2F^{(1)}+\tJ_2\Sigma D_0
- D_1\tJ_2\Sigma \\ 
\tJ_2{[F^{(1)}]}^*+\tJ_2\Sigma^*D_1-D_0\tJ_2\Sigma^* & 0 \end{pmatrix} =0 \, .
\end{equation}
Note that the equation $  \tJ_2F^{(1)}+\tJ_2\Sigma D_0 - D_1\tJ_2\Sigma = 0 $ implies 
also  $  \tJ_2{[F^{(1)}]}^*+\tJ_2\Sigma^*D_1-D_0\tJ_2\Sigma^*  = 0 $ and viceversa. 
Thus, writing  $ \Sigma =\tJ_2  X  $, namely $ X = - \tJ_2  \Sigma $,  
the equation \eqref{homoesp} amounts to solve the  ``Sylvester" equation 
\begin{equation}\label{Sylvestereq}
D_1 X - X D_0 = - \tJ_2F^{(1)}   \, .
\end{equation}
We write  the matrices $ E^{(1)}, F^{(1)}, G^{(1)}$ in \eqref{LinHprep}
as 
\begin{equation}\label{splitEFGprep}
E^{(1)} = 
\begin{pmatrix} 
E_{11}^{(1)} & \im E_{12}^{(1)} \\ 
- \im E_{12}^{(1)} & E_{22}^{(1)}
\end{pmatrix}\, , \quad
F^{(1)} = 
\begin{pmatrix} 
F_{11}^{(1)} & \im F_{12}^{(1)} \\ 
\im F_{21}^{(1)} & F_{22}^{(1)}
\end{pmatrix} \, , \quad
G^{(1)} = 
\begin{pmatrix} 
G_{11}^{(1)} & \im G_{12}^{(1)} \\ 
- \im G_{12}^{(1)} & G_{22}^{(1)} 
\end{pmatrix} 
\end{equation} 
where the real numbers 
$ E_{ij}^{(1)}, F_{ij}^{(1)}, G_{ij}^{(1)} $, $ i , j = 1,2 $, have the expansion 
in  \eqref{BinH1prep}, \eqref{BinH2prep}, \eqref{BinH3prep}.  
Thus, by  \eqref{LDR}, \eqref{Xsylvy}  and \eqref{splitEFGprep},
the equation \eqref{Sylvestereq}  amounts  to solve the 
 $4\times 4$ real linear system 
\begin{align}\label{Sylvymat}
\underbrace{ \begin{pmatrix}  
G_{12}^{(1)} - E_{12}^{(1)} &   G_{11}^{(1)} &  E_{22}^{(1)} & 0 \\   
G_{22}^{(1)} & G_{12}^{(1)} - E_{12}^{(1)} & 0 & - E_{22}^{(1)} \\
 E_{11}^{(1)} & 0 & G_{12}^{(1)} - E_{12}^{(1)}  & - G_{11}^{(1)} \\
 0 &  - E_{11}^{(1)} & -G_{22}^{(1)}  &  G_{12}^{(1)} - E_{12}^{(1)}
 \end{pmatrix}}_{=: {\cal A} }
 \underbrace{ \begin{pmatrix} x_{11} \\ x_{12} \\ x_{21} \\ x_{22} \end{pmatrix}}_{ =: \vec x}
  =
  \underbrace{
    \begin{pmatrix} 
 -F_{21}^{(1)}  \\  F_{22}^{(1)} \\ - F_{11}^{(1)} \\  F_{12}^{(1)}
 \end{pmatrix}
 }_{=: \vec f}.
\end{align}
We solve this system using the following result, verified by a direct calculus.
\begin{lem}\label{LemmaSylvy}
The determinant of the matrix 
\begin{equation}\label{formA}
A := \begin{pmatrix}  a &  b & c & 0 \\  d & a & 0 & - c \\
e & 0 & a  & -b \\
 0 & - e & -d  &  a
 \end{pmatrix} 
\end{equation}
where $ a,b,c, d , e  $ are real numbers,   is 
\begin{equation}\label{Sylvydet}
 \det A = a^4 -2 a^2 (b d + c e)+(b d - c e)^2 =  (bd-a^2)^2 -2ce\big(a^2 +bd-\frac12 ce\big) \, .
\end{equation}
If $ \det A \neq 0 $ then $ A  $ is invertible and 
\begin{align}\label{Sylvyinv}
  A^{-1}   =  
\footnotesize{\frac{1}{ \det A} \left(
\begin{array}{cccc}
  \! a \left(a^2-b d - c e\right) &  \!  b \left(-a^2+b d - c e\right) & 
-c \left(a^2+b d - 
 c e\right) & \! - 2 a b c \\
  \! d \left(-a^2+b d -  c e\right) &  \! a \left(a^2-b d - c e\right) & 2 a c d & 
   \! - c \left(-a^2-b d + c e\right) \\
  \! - e \left(a^2+b d -  c e\right) &  \! 2 a b e & a \left(a^2-b d - c e\right) & 
 \!  b \left(a^2-b d + c e\right) \\
 \!  - 2 a d e &  \! - e \left(-a^2-b d + c e\right) & d \left(a^2-b d + c e\right) &
  \!  a \left(a^2-b d - c e\right)  
\end{array}
\right)}  \, . 
\end{align}
\end{lem}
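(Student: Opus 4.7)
The statement is a purely algebraic identity about a specific $4\times 4$ matrix with several zero entries, and the cleanest route is to exploit its natural $2\times 2$ block structure. I would write
$$A = \begin{pmatrix} P & Q \\ R & S \end{pmatrix}, \quad P = \begin{pmatrix} a & b \\ d & a \end{pmatrix}, \quad Q = \begin{pmatrix} c & 0 \\ 0 & -c \end{pmatrix}, \quad R = \begin{pmatrix} e & 0 \\ 0 & -e \end{pmatrix}, \quad S = \begin{pmatrix} a & -b \\ -d & a \end{pmatrix},$$
and apply the Schur complement formula $\det A = \det(S)\,\det(P - Q S^{-1} R)$ in the generic case $\det S = a^2 - bd \neq 0$. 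Using $S^{-1} = (a^2-bd)^{-1}\begin{psmallmatrix} a & b \\ d & a \end{psmallmatrix}$, a short computation gives
$$P - Q S^{-1} R = \frac{1}{a^2-bd}\begin{pmatrix} a(a^2-bd-ce) & b(a^2-bd+ce) \\ d(a^2-bd+ce) & a(a^2-bd-ce) \end{pmatrix},$$
whose determinant, multiplied by $a^2-bd$, yields $\det A = (a^2-bd)^2 - 2ce(a^2+bd) + c^2e^2$. Expanding the squares and regrouping produces the two equivalent forms in \eqref{Sylvydet}. Since both sides are polynomial in $a,b,c,d,e$, the identity extends by continuity to the degenerate locus $a^2=bd$.

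For the inverse formula \eqref{Sylvyinv}, I would proceed by direct verification: denoting the proposed inverse by $B/\det A$ with $B$ the displayed cofactor-like matrix, it suffices to check the sixteen polynomial identities encoded in $A B = (\det A)\, I_4$. Thanks to the many zero entries of $A$, each row$\times$column product involves at most three nonzero terms, and the diagonal entries collapse to $\det A$ while the off-diagonal ones vanish identically using only elementary polynomial manipulation (no use of the formula for $\det A$ is even needed for the off-diagonal cancellations). A secondary option is to compute the sixteen $3\times 3$ cofactors of $A$ by Laplace expansion along rows or columns carrying zeros, which also simplifies dramatically.

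The only obstacle is bookkeeping; there is no conceptual subtlety. One can cut the work roughly in half by exploiting the evident symmetries of $A$: the blocks $Q,R$ are diagonal with opposite signs on their $(2,2)$-entries, while $P$ and $S$ differ only by the signs of their off-diagonal entries, so the conjugation $A \mapsto J A J$ with $J := \mathrm{diag}(1,-1,-1,1)$ leaves $A$ invariant up to a permutation of rows/columns, which pairs up cofactor computations.
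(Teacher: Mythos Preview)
Your proposal is correct; the paper's own proof consists of the single sentence ``verified by a direct calculus'' with no further detail, so your Schur-complement computation of $\det A$ and direct verification of $AB=(\det A)I_4$ are a perfectly valid (and nicely organized) execution of exactly that. The only minor quibble is that the conjugation $JAJ$ with $J=\mathrm{diag}(1,-1,-1,1)$ does not quite permute rows and columns of $A$ but rather sends $(b,c,d,e)\mapsto(-b,-c,-d,-e)$; this still yields a useful symmetry of the adjugate, so the spirit of your remark stands.
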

The Sylvester matrix $ \cal A $ in \eqref{Sylvymat} has the  form 
\eqref{formA} where, by \eqref{BinH1prep}-\eqref{BinH3prep} 
and since $ \tanh(\tth\mu) = \tth\mu + r(\mu^3)$, 
\begin{align}
& a =  G_{12}^{(1)} - E_{12}^{(1)}  \label{abcde} = 
- \te_{12}\frac{\mu}{2} \big(1 +r(\e^2, \mu \e, \mu^2)\big) \, , \ b =  G_{11}^{(1)} 
=\mu +   r_8(\mu\e^2, \mu^3 \e  
) \, , \\
&  c = 
E_{22}^{(1)} =-\te_{22}\frac{\mu}{8}(1+r_5(\e,\mu)) \, 
, \ d = G_{22}^{(1)}
= \mu \tth  + r( \mu \e, \mu^3 )\, , 
\
 e =  E_{11}^{(1)} = r(\mu\e^2, \mu^3) \, , \notag
\end{align}
where  $\te_{12}$ and 
$\te_{22}$, defined   respectively in \eqref{te12}, 
\eqref{te22}, are positive for any $ \tth > 0 $. 

By \eqref{Sylvydet}, the determinant of the matrix $ {\cal A} $ is 
\begin{align} 
\det {\cal A} = (bd-a^2)^2 + r(\mu^4\e^2,\mu^6) = 
 \mu^4 \mathtt{D}_\tth^2(1+r(\e, \mu^2))  \label{detcalA} \, 
\end{align}
 where $\mathtt{D}_\tth$ is defined in \eqref{defDh}.
By \eqref{Sylvyinv}, \eqref{abcde}, \eqref{detcalA}
and, since $\mathtt{D}_\tth=\tth-\frac14\te_{12}^2$, we obtain 
\begin{align}\label{calA-1} 
 {\cal A}^{-1} = (1+r(\e,\mu)) \displaystyle{\frac{1}{\mu \mathtt{D}^2_\tth}}\, \begin{pmatrix}
   \frac12{\te_{12}}\mathtt{D}_\tth &   \mathtt{D}_\tth & 
 \frac{1}{32} \te_{22}  (\te_{12}^2+4\tth)  &  -\frac18{\te_{12}}\, \te_{22}   \\
 \tth\mathtt{D}_\tth &   \frac12{\te_{12}}\mathtt{D}_\tth  
  &\frac18 \te_{12}\te_{22} \tth  & 
 - \frac{1}{32}\te_{22} \, (\te_{12}^2+4\tth)   \\
 r(\e^2, \mu^2)  &  r(\e^2, \mu^2)   
&   \frac12{\te_{12}} \mathtt{D}_\tth & 
- {\mathtt{D}_\tth} \\
    r(\e^2, \mu^2)  &  r(\e^2, \mu^2)   &  -\tth \mathtt{D}_\tth  &
  \frac12{\te_{12}}\mathtt{D}_\tth 
\end{pmatrix} \, . 
\end{align}
Therefore, 
for any $\mu\neq 0$, there exists a unique solution $\vec x = {\cal A}^{-1} \vec f  $
of the linear system \eqref{Sylvymat}, namely  a unique matrix $ X $ which solves  
the Sylvester equation \eqref{Sylvestereq}.

\begin{lem}
The matrix  solution $X $ of the Sylvester equation \eqref{Sylvestereq} 
is analytic in $(\mu,\e) $,  and admits an expansion as in \eqref{Xsylvy}.
\end{lem}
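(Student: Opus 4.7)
The plan is to solve the Sylvester equation \eqref{Sylvestereq} explicitly via the equivalent $4\times 4$ real linear system ${\cal A}\vec x=\vec f$ in \eqref{Sylvymat}, apply the inversion formula of Lemma \ref{LemmaSylvy}, and show that the $1/\mu$ pole appearing in ${\cal A}^{-1}$ is cancelled by the fact that $\vec f = O(\mu\e)$. First I would note that, by \eqref{BinH3prep}, every component of the right-hand side $\vec f = (-F_{21}^{(1)},F_{22}^{(1)},-F_{11}^{(1)},F_{12}^{(1)})^\top$ is real analytic and of size $O(\mu\e)$; specifically
\begin{equation*}
-F_{11}^{(1)}=-\tf_{11}\mu\e+r(\mu\e^3,\mu^2\e^2,\mu^3\e)\,,\quad F_{12}^{(1)}=\mu\e\,\ch^{-\frac12}+r(\mu\e^2,\mu^2\e)\,,
\end{equation*}
while $F_{21}^{(1)},F_{22}^{(1)}=r(\mu\e)$. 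The key mechanism for analyticity at $\mu=0$ is this factor of $\mu$ already present in $\vec f$.

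Next I would plug \eqref{abcde} into the explicit formulas of Lemma \ref{LemmaSylvy}. Since $a=-\tfrac12\te_{12}\mu(1+r(\e^2,\mu\e,\mu^2))$, $b=\mu(1+r)$, $c=-\tfrac18\te_{22}\mu(1+r)$, $d=\tth\mu(1+r)$ and $e=r(\mu\e^2,\mu^3)$, the dominant terms in $\det{\cal A}=(bd-a^2)^2-2ce(a^2+bd-\tfrac12 ce)$ are produced by $(bd-a^2)^2=\mu^4(\tth-\tfrac14\te_{12}^2)^2(1+r)=\mu^4\mathtt{D}_\tth^2(1+r)$, since $\mathtt{D}_\tth>0$ by \eqref{defDh}. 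This yields \eqref{detcalA}. Entrywise one finds that every cofactor entering ${\cal A}^{-1}$ is either of order $\mu^3$ (in the four ``principal'' positions collected in \eqref{calA-1}) or of the much smaller order absorbed in $r(\e^2,\mu^2)$, so that the common $1/\det{\cal A}=\mu^{-4}\mathtt{D}_\tth^{-2}(1+r)$ produces exactly the prefactor $(1+r(\e,\mu))/(\mu\mathtt{D}_\tth^2)$ of \eqref{calA-1}. In particular ${\cal A}^{-1}$ is meromorphic with at worst a simple pole in $\mu$.

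I would then form $\vec x={\cal A}^{-1}\vec f$. Because $\vec f=O(\mu\e)$, the $1/\mu$ in front of ${\cal A}^{-1}$ is cancelled and $\vec x$ extends to an analytic function of $(\mu,\e)$ in a neighbourhood of the origin; the reality/imaginarity pattern in \eqref{Xsylvy} follows from $\vec f$ being real together with the reversibility-preserving structure (equivalently, from the fact that the Sylvester equation \eqref{Sylvestereq} is a real linear equation). To obtain the leading coefficients I would retain only the $O(\mu\e)$ contributions of $\vec f$ and the $O(\mu^3)$ cofactors. For the third component this gives
\begin{equation*}
x_{21}=\frac{1+r(\e,\mu)}{\mu\mathtt{D}_\tth^2}\!\left[-\tfrac12\te_{12}\mathtt{D}_\tth\,F_{11}^{(1)}-\mathtt{D}_\tth\,F_{12}^{(1)}\right]+r(\e^2,\mu\e)=-\tfrac12\mathtt{D}_\tth^{-1}\!\bigl(\te_{12}\tf_{11}+2\ch^{-\frac12}\bigr)\e+r(\e^2,\mu\e),
\end{equation*}
and an identical computation for the fourth component using the $\tth\mathtt{D}_\tth$ and $\tfrac12\te_{12}\mathtt{D}_\tth$ entries of the last two rows of \eqref{calA-1} produces the stated coefficient $\tfrac12\mathtt{D}_\tth^{-1}(\ch^{-\frac12}\te_{12}+2\tth\tf_{11})$ for $x_{22}$. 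The first two components $x_{11},x_{12}$ are similarly $O(\e)$: their leading $O(\mu\e)$ contributions from $F_{11}^{(1)},F_{12}^{(1)}$ are multiplied by the cofactors of size $O(\mu^2)$ (coming from $c,e$ or $b,d$) and divided by $\mu\mathtt{D}_\tth^2$, giving the entries $r_{11}(\e)$, $r_{12}(\e)$ in \eqref{Xsylvy}.

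The only subtle point, and what I expect to be the main obstacle, is a book-keeping one: one must carefully separate, in each entry of ${\cal A}^{-1}\vec f$, the $O(\e)$ contributions (which survive after division by $\mu$) from the genuinely higher-order tails $r(\e^2,\mu\e)$, and in particular verify that no hidden $\mu$-only terms sneak into $x_{21},x_{22}$. This is guaranteed by the fact that every entry of $\vec f$ carries at least one factor of $\e$ (since $F^{(1)}=O(\e)$ at $\mu=0$ by Lemma \ref{lem:fina}), so after dividing by $\mu$ one never produces a term depending on $\mu$ alone. This establishes both the analytic extension at $\mu=0$ and the expansion \eqref{Xsylvy}.
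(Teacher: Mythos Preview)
Your proposal is correct and follows essentially the same route as the paper: solve $\vec x={\cal A}^{-1}\vec f$ using the explicit inversion formula \eqref{Sylvyinv}, observe that $\det{\cal A}=\mu^4\mathtt{D}_\tth^2(1+r)$ while every entry of $\vec f$ carries a factor $\mu\e$, so the simple pole of ${\cal A}^{-1}$ at $\mu=0$ is cancelled and $\vec x$ extends analytically with $\vec x=O(\e)$; the leading coefficients of $x_{21},x_{22}$ then drop out of the third and fourth rows of \eqref{calA-1} exactly as you compute. One minor wording issue: in your discussion of $x_{11},x_{12}$ the relevant cofactors from \eqref{Sylvyinv} are $O(\mu^3)$ (not $O(\mu^2)$), which after division by $\det{\cal A}=O(\mu^4)$ gives the $O(1/\mu)$ entries displayed in \eqref{calA-1}; the conclusion $x_{11},x_{12}=r(\e)$ is unaffected.
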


\begin{proof}
By \eqref{Sylvymat}, \eqref{calA-1}, \eqref{splitEFGprep}, \eqref{BinH3prep} we obtain,  for any $\mu \neq 0$
\begin{align*} 
\footnotesize
\begin{pmatrix}
x_{11} \\ x_{12} \\ x_{21} \\ x_{22} \end{pmatrix}  
 \footnotesize  = \frac{1}{\mathtt{D}^2_\tth} \begingroup 
\setlength\arraycolsep{-2pt} \begin{pmatrix}
   \frac12{\te_{12}}\mathtt{D}_\tth &   \mathtt{D}_\tth & 
 \frac{1}{32} \te_{22}  (\te_{12}^2+4\tth)  &  -\frac18{\te_{12}}\, \te_{22}   \\
 \tth\mathtt{D}_\tth &   \frac12{\te_{12}}\mathtt{D}_\tth  
  &\frac18 \te_{12}\te_{22} \tth  & 
 - \frac{1}{32}\te_{22} \, (\te_{12}^2+4\tth)   \\
 r(\e^2, \mu^2) \quad &  r(\e^2, \mu^2)  
&   \frac12{\te_{12}} \mathtt{D}_\tth & 
- {\mathtt{D}_\tth} \\
    r(\e^2, \mu^2) \quad &  r(\e^2, \mu^2)  &  -\tth \mathtt{D}_\tth  &
  \frac12\te_{12}\mathtt{D}_\tth \end{pmatrix} \endgroup
   \begin{pmatrix}  r(\e) \\ r(\e) 
  \\ -\tf_{11}\e + r(\e^3,\mu\e^2,\mu^2\e) \\ 
  \ch^{-\frac12}\e+r(\e^2, \mu \e) \end{pmatrix}(1+r(\e,\mu)) \, ,
\end{align*} 
which proves \eqref{Xsylvy}.
In particular each $x_{ij}$ admits an analytic extension at $\mu = 0$. 
Note that, for $\mu = 0$, one has  $E^{(2)}=G^{(2)}=F^{(2)}= 0$
and the Sylvester equation reduces to tautology.
\end{proof}
 Since the matrix $ S $ solves the homological equation $\lie{S}{ D^{(1)} }+  R^{(1)} =0$, identity \eqref{lieexpansion} simplifies to 
\begin{equation}
\label{Lie2}
\tL_{\mu,\e}^{(2)}
 =   D^{(1)}  +\frac12\lie{S}{ R^{(1)} }+
\frac12 \int_0^1 (1-\tau^2) \, \exp(\tau S) \, \text{ad}_S^2( R^{(1)} ) \, \exp(-\tau S) \de \tau \, . 
\end{equation}
 The matrix $\frac12 \lie{S}{ R^{(1)} }$ is, by \eqref{formaS}, 
\eqref{LDR},   the block-diagonal Hamiltonian and reversible matrix
\begin{equation}\label{Lieeq2}
\begin{aligned}
& \frac12 \lie{S}{ R^{(1)} }\\ 
& = \begin{pmatrix}  \frac12 \tJ_2 ( \Sigma \tJ_2 [F^{(1)}]^*- F^{(1)} \tJ_2 \Sigma^*) & 0 \\ 0 &  \frac12 \tJ_2 ( \Sigma^* \tJ_2 F^{(1)}- [F^{(1)}]^* \tJ_2 \Sigma) \end{pmatrix} = \begin{pmatrix} \tJ_2 \tilde E & 0 \\ 0 &\tJ_2 \tilde G \end{pmatrix},
\end{aligned}
\end{equation}
where, since $ \Sigma = \tJ_2 X $,  
\begin{equation}\label{EGtilde}
\tilde E := \text{\textbf{Sym}} \big( \tJ_2 X \tJ_2  [F^{(1)}]^* \big) 
\, , \qquad 
\tilde G :=  \text{\textbf{Sym}} \big(  X^* F^{(1)} \big) \, , 
\end{equation}
denoting $ \text{\textbf{Sym}}(A) := \frac12 (A+ A^* )$. 
\begin{lem}\label{lem:EGtilde}
The  self-adjoint and reversibility-preserving matrices 
 $ \tilde E,\ \tilde G  $ in \eqref{EGtilde} have the form
\begin{equation}
\begin{aligned}
& \tilde E = \begin{pmatrix} 
 \tilde\te_{11}\mu\e^2  +  \tilde r_1(\mu\e^3,\mu^2\e^2) &  \im  \tilde r_2(\mu\e^2) \\ 
 - \im  \tilde r_2(\mu\e^2) &   \tilde r_5(\mu\e^2) \end{pmatrix} \, , 
 \quad \tilde G = 
\begin{pmatrix}  \tilde r_8(\mu\e^2) &   \im \tilde r_9 (\mu\e^2)   \\
-\im \tilde r_9(\mu\e^2)  &  \tilde r_{10}(\mu\e^2)
\end{pmatrix} \, ,  \\
 &\tilde\te_{11} :=  -\mathtt{D}_\tth^{-1} \big( \ch^{-1} + \tth \tf_{11}^2 +\te_{12}\tf_{11}\ch^{-\frac12} \big)\, . \end{aligned} \label{tilde.E.G}
\end{equation}
\end{lem}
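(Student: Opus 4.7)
The proof is a direct, entrywise computation using the explicit expansions \eqref{Xsylvy} of $X$ and \eqref{BinH3prep} of $F^{(1)}$. First, I would record two structural facts. By definition $\tilde E = \text{\textbf{Sym}}(\tJ_2 X \tJ_2 [F^{(1)}]^*)$ and $\tilde G = \text{\textbf{Sym}}(X^* F^{(1)})$ are self-adjoint. Moreover, both $X$ and $F^{(1)}$ are reversibility-preserving, and since taking adjoints preserves this property and $\tJ_2$ anti-commutes with $\rho_2$ (so the double factor $\tJ_2 \cdot \tJ_2$ commutes with $\rho_2$), the products entering $\tilde E$ and $\tilde G$ are reversibility-preserving $2\times 2$ matrices, i.e.\ have real diagonal entries and purely imaginary off-diagonal entries. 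This already explains the general shape of $\tilde E$ and $\tilde G$ in \eqref{tilde.E.G}.

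Next, reading off orders of magnitude from \eqref{Xsylvy} and \eqref{BinH3prep}, every entry of $X$ is $O(\e)$ while every entry of $F^{(1)}$ is $O(\mu\e)$. Hence, multiplying matrices entrywise, each product of an entry of $X$ with an entry of $F^{(1)}$ is $O(\mu\e^2)$. This immediately gives the orders $\tilde r_2(\mu\e^2)$ for $\tilde E_{12}$, $\tilde r_5(\mu\e^2)$ for $\tilde E_{22}$, and $\tilde r_8(\mu\e^2)$, $\tilde r_9(\mu\e^2)$, $\tilde r_{10}(\mu\e^2)$ for all entries of $\tilde G$. Symmetrization only averages a matrix with its adjoint, preserving these orders.

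The only coefficient one needs to extract explicitly is the leading term of $\tilde E_{11}$. Writing $X=\begin{psmallmatrix} x_{11} & \im x_{12} \\ \im x_{21} & x_{22}\end{psmallmatrix}$ and $F^{(1)}=\begin{psmallmatrix} f_{11} & \im f_{12} \\ \im f_{21} & f_{22}\end{psmallmatrix}$ with real $x_{ij}, f_{ij}$, a direct computation gives
\begin{equation*}
\bigl(\tJ_2 X \tJ_2 [F^{(1)}]^*\bigr)_{11} \;=\; -\,x_{22}\,f_{11} \,+\, x_{21}\,f_{12},
\end{equation*}
which is already real, so symmetrization leaves it unchanged. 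Substituting the leading coefficients
$x_{22} = \tfrac12 \mathtt D_\tth^{-1}(\ch^{-1/2}\te_{12}+2\tth\tf_{11})\e + r_{22}(\e^2,\mu\e)$,
$x_{21} = -\tfrac12 \mathtt D_\tth^{-1}(\te_{12}\tf_{11}+2\ch^{-1/2})\e + r_{21}(\e^2,\mu\e)$,
$f_{11} = \tf_{11}\mu\e + r_3(\mu\e^3,\mu^2\e^2,\mu^3\e)$, and $f_{12}=\ch^{-1/2}\mu\e + r_4(\mu\e^2,\mu^2\e)$, and collecting the $\mu\e^2$ contributions gives
\begin{equation*}
\tilde E_{11} \;=\; -\frac{\mu\e^2}{\mathtt D_\tth}\Bigl(\ch^{-1} + \tth\,\tf_{11}^2 + \ch^{-1/2}\tf_{11}\,\te_{12}\Bigr) + \tilde r_1(\mu\e^3,\mu^2\e^2),
\end{equation*}
which is precisely $\tilde\te_{11}\,\mu\e^2 + \tilde r_1(\mu\e^3,\mu^2\e^2)$ with $\tilde\te_{11}$ as claimed in \eqref{tilde.E.G}. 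There is no real obstacle; the only care required is bookkeeping of remainder monomials. All cross-terms involving the analytic remainders $r_{ij}$ of $X$ (which are $O(\e^2,\mu\e)$) and the leading $O(\mu\e)$ entries of $F^{(1)}$ produce contributions of type $\mu\e^3$ or $\mu^2\e^2$, matching the stated form of $\tilde r_1$.
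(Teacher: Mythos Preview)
Your proof is correct and follows essentially the same approach as the paper: both compute the $(1,1)$ entry of $\tJ_2 X \tJ_2 [F^{(1)}]^*$ explicitly as $x_{21}F_{12}-x_{22}F_{11}$, substitute the leading expansions from \eqref{Xsylvy} and \eqref{BinH3prep} to extract $\tilde\te_{11}$, and handle the remaining entries and all of $\tilde G$ by the crude size estimate $X=\cO(\e)$, $F^{(1)}=\cO(\mu\e)$. Your added structural remark on reversibility-preservation is a nice justification for the real/imaginary pattern that the paper obtains only by displaying the full product matrix.
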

\begin{proof}
For simplicity we set $F=F^{(1)}$.  By   \eqref{Xsylvy}, \eqref{BinH3prep}, 
one has 
\begin{align*}
\tJ_2 X \tJ_2  F^* &  = 
 \begin{pmatrix}   x_{21}F_{12}-x_{22}F_{11} &  \im  (x_{21}F_{22}+x_{22} F_{21})
 \\ 
 \im (x_{11}F_{12}+x_{12}F_{11}) & - x_{11}F_{22} + x_{12}F_{21} \end{pmatrix} 
= \begin{pmatrix} 
 \tilde\te_{11}\mu\e^2 + r(\mu\e^3,\mu^2\e^2) &  \im  r(\mu\e^2) \\ 
 \im  r(\mu\e^2) &  r(\mu\e^2) \end{pmatrix} 
\end{align*}
with $ \tilde \te_{11}$ defined in \eqref{tilde.E.G}. The expansion of $\tilde E$ in \eqref{tilde.E.G} follows in view of \eqref{EGtilde}. 
Since $X = \cO(\e)$ by \eqref{Xsylvy}  and $  F = O(\mu \e) $ by \eqref{BinH3prep} 
we deduce that $ X^*   F  = \cO (\mu \e^2 )$ and 
%For $\tilde G$ one has
%\begin{align*}
% X^*   F &=  \begin{pmatrix}
% x_{11}F_{11}+x_{21}F_{21} &  \im ( x_{11}F_{12} - x_{21}F_{22}) \\ \im (x_{22}F_{21}-x_{12}F_{11} ) & x_{22}F_{22}+ x_{12}F_{12}  \end{pmatrix} 
%= \cO(\mu\e^2)
%\end{align*}
%and 
the expansion of $\tilde G $ in \eqref{tilde.E.G} follows. 
\end{proof}

Note that the term $  \tilde\te_{11}\mu\e^2 $ in the matrix $ \tilde E $
in \eqref{EGtilde}-\eqref{tilde.E.G}, has the same order
of the $(1,1)$-entry of  $ E^{(1)} $ in \eqref{BinH1prep}, thus will contribute to the Whitham-Benjamin function $ \teWB $ in the $(1,1)$-entry of  $ E^{(2)} $ in \eqref{Bsylvy1}.
Finally we show that the last term in \eqref{Lie2} is 
small. 

\begin{lem}\label{lem:series}
The $ 4 \times 4 $  Hamiltonian and reversibility  matrix 
\begin{equation}\label{series}
\frac12 \int_0^1 (1-\tau^2) \, \exp(\tau S) \, \textup{ad}_S^2( R^{(1)} ) \, \exp(-\tau S) \, \de \tau
= \begin{pmatrix}
\tJ_2 \widehat E & \tJ_2 F^{(2)}\\
\tJ_2 [ F^{(2)}]^* & \tJ_2 \widehat G
\end{pmatrix}
\end{equation}
where the $ 2 \times 2 $ self-adjoint and reversible  matrices  $\widehat E $, $ \widehat G$ have   entries 
\begin{equation}\label{E2G2.0}
\widehat E_{ij} \ , \widehat G_{ij}   =  r(\mu\e^3) \, , \quad 
i,j = 1,2 \, , 
\end{equation}
and   the $2\times 2$ reversible matrix $ F^{(2)}$ admits an expansion as in 
\eqref{Bsylvy3}.
\end{lem}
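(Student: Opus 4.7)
The plan is to expand the integrand as an absolutely convergent Lie series and then exploit the block parity together with the smallness of $S$ and $R^{(1)}$. Using the Hadamard identity for $4\times 4$ matrices
\begin{equation*}
\exp(\tau S)\, \textup{ad}_S^2(R^{(1)})\, \exp(-\tau S) = \sum_{n=0}^\infty \frac{\tau^n}{n!}\, \textup{ad}_S^{n+2}(R^{(1)}) \, ,
\end{equation*}
which converges absolutely and uniformly for $\tau\in[0,1]$ and $(\mu,\e)$ in a neighborhood of the origin, since $S$ and $R^{(1)}$ are analytic matrix-valued functions with $\|S\|=\cO(\e)$, the integral in \eqref{series} equals
\begin{equation*}
\sum_{n=0}^\infty c_n\, \textup{ad}_S^{n+2}(R^{(1)}) \, , \qquad c_n := \frac{1}{2\, n!}\int_0^1 \tau^n(1-\tau^2)\, \de\tau \in \bR \, .
\end{equation*}

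Next I examine the block structure. By \eqref{formaS} and \eqref{LDR}, both $S$ and $R^{(1)}$ are purely block off-diagonal in the $2\times 2$-block decomposition. A direct computation shows that the commutator of two such off-diagonal matrices is block-diagonal, while the commutator of an off-diagonal matrix with a block-diagonal one is off-diagonal. By induction $\textup{ad}_S^k(R^{(1)})$ is off-diagonal for $k$ even and block-diagonal for $k$ odd. Consequently, separating the series above by the parity of $n+2$, the even-$n$ terms produce the off-diagonal block contribution whose first $2\times 2$ block gives $\tJ_2 F^{(2)}$, with leading order $c_0\,\textup{ad}_S^2(R^{(1)})$, while the odd-$n$ terms produce the block-diagonal contribution giving $\tJ_2 \widehat E$ and $\tJ_2 \widehat G$, with leading order $c_1\,\textup{ad}_S^3(R^{(1)})$.

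The size bounds follow from $R^{(1)}=\cO(\mu\e)$ (Lemma \ref{decoupling1prep}) and $S=\cO(\e)$ (Lemma \ref{decoupling2} together with \eqref{Xsylvy}), which yield $\textup{ad}_S^k(R^{(1)})=\cO(\mu\e^{k+1})$ uniformly in $\tau\in[0,1]$ and in $\tth$ on compact subsets of $(0,+\infty)$. Hence the off-diagonal entries are $\cO(\mu\e^3)$, matching \eqref{Bsylvy3} for $F^{(2)}$, and the block-diagonal entries are $\cO(\mu\e^4)$, which is comfortably $r(\mu\e^3)$, in agreement with \eqref{E2G2.0}. The Hamiltonian and reversible character of the integral is inherited term-by-term: the commutator of two Hamiltonian matrices is Hamiltonian, so every $\textup{ad}_S^k(R^{(1)})$ is Hamiltonian; moreover, from the identities $\rho_4 S = S\rho_4$ (because $S$ is reversibility-preserving) and $\rho_4 R^{(1)}=-R^{(1)}\rho_4$ (because $R^{(1)}$ is reversible), a short induction gives $\rho_4\,\textup{ad}_S^k(R^{(1)})=-\textup{ad}_S^k(R^{(1)})\,\rho_4$ for every $k\geq 0$, so reversibility survives after summation with the real coefficients $c_n$. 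The only truly non-trivial step is the parity argument; everything else is routine bookkeeping on the orders of $S$ and $R^{(1)}$.
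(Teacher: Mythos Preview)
Your proof is correct and follows essentially the same approach as the paper: both hinge on the block off-diagonal structure of $S$ and $R^{(1)}$, the size estimates $S=\cO(\e)$ and $R^{(1)}=\cO(\mu\e)$, and the observation that $\textup{ad}_S^2(R^{(1)})=\cO(\mu\e^3)$ is purely off-diagonal. The paper organizes this slightly differently---rather than expanding the full Lie series, it computes $\textup{ad}_S^2(R^{(1)})$ explicitly via the formula $\tilde F = 2(\Sigma\tJ_2\tilde G - \tilde E\tJ_2\Sigma)$ (reusing $\tilde E,\tilde G=\cO(\mu\e^2)$ from the previous lemma) and then treats the conjugation $\exp(\tau S)(\cdot)\exp(-\tau S)$ as an $\cO(\e)$-perturbation of the identity---but the underlying mechanism is identical.
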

\begin{proof}
Since $S $ and $  R^{(1)}  $ are Hamiltonian and reversibility-preserving
then $ \textup{ad}_S  R^{(1)}  = [S, R^{(1)} ] $ is Hamiltonian   and reversibility-preserving as well.
Thus 
each $ \exp(\tau S) \, \textup{ad}_S^2( R^{(1)} ) \, \exp(-\tau S)$ 
 is Hamiltonian   and reversibility-preserving, and formula  \eqref{series} holds. 
In order to estimate  its entries we first compute $\textup{ad}_S^2( R^{(1)} )$. 
Using the form of  $ S $ in \eqref{formaS} and  $[S,  R^{(1)} ]$ in \eqref{Lieeq2} one gets
\begin{equation}\label{tildeF}
\textup{ad}_S^2(R^{(1)})  =  \begin{pmatrix} 0 & \tJ_2\tilde F \\ \tJ_2 \tilde F^* & 0\end{pmatrix}\qquad \text{where} \qquad \tilde F:= 
2\left(  \Sigma \tJ_2 \tilde G - \tilde E \tJ_2 \Sigma \right)
\end{equation}
and  $\tilde E$, $\tilde G$ are defined in  \eqref{EGtilde}. 
% In order to estimate  $\tilde F$,  
%we write $ \footnotesize  \tilde G = \begin{pmatrix} \tilde G_{11} & \im \tilde G_{12} \\ -\im \tilde G_{12} & \tilde G_{22}\end{pmatrix}$, \\
%$ \footnotesize  \tilde E = \begin{pmatrix} \tilde E_{11} & \im \tilde E_{12} \\ -\im \tilde E_{12} & \tilde E_{22}\end{pmatrix}$ and, by 
Since $ \tilde E ,  \tilde G = \cO (\mu \e^2 )$ 
by \eqref{tilde.E.G}, and $\Sigma = \tJ_2 X =
\cO ( \e ) $ by \eqref{Xsylvy}, we deduce that 
% $ \Sigma \tJ_2 \tilde G = 
%\begin{pmatrix}
% x_{21} \tilde G_{12} - x_{22} \tilde G_{11} 
% \quad \  \im ( x_{21} \tilde G_{22} - x_{22} \tilde G_{12}) \\
% \im ( x_{11} \tilde G_{12} + x_{12} \tilde G_{11}) \ 
%- x_{11} \tilde G_{22} - x_{12} \tilde G_{12}
%\end{pmatrix}
% \red{\cO(\mu\e^3)} $
%$ \tilde E \tJ_2 \Sigma 
%= 
%\begin{pmatrix}
%\tilde E_{12} x_{21} - \tilde E_{11} x_{11}
% \quad   - \im (\tilde E_{11} x_{12} + \tilde E_{12} x_{22}) \\
%\im (\tilde E_{12} x_{11} - \tilde E_{22} x_{21}) 
%\ -\tilde E_{12} x_{12} - \tilde E_{22} x_{22}
%\end{pmatrix}
%= \red{\cO(\mu\e^3)} $
 $\tilde F = \cO(\mu\e^3)  $. %  in \eqref{tildeF} has an expansion as in \eqref{Bsylvy3}.
Then, for any $ \tau \in [0,1]$, the matrix 
$\exp(\tau S) \, \textup{ad}_S^2( R^{(1)} ) \, \exp(-\tau S) =  \textup{ad}_S^2( R^{(1)} ) (1 + \cO(\mu,\e))$. 
In particular the matrix $F^{(2)}$ in \eqref{series} has the same expansion of $\tilde F$, namely  
$ F^{(2)} = \cO(\mu\e^3)  $, and the matrices $\widehat E$, $\widehat G$ have entries 
as in \eqref{E2G2.0}.
\end{proof}

\begin{proof}[Proof of Lemma \ref{decoupling2}.]
It follows by \eqref{Lie2}-\eqref{Lieeq2}, \eqref{LDR} and Lemmata  \ref{lem:EGtilde} and 
\ref{lem:series}. 
The matrix  $E^{(2)} := E^{(1)} + \tilde E + \widehat{ E}$ has the expansion  in \eqref{Bsylvy1}, with $ \teWB = \te_{11} + \tilde \te_{11} $ as in \eqref{te2}.
Similarly $G^{(2)} := G^{(1)} + \tilde G + \widehat{G} $ has the expansion  in \eqref{Bsylvy2}.
\end{proof}

\subsection{Complete block-decoupling and proof of the main results}\label{section34}

We now  block-diagonalize the  $ 4\times 4$ Hamiltonian and reversible 
matrix $\tL_{\mu,\e}^{(2)}$    in \eqref{sylvydec}.  First we split it 
into its $2\times 2$-diagonal and off-diagonal Hamiltonian and reversible matrices
\begin{align}
& \qquad  \qquad   \qquad  \qquad \quad \tL_{\mu,\e}^{(2)} = D^{(2)} + R^{(2)} \, , 
\notag \\
& 
D^{(2)}:= 
  \begin{pmatrix} \tJ_2 E^{(2)} & 0 \\ 0 & \tJ_2 G^{(2)}  \end{pmatrix}, \quad 
R^{(2)}:= \begin{pmatrix}  0 & \tJ_2 F^{(2)} \\ \tJ_2 [F^{(2)}]^* & 0 \end{pmatrix} . \label{LDR2}
\end{align}

\begin{lem}\label{ultimate}
There exist a  $4\times 4$ reversibility-preserving Hamiltonian  matrix $S^{(2)}:=S^{(2)}(\mu,\e)$ of the form \eqref{formaS}, analytic in $(\mu, \e)$, of size 
{$\cO(\e^3)$}, and a $4\times 4$ block-diagonal reversible Hamiltonian matrix $P:=P(\mu,\e)$, analytic in $(\mu, \e)$, of size ${ \cO(\mu\e^6)}$  such that 
\begin{equation}\label{ultdec}
\exp(S^{(2)})(D^{(2)}+R^{(2)}) \exp(-S^{(2)}) = D^{(2)}+P \, . 
\end{equation}
\end{lem}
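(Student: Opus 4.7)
The plan is to construct $S^{(2)}$ by solving a nonlinear Sylvester-type equation via an implicit function argument, in the spirit of Lemma \ref{decoupling2}, exploiting the fact that the off-diagonal perturbation $R^{(2)}$ is now much smaller than in the previous step (namely $\cO(\mu\e^3)$, by \eqref{Bsylvy3}), so that no further non-perturbative correction to the diagonal blocks is produced. I would parametrize $S^{(2)}$ in the form \eqref{formaS} with a $2\times 2$ reversibility-preserving matrix $X^{(2)}$, and expand in Lie series
\[
e^{S^{(2)}}(D^{(2)}+R^{(2)})e^{-S^{(2)}} \;=\; \sum_{n\ge 0}\tfrac{1}{n!}\textup{ad}_{S^{(2)}}^n(D^{(2)}+R^{(2)}).
\]
As $S^{(2)}$ is block off-diagonal, odd powers of $\textup{ad}_{S^{(2)}}$ applied to $D^{(2)}$ remain off-diagonal and even powers become block-diagonal; conversely, odd powers applied to $R^{(2)}$ are block-diagonal and even powers are off-diagonal. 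Splitting accordingly the series into its off-diagonal piece $R^{(2)}+[S^{(2)},D^{(2)}]+\Psi_{\textup{off}}(S^{(2)})$ and its block-diagonal piece $\Psi_{\textup{diag}}(S^{(2)})$, the desired identity \eqref{ultdec} is equivalent to the system
\[
R^{(2)}+[S^{(2)},D^{(2)}]+\Psi_{\textup{off}}(S^{(2)})=0,\qquad P=\Psi_{\textup{diag}}(S^{(2)}).
\]

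Next I would rewrite the first equation as the nonlinear Sylvester equation
$D_1^{(2)}X^{(2)}-X^{(2)}D_0^{(2)}=-\tJ_2 F^{(2)}+N(X^{(2)})$, where $D_1^{(2)}=\tJ_2 E^{(2)}$, $D_0^{(2)}=\tJ_2 G^{(2)}$ and $N$ is an analytic map, at least quadratic in $X^{(2)}$, built out of the iterated commutators in $\Psi_{\textup{off}}$. The linear operator on the left has the same structure analyzed in Lemma \ref{LemmaSylvy}: comparing the leading expansions of $E^{(2)}, G^{(2)}$ in \eqref{Bsylvy1}--\eqref{Bsylvy2} with those of $E^{(1)}, G^{(1)}$ in \eqref{BinH1prep}--\eqref{BinH2prep}, the associated Sylvester matrix $\mathcal{A}^{(2)}$ has determinant $\mu^4\mathtt{D}_\tth^2(1+r(\e,\mu^2))$ and inverse of the same form as \eqref{calA-1}, in particular of order $\cO(\mu^{-1})$ in operator norm. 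Since the datum $-\tJ_2 F^{(2)}$ is $\cO(\mu\e^3)$ by \eqref{Bsylvy3}, applying $(\mathcal{A}^{(2)})^{-1}$ restores a clean $\cO(\e^3)$ first guess, analytic in $(\mu,\e)$ up to $\mu=0$.

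Given this quantitative invertibility, the nonlinear Sylvester equation can be solved by the analytic implicit function theorem on the Banach space of $2\times 2$ reversibility-preserving matrices (equivalently, by a contraction mapping, since $N(X)=\cO(\|X\|^2)$). This produces a unique analytic solution $X^{(2)}=\cO(\e^3)$, and hence $S^{(2)}=\cO(\e^3)$, Hamiltonian and reversibility-preserving since both $F^{(2)}$ and $(\mathcal{A}^{(2)})^{-1}$ respect the corresponding involutions. The block-diagonal remainder is then
\[
P\;=\;\Psi_{\textup{diag}}(S^{(2)})\;=\;[S^{(2)},R^{(2)}]+\tfrac12\bigl[S^{(2)},[S^{(2)},D^{(2)}]\bigr]+\textup{h.o.t.},
\]
and, using the homological equation to rewrite $[S^{(2)},D^{(2)}]=-R^{(2)}+\Psi_{\textup{off}}(S^{(2)})$, every term contains either two factors of $S^{(2)}$ together with one factor of $D^{(2)}$ of size $\cO(\mu)$, or one factor of $S^{(2)}$ times $R^{(2)}$ of size $\cO(\mu\e^3)$; in all cases the bound $P=\cO(\mu\e^6)$ follows. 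The Hamiltonian and reversible structure of $P$ is automatic, since $\exp(S^{(2)})$ is symplectic and reversibility-preserving by Lemma~3.8 of \cite{BMV1}.

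The main obstacle is the uniform control of the small-divisor factor $\mu^{-1}$ coming from $(\mathcal{A}^{(2)})^{-1}$: without the crucial extra factor of $\mu$ in the $(1,1),(1,2),(2,1),(2,2)$ entries of $F^{(2)}$ displayed in \eqref{Bsylvy3}, the scheme would produce an $S^{(2)}$ singular at $\mu=0$ and the quadratic error $P$ would not gain the required $\mu\e^6$ smallness. This smallness of $F^{(2)}$ is exactly the gain obtained in the previous non-perturbative step (Lemma \ref{decoupling2}), and is what makes the present block-diagonalization perturbative and amenable to the implicit function theorem.
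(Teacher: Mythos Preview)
Your proof is correct and follows essentially the same approach as the paper: split the conjugation into block-diagonal and off-diagonal parts, rewrite the off-diagonal equation as a nonlinear Sylvester equation whose linearization has the same structure (and hence the same $\mu^{-1}$-inverse) as in Lemma~\ref{decoupling2}, solve it by the implicit function theorem using that $F^{(2)}=\cO(\mu\e^3)$, and read off $P=\cO(\mu\e^6)$ from the block-diagonal remainder. The paper's proof is slightly terser but organizes the argument identically.
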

\begin{proof}
We set for brevity $ S = S^{(2)} $. 
The equation \eqref{ultdec} is equivalent
to  the system
\begin{equation}\label{equazionisplittate}
\begin{cases}  \Pi_{D}\big( e^{ S} \big(D^{(2)}+R^{(2)}   \big) e^{- S} \big ) - D^{(2)}  = P   \\
\Pi_{\off}\big( e^{S} \big(D^{(2)}+R^{(2)}   \big) e^{- S}\big)  = 0 \, , 
\end{cases}
\end{equation}
where $\Pi_D$ is the projector onto the block-diagonal matrices and $\Pi_\off$ onto 
the block-off-diagonal ones.
The second equation  in \eqref{equazionisplittate} is equivalent, by a Lie expansion, 
and since $ [S, R^{(2)}] $ is block-diagonal,  to 
\begin{equation}\label{nonlinhomo}
R^{(2)} +  \lie{S}{D^{(2)}} +  \underbrace{\Pi_\off \int_0^1 (1-\tau) e^{\tau S} \text{ad}_S^2\big(D^{(2)}+R^{(2)} \big)e^{- \tau S} \de \tau}_{=: \mathcal{R}(S)} = 0 \, . 
\end{equation}
 The ``nonlinear homological equation" \eqref{nonlinhomo},  
\begin{equation}\label{nonlinhomo1}
[S,D^{(2)}] = -R^{(2)} - \mathcal{R}(S) \, , 
\end{equation}
is equivalent to solve the $4\times 4$ real linear system
\begin{equation}\label{sistAx2}
{\cal A} \vec{x} =  \vec{f}(\mu,\e,\vec{x}) \, ,\quad \vec{f}(\mu,\e,\vec{x}) = \mu \vec{v}(\mu,\e)+\mu \vec{g}(\mu,\e,\vec{x})
\end{equation}
associated, as in \eqref{Sylvymat}, to \eqref{nonlinhomo1}. 
The vector $ \mu \vec{v}(\mu,\e) $ is associated with $ -  R^{(2)} $
where $R^{(2)}  $ is in \eqref{LDR2}.
The vector $ \mu  \vec{g}(\mu,\e,\vec{x})  $ is associated with the matrix
$  - \mathcal{R}(S) $, 
which is a Hamiltonian and reversible block-off-diagonal 
matrix (i.e of the form \eqref{LDR}).
The factor $\mu$ is present in $D^{(2)}$ and $R^{(2)}$, see \eqref{Bsylvy1}, \eqref{Bsylvy2}, \eqref{Bsylvy3} and 
the analytic function $ \vec{g}(\mu,\e,\vec{x})  $ is quadratic in $ \vec{x} $ (for the presence of 
$ \text{ad}_S^2 $ in $ \cR(S)$). 
In view of \eqref{Bsylvy3} one has 
\begin{equation}\label{sizev}
\mu \vec{v}(\mu,\e):= (-F^{(2)}_{21},F^{(2)}_{22},-F^{(2)}_{11},F^{(2)}_{12})^\top, \quad F^{(2)}_{ij} =  \, {r(\mu \e^3)} \, .
\end{equation}
System \eqref{sistAx2}  is equivalent to $ \vec{x}  = {\cal A}^{-1}  \vec{f}(\mu,\e,\vec{x}) $ and,
writing  ${\cal A}^{-1} = \frac1\mu {\cal B} (\mu,\e) $ (cfr. \eqref{calA-1}), to 
$$
\vec{x} = {\cal B}(\mu,\e) \vec{v}(\mu,\e) + 
 {\cal B}(\mu,\e) \vec{g}(\mu, \e, \vec{x}) \, . 
$$
By the implicit function theorem
this equation admits a unique small solution $\vec{x}=\vec{x}(\mu,\e)$, analytic in 
$ (\mu, \e ) $,  
 with size ${\cO (\e^3)} $ as  $ \vec{v} $ in \eqref{sizev}.
Then  the first equation of  \eqref{equazionisplittate}  gives  
 $ P = [S, R^{(2)}] + \Pi_D \int_0^1 (1-\tau) e^{\tau S} \text{ad}_S^2\big(D^{(2)}+R^{(2)} \big)e^{- \tau S} \de \tau$, 
 and its estimate follows from those of $S$ and  $ R^{(2)} $ (see  \eqref{Bsylvy3}).  
\end{proof}
 
 \noindent
{\sc Proof of Theorems \ref{TeoremoneFinale} and \ref{thm:simpler}. }
By Lemma \ref{ultimate} and recalling  \eqref{calL}
the operator $ \cL_{\mu,\e} : \mathcal{V}_{\mu,\e} \to  \mathcal{V}_{\mu,\e} $ 
is represented by the $4\times 4$ Hamiltonian and reversible matrix 
$$
\im\ch \mu + \exp( S^{(2)})\tL_{\mu,\e}^{(2)} \exp(- S^{(2)}) = \im \ch \mu + 
\begin{pmatrix} \tJ_2 E^{(3)} & 0 \\ 0 &  \tJ_2 G^{(3)} 
\end{pmatrix} =:
 \begin{pmatrix} \mathtt{U} & 0 \\ 0 & \mathtt{S} \end{pmatrix} \, , 
$$ 
where the matrices $E^{(3)}$ and $G^{(3)}$ expand 
as in \eqref{Bsylvy1}, \eqref{Bsylvy2}.
Consequently the  matrices $\mathtt{U}$ and $\mathtt{S}$ 
expand  as in \eqref{S}. %  (cfr. \eqref{phihmu}).
Theorem \ref{TeoremoneFinale} is proved. 
Theorem \ref{thm:simpler} is a straight-forward corollary.
The function $\underline{\mu}(\e) $ in \eqref{barmuep} is defined as the implicit solution of the function $\DeltaBF(\tth;\mu,\e)$ in  \eqref{primosegno} for $\e$ small enough, depending on $\tth$.

\appendix

\section{Expansion of the Kato basis }\label{ProofExpansion}

In this appendix we  prove Lemma \ref{expansion1}. 
We  provide the expansion of the basis $f_k^\pm(\mu,\e) = U_{\mu,\e}f_k^\pm $, $k=0,1$, in \eqref{basisF}, where $f_k^\pm$ defined in \eqref{funperturbed}  belong to the subspace $\mathcal{V}_{0,0}:=\text{Rg}(P_{0,0})$. We first Taylor-expand the transformation operators $U_{\mu,\e}$ defined in \eqref{OperatorU}.  We denote  $\pa_\e$ with an  apex and  $\pa_\mu$ with a dot. 
\begin{lem}
The first jets of $U_{\mu,\e}P_{0,0}$ are 
 \begin{align}
  U_{0,0}P_{0,0}&=P_{0,0} \, , \quad U_{0,0}'P_{0,0}=P_{0,0}'P_{0,0} \, , \quad \dot U_{0,0}P_{0,0}=\dot P_{0,0}P_{0,0} \, , \label{Ufirstorder}\\
\dot U_{0,0}'P_{0,0}&=
\big(\dot P_{0,0}'- \tfrac12 P_{0,0}\dot P_{0,0}' \big)P_{0,0} \, , \label{Umix} 
 \end{align}
where
\begin{align}\label{Pdereps}
 P_{0,0}' &= \frac{1}{2\pi\im} \oint_\Gamma (\sL_{0,0}-\lambda)^{-1} \sL_{0,0}' (\sL_{0,0}-\lambda)^{-1} \de\lambda \, ,  \\ 
\dot P_{0,0}  \label{Pdermu} &= \frac{1}{2\pi\im} \oint_\Gamma (\sL_{0,0}-\lambda)^{-1} \dot \sL_{0,0} (\sL_{0,0}-\lambda)^{-1} \de\lambda \, ,
\end{align}
and
\begin{subequations}
\begin{align}
\dot P_{0,0}' &= -\frac{1}{2\pi\im} \oint_\Gamma (\sL_{0,0}-\lambda)^{-1} \dot \sL_{0,0} (\sL_{0,0}-\lambda)^{-1}  \sL_{0,0}' (\sL_{0,0}-\lambda)^{-1} \de\lambda  \label{Pmisto1}\\
&\qquad   -\frac{1}{2\pi\im} \oint_\Gamma (\sL_{0,0}-\lambda)^{-1} \sL_{0,0}' (\sL_{0,0}-\lambda)^{-1} \dot \sL_{0,0} (\sL_{0,0}-\lambda)^{-1} \de\lambda \label{Pmisto2} \\ 
&\qquad   + \frac{1}{2\pi\im} \oint_\Gamma (\sL_{0,0}-\lambda)^{-1} \dot \sL_{0,0}' (\sL_{0,0}-\lambda)^{-1}  \de\lambda \label{Pmisto3} \, .
\end{align}
\end{subequations}
The operators $\sL_{0,0}'$ and $\dot \sL_{0,0}$ are
\begin{equation}
\sL_{0,0}' =
 \begin{bmatrix} \pa_x \circ p_1(x) & 0 \\ -a_1(x) & p_1(x)\circ \pa_x \end{bmatrix}, 
 \qquad 
 \dot \sL_{0,0} = 
 \begin{bmatrix} 0 &  \textup{sgn}(D) m(D) \\
  0 & 0 
  \end{bmatrix},
   \label{cLfirstorder}
\end{equation}
where $\sgn (D) $ is defined in \eqref{def:segno} and $m(D) $ is the real, even operator
\begin{equation}\label{multiplier}
m(D) := \tanh ( \tth |D|) + \tth |D| (1 - \tanh^2 (\tth |D|)) 
\end{equation}
and 
$ a_1(x) $ and $  p_1(x) $ are given in Lemma \ref{lem:pa.exp}. 

The operator $\dot \sL_{0,0}'$ is
\begin{equation}\label{cLmisto}
\dot \sL_{0,0}' = \begin{bmatrix} \im p_1(x) & 0 \\ 0 & \im p_1(x)\end{bmatrix}\,.
\end{equation}
\end{lem}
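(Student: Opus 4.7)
\medskip

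\noindent\textbf{Proof proposal.} The computation proceeds in three independent parts, each of which is a direct differentiation.

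\emph{Part 1: jets of $\sL_{\mu,\e}$.} I would read off $\sL'_{0,0}$, $\dot\sL_{0,0}$ and $\dot\sL'_{0,0}$ directly from \eqref{calL2}, inserting the expansions $p_\e=\e p_1+\cO(\e^2)$, $a_\e=\e a_1+\cO(\e^2)$ of Lemma \ref{lem:pa.exp} and $\ttf=\cO(\e^2)$ from \eqref{expfe}. Because $\ttf$ vanishes to second order in $\e$, the $(1,2)$-entry $|D+\mu|\tanh((\tth+\ttf)|D+\mu|)$ contributes nothing to $\sL'_{0,0}$; the $(1,1)$- and $(2,2)$-entries have a term $\im\mu p_\e$ which vanishes at $\mu=0$, which gives the formula for $\sL'_{0,0}$. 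For $\dot\sL_{0,0}$ I use \eqref{D+mu an} to write $|D+\mu|\tanh((\tth+\ttf)|D+\mu|)=(D+\mu)\tanh((\tth+\ttf)(D+\mu))$ for small $\mu$, differentiate the symbol once in $\mu$ at $(0,0)$, and rewrite it in terms of $\sgn(D)\, m(D)$ using that $\sgn(k)\tanh(\tth|k|)=\tanh(\tth k)$. Finally $\dot\sL'_{0,0}$ comes from differentiating $\im\mu\, p_\e$ once in each variable at the origin; the $(1,2)$-entry contributes $0$ because $\partial_\e\ttf\vert_{\e=0}=0$.

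\emph{Part 2: jets of $P_{\mu,\e}$.} Starting from \eqref{Pproj} and using the identity $\partial(\sL-\lambda)^{-1}=-(\sL-\lambda)^{-1}(\partial\sL)(\sL-\lambda)^{-1}$, I differentiate under the contour integral (legitimate since $\Gamma$ remains in the resolvent set of $\sL_{\mu,\e}$ for $(\mu,\e)\in B(\mu_0)\times B(\e_0)$ by Lemma \ref{lem:Kato1}). One $\partial_\e$ gives \eqref{Pdereps} and one $\partial_\mu$ gives \eqref{Pdermu}. Applying $\partial_\mu$ to \eqref{Pdereps} and using the resolvent identity twice produces the three summands \eqref{Pmisto1}--\eqref{Pmisto3}, where \eqref{Pmisto3} is the one where the derivative hits $\sL'_{\mu,\e}$ itself.

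\emph{Part 3: jets of $U_{\mu,\e}P_{0,0}$.} Write $U_{\mu,\e}=Q_{\mu,\e}B_{\mu,\e}$ with
\[
Q_{\mu,\e}:=\bigl(\uno-(P_{\mu,\e}-P_{0,0})^2\bigr)^{-1/2},\qquad B_{\mu,\e}:=P_{\mu,\e}P_{0,0}+(\uno-P_{\mu,\e})(\uno-P_{0,0}),
\]
and expand $Q$ via \eqref{rootexp}. Since $(P_{\mu,\e}-P_{0,0})^2$ vanishes to second order at the origin, both $Q'_{0,0}$ and $\dot Q_{0,0}$ vanish, while $Q_{0,0}=\uno$. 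Using $P_{0,0}^2=P_{0,0}$ one gets $B_{0,0}=\uno$, so $U_{0,0}=\uno$ and the first three equalities in \eqref{Ufirstorder} follow from $B'_{0,0}=P'_{0,0}(2P_{0,0}-\uno)$, $\dot B_{0,0}=\dot P_{0,0}(2P_{0,0}-\uno)$ and the elementary identity $(2P_{0,0}-\uno)P_{0,0}=P_{0,0}$. For the mixed derivative I use the Leibniz rule
\[
\dot U'_{0,0}=(\dot Q'_{0,0})\uno+\dot Q_{0,0}B'_{0,0}+Q'_{0,0}\dot B_{0,0}+\dot B'_{0,0},
\]
where the middle two terms vanish; $\dot Q'_{0,0}=\tfrac12\partial_\mu\partial_\e(P_{\mu,\e}-P_{0,0})^2\big|_{0,0}=\tfrac12(\dot P_{0,0}P'_{0,0}+P'_{0,0}\dot P_{0,0})$, and $\dot B'_{0,0}=\dot P'_{0,0}(2P_{0,0}-\uno)$.

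\emph{Final algebraic simplification (the only delicate step).} Multiplying the resulting expression on the right by $P_{0,0}$ yields
\[
\dot U'_{0,0}P_{0,0}=\dot P'_{0,0}P_{0,0}+\tfrac12\bigl(\dot P_{0,0}P'_{0,0}+P'_{0,0}\dot P_{0,0}\bigr)P_{0,0}.
\]
To match \eqref{Umix} I differentiate the projector relation $P_{\mu,\e}^2=P_{\mu,\e}$ once in each variable to get, at $(0,0)$,
\[
\dot P'_{0,0}=\dot P'_{0,0}P_{0,0}+\dot P_{0,0}P'_{0,0}+P'_{0,0}\dot P_{0,0}+P_{0,0}\dot P'_{0,0},
\]
which, after right-multiplication by $P_{0,0}$, gives $(\dot P_{0,0}P'_{0,0}+P'_{0,0}\dot P_{0,0})P_{0,0}=-P_{0,0}\dot P'_{0,0}P_{0,0}$. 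Substituting back produces exactly $\dot U'_{0,0}P_{0,0}=(\dot P'_{0,0}-\tfrac12 P_{0,0}\dot P'_{0,0})P_{0,0}$, as claimed. The main (and only) subtle point is this second-order projector identity; all other steps are routine Leibniz rule manipulations and symbol computations.
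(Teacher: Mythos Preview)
Your proof is correct and follows essentially the same route as the paper: Taylor-expand $U_{\mu,\e}P_{0,0}$ via \eqref{rootexp}, reduce the mixed-derivative identity \eqref{Umix} to the second-order projector relation $(\dot P_{0,0}P'_{0,0}+P'_{0,0}\dot P_{0,0})P_{0,0}=-P_{0,0}\dot P'_{0,0}P_{0,0}$ obtained by differentiating $P_{\mu,\e}^2=P_{\mu,\e}$, and read off the jets of $\sL_{\mu,\e}$ and $P_{\mu,\e}$ from \eqref{calL2} and \eqref{Pproj}. The only organizational difference is that you factor $U=QB$ and differentiate each factor, whereas the paper expands the product $U_{\mu,\e}P_{0,0}=P_{\mu,\e}P_{0,0}+\tfrac12(P_{\mu,\e}-P_{0,0})^2P_{\mu,\e}P_{0,0}+\cO((P_{\mu,\e}-P_{0,0})^4)$ directly; both lead to the same algebra.
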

\begin{proof}
 By \eqref{OperatorU} and \eqref{rootexp} one has the Taylor expansion  in $\cL(Y)$
$$
   U_{\mu,\e}P_{0,0}  = P_{\mu,\e}P_{0,0} + \frac{1}{2}(P_{\mu,\e}-P_{0,0})^2P_{\mu,\e}P_{0,0} +\cO(P_{\mu,\e}-P_{0,0})^4   \, ,
  $$
  where  $\cO(P_{\mu,\e}-P_{0,0})^4 = \cO(\e^4,\e^3\mu,\e^2\mu^2,\e\mu^3,\mu^4) \in \cL(Y)$.
Consequently one derives \eqref{Ufirstorder}, \eqref{Umix},
using also the identity
$\dot P_{0,0} P_{0,0}' P_{0,0} + P_{0,0}' \dot P_{0,0} P_{0,0} = - P_{0,0} \dot P_{0,0}' P_{0,0}$, 
which follows  differentiating $P_{\mu,\e}^2 = P_{\mu,\e}$.  
Differentiating  \eqref{Pproj}  one gets  \eqref{Pdereps}-\eqref{Pmisto3}. Formulas
\eqref{cLfirstorder}-\eqref{cLmisto} follow by \eqref{calL2}
using also that the Fourier multiplier
$ \Pi_0 \big( \tanh(\tth |D|) + \tth |D| \big(1-\tanh^2(\tth |D|) \big)\big) = 0 $.   
\end{proof}
By the previous lemma we have the Taylor expansion
\begin{equation}\label{ordinibase}
f_k^\sigma(\mu,\e) = f_k^\sigma + \e P_{0,0}' f_k^\sigma +\mu \dot P_{0,0} f_k^\sigma + \mu\e  \big(\dot P_{0,0}'- \frac12 P_{0,0}\dot P_{0,0}' \big) f_k^\sigma + \cO(\mu^2,\e^2) \, .
\end{equation}
In order to compute the vectors
 $P_{0,0}' f_k^\sigma$ and $\dot P_{0,0} f_k^\sigma$ using 
 \eqref{Pdereps} and \eqref{Pdermu}, it is useful to know the action of  $(\sL_{0,0} - \lambda)^{-1}$ on the vectors 
\begin{equation}
\begin{aligned}
\label{fksigma}
& f_k^+:=\vet{\ch^{1/2}\cos(kx)}{\ch^{-1/2}\sin(kx)} \, ,
\quad  f_k^- :=\vet{-\ch^{1/2}\sin(kx)}{\ch^{-1/2}\cos(kx)} \, , \\
&  f_{-k}^+ :=\vet{\ch^{1/2}\cos(kx)}{-\ch^{-1/2}\sin(kx)}\, ,
\quad 
f_{-k}^- :=\vet{\ch^{1/2}\sin(kx)}{\ch^{-1/2}\cos(kx)} \, , \quad k \in \bN \, .
\end{aligned}  
\end{equation}
\begin{lem}\label{lem:VUW}
The space $ H^1(\bT) $ decomposes as 
$
H^1(\bT) =  \cV_{0,0} \oplus \cU \oplus {\cW_{H^1}}
${, with  $\cW_{H^1}=\overline{\bigoplus\limits_{k=2}^\infty \cW_k}^{H^1}$}
where the subspaces $\cV_{0,0}, \cU $ and $ \cW_k $, defined below, are 
invariant  under   $\sL_{0,0} $ and  the following properties hold:
\begin{itemize}
\item[(i)] $ \cV_{0,0} = \text{span} \{ f^+_1, f^-_1, f^+_0, f^-_0\}$  is the generalized kernel of $\sL_{0,0}$. For any $ \lambda \neq 0 $ the operator 
$ \sL_{0,0}-\lambda :  \cV_{0,0} \to \cV_{0,0} $ is invertible and  
 \begin{align}\label{primainversione1}
& (\sL_{0,0}-\lambda)^{-1}f_1^+ = -\frac1\lambda f_1^+ \, ,
\quad 
(\sL_{0,0}-\lambda)^{-1}f_1^- = -\frac1\lambda f_1^-,
\quad  (\sL_{0,0}-\lambda)^{-1}f_0^- = -\frac1\lambda f_0^- \, ,  \\
& \label{primainversione2}
(\sL_{0,0}-\lambda)^{-1}f_0^+ = -\frac1\lambda f_0^+ + \frac{1}{\lambda^2} f_0^- \, .
\end{align} 
\item[(ii)] $\cU := \text{span}\left\{ f_{-1}^+, f_{-1}^-  \right\}$.   For any 
$ \lambda \neq \pm 2 \im $ the operator 
$ \sL_{0,0}-\lambda :  \cU \to \cU $ is invertible and
\begin{equation}
\label{primainversione3}
\begin{aligned}
&  (\sL_{0,0}-\lambda)^{-1} f_{-1}^+ =
   \frac{1}{\lambda^2+4 \ch^2}\left(-\lambda f_{-1}^+ + 2 \ch f_{-1}^-\right),  \\
&  (\sL_{0,0}-\lambda)^{-1} f_{-1}^- 
  = \frac{1}{\lambda^2+4\ch^2}
  \left(-2 \ch f_{-1}^+ - \lambda f_{-1}^-\right) \, .
  \end{aligned}
\end{equation}
\item[{(iii)}]  Each
subspace $\cW_k:= \text{span}\left\{f_k^+, \ f_k^-, f_{-k}^+, \ f_{-k}^- \right\}$ is  invariant under $ \sL_{0,0} $.  Let $\cW_{L^2}=\overline{\bigoplus\limits_{k=2}^\infty \cW_k}^{L^2}$. For any
$|\lambda| < %\red{ 2\ch+\dfrac{2\ch}{\sqrt{1+\ch^4}}}
\delta(\tth)$ small enough, the operator 
$ {\sL_{0,0}-\lambda :  \cW_{H^1}\to \cW_{L^2} }$ is invertible and for any $f \in {\cW_{L^2}}$
\begin{equation}
\label{primainversione4}
 (\sL_{0,0}-\lambda)^{-1} f  =  \big(\ch^2 \pa_x^2 + |D|\tanh(\tth |D|)\big)^{-1} \begin{bmatrix} \ch \partial_x & - |D|\tanh(\tth |D|) \\ 1 & \ch \partial_x\end{bmatrix} f + \lambda \varphi_f(\lambda, x) \, ,
\end{equation}
for some analytic  function  $\lambda \mapsto \varphi_f(\lambda, \cdot) \in H^1(\bT, \bC^2)$.
\end{itemize}
\end{lem}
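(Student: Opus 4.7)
\textbf{Proof plan for Lemma \ref{lem:VUW}.}

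The plan is to exploit the fact that $\sL_{0,0}$ is a Fourier multiplier matrix operator, acting on each Fourier mode $k \in \bZ$ by the constant $2\times 2$ matrix
\[
M_k := \begin{pmatrix} \im \ch k & |k|\tanh(\tth|k|) \\ -1 & \im \ch k \end{pmatrix} \, ,
\]
whose eigenvalues are precisely $\lambda_k^\pm(0) = \im(\ch k \pm \sqrt{|k|\tanh(\tth|k|)})$ by \eqref{omeghino}. Consequently the four-dimensional subspace $\cW_k$ ($k\geq 1$) generated by the real trigonometric vectors $f_{\pm k}^\pm$ in \eqref{fksigma} (which span the combined $\pm k$ Fourier modes), together with $\cV_{0,0}$ (the mode $0$ modes plus the $\pm 1$ kernel combinations), yields the claimed orthogonal decomposition of $H^1(\bT)$, and each piece is $\sL_{0,0}$-invariant because $\sL_{0,0}$ preserves each Fourier mode. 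Using $\ch^2 = \tanh(\tth)$ throughout, a direct computation confirms $\sL_{0,0}f_1^\pm = \sL_{0,0}f_0^- = 0$ and $\sL_{0,0}f_0^+ = -f_0^-$, so part (i) follows by inverting the resulting nilpotent-plus-$\lambda$ action on $\cV_{0,0}$: \eqref{primainversione1} is immediate for kernel vectors, and for \eqref{primainversione2} one solves $(\sL_{0,0}-\lambda)(af_0^+ + b f_0^-) = f_0^+$ to get $a = -1/\lambda$, $b=1/\lambda^2$.

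For part (ii), I would compute $\sL_{0,0}f_{-1}^+$ and $\sL_{0,0}f_{-1}^-$ directly; the two vertical and horizontal cancellations are replaced by doublings (because now the Fourier modes $\pm 1$ enter with opposite sign in the two components), giving $\sL_{0,0}f_{-1}^+ = -2\ch\, f_{-1}^-$ and $\sL_{0,0}f_{-1}^- = 2\ch\, f_{-1}^+$. Thus $\sL_{0,0}\restr{}{\cU}$ is represented by the antisymmetric matrix $\bigl(\begin{smallmatrix} 0 & 2\ch \\ -2\ch & 0\end{smallmatrix}\bigr)$ with eigenvalues $\pm 2\im \ch$; inverting $(M-\lambda I)$ by Cramer's rule yields \eqref{primainversione3}, where the denominator $\lambda^2+4\ch^2$ indicates the right exceptional set is $\lambda\neq \pm 2\im\ch$ (the statement in the lemma should probably read $\pm 2\im\ch$).

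For part (iii), the key is to show $\sL_{0,0} : \cW_{H^1}\to \cW_{L^2}$ is invertible. On each $\cW_k$, $k\geq 2$, the eigenvalues of $\sL_{0,0}$ are $\lambda_k^\pm(0) = \im(\ch k \pm \sqrt{k\tanh(\tth k)})$, which are all nonzero (for $k \geq 2$ one has $k\ch^2 - \tanh(\tth)k \cdot (\tanh(\tth k)/\tanh(\tth))/k$\ldots more concretely, $k\tanh(\tth) > \tanh(\tth k)$ for $k\geq 2$, as can be seen from the strict concavity of $\tanh$). Since $|\lambda_k^\pm(0)| \to \infty$ as $k\to\infty$ and the minimum over $k \geq 2$ is attained at $k=2$, one obtains $\delta(\tth) := \min_{k\geq 2}\min_{\pm}|\lambda_k^\pm(0)| > 0$. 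For $|\lambda| < \delta(\tth)$ the Neumann expansion $(\sL_{0,0}-\lambda)^{-1} = \sL_{0,0}^{-1}\sum_{n\geq 0}(\lambda \sL_{0,0}^{-1})^n$ converges uniformly on $\cW_{L^2}$. To compute $\sL_{0,0}^{-1}$ on $\cW_{L^2}$, I would solve $\sL_{0,0} v = f$ componentwise: from the second equation $v_1 = -f_2 + \ch\pa_x v_2$, substitute into the first to obtain $(\ch^2\pa_x^2 + |D|\tanh(\tth|D|))v_2 = f_1 + \ch\pa_x f_2$. The operator $\ch^2\pa_x^2 + |D|\tanh(\tth|D|)$ is a Fourier multiplier with symbol $-\ch^2 k^2 + |k|\tanh(\tth|k|) = -k(k\ch^2 - \tanh(\tth k))$, which by the same concavity argument is nonzero on $\cW_k$ for $k\geq 2$; thus it is invertible on $\cW_{L^2}$, and a matrix manipulation yields precisely the leading term of \eqref{primainversione4}, with $\lambda\varphi_f(\lambda,\cdot) := \sum_{n\geq 1}\lambda^n \sL_{0,0}^{-(n+1)}f$ the analytic remainder.

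The main technical point is establishing the uniform lower bound $\delta(\tth)>0$ on the spectrum of $\sL_{0,0}\restr{}{\cW_{H^1}}$, which requires showing $k\tanh(\tth) > \tanh(\tth k)$ for $k\geq 2$ --- a standard convexity lemma --- and monitoring how the bound $\delta(\tth)$ depends on $\tth$ to confirm that the decomposition and resolvent estimates are uniform on compact sets of $(0,+\infty)$, as needed for the rest of the paper.
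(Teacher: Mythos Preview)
Your proposal is correct and follows essentially the same route as the paper: invariance and the decomposition by Fourier modes, parts (i)--(ii) by the direct computations $\sL_{0,0}f_1^\pm=\sL_{0,0}f_0^-=0$, $\sL_{0,0}f_0^+=-f_0^-$, $\sL_{0,0}f_{-1}^+=-2\ch f_{-1}^-$, $\sL_{0,0}f_{-1}^-=2\ch f_{-1}^+$, and part (iii) by inverting $\sL_{0,0}$ on $\cW_{H^1}$ via the scalar multiplier $\ch^2\pa_x^2+|D|\tanh(\tth|D|)$ and then running a Neumann series. You are also right that the exceptional set in (ii) should read $\lambda\neq\pm 2\im\ch$, consistent with the denominator $\lambda^2+4\ch^2$ in \eqref{primainversione3}; this is a typo in the statement.
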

\begin{proof}
By inspection the spaces $\cV_{0,0}$, $\cU$ and ${ \cW_k}$ are invariant under $ \sL_{0,0}$ 
and, by Fourier series, they decompose $H^1(\bT, \bC^2)$. 
 Formulas  \eqref{primainversione1}-\eqref{primainversione2} follow using that 
$f_1^+, f_1^-, f_0^-$ are in the kernel of $\sL_{0,0}$, and $\sL_{0,0}f_0^+ =-f_0^- $.
 Formula \eqref{primainversione3} follows using that  $\sL_{0,0} f^+_{-1} = -2\ch f^{-}_{-1}$ and  $\sL_{0,0} f^-_{-1} =  2\ch f^{+}_{-1}$.
Let us prove item $(iii)$. Let $\cW:= \cW_{H^1}$.  The operator 
${ \restr{(\sL_{0,0}-\lambda\uno)}{\cW}} $ is invertible for any $ \lambda \notin 
\{ 
\pm \im 
\sqrt{|k| \tanh{(\tth |k|)}} \pm \im k \ch, k \geq 2, k \in {\mathbb N}  \}$ and 
$$ \footnotesize   
(\restr{\sL_{0,0}}{\cW})^{-1} 
= 
\left(\ch^2 \pa_x^2 + |D| \tanh(\tth |D|)\right)^{-1} 
\begin{bmatrix}
\ch \pa_x & -|D| \tanh(\tth |D|)) \\ 1 & \ch\pa_x\end{bmatrix}_{|\cW}  \, . 
$$
By Neumann series, for any  $ \lambda $ such that 
 $ |\lambda |  \|(\restr{\sL_{0,0}}{\cW})^{-1}\|_{{\cL(\cW,H^1(\bT))}}
 < 1 $ %,  i.e. $|\lambda| < \delta(\tth)$ small enough,
  we have
$$
  (\restr{\sL_{0,0}}{\cW}-\lambda)^{-1} = 
(\restr{\sL_{0,0}}{\cW})^{-1}  
\big( \uno - \lambda (\restr{\sL_{0,0}}{\cW})^{-1} \big)^{-1} =  
   (\restr{\sL_{0,0}}{\cW})^{-1} \sum_{k \geq 0} ((\restr{\sL_{0,0}}{\cW})^{-1}\lambda)^k 
  \, .
$$
 Formula 
  \eqref{primainversione4} follows  with 
 $\varphi_f(\lambda, x):= (\restr{\sL_{0,0}}{\cW})^{-1} 
 \sum_{k \geq 1} \lambda^{k-1} [(\restr{\sL_{0,0}}{\cW})^{-1}]^k f $.
\end{proof}
We shall also use the following formulas obtained  by 
\eqref{cLfirstorder}, \eqref{multiplier} and \eqref{funperturbed}:
\begin{equation}\label{derivoeps}
\begin{aligned}
&\sL_{0,0}'f_1^+ = \vet{2 \ch^{-1/2} \, \sin(2x)}{\frac{1}{2}
 \ch^{5/2}(1-\ch^{-4}) (1+\cos(2x) ) } \, , \qquad  
\sL_{0,0}'f_1^- = \vet{2\,  \ch^{-1/2} \, \cos(2x)}{-\frac{1}{2} 
\ch^{5/2}(1-\ch^{-4}) \sin(2x)} \, ,  \\
& 
\sL_{0,0}'f_0^+ = \vet{2\ch^{-1} \sin(x)}{\left(\ch^2 + \ch^{-2}\right)\cos(x)} \, , \qquad
\sL_{0,0}'f_0^- = 0 \, , \\
& \dot\sL_{0,0}f_1^+ = -\im b(\tth) \vet{\cos(x)}{0}\, , \qquad \dot\sL_{0,0}f_1^-= \im  b(\tth) \vet{\sin(x)}{0} \, , 
\quad b(\tth):= \ch^{-1/2} \big( \ch^2 + \tth(1-\ch^4) \big) \, , 
\\
&
\dot\sL_{0,0}f_0^+ = 0 \, , \qquad
 \dot\sL_{0,0}f_0^- = 0  \, .
\end{aligned}
\end{equation}
\begin{rem}
% A difference with the deep water case is that, 
In deep water  we have $ \dot\sL_{0,0}f_0^- = f_0^+$ (cfr. formula (A.14)  in \cite{BMV1}). 
In finite depth instead $ \dot\sL_{0,0}f_0^- = 0$ because
the Fourier multiplier $ \sgn(D) m(D) $  in \eqref{multiplier} vanishes on the constants.
\end{rem}
We  finally compute $P_{0,0}' f_k^\sigma$ and $\dot P_{0,0}f_k^\sigma$.
\begin{lem}
One has
\begin{equation}\label{tuttederivate}
\begin{aligned}
 & P_{0,0}'f^+_1 ={ 
  \vet{\frac12 \ch^{-\frac{11}{2}} (3+\ch^4) \, \cos(2x)}{  \frac14 \ch^{-\frac{13}{2}} (1+\ch^4)(3-\ch^4) \sin(2x)}  } \, , \quad P_{0,0}'f^-_1 = {
  \vet{-\frac12 \ch^{-\frac{11}{2}} (3+\ch^4) \, \sin(2x)}{  \frac14 \ch^{-\frac{13}{2}} (1+\ch^4)(3-\ch^4) \cos(2x)}  } \, , \\
&   P_{0,0}'f^+_0 =   \tfrac14\ch^{-\frac52}(3+\ch^4)  f^+_{-1} \, , \quad   P_{0,0}'f^-_0 =0 \, , \quad    \dot P_{0,0} f_0^+=0 \, ,\quad  
  \dot P_{0,0} f_0^-=0 \, ,   \\
 &\dot P_{0,0} f_1^+ = \frac{\im}{4}\big(1+\ch^{-2}\tth (1-\ch^4)\big) f^{-}_{-1} \, ,
 \quad
  \dot P_{0,0} f_1^- = \frac{\im}{4}  \big(1+\ch^{-2} \tth (1-\ch^4)\big) f^+_{-1}\, .
\end{aligned}
\end{equation}
\end{lem}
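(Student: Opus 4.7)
The plan is to evaluate, for each $f_k^\sigma$ with $k=0,1$ and $\sigma=\pm$, the contour integral formulas \eqref{Pdereps}, \eqref{Pdermu} by residue calculus at $\lambda=0$, using the explicit action of $\sL_{0,0}'$, $\dot\sL_{0,0}$ computed in \eqref{derivoeps}, and the explicit resolvent formulas \eqref{primainversione1}--\eqref{primainversione4} provided by the invariant decomposition $H^1(\bT)=\cV_{0,0}\oplus\cU\oplus \cW_{H^1}$ of Lemma~\ref{lem:VUW}. The general strategy is the same for every case: one first applies $(\sL_{0,0}-\lambda)^{-1}$ to $f_k^\sigma$, which, since $f_k^\sigma\in\cV_{0,0}$, produces only negative powers of $\lambda$ by \eqref{primainversione1}--\eqref{primainversione2}; then one applies $\sL_{0,0}'$ or $\dot\sL_{0,0}$ and decomposes the result along the subspaces $\cV_{0,0}$, $\cU$, $\cW_2$; finally one applies $(\sL_{0,0}-\lambda)^{-1}$ once more and integrates over $\Gamma$, collecting the residue at $0$.

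A number of entries are immediately zero: since $\sL_{0,0}'f_0^-=0$ by \eqref{derivoeps} and $(\sL_{0,0}-\lambda)^{-1}f_0^-=-\lambda^{-1}f_0^-$, one gets $P'_{0,0}f_0^-=0$; likewise $\dot\sL_{0,0}f_0^\pm=0$ immediately gives $\dot P_{0,0}f_0^\pm=0$, the crucial point being that in finite depth the Fourier multiplier $\mathrm{sgn}(D)m(D)$ in \eqref{multiplier} annihilates the constants, so the formally dangerous double pole $\lambda^{-2}f_0^-$ in $(\sL_{0,0}-\lambda)^{-1}f_0^+$ is killed by $\dot\sL_{0,0}$ before the second resolvent is applied.

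For the four remaining vectors one decomposes $\sL_{0,0}'f_1^\pm$ and $\sL_{0,0}'f_0^+$, $\dot\sL_{0,0}f_1^\pm$ into the eigenbasis \eqref{fksigma} of the subspaces $\cU$ and $\cW_2$, then uses the resolvent formulas \eqref{primainversione3}, \eqref{primainversione4}. Concretely, for $P'_{0,0}f_1^\pm$ the images $\sL_{0,0}'f_1^\pm$ have a $\cW_2$ part and a multiple of $f_0^-$; the $\cW_2$ part feeds into \eqref{primainversione4}, which is analytic in $\lambda$ near $0$, so the residue at $\lambda=0$ of $\lambda^{-1}(\sL_{0,0}-\lambda)^{-1}(\cdot)$ equals $-\sL_{0,0}^{-1}$ applied to the $\cW_2$ vector; explicit mode-by-mode inversion on $k=\pm 2$ (using the symbol of \eqref{primainversione4}) yields the expansion claimed in \eqref{tuttederivate}. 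For $P'_{0,0}f_0^+$ one decomposes $\sL_{0,0}'f_0^+=\tfrac12\ch^{-3/2}(\ch^4-1)f_1^- + \tfrac12\ch^{-3/2}(3+\ch^4)f_{-1}^-$; only the $\cU$-component contributes to the residue, which by \eqref{primainversione3} produces the factor $\tfrac{1}{2\ch}$ at $\lambda=0$ and yields the stated coefficient $\tfrac14\ch^{-5/2}(3+\ch^4)$ in front of $f_{-1}^+$. For $\dot P_{0,0}f_1^+$ (and symmetrically $f_1^-$) one notes that $\dot\sL_{0,0}f_1^+=-\im b(\tth)\tfrac12\ch^{-1/2}(f_1^++f_{-1}^+)$; the $f_1^+$-piece gives a double pole $\lambda^{-2}$ with constant numerator and zero residue, while the $\cU$-piece, by \eqref{primainversione3}, contributes a simple pole at $\lambda=0$ with residue $\tfrac{b(\tth)}{4\ch^{3/2}}\,f_{-1}^-=\tfrac14(1+\ch^{-2}\tth(1-\ch^4))f_{-1}^-$, matching \eqref{tuttederivate}.

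The only technical obstacle is purely arithmetic: correctly decomposing the four basis vectors $\{f_{\pm 1}^\pm\}$ and the $\cW_2$-components produced by $\sL_{0,0}'f_1^\pm$, and carefully tracking which $\lambda$-poles at $0$ are simple (hence contribute) and which are double with zero residue. All coefficients $\alpha_\tth,\beta_\tth,\gamma_\tth,\delta_\tth$ appearing in the expansions \eqref{def:basiscoeff} arise from exactly these residues via the algebraic identities $b(\tth)=\ch^{-1/2}(\ch^2+\tth(1-\ch^4))$ and $\ch^2=\tanh(\tth)$. No extra analysis is required beyond these bookkeeping computations.
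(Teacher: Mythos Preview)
Your proposal is correct and follows essentially the same approach as the paper's proof: apply the first resolvent via \eqref{primainversione1}--\eqref{primainversione2}, act with $\sL_{0,0}'$ or $\dot\sL_{0,0}$ using \eqref{derivoeps}, decompose the result along $\cV_{0,0}\oplus\cU\oplus\cW_2$, apply the second resolvent via \eqref{primainversione3}--\eqref{primainversione4}, and collect the residue at $\lambda=0$. The explicit decompositions and residue computations you outline (for $P_{0,0}'f_0^+$ and $\dot P_{0,0}f_1^+$ in particular) match the paper's line by line, including the observation that the $\cV_{0,0}$-components produce only pure $\lambda^{-2}$ terms with vanishing residue.
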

\begin{proof}
We first compute $P_{0,0}'f_1^+$. By \eqref{Pdereps}, \eqref{primainversione1}  and \eqref{derivoeps}  we deduce
$$
P_{0,0}'f_1^+ = -\frac{1}{2\pi\im} \oint_\Gamma \frac{1}{\lambda}(\sL_{0,0}-\lambda)^{-1}
\vet{2 \ch^{-1/2} \, \sin(2x)}{\frac{1}{2} \ch^{5/2}(1-\ch^{-4}) (1+\cos(2x) ) }
  \de\lambda \,  .
$$
We note that   
$ \footnotesize \vet{2 \ch^{-1/2} \, \sin(2x)}{\frac{1}{2}\ch^{5/2}(1-\ch^{-4}) (1+\cos(2x) ) }  = 
\frac{1}{2} \ch^{5/2}(1-\ch^{-4}) f_0^- + \cW $. 
 Therefore by \eqref{primainversione1}
and  \eqref{primainversione4}  there is an analytic function $\lambda \mapsto \varphi(\lambda, \cdot) \in H^1(\bT, \bC^2)$ so that  
\begin{equation*}
P_{0,0}'f_1^+
 = -\frac{1}{2\pi\im} \oint_\Gamma \frac{1}{\lambda} \Big(- \dfrac{\ch^{5/2}(1-\ch^{-4})}{2\lambda} f_0^- 
{-
\frac{1+\ch^4}{4\ch^6} 
  \vet{2\ch \frac{\ch^{-\frac12} (3+\ch^4)}{1+\ch^4} \cos(2x)}{ \ch^{-\frac12} (3-\ch^4) \sin(2x)}  }
  +
 \lambda \varphi(\lambda) \Big) \, \de\lambda \, ,
\end{equation*}
where we exploited the identity $ \tanh(2\tth) = \frac{2\ch^2}{1+\ch^4}$ in applying \eqref{primainversione4}. Thus, by means of  residue Theorem we obtain the first identity in \eqref{tuttederivate}.
Similarly one computes $P_{0,0}'f_1^-$. 
 By \eqref{Pdereps}, \eqref{primainversione1}  and \eqref{derivoeps}, one has $P_{0,0}'f_0^-=0$. 
Next we compute $P_{0,0}'f_0^+$. 
 By \eqref{Pdereps}, \eqref{primainversione1},  \eqref{primainversione2}  and \eqref{derivoeps} we get
$$
P_{0,0}'f_0^+  
= -\frac{1}{2\pi\im} \oint_\Gamma \frac{1}{\lambda}(\sL_{0,0}-\lambda)^{-1}  \vet{2\ch^{-1} \sin(x)}{(\ch^2 + \ch^{-2})\cos(x)}  \de\lambda \, . 
$$
Next we decompose
$ \footnotesize
 \vet{2\ch^{-1} \sin(x)}{(\ch^2 + \ch^{-2})\cos(x)} 
 =\underbrace{{ \frac12 \ch^{-\frac32}(\ch^4+3)}}_{=: \alpha} f^{-}_{-1} + \underbrace{{\frac12 \ch^{-\frac32}(\ch^4-1)}}_{ =: \beta} f^-_1 $. 
By \eqref{derivoeps} and \eqref{primainversione3} we get 
$$
P_{0,0}'f_0^+   = 
-\frac{1}{2\pi\im} \oint_\Gamma \Big(-\frac{2 \alpha \ch}{\lambda(\lambda^2+4\ch^2)}f_{-1}^+- \frac{\alpha }{\lambda^2+4\ch^2} f_{-1}^-  + \frac{\beta}{\lambda^2} f^-_1 \Big)  \de\lambda  = \frac{\alpha}{2\ch} f^+_{-1}
  \, ,  
$$
where in the last step we used the residue theorem.
We compute now $\dot P_{0,0} f^+_1$. 
First we have
$ \dot P_{0,0}f_1^+   =\ \frac{\im}{2\pi\im} b( {\mathtt h}) \oint_\Gamma \frac{1}{\lambda}(\sL_{0,0}-\lambda)^{-1} \footnotesize \vet{\cos(x)}{0} \de\lambda  $, where $b(\tth)$ is in \eqref{derivoeps},
and then, writing $ \footnotesize  \vet{\cos(x)}{0} =\frac{1}{2} \ch^{-\frac12} ( f_1^+ + f_{-1}^+ )$ and using \eqref{primainversione3}, we conclude  
using again the residue theorem
$ \dot P_{0,0} f_1^+
 = \frac{\im}{4} \big( 1 + {\mathtt h} (1 - \ch^4) \ch^{-2} \big) f^{-}_{-1} $. 
 The computation of $\dot P_{0,0}f^-_1$ is analogous. 
% Indeed 
% \begin{equation*}\dot P_{0,0}f^-_1 = -\frac{ \im  b(\tth)}{2\pi\im} \oint_\Gamma \frac1\lambda (\cL_{0,0}-\lambda)^{-1} \vet{\sin(x)}{0} \de \lambda\, ,
%  \end{equation*}
%  and, as we write $ \footnotesize \vet{\sin(x)}{0} =  \frac{1}{2} \ch^{-\frac12} (  f_{-1}^--f_1^-  ) $, we conclude  by \eqref{primainversione3} and
%residue theorem
% \begin{align*}\dot P_{0,0}f^-_1 &= -\frac{ \im  b(\tth)}{2\pi\im} \oint_\Gamma \frac1\lambda (\cL_{0,0}-\lambda)^{-1} \vet{\sin(x)}{0} \de \lambda  -\frac{\ch^{-\frac12}}{2} \frac{ \im  b(\tth)}{2\pi\im} \oint_\Gamma \frac1\lambda (\cL_{0,0}-\lambda)^{-1}  f_{-1}^- \de \lambda \\
%  &=\im  \frac{1}{2\ch} \frac{\ch^{-\frac12}}{2}   b(\tth)f_{-1}^+ = \frac{\im}{4}(1+\tth(1+\ch^4)\ch^{-2})f_{-1}^+\, .
%  \end{align*}
  Finally, in view of \eqref{derivoeps}, we have
  \begin{align*}
&   \dot P_{0,0}f^+_0 = \frac{1}{2\pi\im} \oint_\Gamma (\cL_{0,0}-\lambda)^{-1} \dot\cL_{0,0} \big( \frac1{\lambda^2} f_0^--\frac{1}{\lambda}f_0^+ \big) \de \lambda =0\, , \\ 
&  \dot P_{0,0}f^-_0 = -\frac{1}{2\pi\im} \oint_\Gamma \frac{1}{\lambda} (\cL_{0,0}-\lambda)^{-1} \dot\cL_{0,0}f_0^- \de \lambda =0 \,.
  \end{align*}
In conclusion all the formulas  in \eqref{tuttederivate} are proved.
\end{proof}
So far we have obtained the linear terms of the expansions  \eqref{exf41}, \eqref{exf42}, \eqref{exf43}, \eqref{exf44}. 
We now provide further information about the expansion of the basis at $\mu=0$. The proof of the next lemma follows as that of Lemma A.4 in \cite{BMV1}. 
 \begin{lem}
 The basis $\{f_k^\sigma(0,\e), \, k = 0,1 , \sigma = \pm\}$ is real.
 For any $\e $  it results $f_0^-(0,\e) \equiv f_0^- $. The property \eqref{nonzeroaverage} holds.
 \end{lem}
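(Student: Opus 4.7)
The plan is to derive each of the three claims from the structural properties of the transformation operator $U_{0,\e}$ established in Lemma \ref{propPU} together with the symplectic/reversible nature of the basis $\mathcal{F}$ given by Lemma \ref{base1symp}.

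\textbf{Reality of the basis at $\mu=0$.} First I would invoke Lemma \ref{propPU}(iii), which asserts that $U_{0,\e}$ is a real operator, i.e.\ $\overline{U_{0,\e}}=U_{0,\e}$. Since each unperturbed vector $f_k^\sigma$ in \eqref{funperturbed} is real valued, the vectors $f_k^\sigma(0,\e):= U_{0,\e} f_k^\sigma$ are real as well.

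\textbf{Invariance of $f_0^-$.} Next I would show directly from \eqref{cLepsilon} that $\sL_{0,\e}f_0^-=0$ for every $\e$: applying the operator to $f_0^-=\binom{0}{1}$, the first component is $|D|\tanh((\tth+\ttf)|D|)\cdot 1 = 0$ because $|D|$ annihilates constants, and the second component is $(\ch+p_\e)\partial_x 1 = 0$. Hence $f_0^-$ lies in the kernel of $\sL_{0,\e}$, in particular in the invariant subspace $\mathcal{V}_{0,\e}=\mathrm{Rg}(P_{0,\e})$, so $P_{0,\e}f_0^-=f_0^-$, and analogously $P_{0,0}f_0^-=f_0^-$. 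Substituting into the explicit formula \eqref{OperatorU} for the transformation operator,
\[
U_{0,\e}f_0^- = (\uno-(P_{0,\e}-P_{0,0})^2)^{-1/2}\bigl[P_{0,\e}P_{0,0}+(\uno-P_{0,\e})(\uno-P_{0,0})\bigr]f_0^- = f_0^-,
\]
since $(P_{0,\e}-P_{0,0})f_0^-=0$ and the bracketed operator reduces to the identity on $f_0^-$.

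\textbf{Parity structure.} Finally, by Lemma \ref{base1symp} the basis $\mathcal{F}$ is reversible, so $\bro f_k^\sigma(0,\e) = \sigma f_k^\sigma(0,\e)$ for $k=0,1$, $\sigma=\pm$. Using the definition \eqref{reversibilityappears} of $\bro$ together with the reality of each $f_k^\sigma(0,\e)$ established above, the relation $\bro f_k^\sigma(0,\e) = \sigma f_k^\sigma(0,\e)$ forces the first component of $f_k^+(0,\e)$ to be even and the second odd, and vice versa for $f_k^-(0,\e)$. To pin down the constant modes (the only subtle point, and thus what I would treat as the main obstacle), I would exploit the symplectic normalization \eqref{symplecticbasis}. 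Since $\cJ f_0^- = \binom{1}{0}$, the scalar product $(\cJ f_0^-,f)$ extracts the spatial average of the first component of $f$. The conditions $(\cJ f_0^-,f_1^\pm(0,\e))=0$ then imply that the first component of $f_1^\pm(0,\e)$ has zero mean, while $(\cJ f_0^-,f_0^+(0,\e))=1$, combined with $f_0^+(0,\e)=U_{0,\e}\binom{1}{0}=\binom{1}{0}+\mathcal{O}(\e)$, forces the $\e$-correction in the first component of $f_0^+(0,\e)$ to have zero mean. Collecting these parity and zero-average constraints yields precisely the representation \eqref{nonzeroaverage}, concluding the proof along the lines of Lemma A.4 in \cite{BMV1}.
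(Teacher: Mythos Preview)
Your proposal is correct and follows essentially the same approach as the paper, which defers to Lemma A.4 of \cite{BMV1}: reality from Lemma \ref{propPU}(iii), the invariance $f_0^-(0,\e)=f_0^-$ from $\sL_{0,\e}f_0^-=0$ combined with the explicit form \eqref{OperatorU} of the transformation operator, and the parity/zero-average structure \eqref{nonzeroaverage} from reversibility together with the symplectic normalizations \eqref{symplecticbasis} against $f_0^-(0,\e)=f_0^-$. The only minor redundancy is that for $f_0^+(0,\e)$ you do not need the expansion $\binom{1}{0}+\mathcal{O}(\e)$ to fix the constant in the first component; the symplectic condition $(\cJ f_0^-, f_0^+(0,\e))=1$ alone forces the average of the first component to equal $1$.
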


We now provide further information about the expansion of the basis at $\e=0$. The following lemma follows as Lemma A.5 in \cite{BMV1}. The key observation is that the operator $\restr{\sL_{\mu,0}}{\mathcal{Z}}$, where $\mathcal{Z}$ is the invariant subspace $\mathcal{Z}:=\text{span}\{f_0^+,\,f_0^-\}$, has the two eigenvalues $\pm\im \sqrt{\mu \tanh(\tth \mu)}$, which, for small $\mu$, lie inside the loop $\Gamma$ around $0$ in \eqref{Pproj}.

\begin{lem}\label{lem:A5}
For any small $\mu$, we have $f_0^+(\mu,0) \equiv f_0^+ $
and $f_0^-(\mu,0) \equiv f_0^- $. Moreover the vectors $f_1^+(\mu,0)$ and $f_1^-(\mu,0)$ have both components with zero space average.
\end{lem}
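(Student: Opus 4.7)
The plan is to exploit the constant-coefficient (Fourier-multiplier) structure of $\sL_{\mu,0}$. When $\e=0$ the potentials $p_\e, a_\e, \ttf$ all vanish, so $\sL_{\mu,0}$ commutes with every Fourier spectral projector. In particular the two subspaces
$$
\mathcal{Z}:=\mathrm{span}\{f_0^+,f_0^-\},\qquad \mathcal{H}_0:=\{u\in H^1(\bT,\bC^2):\Pi_0 u = 0\}
$$
are invariant under $\sL_{\mu,0}$, and $H^1(\bT,\bC^2)=\mathcal{Z}\oplus\mathcal{H}_0$.

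First I would compute directly, using the explicit form \eqref{calL2} at $\e=0$ and the fact that the Fourier multiplier $|D+\mu|\tanh(\tth|D+\mu|)$ sends the constant $1$ to $\mu\tanh(\tth\mu)$, that
$$
\sL_{\mu,0} f_0^+ = -f_0^-,\qquad \sL_{\mu,0} f_0^-=\mu\tanh(\tth\mu)\,f_0^+.
$$
Hence the matrix of $\sL_{\mu,0}|_{\mathcal Z}$ in the basis $\{f_0^+,f_0^-\}$ is $\bigl(\begin{smallmatrix} 0 & \mu\tanh(\tth\mu)\\-1 & 0\end{smallmatrix}\bigr)$, whose eigenvalues $\pm\im\sqrt{\mu\tanh(\tth\mu)}$ lie inside the contour $\Gamma$ encircling $0$ for all sufficiently small $\mu$. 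By the invariance of $\mathcal{Z}$ under $\sL_{\mu,0}$ and the definition \eqref{Pproj} of $P_{\mu,0}$, this forces $\mathcal{Z}\subset\mathcal{V}_{\mu,0}=\mathrm{Rg}(P_{\mu,0})$, so that $P_{\mu,0} f_0^\sigma=f_0^\sigma=P_{0,0}f_0^\sigma$ for $\sigma=\pm$.

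With this identity in hand, $(P_{\mu,0}-P_{0,0})f_0^\sigma=0$, so every term in the power series \eqref{rootexp} annihilates $f_0^\sigma$. Substituting into the Kato formula \eqref{OperatorU} yields at once $U_{\mu,0}f_0^\sigma=f_0^\sigma$, i.e.\ $f_0^\sigma(\mu,0)\equiv f_0^\sigma$, which is the first assertion.

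For the second assertion, I would observe that $(\sL_{\mu,0}-\lambda)^{-1}$, being the resolvent of a Fourier multiplier operator, commutes with $\Pi_0$; hence both $P_{\mu,0}$ and $P_{0,0}$ preserve $\mathcal{H}_0$, and so does $\bigl(\uno-(P_{\mu,0}-P_{0,0})^2\bigr)^{-1/2}$ defined by the Neumann series \eqref{rootexp}. Consequently $U_{\mu,0}$ maps $\mathcal{H}_0$ into itself, and since $f_1^\pm\in\mathcal{H}_0$ (they are pure first-harmonic vectors), their images $f_1^\pm(\mu,0)=U_{\mu,0}f_1^\pm$ also have both components of zero space average. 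The argument has no real obstacle: everything reduces to the Fourier-mode decoupling of a constant-coefficient operator, combined with the algebraic form of the Kato transformation \eqref{OperatorU}.
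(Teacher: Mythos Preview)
Your proof is correct and follows essentially the same approach that the paper indicates (the paper only sketches the argument by referring to Lemma A.5 of \cite{BMV1} and stating the key observation about the eigenvalues of $\sL_{\mu,0}|_{\mathcal Z}$). Your write-up is in fact a clean, self-contained fleshing out of that sketch: the Fourier-multiplier structure of $\sL_{\mu,0}$ makes $\mathcal Z$ and $\mathcal H_0$ invariant, the two eigenvalues $\pm\im\sqrt{\mu\tanh(\tth\mu)}$ stay inside $\Gamma$, and then the algebra of \eqref{OperatorU}--\eqref{rootexp} delivers both conclusions.
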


We finally consider  the $\mu\e$ term in the expansion \eqref{ordinibase}.
\begin{lem}
The derivatives
$ (\pa_{\mu} \pa_\e f_k^\sigma)(0,0) = 
 \left(\dot P_{0,0}'- \frac12 P_{0,0}\dot P_{0,0}' \right)f_k^\sigma $
 satisfy 
 \begin{equation}\label{struttura2}
\begin{aligned}
&  (\pa_{\mu} \pa_\e f_1^+)(0,0)  = \im \vet{odd(x)}{even(x)},  \qquad 
(\pa_{\mu} \pa_\e f_1^-)(0,0) - = \im \vet{even(x)}{odd(x)},\\
& (\pa_{\mu} \pa_\e f_0^+)(0,0) = \im \vet{odd(x)}{even_0(x)},
\qquad 
(\pa_{\mu} \pa_\e f_0^-)(0,0) = \im   \vet{even_0(x)}{odd(x)}  \, . 
\end{aligned}
\end{equation}
\end{lem}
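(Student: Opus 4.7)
The idea is to combine two structural symmetries -- reversibility of the Kato basis and a discrete reality property of the relevant operators -- to pin down the parity of $(\pa_\mu\pa_\e f_k^\sigma)(0,0) = \dot U_{0,0}' f_k^\sigma$ without evaluating the Cauchy integrals \eqref{Pmisto1}--\eqref{Pmisto3} explicitly; the zero-mean refinement in the $k=0$ case is then extracted from the symplectic normalisation of the basis.

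First, the Kato basis satisfies $\bar\rho\, f_k^\sigma(\mu,\e) = \sigma\, f_k^\sigma(\mu,\e)$ for every $(\mu,\e)$, with $\bar\rho$ independent of $(\mu,\e)$. Differentiating in $\pa_\mu\pa_\e$ at the origin preserves the identity, so by \eqref{reversiblebasisprop} the mixed derivative has the structure $(\pa_\mu\pa_\e f_k^+)(0,0) = \vet{even(x)+\im\, odd(x)}{odd(x)+\im\, even(x)}$, and analogously for $\sigma=-$ with the roles of $even$ and $odd$ swapped in each slot.

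Second, I would show that $(\pa_\mu\pa_\e f_k^\sigma)(0,0)$ is purely imaginary, thereby killing the real summand in each slot. By \eqref{cLfirstorder}--\eqref{cLmisto}, both $\dot\sL_{0,0}$ (through the factor $\sgn(D)$) and $\dot\sL_{0,0}'$ (through the explicit factor $\im$) are purely imaginary operators, while $\sL_{0,0}$, $\sL_{0,0}'$ and $P_{0,0}$ are real. Each Cauchy integrand in \eqref{Pmisto1}--\eqref{Pmisto3} is therefore a product of real operators times exactly one purely imaginary operator. Choosing the separating contour $\Gamma$ symmetric about the real axis -- possible since the spectrum of the real operator $\sL_{0,0}$ is conjugation-symmetric -- and changing the integration variable $\lambda\mapsto\bar\lambda$, one picks up three sign flips under complex conjugation: one from reversing the orientation of $\Gamma$, one from conjugating the prefactor $1/(2\pi\im)$, and one from the single purely imaginary operator factor. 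The net sign is $-1$, so $\overline{\dot P_{0,0}' f} = -\dot P_{0,0}' f$ for every real $f$; the real operator $P_{0,0}$ preserves this property, so $\dot U_{0,0}' f_k^\sigma$ is purely imaginary. Combined with the first step this already yields \eqref{struttura2} for $k=1$.

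Third, for $k=0$ the additional zero-mean property of the ``even'' slot follows from the symplectic normalisations $(\cJ f_0^\pm(\mu,\e), f_0^\pm(\mu,\e)) \equiv 0$ in \eqref{symplecticbasis}. Applying $\pa_\mu\pa_\e$ at $(0,0)$, the cross terms vanish thanks to $(\pa_\mu f_0^\pm)(0,0) = 0$ from \eqref{tuttederivate}, leaving
\[
(\cJ\pa_\mu\pa_\e f_0^\pm, f_0^\pm) + (\cJ f_0^\pm, \pa_\mu\pa_\e f_0^\pm) = 0.
\]
Using $(\cJ a, b) = -(a, \cJ b)$ together with $\cJ f_0^+ = -f_0^-$ and $\cJ f_0^- = f_0^+$, this identity forces the scalar $(f_0^\mp, \pa_\mu\pa_\e f_0^\pm)$ to equal its own complex conjugate, hence to be real. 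But by step two it is also purely imaginary, so it vanishes. Recalling \eqref{scalar}, one has $(f_0^-, v) = \tfrac{1}{2\pi}\int \bar v_2\,\de x$ and $(f_0^+, v) = \tfrac{1}{2\pi}\int \bar v_1\,\de x$, so the vanishing is exactly the zero-mean condition for the second component of $\pa_\mu\pa_\e f_0^+$ and for the first component of $\pa_\mu\pa_\e f_0^-$, completing the proof. The main obstacle is the careful sign-tracking in step two, where three independent minus signs must be aligned correctly; once this is pinned down, the rest of the lemma is a formal consequence of the symmetries -- reversibility and symplecticity -- already embedded in the Kato construction of Lemma \ref{base1symp}.
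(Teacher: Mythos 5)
Your proposal is correct, and for the first two steps it coincides in spirit with the paper's proof: the parity structure comes from the reversibility identity $\bar\rho f_k^\sigma(\mu,\e)=\sigma f_k^\sigma(\mu,\e)$ (which commutes with $\pa_\mu\pa_\e$ because $\mu,\e$ are real), and the purely-imaginary property of $\dot U_{0,0}'f_k^\sigma$ follows from the conjugation properties of the operators appearing in \eqref{Pmisto1}--\eqref{Pmisto3}. One small imprecision in your step two: the resolvents $(\sL_{0,0}-\lambda)^{-1}$ for $\lambda\in\Gamma\setminus\bR$ are \emph{not} real operators, so the integrand is not literally a product of real operators times one purely imaginary factor. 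Your subsequent contour manipulation $\lambda\mapsto\bar\lambda$ is exactly what makes the argument work -- it uses $\overline{(\sL_{0,0}-\lambda)^{-1}}=(\sL_{0,0}-\bar\lambda)^{-1}$ together with the $\Gamma$-symmetry -- so there is no gap, but the leading sentence should be restated in terms of the conjugation rule rather than ``reality''.

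Where you genuinely diverge from the paper is the zero-average refinement for the $k=0$ vectors. The paper attacks this head-on: it inserts $f_0^\pm$ into each of the three Cauchy integrals \eqref{Pmisto1}--\eqref{Pmisto3}, observes that the composed Fourier multipliers preserve the zero-average property, uses the specific vanishings $\dot\sL_{0,0}f_0^\pm=0$ and $\sL_{0,0}'f_0^-=0$, and evaluates the residues term by term. You instead differentiate the symplectic normalisation $(\cJ f_0^\pm(\mu,\e),f_0^\pm(\mu,\e))\equiv 0$, kill the cross terms using $(\pa_\mu f_0^\pm)(0,0)=0$ from \eqref{tuttederivate}, and obtain that the zero-mode scalar products $(f_0^\mp,\pa_\mu\pa_\e f_0^\pm)$ are simultaneously real (from the symplectic identity and skew-adjointness of $\cJ$) and purely imaginary (from step two), hence zero. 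This is a cleaner, structural argument that avoids the residue bookkeeping entirely -- you trade explicit computations for the invariance already built into Kato's construction, at the modest cost of invoking $(\pa_\mu f_0^\pm)(0,0)=0$, which is supplied by the earlier lemma. Both routes are valid; yours is arguably tidier and generalises more readily.
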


\begin{proof}
We prove that $\dot P'_{0,0}= \eqref{Pmisto1} + \eqref{Pmisto2} + 
\eqref{Pmisto3}$ is purely imaginary, see footnote \ref{apuim}.
This follows since the operators in $\eqref{Pmisto1}$,  $\eqref{Pmisto2}$ and  $\eqref{Pmisto3}$ 
 are purely imaginary 
 because $\dot \sL_{0,0}$ is purely imaginary, 
  $\sL_{0,0}' $  in \eqref{cLfirstorder}  is real
and $\dot \sL_{0,0}'$ in \eqref{cLmisto} is purely imaginary 
(argue as in  Lemma \ref{propPU}-$(iii)$ of \cite{BMV1}).
Then, applied to the real vectors $f^\sigma_k$, $k = 0,1$, $\sigma = \pm$, give purely imaginary vectors. 

 The property 
\eqref{reversiblebasisprop} implies that  $(\pa_{\mu} \pa_\e f_k^\sigma)(0,0) $ have the claimed parity structure
in \eqref{struttura2}.
We shall now prove that 
 $(\pa_{\mu} \pa_\e f_0^\pm)(0,0)$
 have zero average.
We have, by \eqref{primainversione2}  and \eqref{derivoeps}
\begin{align*}
\eqref{Pmisto1} f_0^+:= \frac{1}{2\pi\im} \oint_\Gamma (\sL_{0,0}-\lambda)^{-1} \dot \sL_{0,0} (\sL_{0,0}-\lambda)^{-1} \frac{1}{\lambda} \vet{2\ch^{-1} \sin(x)}{\left(\ch^2 + 
\ch^{-2}\right)\cos(x)}    \, \de\lambda
\end{align*}
and since the operators $(\sL_{0,0}-\lambda)^{-1}$  and $\dot \sL_{0,0}$ are  both Fourier multipliers, hence they preserve {the absence of average} of the vectors, then  $\eqref{Pmisto1} f_0^+$ has zero average.
Next     $ \eqref{Pmisto2} f_0^+ = 0$ since 
$\dot \sL_{0,0} f_0^\pm = 0$, cfr. \eqref{def:segno}. 
Finally, by  \eqref{primainversione2} and \eqref{cLmisto} where $ p_1 (x) =p_1^{[1]} \cos (x) $, 
$$
\eqref{Pmisto3} f_0^+ = 
 \frac{\im p_1^{[1]}}{2\pi\im} \oint_\Gamma (\sL_{0,0}-\lambda)^{-1}
  \Big( 
- \frac{1}{\lambda} \vet{ \cos (x)}{0} + \frac{1}{\lambda^2} \vet{0}{ \cos (x)} \Big) \, 
  \de\lambda  
$$
is a vector with zero average. 
We conclude that $\dot P_{0,0}' f_0^+$ is an imaginary vector with zero average, as well as  $(\pa_\mu \pa_\e f_0^+)(0,0)$ since $P_{0,0}$ sends zero average functions in zero average functions. 
Finally,  by \eqref{reversiblebasisprop},  $(\pa_\mu \pa_\e f_0^+)(0,0)$ has the claimed structure in \eqref{struttura2}.

We  finally consider $(\pa_{\mu} \pa_\e f_0^-)(0,0)$.
By \eqref{primainversione1} and $\sL_{0,0}'f_0^-=0$ (cfr. \eqref{derivoeps}), it results
$$
\eqref{Pmisto1}f_0^- = - \frac{1}{2\pi\im} \oint_\Gamma \frac{(\sL_{0,0}-\lambda)^{-1}}{\lambda} \dot \sL_{0,0} (\sL_{0,0}-\lambda)^{-1} \sL_{0,0}' f_0^- \de\lambda=0 \,  
 . 
$$
Next by \eqref{primainversione1} and 
$\dot \sL_{0,0} f_0^- = 0$ we get
$\eqref{Pmisto2} f_0^- = 0$.
 Finally by \eqref{primainversione1} and \eqref{cLmisto}
 \begin{align*}
\eqref{Pmisto3} f_0^- = 
-\frac{1}{2\pi\im} \oint_\Gamma (\sL_{0,0}-\lambda)^{-1}
\frac{1}{\lambda} \vet{0}{\im p_1^{[1]} \cos(x)} \de \lambda 
\end{align*}
has zero average 
since $(\sL_{0,0}-\lambda)^{-1}$ is a Fourier multiplier (and thus preserves {average absence}).
\end{proof}
   
   This completes the proof of Lemma \ref{expansion1}. 

\section{Expansion of the Stokes waves in finite depth}\label{sec:App2}

In this Appendix we provide the expansions  \eqref{exp:Sto}-\eqref{expcoef}, \eqref{expfe}, 
\eqref{pino1fd}-\eqref{aino2fd}.
\\[1mm]
\noindent
{\bf Proof  of \eqref{exp:Sto}-\eqref{expcoef}.}
Writing
 \begin{equation}\label{etapsic}
\begin{aligned}
 & \eta_\e(x) = \e \eta_1(x) + \e^2 \eta_2(x) + \cO(\e^3) \, , \\
 &  \psi_\e(x) = \e \psi_1(x) + \e^2 \psi_2(x) + \cO(\e^3) \, ,  
  \end{aligned}
\qquad \quad c_\e = \ch + \e c_1 + \e^2 c_2+ \cO(\e^3) \, ,  
\end{equation}
where  $\eta_i$ is $even(x)$ and $\psi_i$ is $odd(x)$ for $i=1,2$, 
we solve order by order in $ \e $ the equations \eqref{travelingWWstokes},  
that we rewrite  as
\begin{equation}
\label{Sts}
\begin{cases}
-c \, \psi_x +  \eta   + \dfrac{\psi_x^2}{2} - 
\dfrac{\eta_x^2}{2(1+\eta_x^2)} ( c  -  \psi_x )^2  = 0 \\
c \, \eta_x+G(\eta)\psi = 0 	\, ,
\end{cases}
\end{equation}
having substituted $G(\eta)\psi $ with $-c \, \eta_x $  in the first equation.
 We expand the Dirichlet-Neumann operator 
$ G(\eta)=  G_0+ G_1(\eta) + G_2(\eta) + \cO(\eta^3)  $
where, according to \cite{CS}[formula (2.14)], 
\begin{equation} \label{expDiriNeu}
\begin{aligned}
 G_0 & := D\tanh(\tth D) = |D| \tanh(\tth |D|) \,, \\
 G_1(\eta) & := D \big( \eta -  \tanh(\tth D)\eta \tanh(\tth D) \big)D = -\pa_x \eta \pa_x - |D| \tanh(\tth|D|)\eta|D| \tanh(\tth|D|), \\
 G_2(\eta) & := -\frac12 D \Big( 
 D{\eta}^2 \tanh(\tth D) +\tanh(\tth D){\eta}^2 D - 2\tanh(\tth D)\eta D\tanh(\tth  D)\eta\tanh(\tth D) \Big)D \, .
\end{aligned}
\end{equation}
{\bf First order in $ \e $.}  Substituting in \eqref{Sts} the expansions   in \eqref{etapsic},
we get the linear system 
\begin{equation}\label{cB0}
 \left\{\begin{matrix} -\ch (\psi_1)_x + \eta_1 = 0 \\
 \ch (\eta_1)_x + G_0\psi_1 =0 \, ,  \end{matrix}\right.
 \quad  \text{i.e.} \, \vet{\eta_1}{\psi_1} \in \text{Ker }\cB_0  \text{ with } \cB_0 := \begin{bmatrix} 1 & -\ch\pa_x \\ \ch\pa_x & G_0  \end{bmatrix},  
\end{equation}
where $\eta_1$ is $even(x)$ and $\psi_1$ is $odd(x)$.
\begin{lem}
The kernel of the linear operator $\cB_0$ in \eqref{cB0} is
\begin{equation}\label{chk}
\text{Ker }\cB_0= \text{span}\,\Big\{\vet{\cos(x)}{\ch^{-1}\sin(x)} \Big\}.
\end{equation}
\end{lem}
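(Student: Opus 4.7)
The strategy is straightforward Fourier analysis. First, solve the first equation of the system \eqref{cB0} for $\eta_1$ to obtain $\eta_1 = \ch (\psi_1)_x$. Substituting into the second equation yields the scalar equation
\begin{equation*}
 \ch^2 \partial_x^2 \psi_1 + G_0 \psi_1 = 0 \, ,
\end{equation*}
where $G_0 = |D|\tanh(\tth|D|)$ and $\ch^2 = \tanh(\tth)$. Expanding $\psi_1$ in Fourier series $\psi_1 = \sum_{k \in \bZ} \hat\psi_k e^{\im k x}$ reduces this to the dispersion-type relations
\begin{equation*}
 \big( |k|\tanh(\tth|k|) - \tanh(\tth) k^2 \big) \hat\psi_k = 0 \, , \qquad k \in \bZ \, .
\end{equation*}

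The key observation is that, for $|k| \geq 2$, the coefficient above is strictly negative. Indeed, since $\tanh$ is strictly concave on $[0,+\infty)$ with $\tanh(0)=0$, the function $s \mapsto \tanh(s)/s$ is strictly decreasing on $(0,+\infty)$; applying this with $s = \tth$ and $s = \tth |k|$ yields $\tanh(\tth|k|) < |k|\tanh(\tth)$ for $|k| \geq 2$. Consequently $\hat\psi_k = 0$ for all such $k$. For $k=0$ the coefficient also vanishes, but then $\hat\psi_0$ would contribute a constant to $\psi_1$, which is ruled out by the odd parity requirement $\psi_1 \in odd(x)$. For $k = \pm 1$ the relation is identically satisfied since $\tanh(\tth) = \tanh(\tth)\cdot 1$, so $\hat\psi_{\pm 1}$ remain free. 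Imposing oddness and reality forces $\psi_1(x) = A \sin(x)$ for some $A \in \bR$, and hence $\eta_1 = \ch A \cos(x)$.

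Normalizing by $A = \ch^{-1}$ gives the vector $(\cos(x), \ch^{-1}\sin(x))^\top$, proving \eqref{chk}. The only real step is the strict monotonicity of $\tanh(s)/s$, which is standard; no genuine obstacle arises.
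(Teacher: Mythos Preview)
Your proof is correct and takes essentially the same approach as the paper: both reduce to Fourier analysis and invoke the strict monotonicity of $s \mapsto \tanh(s)/s$ on $(0,+\infty)$ to show that only the modes $|k|=1$ survive. The only cosmetic difference is that you first eliminate $\eta_1$ via the first equation to obtain a scalar equation for $\psi_1$, whereas the paper keeps both components and examines the $2\times 2$ block of $\cB_0$ on each subspace $\mathrm{span}\{(\cos(kx),0)^\top,(0,\sin(kx))^\top\}$, computing its determinant $k\tanh(\tth k)-\ch^2 k^2$; the monotonicity argument and the handling of $k=0$ via the oddness of $\psi_1$ are identical.
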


\begin{proof} The action of $\cB_0$ on each subspace span$\footnotesize{\,\Big\{\vet{\cos(kx)}{0}, \vet{0}{\sin(kx)}\Big\}} $, $k\in \bN$, is represented by the $2\times 2$ matrix $\footnotesize{ \begin{bmatrix} 1 & -\ch k \\ -\ch k & k\tanh(\tth k)  \end{bmatrix}}$. Its determinant 
 $ k \tanh(\tth k) - \ch^2 k^2= k^2 \Big(\frac{\tanh(\tth k)}{k} -\tanh(\tth) \Big)$ vanishes  if and only if $k=1$. Indeed 
the function $x\mapsto\frac{\tanh(\tth x)}{x} $ is monotonically decreasing for $x>0$, since its derivative
$
\frac{2x\tth -\sinh(2\tth x)}{2\cosh^2(\tth x)x^2}
$
is negative for $x>0$. For $k=1$ we obtain the kernel of $\cB_0$ given in \eqref{chk}. For $k=0$ it has no kernel since $\psi_1(x)$ is odd.
\end{proof}
We set
$
\eta_1(x) := \cos(x)$, $\psi_1(x) := \ch^{-1} \sin(x) 
$
in agreement with 
\eqref{exp:Sto}. 
\\[1mm]
{\bf Second order in $ \e $.} 
By \eqref{Sts}, and since 
$ \ch^2 (\eta_1)_x^2 = (G_0\psi_1)^2  $, we get  the linear system
\begin{equation}\label{syslin2}
 \cB_0 \vet{\eta_2}{\psi_2} = \vet{c_1(\psi_1)_x-\frac12 (\psi_1)_x^2 + \frac12 (G_0\psi_1)^2 }{-c_1(\eta_1)_x - G_1(\eta_1)\psi_1} \, , 
\end{equation}
where  $\cB_0$ is the self-adjoint operator in \eqref{cB0}. 
System \eqref{syslin2} admits a solution if and only if its right-hand term is orthogonal to the Kernel  of $\cB_0$ in \eqref{chk}, namely
\begin{equation}\label{orth1}
 \Big(\vet{c_1(\psi_1)_x-\frac12 (\psi_1)_x^2 + \frac12 (G_0\psi_1)^2 }{-c_1(\eta_1)_x - G_1(\eta_1)\psi_1}\;,\;\vet{\cos(x)}{\ch^{-1}\sin(x)}\Big)=0 \, . 
\end{equation}
In view of the first order expansion \eqref{exp:Sto}, \eqref{expDiriNeu} and the identity
 $
  \tanh(2\tth)  = 
  \displaystyle{\frac{2\ch^2}{1+\ch^4}}
 $,
  it results $
 [G_0\psi_1](x)= \ch \sin(x)$, $\big[G_1(\eta_1)\psi_1\big](x) 
 =\frac{1-\ch^4}{\ch(1+\ch^4)}\sin(2x)$
so that \eqref{orth1} implies
 $c_1=0$, in agrement with \eqref{exp:Sto}. Equation
 \eqref{syslin2} reduces to 
\begin{align}\label{sisto2}
\begin{bmatrix} 1 & -\ch\pa_x \\ \ch\pa_x & G_0  \end{bmatrix}
\vet{\eta_2}{\psi_2} 
  = \vet{ - \frac14 (\ch^{-2}-\ch^2) - \frac14  (\ch^{-2}+\ch^2) \cos(2x) }{ - \frac{1-\ch^4}{\ch(1+\ch^4)} \sin(2x) }.
\end{align}
Setting $ \eta_2 = \eta_2^{[0]} +  \eta_2^{[2]} \cos(2x) $ and  $ \psi_2 =
 \psi_2^{[2]}  \sin (2x) $, system 
\eqref{sisto2} amounts to 
\begin{align*}
\left\{ \begin{matrix} \eta_2^{[0]} +\big( \eta_2^{[2]} -2\ch  \psi_2^{[2]} \big) \cos(2x)  = - \frac14\left(\ch^{-2}-\ch^2\right) - \frac14  \left(\ch^{-2}+\ch^2 \right) \cos(2x)  \\ (-2\ch \eta_2^{[2]}   + 2  \psi_2^{[2]} \tanh(2\tth))\sin(2x)  =  - \frac{1-\ch^4}{\ch(1+\ch^4)} \sin(2x) \, , \end{matrix}\right.
\end{align*}
which leads to the expansions of $ \eta_2^{[0]} $, $ \eta_2^{[2]} $, $ \psi_2^{[2]} $ 
given in
\eqref{exp:Sto}-\eqref{expcoef}.

\noindent
{\bf Third order in $ \e $.} 
It remains to determine 
$ c_2 $ in \eqref{expc2}.
 We get the linear system 
\begin{equation}\label{syslin3}
 \cB_0 \vet{\eta_3}{\psi_3} = \vet{c_2(\psi_1)_x 
 - (\psi_1)_x (\psi_2)_x - (\eta_1)_x^2 (\psi_1)_x \ch + 
 (\eta_1)_x (\eta_2)_x \ch^2
 }{-c_2(\eta_1)_x - G_1(\eta_1)\psi_2- G_1(\eta_2)\psi_1 - G_2(\eta_1)\psi_1} \, . 
\end{equation}
System \eqref{syslin3} has 
a solution if and only if the right hand side is orthogonal to the Kernel of
$ \cB_0 $ given in \eqref{chk}. This condition determines uniquely $ c_2 $.
Denoting  $\Pi_1$ the $L^2$-orthogonal projector on span$\, \{\cos(x),\sin(x)\} $, it results 
\begin{align*} 
& c_2 (\psi_1)_x = c_2 \ch^{-1} \cos(x)\, , \quad c_2 (\eta_1)_x = -c_2 \sin(x) \, , \quad \Pi_1[ (\psi_1)_x (\psi_2)_x] = 
\psi_2^{[2]} \ch^{-1}  \cos(x)\, ,\\ 
& \Pi_1 [\ch (\eta_1)_x^2 (\psi_1)_x ] = \tfrac14 \cos(x)\, , \quad \Pi_1[\ch^2 (\eta_1)_x (\eta_2)_x] = \eta_2^{[2]}\ch^2 \cos(x) \, , 
\end{align*}
and, in view of \eqref{expDiriNeu}, and \eqref{exp:Sto}, \eqref{expcoef}, 
\begin{align*}
 \Pi_1[ G_1(\eta_1)\psi_2]  &= \psi_2^{[2]}\frac{1-\ch^4}{1+\ch^4} \sin(x) \, ,  \quad 
  \Pi_1[G_2(\eta_1)\psi_1] 
 =   \ch \frac{3\ch^4-1}{4(1+\ch^4)} \sin(x) \, , \\
 \Pi_1[G_1(\eta_2)\psi_1] 
&= \ch^{-1} 
 \Big( \eta_2^{[0]}(1-\ch^4) +   \tfrac12 \eta_2^{[2]} (1+\ch^4) \Big)\sin(x) \, . 
\end{align*}
Therefore the orthogonality condition proves  \eqref{expc2}.

\noindent{\bf Proof of \eqref{expfe}.} 
We expand  the function $\mathfrak{p}(x)  = \e\mathfrak{p}_1(x) + \e^2 \mathfrak{p}_2(x) + \cO(\e^3)  $ defined by the fixed point equation \eqref{def:ttf}.   We first 
note that  the constant $\mathtt{f}_\e=\cO(\e^2)$  because
 $\eta_1(x) = \cos(x)$ has zero average. 
Then 
$ \mathfrak{p}(x)
 = \frac{\mathcal H}{\tanh(\tth |D|)} \big[\e\eta_1 +\e^2\big(\eta_2 + 
 (\eta_1)_x \mathfrak{p}_1 \big)+\cO(\e^3)\big] $, 
and, using that $\mathcal H \cos(kx) = \sin(kx)$, for any  $ k \in \bN$, we get 
\begin{align}
 \mathfrak{p}_1(x) 
 & = \frac{\mathcal H}{\tanh(\tth |D|)}\cos(x)  = \ch^{-2}\sin(x) \, , \label{pfra1} \\
 \mathfrak{p}_2(x) &= \frac{\mathcal H}{\tanh(\tth |D|)}((\eta_1)_x
 \mathfrak{p}_1 +\eta_2 ) 
 =  \frac{(1+\ch^4)(\ch^4+3)}{8\ch^8}\sin(2x) \, . \label{pfra2}
\end{align}
Finally
\begin{align*}
 \tf_\e & = \frac{\e^2}{2\pi} \int_\bT \big( \eta_2 + (\eta_1)_x\mathfrak{p}_1 \big) \de x + \cO(\e^3) 
 = \e^2\big(\eta_2^{[0]} -\tfrac1{2} \ch^{-2} \big)+\cO(\e^3 ) \stackrel{\eqref{expcoef}}{=} \e^2 \frac{\ch^4-3}{4\ch^2}+\cO(\e^3 ) \, .
\end{align*}
The expansion \eqref{expfe} is proved.
\\[1mm]
{\bf Proof of Lemma \ref{lem:pa.exp}.} 
In view of \eqref{exp:Sto}-\eqref{expcoef}, the expansions of the functions $B$, $V$  in \eqref{espVB} are
\begin{align}
 B= : \e B_1(x) + \e^2 B_2(x) + \cO(\e^3) 
= \e\ch \sin(x) + \e^2 \frac{3-2\ch^4}{2\ch^5}\sin(2x) + \cO(\e^3) \label{espB1}  
  \end{align}
  and
  \begin{align}
 V= : \e V_1(x) + \e^2 V_2(x)  + \cO(\e^3)
 = \e \ch^{-1} \cos(x) + \e^2 \Big[\frac\ch2  +\frac{3-\ch^8}{4\ch^7}\cos(2x)\Big]+\cO(\e^3)
 \label{espV1} \, . 
\end{align}
In  view of  \eqref{def:pa}, denoting derivatives w.r.t $x$ with  an apex and suppressing dependence on $x$ when trivial, we have
\begin{align}
\ch+p_\e(x)  &= (\ch+\e^2 c_2 - V(x) - V'(x)\mathfrak{p}(x)+\cO(\e^3)) (1-\mathfrak{p}'(x)+(\mathfrak{p}'(x))^2+\cO(\e^3))\notag \\
&= \ch + \e \underbrace{(- V_1 -\ch \mathfrak{p}_1')}_{=: p_1}+\e^2 \underbrace{\big( c_2 + V_1\mathfrak{p}_1' - V_2 - V_1'\mathfrak{p}_1 -\ch \mathfrak{p}_2' +\ch (\mathfrak{p}_1')^2 \big)}_{=:p_2} + \, \cO(\e^3) \, .\label{pino12imp}
\end{align} 
Similarly by \eqref{def:pa}
\begin{align}
 1+a_\e(&x) : = \frac{1}{1+\mathfrak{p}_x(x)} - (\ch+p_\e(x))B_x(x+\mathfrak{p}(x)) \notag \\
   = &  1+\e\underbrace{\big(-\mathfrak{p}_1'- \ch B_1'\big)}_{=:a_1} +\e^2\underbrace{\big((\mathfrak{p}_1')^2-\mathfrak{p}_2'- \ch B_2'-\ch B_1''\mathfrak{p}_1(x)+ B_1' V_1 + \ch  B_1'\mathfrak{p}_1' \big)}_{=:a_2}+\cO(\e^3)\, . \label{aino12imp}
 \end{align}
By \eqref{espV1}, \eqref{pfra1}, \eqref{exp:Sto}, \eqref{pfra2}, \eqref{espB1} we 
deduce that the functions $p_1 $, $p_2 $, $a_1 $, $a_2 $ in \eqref{pino12imp} and \eqref{aino12imp} have an expansion as in \eqref{pino1fd}-\eqref{aino2fd}.\qed

 \begin{footnotesize}

 \end{footnotesize}
\end{document}